\DeclareMathOperator{\St}{St}
\DeclareMathOperator{\GL}{GL}
\DeclareMathOperator{\E}{E}
\DeclareMathOperator{\G}{G}
\DeclareMathOperator{\UU}{U}
\newcommand{\Gsc}{\G_\mathrm{sc}}
\newcommand{\Esc}{\E_\mathrm{sc}}
\DeclareMathOperator{\Um}{Um}
\DeclareMathOperator{\K}{K}
\DeclareMathOperator{\Ker}{Ker}
\DeclareMathOperator{\Stab}{Stab}
\DeclareMathOperator{\Img}{Im}
\newcommand{\rA}{\mathsf{A}}
\newcommand{\rD}{\mathsf{D}}
\newcommand{\rE}{\mathsf{E}}
\newcommand{\ZZ}{\mathbb{Z}}
\newcommand{\StW}{\widehat{W}}
\newcommand{\StH}{\widehat{H}}
\newcommand{\inv}{^{-1}}
\newcommand{\RNum}[1]{\uppercase\expandafter{\romannumeral #1\relax}}
\newcommand{\eval}[4]{\mathrm{ev} \textstyle \left[\frac{#2[#1] \rightarrow #3}{#1 \mapsto #4}\right]}
\newcommand{\ev}[3]{\eval{X}{#1}{#2}{#3}}
\newtheorem{thm}{Theorem}
\Crefname{thm}{Theorem}{Theorems}
\numberwithin{equation}{section}
\numberwithin{thm}{section}
\newtheorem{lemma}[thm]{Lemma}
\numberwithin{lemma}{section}
\Crefname{lemma}{Lemma}{Lemmas}
\newlist{lemlist}{enumerate}{1} \setlist[lemlist]{label=\textrm{(\arabic{lemlisti})}, ref=\thelemma.(\arabic{lemlisti}),noitemsep} \Crefname{lemlisti}{Lemma}{Lemma}
\newtheorem{cor}[thm]{Corollary}
\Crefname{cor}{Corollary}{Corollaries}
\newtheorem{prop}[lemma]{Proposition}
\Crefname{prop}{Proposition}{Propositions}
\newtheorem*{thm*}{Theorem}
\newtheorem*{lemma*}{Lemma}
\theoremstyle{definition}
\newtheorem{dfn}[lemma]{Definition}
\Crefname{dfn}{Definition}{Definitions}
\newtheorem{example}[lemma]{Example}
\Crefname{example}{Example}{Examples}
\theoremstyle{remark}
\newtheorem{rem}[lemma]{Remark}
\Crefname{rem}{Remark}{Remarks}
\title{On the $\mathbb{A}^1$-invariance of $\K_2$ of Chevalley groups of simply-laced type}
\keywords {Steinberg group, $K_2$-functor, Serre problem {\em Mathematical Subject Classification (2010):} 19C20}
\author {Sergei Sinchuk}
\dedicatory{In the memory of my teacher Nikolai Vavilov}
\email {sinchukss at gmail.com}
\date {\today}
\begin{document}
\maketitle

\begin{abstract}
In this paper we study the $\mathbb{A}^1$-invariance of the unstable functor $\mathrm{K}_2(\Phi, R)$
in the case when $\Phi$ is an irreducible root system of type $\mathsf{ADE}$ containing $\rA_4$ and not of type $\rE_8$.
We show that in the geometric case, i.\,e. when $R$ is a regular ring containing a field $k$
one has $\K_2(\Phi, R[t]) = \K_2(\Phi, R)$, which allows one to interpret the unstable $\K_2$ groups
as $\mathbb{A}^1$-fundamental groups of Chevalley--Demazure group schemes in the $\mathbb{A}^1$-homotopy category.
We also prove a variant of "early stability" theorem which allows one to find a generating set
of $\K_2(\Phi, A[X_1, \ldots X_n])$ in the case when $A$ is a Dedekind domain.
\end{abstract}

\section{Introduction}\label{sec:introduction}

Let $R$ be a commutative ring with a unit and $\Phi$ be a reduced irreducible root system of rank $\geq 2$.
Recall that a choice of a lattice $\Lambda$ lying between the root lattice $\mathbb{Z}\Phi$ and the weight lattice $P(\Phi)$
specifies a Chevalley--Demazure group scheme $G=\G_\Lambda(\Phi, -)$ over $\ZZ$.
The Steinberg group $\St(\Phi, R)$ is a certain group defined by means of generators and relations, see~\cref{subsec:steinberg-preliminaries} for the definition.
By definition, the \textit{unstable $\K_2$-functor} $\K_2^G(R)$ is defined as the kernel of the canonical homomorphism $\St(\Phi, R) \to \G_\Lambda(\Phi, R)$.

This work continues the series of papers~\cite{LS20, LSV2} and has a twofold aim.
The first goal of this work is to generalize the main result of~\cite{LSV2} concerning the $\mathbb{A}^1$-invariance of the functor $\K_2^G(R)$.
Specifically, we prove the following invariance theorem, which generalizes~\cite[Theorem~1.1]{LSV2}:

\begin{thm}[The $\K_2$-analogue of Lindel--Popescu theorem] \label{thm:LP-for-K2}
Let $k$ be an arbitrary field and $R$ be a regular ring containing $k$.
Let $\Phi$ be an irreducible simply-laced root system containing $\rA_4$ but not of type $\rE_8$.
Then for any lattice $\Lambda$ as above and $G = \mathrm{G}_\Lambda(\Phi, -)$ one has
$\K_2^G(R[t])\cong\K_2^G(R).$
\end{thm}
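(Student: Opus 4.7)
The plan is to follow the classical Lindel--Popescu paradigm already used in \cite{LSV2}, adapting each step to the larger class of simply-laced root systems treated here. I would split the proof into a Popescu-style continuity reduction, a Quillen-patching local-global step, an \'etale-neighborhood reduction via Lindel's lemma, and a Horrocks-type calculation on the geometric building block, with the combinatorial restriction ``contains $\rA_4$, not $\rE_8$'' entering only in the last step.

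First, since the functor $\K_2^G$ commutes with filtered colimits of rings (as it is defined by finitely many generators and relations modulo those coming from $\G_\Lambda(\Phi,-)$), Popescu's theorem lets me write an arbitrary regular $k$-algebra $R$ as a filtered colimit of smooth finite-type $k$-algebras, and the isomorphism $\K_2^G(R[t])\cong\K_2^G(R)$ will follow once it is established for smooth finite-type $k$-algebras. I would then invoke (or reprove, adapting the argument from \cite{LS20,LSV2} to the present root systems) a local-global principle for the cokernel/kernel of $\K_2^G(R)\to\K_2^G(R[t])$: if this map is an isomorphism after localization at every maximal ideal of $R$, then it is an isomorphism. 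This reduces the statement to $R$ smooth local essentially of finite type over $k$.

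Next, I would use Lindel's \'etale neighborhood lemma in the form: a smooth local $k$-algebra is, up to an \'etale extension inducing an isomorphism on residue fields, a localization of a polynomial algebra over a field $k'\supseteq k$. Combined with the behavior of $\K_2^G$ under \'etale/henselian base change (which already plays a central role in \cite{LSV2}), this reduces the problem to checking $\K_2^G(B[t])\cong\K_2^G(B)$ where $B$ is essentially of the shape $k'[y_1,\ldots,y_n]_{\mathfrak{m}}$, i.e.\ a regular local ring obtained from a polynomial algebra over a field. Up to a further chain of localizations one may assume $B=A[y_1,\ldots,y_{n-1}]_{\mathfrak{m}}$ where $A$ is a DVR, putting us in the arithmetic situation of the ``early stability'' theorem announced in the abstract.

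The main obstacle is the Horrocks-type step: one needs that every element of $\K_2^G(B[t])$ which becomes trivial after evaluation at $t=0$ is already trivial. Here I would exploit the early stability result, which provides an explicit generating set for $\K_2(\Phi,A[X_1,\ldots,X_n])$ when $A$ is a Dedekind domain, and combine it with relative Steinberg computations along the lines of \cite{LS20,LSV2}. The combinatorial hypotheses on $\Phi$ (containing $\rA_4$ and not of type $\rE_8$) are precisely what is needed to control the relations among these generators: the $\rA_4$-subsystem provides sufficiently many ``Steinberg pentagons'' to rewrite an arbitrary relative generator, while the absence of $\rE_8$ ensures that a suitable weight-one faithful representation exists, which is what forces the known Horrocks-type arguments to go through. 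I expect this last step to be by far the most technical part of the paper; the preceding reductions, by contrast, should be fairly formal once the generators-and-relations calculus of $\St(\Phi,-)$ has been set up in the requisite generality.
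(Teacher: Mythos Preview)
Your outline of the reduction steps---Popescu approximation, Quillen-type local--global, Lindel's \'etale neighborhood---is correct and is exactly what the paper does: the proof of \cref{thm:LP-for-K2} simply invokes the argument of \cite[Theorem~1.1]{LSV2} verbatim, replacing the old Horrocks input there by the new \cref{thm:horrocks-k2}. Your intuition that the exclusion of $\rE_8$ is tied to the nonexistence of a microweight is also right.

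The genuine gap is in the Horrocks step. You propose to deduce it from the early stability theorem, but in the paper the dependency runs the other way: early stability (\cref{cor:dedekind}, via \cref{thm:early-stability} and \cref{lem:horrocks-b}) is \emph{proved using} the Horrocks machinery of \cref{sec:horrocks}, so invoking it here would be circular. Moreover, early stability concerns polynomial rings $A[X_1,\ldots,X_n]$ over a Dedekind domain, whereas Lindel's lemma lands you on a \emph{localization} of such a ring, and nothing in your sketch bridges that gap. What the paper actually proves as the key input is \emph{local acyclicity for domains}: for every local domain $A$ the $\mathbb{P}^1$-square~\eqref{eq:P1-square} with corners $\K_2(\Phi,A)$, $\K_2(\Phi,A[X])$, $\K_2(\Phi,A[X^{-1}])$, $\K_2(\Phi,A[X^{\pm 1}])$ is a pullback. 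This is achieved not by controlling generators of $\K_2$ but by a Suslin--Tulenbaev ``model set'' construction: one builds a set $\overline{V}$ on which $\St(\Phi,A[X^{\pm1}])$ acts, using Allcock's Curtis--Tits presentation of the affine Steinberg group (\cref{prop:rel-poly-Laurent}) and a single weight automorphism $\delta_{\varpi_k}$ attached to the microweight $\varpi_k$. The ``contains $\rA_4$'' hypothesis enters not through ``pentagons'' but because the construction of the $\varpi_k$-pair in \cref{subsec:construction-sigma} rests on the relative presentation of $\St(4,R,I)$ from \cref{prop:rel-presentation} together with the $\rA_3$-amalgamation \cref{thm:relPres}, and the argument also uses the surjectivity $\K_2(\Delta_k,A)\to\K_2(\Phi,A)$ in \cref{lem:can-repr1}.
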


The primary corollary of \cref{thm:LP-for-K2} is the following result, which interprets the unstable $\K_2$-functors as fundamental groups of group schemes
$\mathrm{G}_\Lambda(\Phi, -)$ in the motivic category of F.~Morel and V.~Voevodsky extending the result of~\cite[Corollary~1.2]{LSV2}.

\begin{cor} \label{cor:motivic-pi1} Let $\Phi$, $\Lambda$ and $k$ be as in~\cref{thm:LP-for-K2}.
Then for any $k$-smooth algebra $R$ and $G = \mathrm{G}_{\Lambda}(\Phi, -)$ one has
\[ \pi_1^{\mathbb{A}^1}(G)(R) := \mathrm{Hom}_{\mathscr{H}_{\bullet}(k)}(S^1 \wedge \mathrm{Spec}(R)_+, G) = \mathrm{KV}_2^{G}(R) = \K_2^G(R).\]
\end{cor}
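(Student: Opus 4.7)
The plan is to follow the template of~\cite[Corollary~1.2]{LSV2}, using \cref{thm:LP-for-K2} as the essential new input that replaces the more restrictive root-system hypothesis used there. In particular, iterating \cref{thm:LP-for-K2} over the polynomial variables (each $R[t_1,\ldots,t_n]$ is again a regular ring containing $k$) gives
\[ \K_2^G(R[t_1, \ldots, t_n]) \cong \K_2^G(R) \]
for all $n \geq 0$ and all $k$-smooth $R$. This unlocks both identifications.

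For the second equality $\mathrm{KV}_2^G(R) = \K_2^G(R)$, I would apply the unstable $\K_2$-defining short exact sequence
\[ 1 \to \K_2^G(-) \to \St(\Phi, -) \to \Esc(\Phi, -) \to 1 \]
levelwise to the standard cosimplicial $k$-algebra $\Delta^\bullet_R$ with $\Delta^n_R = R[t_0, \ldots, t_n]/(t_0 + \cdots + t_n - 1)$. By the iterated $\mathbb{A}^1$-invariance above, the simplicial group $\K_2^G(\Delta^\bullet_R)$ is the constant simplicial object on $\K_2^G(R)$. One then chases the long exact sequence of homotopy groups of the resulting simplicial fibration and uses surjectivity of $\St \to \Esc$ together with the agreement of $\Esc$ with $G$ on polynomial rings over a smooth $k$-algebra to conclude $\pi_1(G(\Delta^\bullet_R)) \cong \K_2^G(R)$, i.e.\ $\mathrm{KV}_2^G(R) \cong \K_2^G(R)$.

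For the first equality $\pi_1^{\mathbb{A}^1}(G)(R) = \mathrm{KV}_2^G(R)$, the strategy is to invoke Morel's $\mathbb{A}^1$-homotopy machinery exactly as in~\cite{LSV2}. The presheaf $R \mapsto \K_2^G(R)$ is now $\mathbb{A}^1$-invariant on regular $k$-algebras, and by the Gersten-type arguments already in place in~\cite{LSV2} (which were set up in the generality needed here, independent of the restricted root-system hypothesis of its main theorem) this upgrades to strong $\mathbb{A}^1$-invariance as a Nisnevich sheaf of abelian groups on $\mathrm{Sm}/k$. One then knows $\St(\Phi, -)$ is $\mathbb{A}^1$-connected as a Nisnevich sheaf on smooth $k$-algebras (via Suslin-style surjectivity of elementary generators over local rings), so the fiber sequence $\K_2^G \to \St(\Phi, -) \to G$ is already $\mathbb{A}^1$-local and computes $\pi_1^{\mathbb{A}^1}(G)$ as the sheaf $\K_2^G$. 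Evaluating on $R$ via the Jouanolou device yields the claim.

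Since all of the heavy machinery (strong $\mathbb{A}^1$-invariance of $\K_2^G$, $\mathbb{A}^1$-connectedness of $\St(\Phi, -)$, and the Jouanolou-type identification with simplicial homotopy groups) has been set up in full generality in~\cite{LSV2}, the main obstacle is simply verifying that those results apply verbatim under the weaker hypothesis $\Phi \supseteq \rA_4$, $\Phi \neq \rE_8$ of \cref{thm:LP-for-K2}; I expect this to be mechanical, since the only place where the root-system hypothesis of~\cite{LSV2} really entered was in proving the $\mathbb{A}^1$-invariance statement that \cref{thm:LP-for-K2} now generalizes.
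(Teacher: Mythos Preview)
Your proposal is correct and matches the paper's approach exactly: the paper's proof consists of the single line ``Repeat the proof of~\cite[Corollary~1.2]{LSV2} verbatim,'' and your plan is precisely to follow that template with \cref{thm:LP-for-K2} replacing the more restrictive $\mathbb{A}^1$-invariance input from~\cite{LSV2}. Your elaboration of what that verbatim repetition entails is a reasonable unpacking, and your closing expectation---that the only place the root-system hypothesis entered in~\cite{LSV2} was the $\mathbb{A}^1$-invariance statement now generalized---is exactly the point.
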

In the above statement $\mathscr{H}_\bullet(k)$ denotes the pointed unstable $\mathbb{A}^1$-homotopy category over $k$, see~\cite{MV99}.
$\mathrm{KV}_2^{G}(R)$ denotes the second Karoubi--Villamayor K-functor, i.\,e. the fundamental group of
the simplicial group $G(R[\Delta^\bullet])$, see e.\,g.~\cite[\S~3]{Jar83} or~\cite[\S~3.2]{LSV2}.

The second primary objective of this paper is to establish a result that describes the structure of unstable $\K_2$-groups
over multivariate polynomial rings over Dedekind domains.
Conceptually, this should be compared to the solution of the Bass--Quillen conjecture for one-dimensional rings, as discussed in~\cite[\S~V.3]{Lam10},
and to analogous results for the unstable $\K_1$-functor, e.\,g.~\cite[Theorem~1.1]{St-Ded}.
Thus, we prove the following

\begin{thm} \label{cor:dedekind}
Let $\Phi$ be as in the statement of~\cref{thm:LP-for-K2} and let $A$ be a Dedekind domain.
Then for any $n \geq 0$ the group $\K_2(\rA_4, A)$ surjects onto $\K_2(\Phi, A[X_1,\ldots, X_n])$.
In particular, $\K_2(\Phi, A[X_1,\ldots, X_n]) = \K_2(\Phi, A)$.
Moreover, if $\K_2(A)$ is generated by Dennis--Stein symbols then for any $n \geq 0$ so is $\K_2(\Phi, A[X_1,\ldots, X_n])$.
\end{thm}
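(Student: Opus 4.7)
The plan is to decompose the theorem into two independent inputs: an \emph{early stability} surjection $\K_2(\rA_4, R) \twoheadrightarrow \K_2(\Phi, R)$ for $R = A[X_1,\ldots,X_n]$, and an $\mathbb{A}^1$-invariance identification $\K_2(\rA_4, A) \cong \K_2(\rA_4, A[X_1,\ldots,X_n])$ for Dedekind $A$. Composing these yields the main assertion $\K_2(\rA_4, A) \twoheadrightarrow \K_2(\Phi, A[X_1,\ldots,X_n])$; the remaining two claims then follow formally.

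For the early stability input, the simply-laced hypothesis, presence of $\rA_4$, and exclusion of $\rE_8$ ensure that $\Phi$ contains a parabolic subset with Levi part of type $\rA_4$. The argument then uses the Chevalley--Steinberg commutator relations to rewrite each root generator $x_\alpha(a)$ for $\alpha \in \Phi \setminus \rA_4$ as a product of $\rA_4$ root unipotents modulo relators already holding in $\St(\rA_4, R)$. This step would presumably be developed elsewhere in the paper as part of the technical machinery underlying \cref{thm:LP-for-K2}.

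For the $\mathbb{A}^1$-invariance input, I would apply a local--global principle to reduce to the case where $A$ is replaced by a localization $A_\mathfrak{m}$. Each $A_\mathfrak{m}$ is a discrete valuation ring; in the equicharacteristic case $A_\mathfrak{m}$ contains its residue field, so \cref{thm:LP-for-K2} applies (iterated in each variable) and yields $\K_2(\rA_4, A_\mathfrak{m}) \cong \K_2(\rA_4, A_\mathfrak{m}[X_1,\ldots,X_n])$. The mixed-characteristic case is the principal obstacle and falls outside the scope of \cref{thm:LP-for-K2}; here I would rely on a Popescu-style desingularization combined with continuity of $\K_2$, or on classical $\K_2$-analogs of Bass--Quillen for DVRs in the style of Tulenbaev and van der Kallen.

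The deduction of the remaining claims is formal. The identification $\K_2(\Phi, A[X_1,\ldots,X_n]) = \K_2(\Phi, A)$ follows by factoring the main surjection as $\K_2(\rA_4, A) \to \K_2(\Phi, A) \to \K_2(\Phi, A[X_1,\ldots,X_n])$ and noting that the second map is split injective via $X_i \mapsto 0$ and simultaneously surjective by this factorization, hence an isomorphism. For the Dennis--Stein assertion, $\sr(A) \leq 2$ for Dedekind $A$ yields surjective $\K_2$-stability $\K_2(\rA_4, A) \twoheadrightarrow \K_2(A)$, so any generating set of Dennis--Stein symbols for $\K_2(A)$ can be lifted to $\K_2(\rA_4, A)$; its image then generates $\K_2(\Phi, A[X_1,\ldots,X_n])$.
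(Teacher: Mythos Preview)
Your two-step decomposition (early stability $\K_2(\rA_4,R)\twoheadrightarrow\K_2(\Phi,R)$ for $R=A[X_1,\dots,X_n]$, then $\mathbb{A}^1$-invariance $\K_2(\rA_4,A)\cong\K_2(\rA_4,R)$) is exactly the paper's structure, and your formal deduction of the last two claims is correct. But both of your proposed mechanisms are off.

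For the $\mathbb{A}^1$-invariance step you propose local--global plus \cref{thm:LP-for-K2} at each DVR, and you correctly flag mixed characteristic as an obstacle. The paper sidesteps this entirely by routing through \emph{stable} $\K_2$: Tulenbaev's main theorem gives $\K_2(\rA_4,B)\cong\K_2(B)$ for $B=A[X_1,\dots,X_n]$ over a one-dimensional base; Quillen's $\mathbb{A}^1$-invariance of stable $\K_2$ for regular rings gives $\K_2(B)\cong\K_2(A)$; and Suslin--Tulenbaev stability ($\mathrm{sr}(A)\le 2$) identifies $\K_2(\rA_3,A)\cong\K_2(\rA_4,A)\cong\K_2(A)$. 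This works uniformly with no characteristic hypothesis and does not invoke \cref{thm:LP-for-K2} at all.

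For the early stability step, your sketch (``rewrite each $x_\alpha(a)$ for $\alpha\notin\rA_4$ as a product of $\rA_4$ root unipotents via commutator relations'') is a genuine gap: this would at best address surjectivity at the level of Steinberg or elementary groups, not at the level of the kernel $\K_2$. The paper's early stability theorem (\cref{thm:early-stability}) is a substantial result in its own right, proved by induction on $n$ using a subsystem version of the Horrocks machinery (the ``model set'' action built in \cref{sec:horrocks}, repackaged as \cref{prop:horrocks-main} and \cref{lem:horrocks-b}) together with Suslin's change-of-variables trick to make the relevant polynomial monic in one variable. None of this is visible from the commutator calculus.
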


By analogy, \cref{thm:LP-for-K2} can be seen as a counterpart for the functor $\K_2^G$ to the theorem of H.~Lindel and D.~Popescu,
 which resolved the classical Bass--Quillen conjecture in the geometric case—that is, for commutative regular $k$-algebras of arbitrary dimension.
We highlight here that the main novelty of \cref{thm:LP-for-K2} is its applicability to root system types $\Phi = \rE_6, \rE_7$,
as well as its extension to orthogonal groups over the base ring, without requiring that $2$ be invertible in $R$.

In the special case when $R$ is a field, homotopy invariance type results for $\K_2$ similar to~\cref{thm:LP-for-K2} have been known since 1970's, cf. e.\,g.~\cite{Hur77, Re75, VW16}.
Notice also that the homotopy invariance for the unstable $\K_1$-functors and their interpretation as groups of connected components of $G$ in the motivic homotopy category
have been recently obtained in much greater generality by A.~Stavrova, see~\cite[Theorem~1.3]{St-poly}, \cite[Theorem~5.2]{St22}.
Notice also that the Nisnevich sheaf $\pi_1^{\mathbb{A}^1}(G)$ associated to the presheaf of fundamental groups $\pi_1^{\mathbb{A}^1}(G)(R)$
has been recently computed by F.~Morel and A.~Sawant for all split reductive $G$ without any assumptions on the rank of $G$, see~\cite[Theorem~1]{MS23}.
For a more detailed historical account of the problem, we refer the reader to the introduction of~\cite{LSV2}.

The main technical ingredient in the proof of all our results is the refined Horrocks theorem for $\K_2$,~\cref{thm:horrocks-k2},
which generalizes~\cite[Theorem~1]{LS20} and~\cite[Proposition~4.3]{Tu83}.
As noted in the introduction of~\cite{LSV2}, the restrictive assumptions on $\Phi$ and $R$ in these two theorems were a major obstacle to proving a more
general result in~\cite{LSV2}, a gap this paper seeks to fill.
Our proof of~\cref{thm:horrocks-k2} follows the same general approach as that of~\cite[Proposition~4.3]{Tu83},
with the key difference being our use of more modern tools, such as the amalgamation theorem for affine Steinberg groups introduced by A.~Allcock in~\cite{A13}.
For a more detailed discussion of the ideas behind our proof of~\cref{thm:horrocks-k2} and its key differences from~\cite{LS20, Tu83}, we refer the reader to~\cref{sec:horrocks}.

\subsection{Acknowledgements}
This work was supported by the Russian Science Foundation grant №19-71-30002.

\section{Preliminaries}\label{sec:preliminaries}

In this paper all rings are assumed to be commutative and all commutators are left-normed i.\,e. $[a, b] = a b a^{-1} b^{-1}$.

Throughout this paper we denote by $\lambda_a$ the homomorphism of principal localization at $a \in A$.
Also for a prime ideal $M \trianglelefteq A$ we denote by $\lambda_M \colon A \to A_M$ the homomorphism inverting the multiplicative subset $A \setminus M$.

For an $A$-algebra $B$ and $b\in B$ we denote by $\mathrm{ev}_{X=b} \colon A[X]\rightarrow B$ the homomorphism of $A$-algebras evaluating each polynomial
$p(X)\in A[X]$ at $b$, i.e. $\mathrm{ev}_{X=b}(p(X)) = p(b)$.
Sometimes in order to emphasize the choice of $B$, we also use the more explicit notation $\ev{A}{B}{b}$ for the same homomorphism.

\subsection{Steinberg groups, $\K_2$-groups and symbols}\label{subsec:steinberg-preliminaries}
Let $\Phi$ be a finite root system of rank $\ell > 1$.
We assume that $\Phi$ is embedded into $\mathbb{R}^\ell$ whose scalar product we denote by $(\text{-}, \text{-})$.
We also fix some system of simple roots $\Pi = \{\alpha_1, \ldots, \alpha_\ell\} \subset \Phi$.
For a root $\alpha\in\Phi$ we denote by $m_i(\alpha)$ the $i$-th coefficient in the expansion of $\alpha$ in $\Pi$,
i.\,e. $\alpha = \sum_{i=1}^n m_i(\alpha) \alpha_i$.
We denote by $\Phi^+$ (resp. $\Phi^-$) the system of positive (respectively, negative) roots with respect to the basis $\Pi$.

We denote by $\Phi^\vee$ the corresponding dual root system, which, by definition, consists of all coroots $\alpha^\vee = \frac{2}{(\alpha, \alpha)} \alpha$, where $\alpha \in \Phi$.
We denote by $P(\Phi)$ the \textit{weight lattice}, i.\,e. the integral lattice spanned by the \emph{fundamental weights $\varpi_i$}.
Recall that the fundamental weights $\varpi_i$ are uniquely determined by the property $(\varpi_i, \alpha_j^\vee) = \delta_{ij}$.
For $v \in \mathbb{R}^\ell$ define the product $\langle v, \beta \rangle \coloneqq (v, \beta^\vee).$
Clearly, for $\alpha,\beta \in \Phi$ the product $\langle \alpha, \beta\rangle$ is always an integer.
We also denote by $W(\Phi)$ the Weyl group of $\Phi$, i.\,e. the isometry group of $\mathbb{R}^\ell$ generated by fundamental reflections $s_\alpha$,
where $s_\alpha(v) = v - \langle v, \alpha\rangle \alpha$.

By definition, a weight $\omega \in P(\Phi)$ is called a \textit{microweight} if $\langle \omega, \alpha \rangle \in \{ 0, 1 \}$ for all $\alpha \in \Phi^+$.

Now let $R$ be an arbitrary commutative ring with $1$ and suppose that $\Phi$ is a root system whose irreducible components have rank $\geq 2$.
Recall that to the pair $(\Phi, R)$ can associate an abstract group $\St(\Phi, R)$, called the \textit{Steinberg group} of type $\Phi$ over $R$.
By definition, $\St(\Phi, R)$ is the group presented by generators $x_\alpha(a)$, $a \in R$, $\alpha \in \Phi$ and an explicit list of relations (see e.\,g in.~\cite{Ma69, St71}).
In this paper we restrict ourselves to the case when the root system $\Phi$ in question is \textit{simply-laced} (i.\,e. has type $\mathsf{ADE}$),
in which case the defining relations of $\St(\Phi, R)$ reduce to the following shorter list, cf. e.\,g.~\cite[\S~2.2]{S15}:
\begin{align}
x_{\alpha}(a)\cdot x_{\alpha}(b) & =x_{\alpha}(a+b), \tag{R1} \label{x-additivity}\\
[x_{\alpha}(a),\,x_{\beta}(b)]   & =x_{\alpha+\beta}(N_{\alpha,\beta} \cdot ab),\text{ for }\alpha+\beta\in\Phi, \tag{R2} \label{R2} \\
[x_{\alpha}(a),\,x_{\beta}(b)]   & =1,\text{ for }\alpha+\beta\not\in\Phi\cup0. \tag{R3} \label{R3}
\end{align}
The coefficients $N_{\alpha,\beta}$ in the above formula are integers equal to $\pm 1$, they coincide with the structure constants of the complex Lie algebra of type $\Phi$.

The construction of $\St(\Phi, R)$ is obviously functorial in $R$.
For a ring homomorphism $ f\colon A \to B$ we denote by $f^*$ the corresponding homomorphism of Steinberg groups $\St(\Phi, A) \to \St(\Phi, B)$.

Throughout this paper we denote by $\Gsc(\Phi, R)$ the group of points of the simply-connected Chevalley--Demazure group of type $\Phi$ over $R$,
 i.\,e. the group scheme corresponding to the choice $\Lambda = P(\Phi)$.
We denote by $\Esc(\Phi, R)$ the \textit{elementary subgroup} of $\Gsc(\Phi, R)$, i.\,e. the subgroup generated by elementary root unipotents of $\Gsc(\Phi, R)$.
Notice that in~\cite{Vav09, VP} the notation $x_\alpha(a)$ is used to denote the elementary root unipotents.
To prevent confusion we will use different notation $t_\alpha(a)$ for them and reserve the notation $x_\alpha(a)$ solely for generators of Steinberg groups.

Recall that the map sending $x_\alpha(a)$ to $t_\alpha(a)$ gives rise to a well-defined homomorphism $\pi \colon \St(\Phi, R) \to \G_\mathrm{sc}(\Phi, R)$, cf.~\cite[\S~1A]{St78}.
The cokernel and the kernel of $\pi$ are called \textit{the unstable $\K_1$- and $\K_2$-functors modeled on the root system $\Phi$}:
\begin{equation} \label{eq:K1-K2-sequence}
\xymatrix{ 1 \ar[r] & \K_2(\Phi, R) \ar[r] & \St(\Phi, R) \ar[r]^{\pi} & \Gsc(\Phi, R) \ar[r] & \K_1(\Phi, R) \ar[r] & 1}
\end{equation}

In the special case $\Phi =\rA_\ell$ the Steinberg $\St(\rA_\ell, R)$ coincides with the linear Steinberg group $\St(\ell + 1, R)$, while
$\Gsc(\rA_\ell, R) = \mathrm{SL}(\ell + 1, R)$, $\Esc(\rA_\ell, R) = \E(\ell + 1, R)$.

For a \textit{special} subset of $\Phi$ (i.\,e. a subset $S \subseteq \Phi$ such that $S \cap -S = \varnothing$)
we denote by $\UU(S, R)$ the subgroup of $\St(\Phi, R)$ generated by $x_\alpha(a)$, where $a \in R$, $\alpha \in S$.
Notice that the restriction of $\pi$ to $\UU(S, R)$ is injective, so $\UU(S, R)$ is isomorphic to the image $\pi(\UU(S, R)) \leq \Gsc(\Phi, R)$, cf.~\cite[\S~1A]{St78}.
To simplify notation for unipotent groups, we slightly abuse language by using $\UU(S, R)$ to denote subgroups of both $\Gsc(\Phi, R)$ and $\St(\Phi, R)$.

Denote by $\Sigma^+_i$ (resp. $\Sigma^-_i$) the subset of $\Phi$ consisting of $\alpha \in \Phi$ such that $m_i(\alpha) > 0$ (resp. $m_i(\alpha) < 0$).
Denote by $\Delta_i$ the set of roots $\alpha \in \Phi$ such that $m_i(\alpha) = 0$.
It is clear that $\Phi = \Sigma_i^- \sqcup \Delta_i \sqcup \Sigma_i^+$ and that $\Sigma_i^+$, $\Sigma_i^-$ are special subsets of $\Phi$.

For $1 \leq k \leq \ell$ denote by $P_k^\pm$ the subgroup of $\St(\Phi, A)$ generated by $x_\alpha(a)$ for all $\alpha \in \Delta \sqcup \Sigma^\pm_k$ and $a \in A$.
Notice that by~\eqref{R2}--\eqref{R3} the subgroup $P_k^\pm$ admits an analogue of \textit{Levi decomposition}:
\begin{equation} \label{eq:levi-decomp} P_k^\pm = L_k \ltimes U_k^\pm, \end{equation}
where $L_k = \mathrm{Im}(\St(\Delta_k, A) \to \St(\Phi, A))$ and $U_k^\pm = \UU(\Sigma_k^\pm, A)$
(as before, we use the notation $U_k^\pm$ to denote subgroups of both $\St(\Phi, A)$ and $\Gsc(\Phi, A)$).

Following~\cite{Ma69} for $\alpha\in\Phi$ and $u \in R^\times$ we define the following elements of $\St(\Phi, R)$:
\begin{align} w_\alpha(u) & =  x_\alpha(u) \cdot x_{-\alpha}(-u^{-1}) \cdot x_\alpha(u), \label{eq:w-definition} \\
h_\alpha(u) & =  w_\alpha(u) \cdot w_\alpha(-1).  \label{eq:h-definition} \end{align}
The subgroup generated by $w_\alpha(u)$ (resp. $h_\alpha(u)$) for all $\alpha\in \Phi$, $u \in R^\times$ is denoted by $\StW(\Phi, R)$ (resp. $\StH(\Phi, R)$).
By~\cite[Lemme~5.2]{Ma69} $\StH(\Phi, R)$ is a normal subgroup of $\StW(\Phi, R)$.

Denote by $W(\Phi, R)$ the image of $\StW(\Phi, R)$ in the Chevalley group $\Gsc(\Phi, R)$.
It is clear that $W(\Phi, R)$ is contained in the group-theoretic normalizer $N(\Phi, R)$ of the split maximal torus $H(\Phi, R) \leq \Gsc(\Phi, R)$.

In the sequel we will need the following explicit elements of the group $\K_2(\Phi, R)$.
Recall that for arbitrary $u, v \in R^\times$ one defines the \textit{Steinberg symbol} via the formula
\begin{equation} \label{eq:steinberg} \{ u, v \}_\alpha = h_\alpha(uv) \cdot h_\alpha^{-1}(u) \cdot h_\alpha^{-1}(v). \end{equation}
Recall also from~\cite[Lemme~5.4]{Ma69} that
\begin{equation} \label{eq:steinberg-2} [h_\alpha(u), h_\beta(v)] = \{u, v^{\langle \alpha, \beta \rangle}\}_\alpha. \end{equation}
Steinberg symbols depend only on the length of the root $\alpha$, cf.~\cite[pp.~26--28]{Ma69}.
In particular, in the case when $\Phi$ is of simply-laced type, they do not depend on the choice of $\alpha$, which allows us to omit it from notation.

Steinberg symbols are central elements of $\St(\Phi, R)$.
Our assumptions on $\Phi$ guarantee that Steinberg symbols are antisymmetric and bimultiplicative, i.\,e. they satisfy the following identities, cf.~\cite[Lemme~2.4]{Ma69}:
\begin{equation} \label{eq:symbol-properties} \{ u, st \} = \{ u, s\} \{ u, t \}, \ \{ u, v \} = \{ v, u\}^{-1}. \end{equation}

Steinberg symbols generate $\K_2(\Phi, A)$ if $A$ happens to be a field or a local ring, see~\cite[Theorem~2.13]{Ste73}.

Let $A$ be a local ring with maximal ideal $M$ and residue field $\kappa = A/M$.
For $w \in \StW(\Phi, A)$ we denote by $\overline{w}$ the image of $w$ in $W(\Phi)$ under the following composite homomorphism:
\[ \StW(\Phi, A) \to W(\Phi, A) \to W(\Phi, \kappa) \to N(\Phi, \kappa) \to N(\Phi, \kappa) / H(\Phi, \kappa) \cong W(\Phi). \]

\subsection{Relative Steinberg groups} \label{subsec:another-presentation}
In this paper we use the concept of a \textit{relative Steinberg group} introduced by F.~Keune and J.-L.~Loday in~\cite{Ke78, Lo78}.
We will only briefly mention the definition and basic properties of these groups and refer the reader to~\cite[\S~2.3]{LS20} for a more detailed exposition.

Let $R$ be a commutative ring, $I \trianglelefteq R$ be an ideal and let $p$ denote the canonical projection $R \to R/I$.
Denote by $D_{R, I}$ the pullback of two copies of $p$ i.\,e. the ring $R \times_{R/I} R$.
Elements of $D_{R, I}$ are pairs $(a; b)$ such that $a-b \in I$.
We also denote by $p_1$, $p_2$ the canonical projections $D_{R, I} \to R$ and by $p_1^*$, $p_2^*$ the corresponding homomorphisms of Steinberg groups induced by them.
Recall from~\cite[Definition~2.5]{LS20} that \textit{the relative Steinberg group} $\St(\Phi, R, I)$ is defined as the quotient
$\Ker(p_1^*) / C$, where $C = [\Ker(p_1^*), \Ker(p_2^*)]$.
Denote by $\mu$ the homomorphism $\St(\Phi, R, I) \to \St(\Phi, R)$ induced by $p_2^*$.
Also denote by $C(\Phi, R, I)$ the kernel of $\mu$.
It is clear that $C(\Phi, R, I) = \Ker(p_1^*) \cap \Ker(p_2^*) / C$.
Thus, we obtain an exact sequence
\begin{equation}
\xymatrix{C(\Phi, R, I) \ar@{^{(}->}[r] & \St(\Phi, R, I) \ar[r]^\mu & \St(\Phi, R) \ar@{->>}[r]^-{p^*} & \St(\Phi, R/I). }\label{eq:relative-Steinberg}
\end{equation}
Alternatively, the group $\St(\Phi, R, I)$ can be defined via generators and relations as an $\St(\Phi, R)$-group, cf.~\cite[Proposition~6]{S15}
or even as an abstract group, see~\cite{V22}.

\begin{dfn}\label{dfn:crossed-module}
Let $N$ be a group acting on itself by right conjugation.
Let $M$ be a group with a right action of $N$.
Recall that a group homomorphism $\mu\colon M \to N$ is called a \textit{precrossed module} if $\mu$ preserves the action of $N$, i.\,e.
\[\mu(m^n) = \mu(m)^n, \text{for all $m \in M$, $n\in N;$} \]
If, in addition, $\mu$ satisfies the so-called \textit{Peiffer identity}, i.\,e.
\begin{equation}\label{eq:peiffer} {m}^{\mu(m')} = {m'}^{-1} m m', \text{for all $m, m' \in M$,}\end{equation}
then $\mu$ is called a \textit{crossed module}.
\end{dfn}

It is clear from definitions that $\St(\Phi, R)$ acts on $\St(\Phi, R, I)$ by right conjugation.
We claim that, in fact, the following stronger assertion holds.
\begin{lemma} \label{lem:rel-Steinberg-crossed-module}
For any irreducible root system $\Phi$ of rank $\geq 3$ the homomorphism $\mu \colon \St(\Phi, R, I) \to \St(\Phi, R)$ is a crossed module.
\end{lemma}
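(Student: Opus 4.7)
The plan is to verify both axioms of \cref{dfn:crossed-module} directly from the presentation $\St(\Phi,R,I)=\Ker(p_1^*)/C$. The action of $\St(\Phi,R)$ is defined via the diagonal section $s\colon\St(\Phi,R)\to\St(\Phi,D_{R,I})$ sending $x_\alpha(a)\mapsto x_\alpha((a,a))$; this is a well-defined homomorphism since the Steinberg relations for $R$ transport through the diagonal embedding $R\hookrightarrow D_{R,I}$, and it satisfies $p_1^*\circ s=p_2^*\circ s=\mathrm{id}$. Both $\Ker(p_1^*)$ and $\Ker(p_2^*)$, and hence their commutator $C$, are normal in $\St(\Phi,D_{R,I})$, so conjugation by $s(g)$ descends to a right action of $\St(\Phi,R)$ on $\Ker(p_1^*)/C$. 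The precrossed-module axiom $\mu(m^g)=\mu(m)^g$ then follows immediately by applying $p_2^*$ to conjugation by $s(g)$ and using $p_2^*\circ s=\mathrm{id}$.

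The main content is the Peiffer identity~\eqref{eq:peiffer}. Given representatives $m,m'\in\Ker(p_1^*)$, the trick is to compare the two natural lifts of $p_2^*(m')\in\St(\Phi,R)$ to $\St(\Phi,D_{R,I})$: the diagonal lift $s(p_2^*(m'))$, which is used to act, and the element $m'$ itself. Since
\[ p_2^*\bigl(s(p_2^*(m'))\cdot(m')^{-1}\bigr)=p_2^*(m')\cdot p_2^*(m')^{-1}=1, \]
one may write $s(p_2^*(m'))=k\cdot m'$ for some $k\in\Ker(p_2^*)$. Conjugating $m$ by this element and invoking the elementary commutator identity $k^{-1}mk=m\cdot[m^{-1},k^{-1}]$ yields
\[ m^{\mu(m')} = (k m')^{-1} m (k m') = (m')^{-1}\bigl(m\cdot[m^{-1},k^{-1}]\bigr)m'. \]
Since $m\in\Ker(p_1^*)$ and $k\in\Ker(p_2^*)$, the commutator $[m^{-1},k^{-1}]$ lies in $C$ by definition; passing to the quotient $\St(\Phi,R,I)$ and using the normality of $C$ one concludes $m^{\mu(m')}=(m')^{-1}m m'$, which is precisely \eqref{eq:peiffer}.

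The one delicate point I expect is the book-keeping around the two actions: $\St(\Phi,R)$ acts on the relative group only indirectly, through the ambient Steinberg group of $D_{R,I}$, so the Peiffer identity is really a statement about the discrepancy between the diagonal lift $s(p_2^*(m'))$ and the lift $m'$ supplied by $\Ker(p_1^*)$. The rank-$\geq 3$ hypothesis enters only implicitly, via the structural results on relative Steinberg groups needed to justify the model $\Ker(p_1^*)/C$ and the normality of $\Ker(p_i^*)$ inside $\St(\Phi,D_{R,I})$ (cf.~\cite[Proposition~6]{S15}); once these are in hand the remaining argument is a purely formal commutator computation with no root-system input.
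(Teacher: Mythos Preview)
Your proof is correct and in fact more elementary than the paper's. The paper invokes \cite[Proposition~6]{Lo78} to reduce the Peiffer identity to the vanishing of $[\Ker(p_1^*)\cap\Ker(p_2^*),\,\Delta^*(\St(\Phi,R))]$, and then appeals to the centrality of $\K_2(\Phi,D_{R,I})$ in $\St(\Phi,D_{R,I})$ established in~\cite{LSV1}; this centrality theorem is where the rank~$\geq 3$ hypothesis is genuinely consumed. Your argument bypasses both ingredients: the observation that $s(p_2^*(m'))\cdot(m')^{-1}\in\Ker(p_2^*)$ forces the relevant commutator into $C=[\Ker(p_1^*),\Ker(p_2^*)]$ directly, and this is a purely formal computation. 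It is really an instance of the general fact that for normal subgroups $M,N\trianglelefteq E$ the induced map $M/[M,N]\to E/N$ is always a crossed module.

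One small correction to your closing remark: the rank~$\geq 3$ hypothesis does \emph{not} enter your argument, even implicitly. The model $\St(\Phi,R,I)=\Ker(p_1^*)/C$ is the definition adopted in the paper for any $\Phi$, and the normality of $\Ker(p_i^*)$ is automatic since these are kernels of group homomorphisms; the reference to \cite[Proposition~6]{S15} is not needed here. So your argument actually establishes the lemma for all irreducible $\Phi$ of rank~$\geq 2$, which is stronger than stated.
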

\begin{proof}
Denote by $\Delta$ the diagonal homomorphism $R \to D_{R, I}.$
By~\cite[Proposition~6]{Lo78} it suffices to show that $[\Ker(p_1^*) \cap \Ker(p_2^*), \Delta^*(\St(\Phi, R))] = 1$.
But the latter is clear since $\Ker(p_1^*) \cap \Ker(p_2^*)$ is contained in the subgroup $\K_2(\Phi, D_{R, I})$,
which is a central subgroup of $\St(\Phi, D_{R, I})$ by~\cite{LSV1}.
\end{proof}

Recall that an ideal $I \trianglelefteq R$ is called \textit{a splitting ideal} if the canonical projection $R \twoheadrightarrow R/I$ splits.
If $I \trianglelefteq R$ is a splitting ideal then $C(\Phi, R, I)$ is trivial and $\St(\Phi, R)$ decomposes into the semidirect product
$\St(\Phi, R, I) \rtimes \St(\Phi, R/I)$, see~\cite[\S~1]{LS17}.

We define the relative group $\K_2(\Phi, R, I)$ as the kernel of the homomorphism $\pi \mu \colon \St(\Phi,R, I) \to \Gsc(\Phi, R)$.
For a special subset of roots $S \subseteq \Phi$ we denote by $\UU(S, I)$ the subgroup of $\St(\Phi, R, I)$ generated by $x_\alpha((0;m))C$, $m \in I$, $\alpha \in S$.
It is clear that $\UU(S, I) \cap \K_2(\Phi, R, I) = 1$.
Notice also that $C(\Phi, R, I) \subseteq \K_2(\Phi, R, I)$.

We will need a relative analogue of Steinberg symbol.
Let $A$ be a local unital ring with maximal ideal $M$ embedded as a subring into a larger unital ring $R$ (e.g. $R = A[X]$ or $R = A[X, X\inv]$).
Since $A$ is local the subset $1+M \subseteq A$ forms a group under multiplication.
We denote this group by $(1+M)^\times$.
Clearly, it is isomorphic to the abelian group $(M, \circ)$ with the operation given by $m \circ m' = m + m' + mm'$.
Now for $a \in R^\times$ and $m \in M$ we denote by $\{a, 1+m\}_r$ the coset $\{(a; a), (1; 1+m)\}C \in \St(\Phi, R, MR)$.
It is clear that the map $1+m \mapsto \{a, 1+m\}_r$ specifies a group homomorphism
\begin{equation} \label{eq:relative-symbol} \{ a, -\}_r \colon (1+M)^\times \to \K_2(\Phi, R, MR). \end{equation}
It is also clear that $\mu(\{a, 1+m\}_r) = \{a, 1+m\}$.

\begin{lemma}\label{lem:symbols}
Assume that $A$ is a local domain with maximal ideal $M$.
Denote by $R$ the Laurent polynomial ring $A[X, X\inv]$.
Then the intersection of the image of the relative symbol map $\{X, -\}_r$ with $C(\Phi, R, M[X, X\inv])$ is trivial.
\end{lemma}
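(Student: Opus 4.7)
The plan is to reduce the claim to an injectivity statement for the absolute symbol map and then detect non-triviality by means of a tame symbol of a field of rational functions. To begin, since $\mu(\{X, 1+m\}_r) = \{X, 1+m\}$ inside $\St(\Phi, R)$, membership of $\{X, 1+m\}_r$ in $C(\Phi, R, M[X, X\inv]) = \ker \mu$ is equivalent to the absolute symbol $\{X, 1+m\}$ being trivial in $\K_2(\Phi, R)$. A direct unfolding of \eqref{eq:w-definition}--\eqref{eq:h-definition} shows that $h_\alpha(1) = 1$, hence $\{X, 1\} = 1$, and correspondingly the relative symbol $\{X, 1\}_r$ is trivial as well. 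Therefore the lemma is equivalent to the injectivity of the homomorphism $(1+M)^\times \to \K_2(\Phi, R)$, $1+m \mapsto \{X, 1+m\}$.

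Next I would transport the problem into a field-theoretic setting by functoriality. Because $A$ is a domain, its fraction field $F$ exists and $A \hookrightarrow F$ is injective, so the composite $R = A[X, X\inv] \hookrightarrow F[X, X\inv] \hookrightarrow F(X)$ is a ring homomorphism inducing $\K_2(\Phi, R) \to \K_2(\Phi, F(X))$ and sending $\{X, 1+m\}$ to the Steinberg symbol of the same form in the target. Since $F(X)$ is a field and $\Phi$ is simply-laced of rank $\geq 2$, Matsumoto's theorem~\cite{Ma69} identifies $\K_2(\Phi, F(X))$ with Milnor's $\K_2^M(F(X))$, so the symbol acquires its classical interpretation.

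Finally, the classical tame symbol $\partial_X \colon \K_2^M(F(X)) \to F^\times$ attached to the $(X)$-adic valuation of $F(X)$ sends $\{X, 1+m\}$ to $(1+m)\inv$, which is a nontrivial element of $F^\times$ whenever $m \neq 0$ by the injectivity of $A \to F$. Hence $\{X, 1+m\}$ is nontrivial already in $\K_2^M(F(X))$, and a fortiori in $\K_2(\Phi, R)$, forcing $m = 0$. The step that requires the most care is selecting a detection mechanism that survives the passage from the unstable to the Milnor setting: once the embedding $R \hookrightarrow F(X)$ and Matsumoto's theorem are in place, everything else is formal, relying on the functoriality of $\St(\Phi,-)$ and on the fact that the relative symbol $\{X,-\}_r$ is by construction a lift of the absolute one along $\mu$.
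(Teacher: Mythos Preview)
Your argument is correct and follows essentially the same route as the paper: both reduce the claim to the injectivity of the absolute symbol map $(1+M)^\times \to \K_2(\Phi, R)$, $1+m \mapsto \{X,1+m\}$, and then detect nontriviality after passing to the fraction field $F$ of $A$. The only cosmetic difference is that the paper stops at $F[X^{\pm 1}]$ and cites~\cite[Lemma~2.2]{LS20} for the injectivity of $\{X,-\}\colon F^\times \to \K_2(\Phi, F[X^{\pm 1}])$, whereas you push one step further to $F(X)$ and unpack that injectivity by hand via Matsumoto's theorem and the tame symbol; your version is thus more self-contained but otherwise identical in spirit.
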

\begin{proof}
Set $F = \mathrm{Frac}(A)$.
Consider the following diagram:
\[\begin{tikzcd}
(1+M)^\times \ar[hookrightarrow, rr] \arrow{d}[swap]{\{X {,}\, -\}_r} \ar[rd, dashed, "\{X{,}\, -\}"] &  & F^\times \ar[hookrightarrow, d, "\{X{,}\, -\}"] \\
\K_2(\Phi ,R, M[X^{\pm 1}]) \ar[r] & \K_2(\Phi, R) \ar[r] & \K_2(\Phi, F[X^{\pm 1}]).
\end{tikzcd}\]
Since the right vertical arrow is injective by~\cite[Lemma~2.2]{LS20}, so is the dashed arrow,
 which implies the assertion of the lemma.
\end{proof}

In the sequel we will need the following Steinberg-level analogue of Bruhat decomposition.
Denote by $\overline{\St}(\Phi, R, I)$ the kernel of $p^*$ (or what is the same, the image of $\mu$).
\begin{lemma}\label{lem:bruhat}
Let $A$ be a local ring with maximal ideal $M$ and residue field $\kappa$.
Let $\Phi$ be any irreducible root system and $\Pi$ and $P$ be a pair of systems of simple roots in $\Phi$.
Denote by $\Phi^+_\Pi, \Phi^+_{P} \leq \Phi$ the systems of positive roots of $\Phi$ corresponding to $\Pi$ and $P$, respectively.
Then the Steinberg group $\St(\Phi, A)$ admits the following decomposition:
\begin{equation}\label{eq:bruhat}
\St(\Phi,A) =\UU(\Phi^+_{P}, A)\cdot \StW(\Phi, A) \cdot \UU(\Phi^+_{\Pi}, A) \cdot \overline{\St}(\Phi, A, M).
\end{equation}
\begin{comment}
Moreover, if for some $u,u'\in \UU(\Phi^{+}_{P}, A)$, $v, v' \in \UU(\Phi^+_{\Pi}, A)$, $w,w'\in \StW(\Phi, A)$ and $l,l'\in \overline{\St}(\Phi, A, M)$ one has
\begin{equation} \label{eq:bwb-eq} uwvl=u'w'v'l', \end{equation} then
$\overline{w}=\overline{w'} \in W(\Phi)$ and $w^{-1}w' \in \StH(\Phi, A) \cap \overline{\St}(\Phi, A, M)$.
\end{comment}
\end{lemma}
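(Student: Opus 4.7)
The plan is to first establish the analogous Bruhat decomposition over the residue field $\kappa$ and then lift it along the reduction-mod-$M$ homomorphism. This reduction $\pi \colon \St(\Phi, A) \to \St(\Phi, \kappa)$ is surjective, because each generator $x_\alpha(\bar a)$ is the image of $x_\alpha(a)$ for any preimage $a \in A$ of $\bar a$, and by definition $\Ker(\pi) = \overline{\St}(\Phi, A, M)$. The same argument, combined with the local hypothesis on $A$ (so that every unit of $\kappa$ lifts to a unit of $A$), shows surjectivity of the maps $\UU(\Phi^+_P, A) \twoheadrightarrow \UU(\Phi^+_P, \kappa)$, $\UU(\Phi^+_\Pi, A) \twoheadrightarrow \UU(\Phi^+_\Pi, \kappa)$, and $\StW(\Phi, A) \twoheadrightarrow \StW(\Phi, \kappa)$. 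Granted the decomposition $\St(\Phi, \kappa) = \UU(\Phi^+_P, \kappa) \cdot \StW(\Phi, \kappa) \cdot \UU(\Phi^+_\Pi, \kappa)$, any $g \in \St(\Phi, A)$ therefore admits a factorization of $\pi(g)$ as $\bar u \bar w \bar v$ whose factors lift to $u, w, v$ in the respective subgroups of $\St(\Phi, A)$; setting $y = u w v$ yields $g = y \cdot (y^{-1} g)$ with $y^{-1} g \in \Ker(\pi) = \overline{\St}(\Phi, A, M)$.

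For the decomposition over the field $\kappa$ I would proceed in two substeps. First, for the reference simple system $\Pi$, I would deduce
\[ \St(\Phi, \kappa) = \UU(\Phi^+_\Pi, \kappa) \cdot \StW(\Phi, \kappa) \cdot \UU(\Phi^+_\Pi, \kappa) \]
from the classical Bruhat decomposition $\Gsc(\Phi, \kappa) = \UU(\Phi^+_\Pi, \kappa) \cdot N(\Phi, \kappa) \cdot \UU(\Phi^+_\Pi, \kappa)$ of the split Chevalley group over a field. Given $g \in \St(\Phi, \kappa)$, apply this decomposition to $\pi(g) \in \Esc(\Phi, \kappa) = \Gsc(\Phi, \kappa)$, lift each factor to $\St(\Phi, \kappa)$ (uniquely for the unipotent pieces, and via any set-theoretic lift $N(\Phi, \kappa) \to \StW(\Phi, \kappa)$ for the middle piece), and form their product $y$; then $y^{-1} g \in \Ker(\pi) = \K_2(\Phi, \kappa)$, which over a field is central in $\St(\Phi, \kappa)$ and is generated by Steinberg symbols $\{u,v\} \in \StH(\Phi, \kappa)$ by Matsumoto's theorem. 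Using centrality and $\StH(\Phi, \kappa) \subseteq \StW(\Phi, \kappa)$, this lift discrepancy can be absorbed into the middle factor without disturbing the outer unipotent factors. Second, for an arbitrary simple system $P$, pick $w_0 \in W(\Phi)$ with $w_0 \Pi = P$, lift it to $\tilde w_0 \in \StW(\Phi, \kappa)$, and observe that conjugation by $\tilde w_0$ identifies $\UU(\Phi^+_\Pi, \kappa)$ with $\UU(\Phi^+_P, \kappa)$, so
\[ \UU(\Phi^+_P, \kappa) \cdot \StW(\Phi, \kappa) \cdot \UU(\Phi^+_\Pi, \kappa) = \tilde w_0 \cdot \UU(\Phi^+_\Pi, \kappa) \cdot \StW(\Phi, \kappa) \cdot \UU(\Phi^+_\Pi, \kappa) = \tilde w_0 \cdot \St(\Phi, \kappa), \]
which equals $\St(\Phi, \kappa)$ since $\tilde w_0 \in \St(\Phi, \kappa)$.

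The main substantive input is the first substep: that $\Ker(\pi \colon \St(\Phi, \kappa) \to \Esc(\Phi, \kappa)) = \K_2(\Phi, \kappa)$ lies inside $\StH(\Phi, \kappa)$ and is central in $\St(\Phi, \kappa)$. Both properties are classical under our rank hypothesis, so the genuine content of the proof is simply an exercise in combining these classical inputs with the standard Bruhat decomposition of the Chevalley group. Everything else—the Weyl-conjugation trick for an arbitrary $P$ and the lift from $\kappa$ back to $A$—is formal given the surjectivity of the reduction maps, which in turn rests only on the lifting property of the generators and of units in the local ring $A$.
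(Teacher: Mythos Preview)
Your proof is correct and follows essentially the same approach as the paper: lift the Bruhat (BNB) decomposition from $\Gsc(\Phi,\kappa)$ to $\St(\Phi,\kappa)$ using that $\K_2(\Phi,\kappa)$ is generated by Steinberg symbols and hence lies in $\StW(\Phi,\kappa)$, then lift to $\St(\Phi,A)$ via surjectivity of the reduction with kernel $\overline{\St}(\Phi,A,M)$, and handle a general $P$ by conjugating with an element of $\StW$. The only cosmetic differences are that the paper performs the Weyl-conjugation step at the $A$-level rather than over $\kappa$, and that you overload the symbol $\pi$ for both the reduction map and the map $\St(\Phi,\kappa)\to\Gsc(\Phi,\kappa)$.
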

\begin{proof}
Notice that $\E(\Phi, \kappa) = \Gsc(\Phi, \kappa)$ is a group with a BN-pair by~\cite[\S~4]{Ge17}.
Since $\K_2(\Phi, \kappa)$ is generated by Steinberg symbols, it is a subgroup of $\StW(\Phi, \kappa)$.
This allows us to lift the BNB-decomposition from $\Gsc(\Phi, \kappa)$ first to $\St(\Phi, \kappa)$ and then to $\St(\Phi, A)$,
which proves the assertion of the lemma in the special case $\Pi = P$.
To obtain the general case choose $w \in \StW(\Phi, A)$ such that ${}^w \UU(\Phi^+_\Pi, A) = \UU(\Phi^+_{P}, A)$ and then multiply both sides of~\eqref{eq:bruhat} by $w$ on the left.
\begin{comment}
Let us verify the second assertion.
Projecting the equality~\eqref{eq:bwb-eq} to $\Gsc(\Phi, \kappa)$ and using the fact that double B-cosets are disjoint we obtain that
$\overline{w} = \overline{w'}$.
Since $\K_2(\Phi, \kappa)$ is generated by Steinberg symbols, the image of $w^{-1}w'$ in $\St(\Phi, \kappa)$ belongs to $\StH(\Phi, \kappa)$,
hence
\begin{multline*} w^{-1}w' \in \left(\StH(\Phi, A) \cdot \overline{\St}(\Phi, A, M)\right) \cdot \StW(\Phi, A) \subseteq \\ \subseteq
\left( \UU(\Phi^+, M) \cdot \StH(\Phi, A) \cdot \UU(\Phi^-, M)\right) \cap \StW(\Phi, A) \subseteq \StH(\Phi, A). \end{multline*}
Here the first inclusion follows from~\cite[Theorem~2.4]{Ste73} and the second inclusion is clear from consideration of matrices.

\[w^{-1} w' \in \bigl(\StH(\Phi, A) \cdot \overline{\St}(\Phi, A, M)\bigr)\cap \StW(\Phi, A) \subseteq \StH(\Phi, A).\]
Here we use the fact that $\K_2(\Phi, A)$ is generated by Steinberg symbols, see e.\,g.~\cite[Theorem~2.5]{Ste73}.
\end{comment}
\end{proof}

In the sequel we will also need the following generating set for the group $\St(\Phi, R, I)$.
\begin{lemma}\label{lem:relative-generators}
For irreducible $\Phi$ of rank $\ell \geq 2$ the group $\St(\Phi, R, I)$ is generated as an abstract group by elements $z_\alpha(m, a) = x_\alpha(m)^{x_{-\alpha}(a)}$, $m \in I$, $a \in R$.

In fact, a stronger assertion holds.
Denote by $\Sigma$ the unipotent radical $\Sigma^+_i$ or $\Sigma^-_i$ corresponding to any $1 \leq i \leq \ell$.
Then the group $\St(\Phi, R, I)$ is generated as an abstract group by the following smaller set of generators consisting of the following two families of elements:
\begin{itemize}
\item $x_{\alpha}(m) = x_\alpha((0; m))C$, where $\alpha \in \Phi$, $m \in I$;
\item $z_{\beta}(m, a) = x_\beta((0; m))^{x_{-\beta}((a; a))}C$, where $\beta \in \Sigma$, $a \in A$, $m \in I$.
\end{itemize}
\end{lemma}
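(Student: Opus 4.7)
The plan is first to reduce both statements to a closure argument. By the presentation of the relative Steinberg group as an $\St(\Phi, R)$-group (see \cite[Proposition~6]{S15}, or \cite{Lo78, Ke78}, as recalled in \cite[\S~2.3]{LS20}), $\St(\Phi, R, I)$ is generated as an abstract group by the $\St(\Phi, R)$-orbit of the elements $x_\alpha(m) = x_\alpha((0; m))C$, $\alpha \in \Phi$, $m \in I$, with $\St(\Phi, R)$ acting by right conjugation. Thus to prove the first assertion it suffices to verify that the subgroup $G$ generated by $\{z_\alpha(m,a)\}$ contains every $x_\alpha(m) = z_\alpha(m, 0)$ and is stable under conjugation by each elementary generator $x_\gamma(b)$ of $\St(\Phi, R)$; a standard induction on word length then upgrades this to $\St(\Phi, R)$-stability and hence forces $G = \St(\Phi, R, I)$.

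Since $G$ is already a subgroup, stability under $x_\gamma(b)$-conjugation reduces to checking $x_\alpha(m)^{x_\gamma(b)} \in G$ for all $\alpha, \gamma \in \Phi$, $m \in I$, $b \in R$. A short case analysis via \eqref{x-additivity}--\eqref{R3} handles this: if $\gamma = -\alpha$ the conjugate is exactly $z_\alpha(m, b)$; if $\gamma = \alpha$ it equals $x_\alpha(m)$ by \eqref{x-additivity}; if $\alpha + \gamma \notin \Phi \cup \{0\}$ it equals $x_\alpha(m)$ by \eqref{R3}; and if $\alpha + \gamma \in \Phi$ it factors as a product of $x_\alpha(m)$ and $x_{\alpha+\gamma}(\pm bm)$ by \eqref{R2}. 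Since $bm \in I$, the latter factor is also an $x$-type generator, so in every case the conjugate lies in $G$, completing the first assertion.

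For the stronger second assertion, fix $\Sigma = \Sigma_i^\epsilon$ and let $G' \leq \St(\Phi, R, I)$ be the subgroup generated by all $x_\alpha(m)$ together with $z_\beta(m, a)$ for $\beta \in \Sigma$. By the first assertion it suffices to show $z_\alpha(m, a) \in G'$ for every $\alpha \notin \Sigma$, i.e.\ for $\alpha \in -\Sigma$ and for $\alpha \in \Delta_i$. The idea is to pivot through an auxiliary root $\beta \in \Sigma$ and rewrite $z_\alpha(m, a)$ using \eqref{R2}--\eqref{R3}. For $\alpha \in \Delta_i$, one chooses $\beta \in \Sigma$ with $\alpha + \beta \in \Phi$ (necessarily in $\Sigma$; such $\beta$ exists by irreducibility and rank $\geq 2$) and expresses $z_\alpha(m, a)$ in terms of $z_{\alpha+\beta}(\cdot,\cdot) \in G'$ via iterated applications of \eqref{R2}; for $\alpha \in -\Sigma$, a more delicate pivot is required to handle the ``reversed'' conjugation.

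The main obstacle is precisely the case $\alpha \in -\Sigma$: writing $\beta = -\alpha \in \Sigma$, the element $z_\alpha(m, a) = x_{-\beta}(m)^{x_\beta(a)}$ is conjugation of an element of $-\Sigma$ by an element of $\Sigma$, which is the opposite direction to the allowed generators $z_\beta(m, a) = x_\beta(m)^{x_{-\beta}(a)}$ of $G'$. Handling this uniformly across all irreducible simply-laced $\Phi$ of rank $\geq 2$, in particular the base case $\rA_2$, requires careful use of \eqref{R2}--\eqref{R3} together with close bookkeeping of the signs $N_{\alpha, \beta} = \pm 1$; the calculations themselves are elementary Chevalley commutator manipulations, but every intermediate factor must be verified to lie in $G'$.
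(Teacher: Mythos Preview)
The paper's proof is a one-line deferral: it notes that the first assertion follows from the second (since $x_\alpha(m) = z_\alpha(m,0)$), and for the second it simply cites \cite[Lemma~4]{S15}, which asserts exactly that $\Ker(p_1^*)$ is generated by the listed elements. Your approach --- proving both assertions by hand via a closure argument --- is a legitimate alternative route, but as written it contains a genuine gap and leaves the hard case as an unfinished sketch.

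For the first assertion, the claim that ``stability under $x_\gamma(b)$-conjugation reduces to checking $x_\alpha(m)^{x_\gamma(b)} \in G$'' is not justified. The subgroup $G$ is generated by all $z_\alpha(m,a)$, not only by the $x_\alpha(m) = z_\alpha(m,0)$, so establishing $G^{x_\gamma(b)} \subseteq G$ requires checking $z_\alpha(m,a)^{x_\gamma(b)} \in G$ for general $a \in R$. Your case analysis handles only $a = 0$. When $a \neq 0$ and, say, $\gamma = \alpha$, one has $z_\alpha(m,a)^{x_\alpha(b)} = x_\alpha(m)^{x_{-\alpha}(a)\,x_\alpha(b)}$, and the word $x_{-\alpha}(a)\,x_\alpha(b)$ admits no simplification via \eqref{x-additivity}--\eqref{R3} since $\alpha$ and $-\alpha$ are opposite roots; the induction on word length therefore does not close. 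What is actually needed is a formula expressing $z_\alpha(m,a)^{x_\gamma(b)}$ as a product of $z$- and $x$-generators --- and that is precisely the substance of the computations in \cite{S15} that the paper invokes.

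For the second assertion, you correctly identify the crux (the case $\alpha \in -\Sigma$) but do not resolve it: saying it ``requires careful use of \eqref{R2}--\eqref{R3} together with close bookkeeping of the signs'' describes the difficulty rather than discharging it. The $\Delta_i$ case is likewise only gestured at; writing $x_\alpha(m)$ as a commutator and then conjugating by $x_{-\alpha}(a)$ does not by itself produce an expression in $G'$ without further nontrivial manipulation.
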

\begin{proof}
It suffices to prove only the second assertion.
Notice that by~\cite[Lemma~4]{S15} the group $\Ker(p_1^*)$ is generated by $x_{\beta}((0; m))^{x_{-\beta}((a; a))}$ for $\alpha \in \Sigma$ and $x_{\alpha}((0; m))$ for $\alpha \in \Phi$.
Consequently, the same is true for $\St(\Phi, R, I)$ i.\,e. $\St(\Phi, R, I) = \Ker(p_1^*)/C$.
\end{proof}

\subsection{Weight automorphisms}\label{subsec:weight-automorphisms}
Recall that for every weight $\omega \in P(\Phi)$ and $\beta \in \ZZ \Phi$ the product $\langle \omega, \beta\rangle$ is an integer.
Thus, a choice of $u \in R^\times$ and $\omega \in P(\Phi)$ specifies an automorphism of the generating set for $\St(\Phi, R)$ (i.\,e. an autobijection) via the following mapping:
\begin{equation}\label{eq:chi-def} x_\alpha(a) \mapsto x_\alpha(u^{\langle\omega, \alpha\rangle} \cdot a),\ \alpha\in \Phi,\ a \in R. \end{equation}
It is not hard to check that this automorphism is compatible with relations~\eqref{x-additivity}--\eqref{R3} and therefore specifies a well-defined automorphism of $\St(\Phi, R)$, which we denote by $\chi_{\omega, u}$.

In the following lemma we check that an analogue of $\chi_{\omega, u}$ can also be defined for relative Steinberg groups.
\begin{lemma} \label{lem:relative-chi}
Let $R$ be a commutative ring, $I$ be its ideal and let $u \in R^\times$.
For every weight $\omega \in P(\Phi)$ there exists a well-defined automorphism $\widetilde{\chi}_{\omega, u}$ of the relative Steinberg group $\St(\Phi, R, I)$ making the following diagram commute:
\[\begin{tikzcd} \St(\Phi, R, I) \ar[r, "\widetilde{\chi}_{\omega, u}"] \ar[d] & \St(\Phi, R, I) \ar[d] \\
\St(\Phi, R) \ar[r, "\chi_{\omega, u}"] & \St(\Phi, R). \end{tikzcd}\]
\end{lemma}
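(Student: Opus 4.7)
The plan is to lift the desired automorphism from the absolute Steinberg group $\St(\Phi,R)$ to the Steinberg group over the double ring $D_{R,I}$, and then descend it along the quotient $\Ker(p_1^*)/C = \St(\Phi,R,I)$.

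First I would exploit functoriality of the construction~\eqref{eq:chi-def}. Since $u \in R^\times$, the diagonal element $(u;u) \in D_{R,I}$ is a unit of the double ring, so the construction that produces $\chi_{\omega,u}$ applies verbatim over $D_{R,I}$ and yields an automorphism
\[ \widehat{\chi} := \chi_{\omega,(u;u)} \colon \St(\Phi, D_{R,I}) \to \St(\Phi, D_{R,I}), \qquad x_\alpha\bigl((a;b)\bigr) \mapsto x_\alpha\bigl((u;u)^{\langle \omega,\alpha\rangle}(a;b)\bigr). \]
This is well-defined for exactly the same reason as in the absolute case — the mapping respects \eqref{x-additivity}--\eqref{R3}.

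Next I would check the two naturality squares with respect to the projections $p_1, p_2$. Evaluating on a generator:
\[ p_i^*\bigl(\widehat{\chi}(x_\alpha((a;b)))\bigr) = x_\alpha\bigl(u^{\langle \omega,\alpha\rangle} p_i(a;b)\bigr) = \chi_{\omega,u}\bigl(p_i^*(x_\alpha((a;b)))\bigr), \]
so $p_i^* \circ \widehat{\chi} = \chi_{\omega,u} \circ p_i^*$ for both $i=1,2$. In particular $\widehat{\chi}$ preserves each of $\Ker(p_1^*)$ and $\Ker(p_2^*)$, and hence also their commutator subgroup $C = [\Ker(p_1^*), \Ker(p_2^*)]$. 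Consequently $\widehat{\chi}$ descends to an automorphism
\[ \widetilde{\chi}_{\omega,u} \colon \St(\Phi,R,I) = \Ker(p_1^*)/C \longrightarrow \Ker(p_1^*)/C = \St(\Phi,R,I). \]
Its inverse is obtained in the same way from $(u^{-1};u^{-1})$, so $\widetilde{\chi}_{\omega,u}$ is genuinely an automorphism.

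Finally, since the homomorphism $\mu \colon \St(\Phi,R,I) \to \St(\Phi,R)$ is by definition the one induced by $p_2^*$, the $i=2$ case of the naturality identity above reads exactly as the commutativity of the required diagram. There is no real obstacle here: once the weight automorphism is set up over $D_{R,I}$, the whole statement reduces to functoriality of $\chi_{\omega,-}$ in the ring variable applied to $p_1$ and $p_2$. The only point that deserves a verification is the preservation of the commutator subgroup $C$, which follows immediately from the fact that $\widehat{\chi}$ stabilises both factors.
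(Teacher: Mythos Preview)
Your proof is correct and follows essentially the same approach as the paper: construct $\chi_{\omega,(u;u)}$ on $\St(\Phi,D_{R,I})$, observe that it preserves $\Ker(p_i^*)$ for $i=1,2$ and hence $C$, then restrict to $\Ker(p_1^*)$ and pass to the quotient. Your write-up is simply a more explicit version of the paper's terse three-line argument.
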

\begin{proof}
Observe that the automorphism $\chi_{\omega, (u; u)}$ of $\St(\Phi, D_{R, I})$ preserves subgroups
$\Ker(p_i^*)$, $i=1, 2$ and hence their commutator subgroup $C$.
The required automorphism $\widetilde{\chi}_{\omega, u}$ now can be obtained by restricting $\chi_{\omega, (u; u)}$ to $\Ker(p_1^*)$.
The commutativity of the diagram is obvious.
\end{proof}

In the sequel we will use the following formulae describing the action of $\chi_{\omega, u}$ on the elements $w_\alpha(u)$, $h_\alpha(u)$:
\begin{align}
\label{eq:chi-w} \chi_{\omega, u}\left(w_\alpha(v)\right) &= w_\alpha(u^{\langle \omega, \alpha \rangle} \cdot v), \\
\label{eq:chi-h} \chi_{\omega, u} (h_\alpha(v)) &= h_\alpha(u^{\langle \omega, \alpha \rangle} \cdot v) \cdot h_\alpha(u^{\langle \omega, \alpha\rangle})^{-1} = \{u^{\langle \omega, \alpha\rangle}, v\} \cdot h_\alpha(v).
\end{align}

The following lemma is analogous to~\cite[Lemma~3.1(c)]{Tu83}.
\begin{lemma} \label{lem:winv-chiw}
For any $w \in \StW(\Phi, A[X^{\pm 1}])$ the element $w^{-1} \cdot \chi_{\omega, X}(w)$ belongs to $\StH(\Phi, A[X^{\pm 1}])$.
\end{lemma}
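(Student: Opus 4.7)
The plan is to induct on a generator decomposition $w = w_{\alpha_1}(u_1) \cdots w_{\alpha_n}(u_n)$, reducing to a single generator and then using that $\StH(\Phi, A[X^{\pm 1}])$ is normal in $\StW(\Phi, A[X^{\pm 1}])$ by~\cite[Lemme~5.2]{Ma69}.

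First I would record the algebraic identity $w_\alpha(u) = h_\alpha(u) \cdot w_\alpha(1)$, which follows from the fact that $w_\alpha(u)^{-1} = w_\alpha(-u)$ (direct check from the definition~\eqref{eq:w-definition}) and hence $w_\alpha(-1) = w_\alpha(1)^{-1}$. Combining this with the formula~\eqref{eq:chi-w} for the action of $\chi_{\omega, X}$ on $w_\alpha$, I compute
\[ w_\alpha(u)^{-1} \cdot \chi_{\omega, X}(w_\alpha(u)) = w_\alpha(1)^{-1} \cdot h_\alpha(u)^{-1} h_\alpha(X^{\langle\omega, \alpha\rangle} u) \cdot w_\alpha(1). \]
The middle factor lies in $\StH(\Phi, A[X^{\pm 1}])$: expanding $h_\alpha(X^{\langle\omega, \alpha\rangle} u) = h_\alpha(X^{\langle\omega, \alpha\rangle}) \cdot h_\alpha(u) \cdot \{X^{\langle\omega, \alpha\rangle}, u\}$ via~\eqref{eq:steinberg} rewrites the product as a conjugate of $h_\alpha(X^{\langle\omega, \alpha\rangle})$ by $h_\alpha(u)$, times a central Steinberg symbol which is itself in $\StH$ by definition. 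Since $\StH \trianglelefteq \StW$ and $w_\alpha(1) \in \StW$, the outer conjugation preserves membership in $\StH$, establishing the base case.

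For the inductive step, I would factor a word $w = w' \cdot w_\alpha(u)$ and compute
\[ w^{-1} \chi_{\omega, X}(w) = w_\alpha(u)^{-1} \cdot \bigl((w')^{-1} \chi_{\omega, X}(w')\bigr) \cdot w_\alpha(u) \cdot \bigl(w_\alpha(u)^{-1} \chi_{\omega, X}(w_\alpha(u))\bigr). \]
The first parenthesized factor belongs to $\StH$ by the inductive hypothesis, the second by the base case; conjugation of the first by $w_\alpha(u) \in \StW$ keeps it in $\StH$ by normality, so the whole product lies in $\StH$.

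The argument is essentially routine once one has the rewriting $w_\alpha(u) = h_\alpha(u) w_\alpha(1)$; the only bookkeeping concern is that commuting the $h_\alpha$'s past one another introduces Steinberg symbols, but these are visibly products of $h_\alpha$'s via~\eqref{eq:steinberg} and therefore stay inside $\StH$. The normality of $\StH$ in $\StW$ is the essential structural input that makes the induction go through.
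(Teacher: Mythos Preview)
Your proof is correct and follows essentially the same strategy as the paper: reduce to a single generator $w_\alpha(u)$ using normality of $\StH$ in $\StW$, then compute directly. The paper's base-case computation is a cosmetic variant of yours (it conjugates $h$ by $w_\alpha(1)$ and uses $h_\alpha(u) = w_\alpha(u)w_\alpha(-1)$, arriving at the same product $h_\alpha(u)^{-1} h_\alpha(X^{\langle\omega,\alpha\rangle}u)$), and the paper leaves the inductive step implicit where you spell it out. One minor point: the paper's proof additionally records the explicit closed formula $h = \{X^{\langle\omega,\alpha\rangle}, -u\} \cdot h_\alpha^{-1}(X^{\langle\omega,\alpha\rangle})$ (equation~\eqref{eq:w-computation}), which is not needed for the lemma as stated but is invoked later in~\cref{lem:family2}; your argument establishes membership in $\StH$ without this refinement, which is all the lemma asks.
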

\begin{proof}
Since $\StH(\Phi, A[X^{\pm 1}])$ is a normal subgroup of $\StW(\Phi, A[X^{\pm 1}])$, it suffices to verify the assertion for $w = w_\alpha(u)$.
Set $h = w^{-1} \cdot \chi_{\omega, X}(w)$.
Notice that
\begin{multline*} w_\alpha(1) \cdot h\cdot  w_\alpha(-1) = w_\alpha(-1)^{-1} \cdot w_\alpha(u)^{-1} \cdot w_{\alpha}(X^{\langle \omega, \alpha \rangle} u) \cdot w_\alpha(-1) = \\
= h_\alpha(u)^{-1} \cdot h_\alpha(X^{ \langle \omega, \alpha \rangle }u) = \{ X^{\langle \omega, \alpha \rangle}, u \} \cdot h_\alpha(X^{\langle \omega, \alpha \rangle}) \in \StH(\Phi, A[X^{\pm 1}]).\end{multline*}
Thus, we get from\cite[Lemme~5.2]{Ma69} and~\eqref{eq:steinberg},\eqref{eq:symbol-properties} that
\begin{multline} \label{eq:w-computation} h = \{ X^{\langle \omega, \alpha \rangle}, u \}{}^{w_\alpha(-1)}h_\alpha(X^{\langle \omega, \alpha \rangle}) = \\
= \{ X^{\langle \omega, \alpha \rangle}, u \} \{ -1, X^{\langle \omega, \alpha \rangle} \} h_{-\alpha}(X^{\langle \omega, \alpha \rangle}) = \\
= \{ X^{\langle \omega, \alpha \rangle}, -u \} \cdot h_\alpha^{-1}(X^{\langle \omega, \alpha \rangle}).\qedhere\end{multline}
\end{proof}

\begin{example} \label{exm:chi-linear}
Set $R = A[X^{\pm 1}]$.
Consider the following weights of the root system of $\rA_\ell$ (cf.~\cite[\S~VI.4.7]{Bou}):
\[ \varpi_{i} = \varepsilon_1 + \ldots + \varepsilon_i - \frac{i}{\ell+1}(\varepsilon_1 + \ldots + \varepsilon_n), \ 1 \leq i \leq \ell. \]
For $1\leq k\leq \ell+1$ and $u \in R^\times$ denote by $d_k(u)$ the matrix from $\GL(\ell+1, R)$ which differs from the unit matrix only in that it has the element $u$ on the $k$-th position of its diagonal.
Recall from~\cite[Corollary~4]{Ka77} that for any $g \in \GL(\ell+1, R)$ there exists an automorphism $\beta_g$ of $\St(\ell+1, R)$ ''modeling`` the automorphism $(-)^g \colon \GL(\ell+1, R) \to \GL(\ell+1, R)$ of right conjugation by $g$, i.\,e. such that $\pi \beta_g = (-)^g \pi$.

It is clear that in the linear case the automorphism $\chi_{-\varpi_1, u}$ (resp. $\chi_{\varpi_{\ell}, u}$) coincides with $\beta_{d_1(u)}$ (resp. $\beta_{d_{\ell+1}(u)}$).
For other Chevalley groups the maps $\chi_{\omega, u}$ model automorphisms of inner conjugation by weight elements $h_\omega(u)$ in the sense of~\cite[\S~4]{Vav09}.
\end{example}

Let $\omega \in P(\Phi)$ be a weight of $\Phi$.
Consider the subset $\mathcal{X}_\omega$ of the generating set of $\St(\Phi, A[X])$ consisting of those generators $x_{\alpha}(a) \in \St(\Phi, A[X])$ for which
$\langle \omega, \alpha\rangle < 0$ implies that $a \in A[X]$ is divisible by $X^{-\langle \omega, \alpha\rangle}$.
\begin{rem}
Notice that according to this condition if $\langle \omega, \alpha\rangle \geq 0$ then $\mathcal{X}_\omega$ contains $x_\alpha(a)$ for all $a \in A[X]$.
\end{rem}
Denote by $N_{\omega, \Phi}$ the subgroup of $\St(\Phi, A[X])$ generated by $\mathcal{X}_\omega$.
When $\Phi$ is clear from context we shorten the notation for this subgroup to just $N_\omega$.

\begin{dfn} \label{dfn:delta-pair}
Let $\omega \in P(\Phi)$ be a weight.
By definition, an {\it $\omega$-pair} is a pair of mutually inverse group homomorphisms
$\xymatrix{ \sigma(\omega)\colon N_\omega \ar[r] & \ar@<-1.0ex>[l] N_{-\omega}\colon \sigma(-\omega) }$ satisfying the following identity:
\begin{equation} \label{eq:sigmadef}
\sigma(\pm \omega)(x_\alpha(f)) = x_\alpha(X^{\pm\langle\omega, \alpha\rangle}\cdot f),
\text{ for all } x_\alpha(f) \in \mathcal{X}_{\pm\omega}.
\end{equation}\end{dfn}
It is clear that the maps $\sigma(\omega)$, $\sigma(-\omega)$ are uniquely determined by~\eqref{eq:sigmadef}, so at most one $\omega$-pair may exist for any given $\omega$.
Moreover, $\sigma(\omega), \sigma(-\omega)$ always make the following diagram commute:
\begin{equation} \label{eq:sigma-diagram}
\xymatrix{ N_\omega \ar[r]_{\sigma(\omega)}\ar@{^{(}->}[d] \ar@/^1.5pc/[rr]^{\mathrm{id}} & N_{-\omega} \ar@{^{(}->}[d] \ar[r]_{\sigma(-\omega)} & N_\omega \ar@{^{(}->}[d] \\
\St(\Phi, A[X]) \ar[d] & \St(\Phi, A[X]) \ar[d] & \St(\Phi, A[X]) \ar[d] \\
\St(\Phi, A[X^{\pm 1}]) \ar@<-0.0ex>[r]_{\chi_{\omega, X}} \ar@/_1.5pc/[rr]^{\mathrm{id}} & \St(\Phi, A[X^{\pm 1}]) \ar@<-0.0ex>[r]_{\chi_{-\omega, X}} & \St(\Phi, A[X^{\pm 1}]).} \end{equation}
The question of existence of an $\omega$-pair is rather complicated since $N_\omega$ itself is \textit{not} presented by generators and relations
(it is merely a subgroup of such group) and $\sigma(\pm \omega)$ can \textit{not} be extended to all of $\St(\Phi, A[X])$.
In the case when $\omega$ is a \textit{microweight} in order to show the existence of $\sigma(\omega)$ in~\cref{subsec:construction-sigma} one can use
the explicit presentation of the \textit{relative} Steinberg group $\St(\Phi, A[X], XA[X])$
(which in this case forms an essential part of the group $N_\omega$).
We will recall such explicit presentation in the next subsection.

Now we describe a construction which allows one to construct many $\omega$-pairs once one such pair is constructed.
Suppose that we already know that an $\omega$-pair exists.
We claim that $w \cdot \omega$-pair also exists for any $w \in W(\Phi).$
Indeed, for $g \in N_\omega$, $w \in \StW(\Phi, A)$ we define $\sigma(\overline{w} \cdot \omega)$ via the following identity:
\begin{equation} \label{eq:sigma-gen} \sigma(\overline{w} \cdot \omega)(g) = w \cdot \sigma(\omega)(w^{-1} g w) \cdot w^{-1}.\end{equation}
The correctness of this definition follows from the following
\begin{lemma} \label{lem:delta-weyl}
For $g \in N_{\overline{w} \cdot \omega}$, $w \in \StW(\Phi, A)$ one has $w^{-1} g w \in N_\omega$.
The expression in the right-hand side of~\eqref{eq:sigma-gen} depends only on the class $\overline{w} \in W(\Phi)$.
\end{lemma}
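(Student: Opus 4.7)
My plan rests on two standard facts about $\St(\Phi, A[X])$ and $\StW(\Phi, A)$. First, by induction on the length of $w$ using the commutation formulas between $w_\alpha(u)$ and elementary root subgroups (cf.~\cite[Lemme~5.2]{Ma69}), any $w \in \StW(\Phi, A)$ acts on elementary generators by $w \cdot x_\beta(a) \cdot w^{-1} = x_{\overline{w}\beta}(\epsilon_{w, \beta} \cdot a)$ for some unit $\epsilon_{w, \beta} \in A^\times$ depending only on $w$ and $\beta$; the crucial point is that this scalar lies in $A^\times$ and in particular does \emph{not} involve $X$. Second, the kernel of the natural surjection $\StW(\Phi, A) \to W(\Phi)$ is exactly $\StH(\Phi, A)$ (which is e.g. how $W(\Phi)$ is defined in~\cref{subsec:steinberg-preliminaries}).

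For the first assertion I invoke the $W$-invariance of the pairing, namely $\langle \omega, \overline{w}^{-1}\beta\rangle = \langle \overline{w}\cdot\omega, \beta\rangle$. Therefore, if $x_\beta(a) \in \mathcal{X}_{\overline{w} \cdot \omega}$, i.e.\ the divisibility condition on $a$ imposed by the root $\beta$ and weight $\overline{w}\cdot\omega$ is satisfied, then setting $\alpha := \overline{w}^{-1}\beta$ gives $w^{-1} x_\beta(a) w = x_\alpha(\epsilon \cdot a)$ with $\epsilon \in A^\times$, and the same divisibility condition on $\epsilon \cdot a$ relative to $\alpha$ and $\omega$ is automatic. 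Hence every generator of $N_{\overline{w}\cdot\omega}$ is sent under $w^{-1}(-)w$ into $\mathcal{X}_\omega$, and therefore $w^{-1} g w \in N_\omega$ for all $g \in N_{\overline{w}\cdot\omega}$.

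For the second assertion, suppose $w, w' \in \StW(\Phi, A)$ with $\overline{w} = \overline{w'}$; by the second fact above, $h := w^{-1}w' \in \StH(\Phi, A)$. Setting $g' := w^{-1} g w \in N_\omega$, the equality of the two instances of~\eqref{eq:sigma-gen} reduces to the identity $h \cdot \sigma(\omega)(h^{-1} g' h) \cdot h^{-1} = \sigma(\omega)(g')$. Since $h$ is a product of generators $h_\beta(u)$ and conjugation by $h_\beta(u)$ sends $x_\alpha(f) \mapsto x_\alpha(u^{\langle\alpha,\beta\rangle} \cdot f)$, such conjugation (i) preserves $N_\omega$ (it only rescales the coefficient by a unit of $A$) and (ii) commutes with $\sigma(\omega)$ on generators, because the scalar operations $f \mapsto u^{\langle\alpha,\beta\rangle} f$ and $f \mapsto X^{\langle\omega,\alpha\rangle} f$ obviously commute on $A[X]$. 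Since $\sigma(\omega)$ is a group homomorphism, this commutation propagates from generators to all of $N_\omega$, giving the desired identity.

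The main obstacle is genuinely just the first of the two facts above: one must verify with care that conjugation by a lift $w \in \StW(\Phi, A)$ sends $x_\beta(a)$ to $x_{\overline{w}\beta}(\epsilon \cdot a)$ with $\epsilon$ in $A^\times$, because this is precisely what guarantees divisibility by powers of $X$ is preserved. Once this is in hand, the rest is routine: the first claim is a matching of divisibility conditions via $W$-invariance of $\langle -, -\rangle$, and the second claim is a commutation between two scalar rescalings of the coefficient of an elementary generator.
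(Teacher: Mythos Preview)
Your proof is correct. The first assertion is handled exactly as in the paper: both arguments reduce to generators and invoke the $W$-invariance of the pairing $\langle \overline{w}\cdot\omega,\beta\rangle = \langle \omega,\overline{w}^{-1}\beta\rangle$, together with the fact that conjugation by $w\in\StW(\Phi,A)$ rescales the root-group parameter by a unit of $A$ (hence does not disturb divisibility by powers of $X$). The paper carries this out for a single $w_\alpha(u)$ and leaves the induction on length implicit, whereas you state it for general $w$; this is a cosmetic difference.

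For the second assertion the two arguments diverge slightly. The paper performs the explicit computation ${}^{w_\alpha(u)}\sigma(\omega)(x_\beta(X^nv)^{w_\alpha(u)}) = x_\beta(X^{n+\langle s_\alpha(\omega),\beta\rangle}v)$ and simply observes that the answer does not contain $u$; this has the side benefit of verifying that the resulting map actually satisfies~\eqref{eq:sigmadef}, which is what the paragraph preceding the lemma needs in order to conclude that the $\overline{w}\cdot\omega$-pair exists. Your route---writing $w' = wh$ with $h\in\StH(\Phi,A)$ and checking that conjugation by $h$ commutes with $\sigma(\omega)$ on $N_\omega$---is cleaner for establishing independence per se, since it avoids any induction on the length of $w$. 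It does not immediately display the agreement with~\eqref{eq:sigmadef}, but that follows at once from your first paragraph anyway (the conjugation units from $w^{-1}$ and $w$ cancel), so nothing is lost.
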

\begin{proof}
To show the existence of $\sigma(\overline{w}\cdot \omega)$ it is enough to consider the case $w = w_\alpha(u)$, $g = x_\beta(X^n v) \in \mathcal{X}_{\overline{w} \cdot \omega}$, for some $u \in A^\times$, $v \in A[X]$, which we may assume to be not divisible by $X$.
Clearly, $\overline{w_\alpha(u)} = s_\alpha$ and by our assumptions $n \geq -\min(0, \langle s_\alpha(\omega), \beta \rangle)$.
Notice that by~\cite[Lemme~5.1(b)]{Ma69}
\[ g^w = x_\beta(X^n v)^{w_\alpha(u)} = x_{s_\alpha(\beta)}(\eta_{\alpha, s_\alpha(\beta)} X^{n} \cdot v \cdot u^{\langle s_\alpha(\beta), \alpha \rangle}),\]
where $\eta_{\alpha, s_\alpha(\beta)} = \pm 1$ are the coefficients defined in~\cite[Lemme~5.1]{Ma69}.
From the orthogonality of $s_\alpha$ and the fact that $\Phi$ is simply-laced we also obtain that
$\langle s_\alpha(\omega), \beta \rangle = \langle \omega, s_\alpha(\beta) \rangle$, consequently, we conclude that $g^w \in \mathcal{X}_\omega$.
Thus, the first assertion of the lemma is proved.

Direct computation using the identities from~\cite[Lemma~5.1]{Ma69} also shows that
\begin{multline*}
{}^w\sigma(w)(g^w) = {}^w\sigma(w)(x_{s_\alpha(\beta)}(\eta_{\alpha, s_\alpha(\beta)} X^{n} \cdot v \cdot u^{\langle s_\alpha(\beta), \alpha \rangle})) = \\
= {}^w x_{s_\alpha(\beta)}(\eta_{\alpha, s_\alpha(\beta)} \cdot X^{n + \langle \omega, s_\alpha(\beta) \rangle} \cdot v \cdot u^{\langle s_\alpha(\beta), \alpha \rangle}) = \\
= x_{\beta}(\eta_{\alpha, s_\alpha(\beta)}^2 \cdot X^{n + \langle s_\alpha(\omega), \beta \rangle} \cdot v \cdot u^{\langle s_\alpha(\beta), \alpha \rangle - \langle s_\alpha(\beta), \alpha \rangle }) = x_{\beta}(X^{n + \langle s_\alpha(\omega), \beta \rangle} \cdot v).
\end{multline*}
It remains to notice that the expression in the right-hand side of the above formula does not depend on $u$ and agrees with~\eqref{eq:sigmadef},
which implies the second assertion. \end{proof}

\subsection{Another presentation of the relative linear Steinberg group} \label{subsec:rel-presentation}
The aim of this subsection is twofold.
First of all, we recall the presentation of the relative linear Steinberg group $\St(\rA_{n-1}, R, I) = \St(n, R, I)$ from~\cite{LS17}.
This presentation is formulated in terms of rows and columns rather than root unipotents and is similar to
M.~Tulenbaev's presentation of the relative linear Steinberg group from~\cite[Definition~1.5]{Tu83}.
The key advantage of this presentation over Tulenbaev's is that unlike the latter it applies to the Steinberg group $\St(4, R, I)$,
which will be important in the sequel.

Our second main goal is to define certain elements $X^d(u, v)$, $Y^d(v, u)$ which will help us
with the construction of morphisms $\sigma(\omega)$ from~\cref{dfn:delta-pair}.
These elements are inspired by Tulenbaev's elements $X_{v, Xw}^{\delta_k}$ defined on p.~145 of~\cite{Tu83} with
the important difference that our definitions are also adapted to the situation $n=4$.

For a column $u \in R^n$ we denote by $u^t$ its transpose (which is a row of length $n$).
We denote by $\Um(n, R) \subseteq R^n$ the subset consisting of unimodular columns,
i.\,e. columns $u \in R^n$ such that $v^t u = 1$ for some $v \in R^n$.
We denote by $e_1, \ldots e_n$ the standard basis of $R^n$.
We also denote by $e$ the identity matrix of size $n$.
If $u, v \in R^n$ is a pair of orthogonal columns then $t(u, v) = e + uv^t$ is an invertible matrix of size $n.$
For an element $g \in \GL(n, R)$ we denote by $g^*$ the transpose-inverse of $g$, i.\,e. $g^* = {g^{t}}^{-1}.$

\begin{prop}
\label{prop:rel-presentation}
Assume that $I$ is a splitting ideal of a commutative ring $R$.
Then for any $\ell\geq 3$ the group $\St(\rA_\ell,\,R,\,I)$ can be presented by means of two families of generators $F(u,\,v)$, $S(v,\,u)$
(where $u\in \E(n,\,R)e_1,$ and $v\in I^n$ are such that $u^{t}v=0$) subject to the following relations:
\begin{align}
&F(u,\,v)F(u,\,w)=F(u,\,v+w), \label{add4}\\
&S(u,\,v)S(w,\,v)=S(u+w,\,v), \label{add5}\\
&F(u,\,v)F(u',\,v')F(u,\,v)^{-1}=F(t(u,\,v)u',\,t(v,\,u)^{-1} v'), \label{conj3} \\
&F(me_1,\,m^{*}e_{2}a)=S(me_{1}a,\,m^{*}e_{2}),\ \text{for all $a\in I$,}\, m \in \E(n, R). \label{coef-move}
\end{align}
\end{prop}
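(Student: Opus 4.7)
The plan is to obtain this presentation as a specialization of the main presentation theorem for the relative linear Steinberg group established in~\cite{LS17}. The generators $F(u,v)$ and $S(v,u)$ correspond to canonical lifts to $\St(n, R, I)$ of the elementary transvections $e + uv^t$ and $e + vu^t$, respectively. The splitting hypothesis on $I$ is what makes this identification unambiguous: it ensures that $C(\rA_\ell, R, I)$ is trivial, that $\St(n,R,I)$ embeds in $\St(n,R)$ as a normal subgroup admitting a section, and that the images of the lifts can be read off from matrices in $\GL(n,R)$.

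First, I would verify that the four defining relations \eqref{add4}--\eqref{coef-move} actually hold in $\St(n, R, I)$. Bimultiplicativity of transvections in the appropriate second argument yields \eqref{add4} and \eqref{add5}. The standard conjugation identity for rank-one transvections
\[ t(u, v) \cdot t(u', v') \cdot t(u, v)^{-1} = t\bigl(t(u, v) u',\, t(v, u)^{-1} v'\bigr), \]
lifted to the Steinberg level via the naturality of the $\E(n, R)$-action on $\St(n, R, I)$, yields \eqref{conj3}. Finally, \eqref{coef-move} records the coincidence of the row- and column-style expressions for a rank-one transvection when the underlying column is completable to an element of $\E(n,R)$; at the Steinberg level this is a direct computation using \eqref{R2}--\eqref{R3}.

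For surjectivity of the induced homomorphism from the presented group onto $\St(n, R, I)$, I would appeal to~\cref{lem:relative-generators}: the generators $x_\alpha(m)$ and $z_\beta(m, a)$ supplied there are visibly of the form $F(e_i, m e_j)$, or become such after a $S$-flip via \eqref{coef-move}, so the subgroup generated by the $F$'s and $S$'s exhausts $\St(n,R,I)$.

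The main obstacle is injectivity, i.e.\ that no further relations are needed. This is the substantive content of the theorem and is proved in~\cite{LS17} by systematically rewriting each Steinberg relation \eqref{x-additivity}--\eqref{R3} in terms of $F$'s and $S$'s and checking that the resulting identity follows from \eqref{add4}--\eqref{coef-move}. The hypothesis $\ell\geq 3$ (equivalently $n\geq 4$) provides enough auxiliary indices to perform the requisite commutator manipulations; crucially, unlike Tulenbaev's presentation from~\cite{Tu83}, this one applies in the boundary case $n = 4$, which will be indispensable for the applications later in the paper.
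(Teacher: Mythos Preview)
Your approach is essentially the same as the paper's: both treat this proposition as a direct consequence of the presentation theorem in~\cite{LS17}. The paper's proof is a one-line citation, ``This is precisely~\cite[Proposition~3.10]{LS17} combined with~\cite[Proposition~8]{S15}'', so your expanded sketch of why the relations hold and why the splitting hypothesis matters is more detailed than strictly necessary, but not wrong. The only omission is that the paper also invokes~\cite[Proposition~8]{S15}, which is needed to identify the group presented in~\cite{LS17} with the relative Steinberg group $\St(\rA_\ell, R, I)$ as defined here via the double $D_{R,I}$; you implicitly assume this identification but do not cite a source for it.
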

\begin{proof}
This is precisely~\cite[Proposition 3.10]{LS17} combined with~\cite[Proposition~8]{S15}.
\end{proof}

In what follows we assume $n \geq 4$.
We denote by $D(u)$ the subset of $R^n$ consisting of all columns $v$ which are orthogonal to $u$
(i.\,e. $u^{t} v = 0$) and have at least two zero entries.
Recall from 3.2 of~\cite{Ka77} that for every $u, v, w \in R^n$ such that $u^t v = 0$ there
is a decomposition of $(w^t u) \cdot v$ into a sum of elements of $D(u)$, called \textit{canonical decomposition}:
\begin{equation}
\label{eq:canonical} (w^tu) \cdot v=\sum_{i<j}u_{ij} c_{ij}(v, w),
\end{equation}
where $u_{ij} =e_i u_j-e_j u_i \in D(u)$ and $c_{ij}(v, w) =v_i w_j-v_j w_i \in R$.

\begin{lemma}
\label{lem:xsmall-properties}
Let $v, w \in R^n$ be such that $v^t w = 0$ and assume, moreover, that either $v$ or $w$ has at least one zero entry.
Under these assumptions one can define certain element $x(v, w) \in \St(n, R)$ such that $\pi(x(v, w)) = t(v, w) = e + vw^t$.
The elements $x(v, w)$ satisfy the following properties:
\begin{lemlist}
\item \label{itm:xsmall-scalar} If $v$ or $w$ has at least two zero entries, then $x(v, wa) = x(va, w)$ for $a\in R$.
\item \label{itm:xsmall-additivity} If $w_1$ and $w_2$ have at least two zero entries of which at least one entry is common
then $x(v, w_1) \cdot x(v, w_2) = x(v, w_1+w_2)$ and $x(w_1, v) \cdot x(w_2, v) = x(w_1 + w_2, v)$.
\item \label{itm:xsmall-commute} If $v$, $v'$ are simultaneously orthogonal to $w$ and $w'$, and the elements $w, w'$ both have at least two zero entries then
$[x(v, w),\ x(v', w')] = 1$.
\item \label{itm:xsmall-conj} If $g = x_{ij}(a)$ is a generator of the Steinberg group and $v$ or $w$ has at least two zero entries then
$g \cdot x(v, w) \cdot g^{-1} = x(gv, g^*w)$.
\end{lemlist}
\end{lemma}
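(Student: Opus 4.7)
The plan is to first construct $x(v, w)$ by an explicit formula in terms of Steinberg generators, following the approach of Kallin~\cite{Ka77} (lifted to the Steinberg-group level as in Tulenbaev~\cite{Tu83}), and then verify the four properties by direct computation using the Chevalley commutator relations \eqref{x-additivity}--\eqref{R3}.

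For the construction, I would first suppose that $v_k = 0$ for some index $k$ (if instead only $w$ has a zero entry, a symmetric construction based on row operations is used). The generators $x_{ik}(v_i)$ for $i \neq k$ mutually commute by \eqref{R3}, as do the $x_{kj}(w_j)$ for $j \neq k$, which allows one to set
\[ x(v, w) := \left[\prod_{i \neq k} x_{ik}(v_i),\ \prod_{j \neq k} x_{kj}(w_j)\right] \cdot \prod_{i \neq k} x_{ik}(v_i w_k). \]
A direct matrix-level expansion in $\GL(n, R)$, in which the orthogonality $v^t w = 0$ is used precisely to match column $k$ correctly, then confirms that $\pi(x(v, w)) = e + v w^t = t(v, w)$.

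The principal point in setting up this construction is to verify that the element $x(v, w) \in \St(n, R)$ does not depend on the choice of pivot $k$. If $v_k = v_\ell = 0$ at two distinct indices, one expands both candidate formulas, uses the commutator identity $[x_{ik}(a), x_{kj}(b)] = x_{ij}(ab)$ together with the fact that the resulting $x_{ij}(v_i w_j)$ for $i, j \neq k$ lie in a central subgroup of the span of the generators entering the construction, and uses $\sum_{i \neq k} v_i w_i = v^t w = 0$ to cancel residual diagonal contributions; the two expressions then collapse to the same element. Likewise, if $v$ has a zero at position $k$ while $w$ has a zero at position $\ell$, one must check that the $v$-sided and the symmetric $w$-sided constructions give the same element. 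These independence checks are the Steinberg-level analogs of the computations in \S 3 of~\cite{Ka77}, and constitute the main technical hurdle of the proof.

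Once well-definedness is in hand, the four stated properties follow from short calculations. For \cref{itm:xsmall-scalar}, if $v_k = v_\ell = 0$ then both $x(va, w)$ and $x(v, wa)$ may be computed using the pivot $k$; the two resulting products are literally equal after invoking \eqref{x-additivity} and the commutativity of $R$, since $x_{ij}((v_i a) w_j) = x_{ij}(v_i (w_j a))$. For \cref{itm:xsmall-additivity}, a common zero in $w_1$ and $w_2$ lets us use the same pivot $k$ for both; the commutator-expansion formula for the bracket in the definition of $x(v, w)$, together with \eqref{x-additivity}, then yields $x(v, w_1) \cdot x(v, w_2) = x(v, w_1 + w_2)$, and the first-slot case is symmetric. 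For \cref{itm:xsmall-commute}, choosing appropriate pivots for $x(v, w)$ and $x(v', w')$ arranges that each pair of constituent generators either commutes outright by \eqref{R3} or produces cross-terms that cancel in pairs using the simultaneous orthogonality relations $v^t w = v^t w' = {v'}^t w = {v'}^t w' = 0$. For \cref{itm:xsmall-conj}, it suffices to check the identity when $g = x_{pq}(a)$ is a single Steinberg generator; conjugation by $x_{pq}(a)$ then permutes the generators appearing in the definition of $x(v, w)$ according to the linear action of the corresponding elementary transvection on $v$ and $w$, and matching terms via \eqref{R2} yields the required identity $g \cdot x(v, w) \cdot g^{-1} = x(gv, g^* w)$.

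The main obstacle is establishing well-definedness of $x(v, w)$ under the choice of pivot; all remaining steps then reduce to short, mechanical bookkeeping of Chevalley commutator coefficients, so I would expect the overall length of the proof to be dominated by the pivot-independence argument.
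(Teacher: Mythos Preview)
The paper's own ``proof'' of this lemma is a bare citation: ``See~\cite[Lemma~1.1]{Tu83}.'' Your proposal is precisely a sketch of the argument behind that cited result --- the van der Kallen commutator construction lifted to the Steinberg group, together with the pivot-independence check and the routine verifications --- so your approach is the same as what the paper defers to, only spelled out in more detail than the paper itself provides.
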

\begin{proof}
See~\cite[Lemma~1.1]{Tu83}.
\end{proof}

Now assume that $I$ is an ideal of a ring $R$.
We also assume that $R$ itself is a subring of a larger ring $S$.
We now define two families of elements of $\St(n, R)$.
\begin{dfn} \label{dfn:xy-def}
Let $u \in \Um(n, R)$ and $v \in I^n$ be a vector such that $u^{t}v = 0$.
Let $v = \sum_r v^r$ be some decomposition of $v$ into a finite sum of elements $v^r \in D(u) \cap I^n$.
Under our assumptions on $u$ and $v$ such decomposition always exists, see~\eqref{eq:canonical}.

Let $d = \mathrm{diag}(d_1, \ldots, d_n)$ be an element of the subgroup $T(n, S)$ of diagonal matrices of $\GL(n, S)$ such that
$d^{-1} \cdot u \in R^n,\ d \cdot I^n \subseteq R^n.$
Under these assumptions we set
\begin{equation} \label{eq:X-def}
X^d(u, v) \coloneqq \prod_i x(d^{-1}u, dv^r),
\end{equation}

Not let $d'$ be an element of $T(n, S)$ such that $d' u\in R^n,\ {d'}^{-1} \cdot I^n \subseteq R^n$.
Under these assumptions we set
\begin{equation} \label{eq:Y-def}
Y^{d'}(v, u) \coloneqq \prod_i x({d'}^{-1} v^r, {d'}u).
\end{equation}
\end{dfn}

\begin{lemma}
\label{lem:xy-wd}
The elements $X^d(u, v)$, $Y^{d'}(v, u)$ are well-defined, i.\,e. they do not depend on the choice of decomposition for $v$.
\end{lemma}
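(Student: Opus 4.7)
The claims for $X^d$ and $Y^{d'}$ are symmetric; I focus on $X^d(u,v)$. Let $v = \sum_r v^r = \sum_s \widetilde v^s$ be two decompositions with summands in $D(u) \cap I^n$. The diagonal matrix $d$ preserves the positions of zero entries, so each $dv^r$ has at least two zero entries; combined with $(d^{-1}u)^t(dv^r) = u^t v^r = 0$, this lets us apply \cref{itm:xsmall-commute} (with $v = v' = d^{-1}u$) to conclude that any two factors $x(d^{-1}u, dw)$ with $w \in D(u) \cap I^n$ commute. In particular, each of the products $\prod_r x(d^{-1}u, dv^r)$ and $\prod_s x(d^{-1}u, d\widetilde v^s)$ is a well-defined element independent of the ordering of factors.

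The plan is to compare both products to a common canonical form built from \eqref{eq:canonical}. Choose $w_0 \in R^n$ with $w_0^t u = 1$ and set
\[ a^r_{ij} := u_{ij}\, c_{ij}(v^r, w_0), \qquad a_{ij} := u_{ij}\, c_{ij}(v, w_0).\]
Each piece lies in $D(u) \cap I^n$: it is a scalar multiple, by an element of $I$, of $u_{ij}$, which has $n - 2 \geq 2$ zero entries. By $\mathbb{Z}$-bilinearity of $c_{ij}$ one has $a_{ij} = \sum_r a^r_{ij}$. The central identity to establish is
\[ \prod_r x(d^{-1}u, dv^r) \;=\; \prod_{i<j} x(d^{-1}u, d\, a_{ij}), \]
whose right-hand side depends only on $v$ and $w_0$. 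Applied to both $\{v^r\}$ and $\{\widetilde v^s\}$, this identity delivers the lemma for $X^d$; independence from $w_0$ then follows automatically, since two choices just give two different decompositions of $v$.

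The proof of the central identity splits into two layers. For fixed $(i,j)$, the vectors $d\,a^r_{ij}$ are supported only on coordinates $i$ and $j$, hence share zero entries at all other positions; iterated use of \cref{itm:xsmall-additivity} yields $\prod_r x(d^{-1}u, d\,a^r_{ij}) = x(d^{-1}u, d\,a_{ij})$. Combined with the global commutativity, it then suffices to show, for each fixed $r$, the formula $x(d^{-1}u, dv^r) = \prod_{i<j} x(d^{-1}u, d\,a^r_{ij})$. For $n \geq 5$ this is routine: any two pieces $a^r_{ij}, a^r_{i'j'}$ share a zero at some coordinate in the nonempty set $\{1, \ldots, n\} \setminus (\{i,j\} \cup \{i',j'\})$, so the pieces accumulate into $v^r$ via \cref{itm:xsmall-additivity} in any order.

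The main obstacle is the case $n = 4$, where complementary index pairs such as $\{1,3\}$ and $\{2,4\}$ share no common zero and direct accumulation stalls. This is precisely the gap between our setting and that of Tulenbaev's original presentation, which was formulated for $n \geq 5$. To bridge it, one exploits the hypothesis $v^r \in D(u)$: its two zero entries at some positions $p, q$ force $a^r_{pq} = 0$ and impose linear identities among the surviving pieces that permit a more clever re-grouping, in which each problematic complementary pair is combined indirectly via an auxiliary third piece sharing a zero with both. The scalar-transfer rule \cref{itm:xsmall-scalar} and the conjugation rule \cref{itm:xsmall-conj} provide the flexibility needed to carry this out and ensure that the final product matches $x(d^{-1}u, dv^r)$; this extra bookkeeping is what makes the presentation of \cref{prop:rel-presentation} applicable to $\St(4, R, I)$.
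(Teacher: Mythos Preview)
Your overall strategy matches the paper's exactly: reduce any decomposition of $v$ to the single canonical expression $\prod_{i<j} x(d^{-1}u,\, d\,u_{ij}\,c_{ij}(v,w_0))$. The step collecting the $r$-sum inside each fixed pair $(i,j)$ is fine. The gap is in your per-$r$ step, the claim that
\[
x(d^{-1}u,\, dv^r)\;=\;\prod_{i<j} x\bigl(d^{-1}u,\, d\,u_{ij}\,c_{ij}(v^r,w_0)\bigr).
\]
Your argument for $n\geq 5$ (``any two pieces share a zero, so they accumulate in any order'') does not go through: after merging two pieces $u_{ij},u_{i'j'}$ with $|\{i,j,i',j'\}|=4$ the partial sum has only $n-4$ guaranteed zeros, and for $n=5$ (and a fortiori for $n=4$) this is already too few to invoke \cref{itm:xsmall-additivity} again. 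So the difficulty you locate at $n=4$ is in fact present for all $n$, and the remedy you sketch (invoking \cref{itm:xsmall-scalar} and \cref{itm:xsmall-conj}) is neither necessary nor a proof.

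The correct argument uses only \cref{itm:xsmall-additivity} and works uniformly for $n\geq 4$. Let $p,q$ be the two zero positions of $v^r$. Split the canonical pieces of $v^r$ into three groups according to whether $\{i,j\}$ meets $\{p,q\}$ in $0$, in $\{p\}$, or in $\{q\}$ (the piece with $\{i,j\}=\{p,q\}$ vanishes). All pieces in the first group, and hence every partial sum, vanish at $p$ and $q$, so additivity iterates. In the second group every piece vanishes at $q$; adding them one at a time works because the $k$-th partial sum is supported on at most $k+1$ coordinates, hence has $n-k-1\geq 2$ zeros while $k\leq n-3$, and the last step uses $q$ as the shared zero. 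Moreover the full sum $B_p$ of the second group has $p$-th entry $-w_{0,p}\sum_{i\neq p,q} u_i (v^r)_i = 0$ by orthogonality $u^t v^r=0$, so $B_p$ vanishes at both $p$ and $q$; likewise for $B_q$. Finally the three group-sums all vanish at $p,q$ and combine by additivity to give $x(d^{-1}u,\, dv^r)$. The paper's proof compresses all of this into the phrase ``using \cref{lem:xsmall-properties}''.
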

\begin{proof}
Let $v = \sum_r v^r$ be a decomposition as above.
Since $u$ is unimodular there exists $w$ such that $w^t u = 1$.
Since each $v^r$ is orthogonal to $u$ we can write the canonical decomposition
$v^r = (w^tu)v^r = \sum_{i<j} u_{ij} c_{ij}(v^r, w)$, moreover $\sum_{r} c_{ij}(v^r, w) = c_{ij}(v, w)$.
Now using~\cref{lem:xsmall-properties} we obtain that
\begin{multline*}
X^d(u, v) =
\prod\limits_r x(d^{-1}u, dv^r) = \prod\limits_{r}\prod\limits_{i<j} x(d^{-1} u, du_{ij}c_{ij}(v^r, w)) = \\ =
\prod\limits_{i<j} x(d^{-1} u, d u_{ij}c_{ij}(v, w)). \qedhere
\end{multline*}
\end{proof}

\begin{lemma}
\label{lem:xy-conj} Suppose that $g = x_{hk}(a)$ is a generator of $\St(n, R)$ such that $m = d\pi(g)d^{-1} \in \E(n, R)$, then
\begin{equation*}
g \cdot X^d(u, v) \cdot g^{-1} = X^d(mu, m^*v) \text{ and } g \cdot Y^d(v, u) \cdot g^{-1} = Y^d(mv, m^*u).
\end{equation*}
\end{lemma}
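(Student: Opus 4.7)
The plan is to prove the $X^d$ identity by applying \cref{itm:xsmall-conj} factor-by-factor, translating between $\pi(g),\pi(g)^*$ and $m, m^*$ using the diagonal matrix $d$, and then identifying the resulting product with $X^d(mu, m^*v)$ via the well-definedness argument of \cref{lem:xy-wd}. The $Y^d$ identity will follow by the same computation with the two arguments of $x(-,-)$ interchanged.

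First I would fix a decomposition $v = \sum_r v^r$ with $v^r \in D(u) \cap I^n$, so that $X^d(u,v) = \prod_r x(d^{-1}u, dv^r)$ by Definition~\ref{dfn:xy-def}. Since $v^r \in D(u)$ has at least two zero entries and $d$ is diagonal, so does $dv^r$, and \cref{itm:xsmall-conj} applies to each factor to give $g \cdot x(d^{-1}u, dv^r) \cdot g^{-1} = x(\pi(g)\,d^{-1}u,\ \pi(g)^*\,dv^r)$. Because $d^t = d$, a direct computation yields $m^* = (d\pi(g)d^{-1})^{-t} = d^{-1}\pi(g)^* d$, so $\pi(g)^* d = d m^*$; combined with $\pi(g) d^{-1} = d^{-1} m$, this rewrites the conjugated factor as $x(d^{-1}mu,\; d m^* v^r)$. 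Multiplying over $r$ yields
\[ g \cdot X^d(u,v) \cdot g^{-1} = \prod_r x(d^{-1}mu,\; d m^* v^r). \]

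The remaining and only substantive step is to identify this right-hand side with $X^d(mu, m^*v)$, and this is the main obstacle: the vectors $m^*v^r$ need not individually lie in $D(mu)$, because applying the elementary matrix $m^*$ may destroy one of the two zero entries of $v^r$, so the displayed product is not manifestly a defining expression for $X^d(mu, m^*v)$. To overcome this I would note that $mu$ remains unimodular with splitting $w' := m^* w$, where $w \in R^n$ satisfies $w^t u = 1$ (indeed $(w')^t mu = w^t m^{-1}mu = 1$), and then reproduce the argument of \cref{lem:xy-wd}: apply the canonical decomposition~\eqref{eq:canonical} to each $m^* v^r$ with respect to $mu$ and $w'$, substitute into each factor, and use \cref{itm:xsmall-additivity} to split the resulting $x$-factors and then merge over $r$. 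The outcome is the expression $\prod_{i<j} x(d^{-1}mu,\ d(mu)_{ij}\,c_{ij}(m^*v, w'))$, which is a legitimate realization of $X^d(mu, m^*v)$ through the canonical decomposition of $m^*v$, completing the argument. The $Y^d$ identity is obtained by the same computation with the arguments of $x(-,-)$ swapped: conjugation of $x(d^{-1}v^r, du)$ produces $x(d^{-1}mv^r,\ d m^* u)$, and the analogous canonical-decomposition reorganization recovers $Y^d(mv, m^*u)$.
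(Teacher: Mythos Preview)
Your overall strategy matches the paper's: conjugate each factor via \cref{itm:xsmall-conj}, rewrite $\pi(g),\pi(g)^*$ as $d^{-1}m\,d,\ d\,m^*d^{-1}$, and then recognise the result as a defining product for $X^d(mu,m^*v)$. The difference is in how that last recognition step is carried out, and this is where your argument has a genuine gap.

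You start from an \emph{arbitrary} decomposition $v=\sum_r v^r$ with $v^r\in D(u)$ and, after conjugation, propose to ``reproduce the argument of \cref{lem:xy-wd}'' on the vectors $m^*v^r$. But the splitting step in the proof of \cref{lem:xy-wd} --- passing from $x(-,dv^r)$ to $\prod_{i<j} x(-,du_{ij}c_{ij}(v^r,w))$ via \cref{itm:xsmall-additivity} --- uses that $v^r$ has \emph{two} zero entries. As you yourself note, $m^*v^r$ may have only one zero (when the single coordinate altered by $m^*=t_{kh}(-b)$ coincides with one of the two zeros of $v^r$), and in that case the iterative application of \cref{itm:xsmall-additivity} to the canonical decomposition of $m^*v^r$ is not justified: the additivity hypothesis requires both summands at each stage to have two zeros with one in common.

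The paper closes this gap by working from the start with the \emph{canonical} decomposition $v=\sum_{i<j}u_{ij}c_{ij}(v,w)$, so that every summand is (a scalar multiple of) some $u_{ij}$, which has $n-2\geq 2$ zero entries. It then computes $m^*u_{ij}$ explicitly: if $\{i,j\}\cap\{h,k\}=\varnothing$ or $\{i,j\}=\{h,k\}$ the vector is unchanged, while in the remaining case one finds $m^*u_{ij}=u'_{ij}+u'_{ki}\cdot b$ with $u'=mu$, a sum of two vectors in $D(mu)$ that share a common zero entry (since $n\geq 4$). Thus a \emph{single} application of \cref{itm:xsmall-additivity} suffices for each factor, and the resulting product is visibly of the form $\prod_s x(d^{-1}u',dv'_s)$ with $v'_s\in D(u')$ and $\sum_s v'_s=m^*v$, i.e.\ a defining expression for $X^d(mu,m^*v)$. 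Your argument becomes correct if you replace the arbitrary decomposition by the canonical one and carry out this explicit two-term split.
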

\begin{proof}
Choose $w\in R^n$ such that $w^t u = 1$ and write $v = \sum_{1\leq i<j\leq n} u_{ij} c_{ij}$, where $u_{ij}$ and $c_{ij} = c_{ij}(v, w)$ are as in~\eqref{eq:canonical}.
By~\cref{lem:xsmall-properties} we can write $g' = g \cdot X^d(u, v) \cdot g^{-1}$ as follows:
\begin{equation} \nonumber
\prod\limits_{1\leq i<j\leq n} x(\pi(g) \cdot d^{-1} \cdot u, \pi(g)^* \cdot d \cdot u_{ij}c_{ij})
= \prod\limits_{1\leq i<j\leq n} x( d^{-1} \cdot m \cdot u, d \cdot m^* \cdot u_{ij}c_{ij}).
\end{equation}
Now for every factor in the right-hand side we do the following:
if $\{i, j\} = \{h, k\}$ or $\{i, j\} \cap \{h, k\} = \emptyset$ we leave the factor unchanged,
otherwise, if, $|\{i, j\} \cap \{h, k\}| = 1$, say, $j = h$, $i\neq k$, we further decompose it as follows:
\begin{equation} \nonumber
x(d^{-1} \cdot u', d \cdot m^* \cdot u_{ij} c_{ij}) =
x(d^{-1} \cdot u', d \cdot u'_{ij} c_{ij}) \cdot
x(d^{-1} \cdot u', d \cdot u'_{ki} bc_{ij}),
\end{equation}
where $u' = m \cdot u = t_{jk}(b) \cdot u$ and $b = d_j \cdot a \cdot d_k^{-1} \in R$.
Notice that
\begin{multline*}
m^* \cdot u_{ij} = t_{kj}(-b) \cdot u_{ij} = t_{kj}(-b) \cdot (e_i u_j-e_j u_i) = e_iu_j - e_ju_i + e_k b u_i =
\\ = \left(e_i (u_kb + u_j) - e_j u_i\right) + (e_k u_i-e_i u_k)b = (e_i u'_j-e_j u'_i) + (e_k u'_i-e_i u'_k)b = u'_{ij}+u'_{ki}b.
\end{multline*}

Thus, $g' = \prod_{s} x(d^{-1} \cdot u', d \cdot v'_s) $, where $v'_s \in D(u')$ and $\sum v'_s = m^*v$.
It is clear now that $g' = X^d(m u, m^* v)$ by~\cref{dfn:xy-def}.
The argument for the generator $Y^d(v, u)$ is similar.
\end{proof}

\section{Curtis--Tits type presentations} \label{sec:affine}
Throughout this section $A$ denotes an arbitrary commutative ring.

\subsection{Curtis--Tits type presentation of affine Steinberg groups} \label{subsec:curtis-tits}
In this subsection we briefly recall the theory of Steinberg groups in the Kac--Moody setting.
Only in this subsection we allow root systems to be infinite.

Recall that to any generalized Cartan matrix (or, for short, GCM) $C$ one can associate a (possibly infinite) root system $\Phi = \Phi_C$.
In this general setting, the notion of the Weyl group $W(\Phi)$ is introduced.
Additionally, there are analogues of the concepts of simple, positive and negative roots, as discussed in e.\,g.~\cite[\S~16]{Ca05}.
Recall that a root $\alpha \in \Phi$ is called \textit{real} if it lies in the orbit of some simple root $\alpha_i \in \Pi$ under the action of the Weyl group $W(\Phi)$, cf.~\cite[\S~16.3]{Ca05}.
Recall that a pair of real roots $\alpha, \beta$ is called \textit{prenilpotent} if there exists an element $w$ of the Weyl group $W(\Phi)$ such that both $w\cdot \alpha$ and $w \cdot \beta$ belong to $\Phi^+,$ cf.~\cite[\S~3]{A16}
A consequence of this condition is that $(\mathbb{Z}_{>0} \alpha + \mathbb{Z}_{>0}\beta)\cap \Phi$ consists of real roots.

For our purposes it suffices to focus on a specific subclass of GCMs of \textit{affine} type.
Let $\Phi$ be a finite irreducible root system of rank at least $2$.
Following \S~4 of~\cite{A16} we denote by $\widetilde{\Phi}$ the corresponding unfolded affine root system (sometimes it is denoted by $\Phi^{(1)}$, cf.~\cite[Table~2]{A16}).
The Dynkin diagram of $\widetilde{\Phi}$ is the \textit{extended} Dynkin diagram of $\Phi$, cf.~\cite[\S~17.1]{Ca05}.
Such diagrams for $\Phi$ of type $\rD_\ell$, $\rE_6$ and $\rE_7$ are shown on Figure~1.
The vertex corresponding to the affinizing simple root is marked with yellow color and index $0$ (other markings on the diagram will be explained in the subsequent text).

\tikzset{
root/.style={circle, draw, minimum size=0.2cm, inner sep=0},
zeroroot/.style={circle, draw, minimum size=0.2cm, inner sep=0, fill=yellow},
highlighted/.style={circle, draw, minimum size=0.2cm, inner sep=0, fill=green},
levi/.style={draw, dashed, rounded corners},
dottededge/.style={dotted},
labeled/.style={below}
}
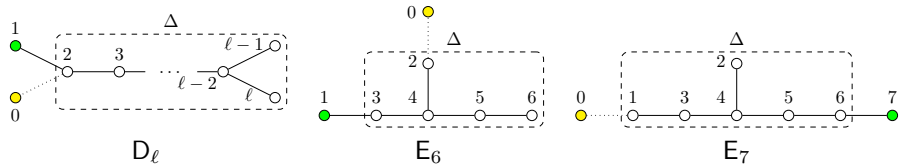
\begin{figure}[hb]\label{fig:dynkin-diagrams}
\begin{longtable}{ c c c }
\scalebox{0.69}{\begin{tikzpicture}
\node[root, highlighted, label=$1$] (d1) at (0,0.5) {};
\node[root, label=$2$] (d2) at (1,0) {};
\node[root, label=$3$] (d3) at (2,0) {};
\node at (3,0) {\ldots};
\node[root, label={[xshift=-0.5cm, yshift=-0.6cm]$\ell-2$}] (dl2) at (4,0) {};
\node[root, label={[xshift=-0.6cm, yshift=-0.35cm]$\ell-1$}] (dl1) at (5,0.5) {};
\node[root, label={[xshift=-0.5cm, yshift=-0.3cm]$\ell$}] (dl) at (5,-0.5) {};
\node[root, zeroroot, label=below:{$0$}] (d0) at (0,-0.5) {};

\draw (d1) -- (d2) -- (d3) -- (2.5,0);
\draw (3.5,0) -- (dl2) -- (dl1);
\draw (dl2) -- (dl);
\draw[dottededge] (d2) -- (d0);

\begin{scope}
\node[levi, fit=(d2) (d3) (dl) (dl1) (dl2), label=above:{$\Delta$}] {};
\end{scope}
\end{tikzpicture}}
&
\scalebox{0.69}{\begin{tikzpicture}
\begin{scope}
\node[root, highlighted, label=$1$] (e1) at (0,0) {};
\node[root, label=$3$] (e3) at (1,0) {};
\node[root, label={[xshift=-0.3cm]$4$}] (e4) at (2,0) {};
\node[root, label=$5$] (e5) at (3,0) {};
\node[root, label=$6$] (e6) at (4,0) {};
\node[root, label={[xshift=-0.3cm,yshift=-0.3cm]$2$}] (e2) at (2,1) {};
\node[root, zeroroot, label=left:$0$] (e0) at (2,2) {};
\draw (e1) -- (e3) -- (e4) -- (e5) -- (e6);
\draw (e2) -- (e4);
\draw[dottededge] (e2) -- (e0);

\begin{scope}
\node[levi, fit= (e2) (e3) (e4) (e5) (e6), label=above:{$\Delta$}] {};
\end{scope}
\end{scope}
\end{tikzpicture}}
&
\scalebox{0.69}{\begin{tikzpicture}
\begin{scope}
\node[root, zeroroot, label={0}] (e0) at (0,0) {};
\node[root, label={$1$}] (e1) at (1,0) {};
\node[root, label={$3$}] (e3) at (2,0) {};
\node[root, label={[xshift=-0.3cm]$4$}] (e4) at (3,0) {};
\node[root, label={$5$}] (e5) at (4,0) {};
\node[root, label={$6$}] (e6) at (5,0) {};
\node[root, label={[xshift=-0.3cm,yshift=-0.3cm]$2$}] (e2) at (3,1) {};
\node[root, highlighted, label={$7$}] (e7) at (6,0) {};

\draw (e1) -- (e3) -- (e4) -- (e5) -- (e6) -- (e7);
\draw (e2) -- (e4);
\draw[dottededge] (e0) -- (e1);

\begin{scope}
\node[levi, fit=(e1) (e2) (e3) (e4) (e5) (e6), label=above:{$\Delta$}] {};
\end{scope}
\end{scope}
\end{tikzpicture}} \\
\text{$\rD_\ell$} &
\text{$\rE_6$} &
\text{$\rE_7$}
\end{longtable}
\caption{Root markings on extended Dynkin diagrams}
\end{figure}

Now we are ready to recall the presentation of the Steinberg group functor $\St(C, -)$ in the Kac--Moody setting.
In fact, two such presentations have been proposed: the definition of J.~Tits~\cite[\S~3.6]{Ti87} and the definition of J.~Morita and U.~Rehmann~\cite[\S~2]{MR90}.
These definitions agree if the Dynkin diagram of $C$ does not contain connected components of type $\rA_1$ (as is the case in our setting), see~\cite[\S~3]{A16} or~\cite[\S~6]{A13}.

The set of generators for $\St(C, A)$ consists of all $x_\alpha(a)$, where $a \in A$ and $\alpha$ ranges over the set $\Phi^\mathrm{re}$ of real roots of $\Phi$.
The set of defining relations for $\St(C, A)$ consists of the following two families of relations:
\begin{align}
x_\alpha(a) x_\alpha(b) & =  x_\alpha(a + b), \label{SKM1} \\
[x_\alpha(a), x_\beta(b)] & =  \prod \limits_{\substack{i\alpha+j\beta \in \Phi^\mathrm{re} \\ i, j > 0 }} x_{i\alpha+j\beta}(N_{\alpha, \beta}^{i, j} \cdot a^i b^j), \label{SKM2}
\end{align}
where $a, b \in A$ and $\alpha, \beta$ range over all {\it prenilpotent} pairs of real roots of $\Phi$.

\begin{lemma} \label{lem:affine-vs-loop} For every irreducible finite root system $\Phi$ of rank $\geq 2$ one has $\St(\widetilde{\Phi}, A) \cong \St(\Phi, A[X, X\inv])$.
\end{lemma}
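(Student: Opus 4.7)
The plan is to construct mutually inverse group homomorphisms between the two groups, using the standard bijection between the set $\widetilde{\Phi}^{\mathrm{re}}$ of real roots of $\widetilde{\Phi}$ and $\Phi \times \mathbb{Z}$: every real root of the untwisted affine root system has a unique presentation as $\alpha + n\delta$ with $\alpha \in \Phi$ and $n \in \mathbb{Z}$, where $\delta$ denotes the minimal positive imaginary root. Concretely, the generators will be matched via $x_{\alpha + n\delta}(a) \leftrightarrow x_\alpha(a X^n)$.

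First I would define $\varphi \colon \St(\widetilde{\Phi}, A) \to \St(\Phi, A[X, X^{-1}])$ on generators by $x_{\alpha + n\delta}(a) \mapsto x_\alpha(a X^n)$ and verify that the Kac--Moody relations \eqref{SKM1} and \eqref{SKM2} hold in the target. The former is immediate from \eqref{x-additivity}. For \eqref{SKM2} applied to a prenilpotent pair $(\alpha + m\delta, \beta + n\delta)$, I would distinguish cases: if $\alpha + \beta \in \Phi$ or $\alpha + \beta \notin \Phi \cup \{0\}$, the identity collapses to the Chevalley commutator formula inside $\St(\Phi, -)$ evaluated on the monomials $aX^m$ and $bX^n$, using the full non-simply-laced form with the structure constants $N_{\alpha,\beta}^{i,j}$ so that types $\rB, \rC, \rF, \rG$ are included on equal footing; the case $\alpha = \beta$ forces commutativity since $\Phi$ is reduced and hence $2\alpha \notin \Phi$.

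In the opposite direction I would define $\psi \colon \St(\Phi, A[X, X^{-1}]) \to \St(\widetilde{\Phi}, A)$ by $x_\alpha\bigl(\sum_n a_n X^n\bigr) \mapsto \prod_n x_{\alpha + n\delta}(a_n)$. The order of factors is irrelevant: for $m \neq n$, the positive integer combinations of $\alpha + m\delta$ and $\alpha + n\delta$ have $\Phi$-component $(i+j)\alpha$ with $i + j \geq 2$, which is not in $\Phi$, so \eqref{SKM2} forces these generators to commute. Relation~\eqref{x-additivity} then follows by absorbing coefficients; \eqref{R2} and \eqref{R3} follow by expanding $[\psi(x_\alpha(f)), \psi(x_\beta(g))]$ with the standard commutator-of-products identities, applying \eqref{SKM2} termwise, and collecting the resulting monomials back into polynomials. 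Mutual inverseness of $\varphi$ and $\psi$ is then transparent on generators.

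The main obstacle, and the step I expect to occupy most of the work, is the verification of \eqref{SKM2} for $\varphi$ in the remaining prenilpotent case $\alpha = -\beta$ with $m + n > 0$: here the right-hand side of \eqref{SKM2} ranges over the infinite ladder of real roots $\alpha + (m + k(m+n))\delta$ and $-\alpha + (n + k(m+n))\delta$, $k \geq 0$, with prescribed structure constants, and one must exhibit exactly this expansion for $[x_\alpha(aX^m), x_{-\alpha}(bX^n)]$ inside $\St(\Phi, A[X, X^{-1}])$. Since this is a rank-one statement, my plan is to reduce to the subgroup generated by $x_{\pm\alpha}(A[X, X^{-1}])$ and carry out an explicit Matsumoto-style computation in the loop $\mathrm{SL}_2$-Steinberg group in the spirit of~\cite[\S\S\,5--6]{Ma69}. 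Alternatively, the amalgamation theorem of Allcock~\cite{A13} presents $\St(\widetilde{\Phi}, A)$ as a colimit of its standard rank-two parabolic subgroups, and this would let one sidestep the case analysis by reducing the entire lemma to rank two, a situation in which both sides of the claimed isomorphism are already available in the literature for each affine Dynkin type (including the non-simply-laced ones).
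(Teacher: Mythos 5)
Your overall strategy --- matching generators via $x_{\alpha+n\delta}(a)\leftrightarrow x_\alpha(aX^n)$, checking \eqref{SKM1}--\eqref{SKM2} in one direction and \eqref{x-additivity}--\eqref{R3} in the other, with the commutation of $x_{\alpha+m\delta}(a)$ and $x_{\alpha+n\delta}(b)$ making $\psi$ well defined --- is exactly the paper's argument. But the step you single out as ``the main obstacle'' rests on a false premise and, as stated, would fail. A pair of real roots $(\alpha+m\delta,\,-\alpha+n\delta)$ with $m+n\neq 0$ is \emph{not} prenilpotent: its sum is the imaginary root $(m+n)\delta$, so $(\mathbb{Z}_{>0}(\alpha+m\delta)+\mathbb{Z}_{>0}(-\alpha+n\delta))\cap\widetilde{\Phi}$ contains imaginary roots, which is excluded by the prenilpotency condition recalled in \cref{subsec:curtis-tits}. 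Indeed, the key combinatorial input the paper cites from~\cite[\S~5]{A16} is precisely that a pair $(\alpha,n),(\beta,m)$ is prenilpotent if and only if $\alpha\neq-\beta$. Consequently relation \eqref{SKM2} is simply not imposed for such pairs, and there is nothing to verify. Note also that the ``infinite ladder'' identity you propose to establish could not be a defining relation in any case --- \eqref{SKM2} is a finite product exactly because prenilpotency guarantees that only finitely many $i\alpha+j\beta$ are roots --- and the element $[x_\alpha(aX^m),x_{-\alpha}(bX^n)]$ of $\St(\Phi,A[X,X\inv])$ does not in general admit any such product expansion (already over a field it produces $h$-type elements and symbols, not unipotents). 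So the computation you planned to spend most of your effort on is both unnecessary and impossible.

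Once this is removed, your proof collapses to the paper's: real roots of $\widetilde{\Phi}$ are identified with $\Phi\times\mathbb{Z}$, prenilpotent pairs are those with $\alpha\neq-\beta$, the relations unfold to \eqref{AR1}--\eqref{AR3} (or their analogues with the general Chevalley commutator formula in the non-simply-laced case, as you correctly allow for), and the two maps on generators are mutually inverse. Your fallback suggestion of invoking Allcock's rank-two amalgamation is unnecessary and somewhat backwards relative to the paper's logic: the paper uses the present lemma to \emph{transport} Allcock's Curtis--Tits presentation to $\St(\Phi,A[X,X\inv])$ in \cref{prop:Allcock-affine}, so deducing the lemma from that presentation would invert the intended order of the argument.
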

\begin{proof}
Notice that the set of real roots of $\widetilde{\Phi}$ is isomorphic to $\Phi \times \ZZ$, see~\cite[\S~3]{A16}.
Also recall from~\cite[\S~5]{A16} that a pair $(\alpha, n), (\beta, m)$ is prenilpotent iff $\alpha \neq - \beta$.

This description allows one to unfold the defining relations~\eqref{SKM1}--\eqref{SKM2}.
For example, in the simply-laced case one obtains the following list of defining relations for $\St(\widetilde{\Phi}, A)$:
\begin{align}
x_{(\alpha, m)}(a)\cdot x_{(\alpha, m)}(b)&=x_{(\alpha, m)}(a+b),  \label{AR1}\\
[x_{(\alpha, m)}(a),\,x_{(\beta, n)}(b)]  &=x_{(\alpha+\beta, n+m)}(N_{\alpha,\beta} \cdot ab),\text{ for }\alpha+\beta\in\Phi, \label{AR2} \\
[x_{(\alpha, m)}(a),\,x_{(\beta, n)}(b)]  &=1,\text{ for }\alpha+\beta\not\in\Phi\cup0. \label{AR3}
\end{align}

It remains to notice that the homomorphism given by $x_{(\alpha, m)}(a) \mapsto x_\alpha(aX^m)$ is an isomorphism with the inverse given by
$x_\alpha(a_{n}X^n + \ldots + a_m X^m) \mapsto \prod_{i=n}^m x_{(\alpha, i)}(a_i)$, $a_i \in A$, $n \leq m$, $n, m\in \ZZ$, cf. e.\,g.~\cite[\S~5.1]{LS20}.
\end{proof}

Our next goal is to formulate the so-called \textit{Curtis--Tits presentation} of affine Steinberg groups discovered by D.~Allcock in~\cite{A16, A13}.
This presentation has the advantage of being formulated in terms of the Dynkin diagram of $\Phi$ rather than the (possibly infinite) set of real roots of $\Phi$.
Its other advantage is that it requires no choice of structure constants in its statement.
Allcock's result is a generalization of Curtis--Tits presentation of the Steinberg group of a finite root system.
Results of such type have been known since 1960's, see e.\,g.~\cite[Theorem~B]{DS74}.

Recall that $\{ \alpha_1, \ldots \alpha_\ell \}$ is the set of simple roots of $\Phi$.
We denote by $\alpha_0$ the opposite root to the maximal root of $\Phi$ (i.\,e. $\alpha_0 := -\alpha_\mathrm{max}$).
Notice that $(\alpha_0, 1)$ corresponds to the added $0$ node of the extended Dynkin diagram of $\widetilde{\Phi}$ depicted on Figure 1, cf.~\cite[\S~4]{A16}.
We denote by $X_0(A)$ the root group $\{ X_0(a) \mid a \in A\}$ (as a group it is isomorphic to the additive group of $A$).
We also denote by $j$ the root adjacent to $0$ on the extended Dynkin diagram of $\Phi$.
\begin{prop} \label{prop:Allcock-affine} For a finite simply-laced irreducible root system $\Phi$ and an arbitrary commutative ring $A$
the group $\St(\Phi, A[X, X\inv])$ is isomorphic to the free product of $\St(\Phi, A)$, the group $X_0(A)$ and the infinite cyclic group $\langle S_0 \rangle$
amalgamated over the subgroup generated by the following list of relations:
{\allowdisplaybreaks\begin{align}
[S_0^2, X_0(a)] & = 1 & \text{ for $a \in A$; } \label{eq:Allcock-2} \\
X_0(1) \cdot {}^{S_0} X_0(1) \cdot X_0(1) & = S_0; \label{eq:Allcock-3} \\
[S_0, w_{\alpha_i}(1)] & = 1 &  \text{for $i$ unjoined with $0$;} \label{eq:Allcock-4} \\
[S_0, x_{\alpha_i}(a)] & = 1, &  \label{eq:Allcock-5-1}\\
[w_{\alpha_i}(1), X_0(a)] & = 1 & \text{for $a \in A$, $i$ unj. with $0$;} \label{eq:Allcock-5-2} \\
[X_0(a), x_{\alpha_i}(b)] & = 1 & \text{for $a, b \in A$, $i$ unj. with $0$;} \label{eq:Allcock-6} \\
S_0 \cdot w_{\alpha_j}(1) \cdot S_0 & = w_{\alpha_j}(1) \cdot S_0 \cdot w_{\alpha_j}(1); \label{eq:Allcock-7} \\
{}^{S_0^2} w_{\alpha_j}(1) & = w_{\alpha_j}(-1); \label{eq:Allcock-8-1} \\
{}^{w_{\alpha_j}^2(1)} S_0 & = S_0^{-1}; \label{eq:Allcock-8-2} \\
x_{\alpha_j}(a) \cdot S_0 \cdot w_{\alpha_j}(1) & = S_0 \cdot w_{\alpha_j}(1) \cdot X_0(a), & \label{eq:Allcock-9-1} \\
X_0(a) \cdot w_{\alpha_j}(1) \cdot S_0 & = w_{\alpha_j}(1) \cdot S_0 \cdot x_{\alpha_j}(a), & \label{eq:Allcock-9-2} \\
{}^{S_0^2} x_{\alpha_j}(a) & = x_{\alpha_j}(-a), & \label{eq:Allcock-10-1} \\
{}^{w_{\alpha_j}^2(1)} X_0(a) & = X_0(-a) & \text{for $a \in A;$} \label{eq:Allcock-10-2} \\
[X_0(a), {}^{S_0} x_{\alpha_j}(b)] &= 1, & \label{eq:Allcock-11-1} \\
[x_{\alpha_j}(a), {}^{w_{\alpha_j}(1)}X_0(b)] &= 1, & \label{eq:Allcock-11-2} \\
[X_0(a), x_{\alpha_j}(b)] &= {}^{S_0} x_{\alpha_j}(ab) & \text{for $a, b \in A.$} \label{eq:Allcock-12}
\end{align}}
\end{prop}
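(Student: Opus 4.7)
The plan is to deduce the presentation directly from the Curtis--Tits style presentation of affine Steinberg groups established by D.~Allcock in~\cite{A13, A16}, using the identification $\St(\Phi, A[X, X\inv]) \cong \St(\widetilde{\Phi}, A)$ provided by~\cref{lem:affine-vs-loop}. Under this identification, $X_0(a)$ corresponds to the root unipotent $x_{\alpha_0}(aX) \in \St(\widetilde{\Phi}, A)$ attached to the affinising node $0$, while $S_0$ corresponds to $w_{\alpha_0}(1)$. The argument is thus a matter of matching the abstract edge-and-vertex relations of Allcock's presentation against the explicit list~\eqref{eq:Allcock-2}--\eqref{eq:Allcock-12}.

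Allcock's theorem states that $\St(\widetilde{\Phi}, A)$ is the free product of the rank-one subgroups $\langle x_{\alpha_k}(A), w_{\alpha_k}(1)\rangle$ attached to the nodes $k = 0, 1, \ldots, \ell$ of the extended Dynkin diagram, amalgamated over relations prescribed by the edges. Since $\widetilde{\Phi}$ is simply-laced, every pair of nodes is either unjoined (yielding commutation relations) or joined by a single edge (yielding the standard $\rA_2$-interaction). The subdiagram on nodes $1, \ldots, \ell$ is the Dynkin diagram of $\Phi$, so the subgroup it generates is canonically $\St(\Phi, A)$. Consequently it suffices to show that the amalgamation relations involving the node $0$ coincide with~\eqref{eq:Allcock-2}--\eqref{eq:Allcock-12}.

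I would partition the verification into three blocks. The intrinsic $\SL_2$-relations of the $0$-node are carried by~\eqref{eq:Allcock-2} and~\eqref{eq:Allcock-3}: the first expresses centrality of the toral element $S_0^2 = h_{\alpha_0}(-1)^2$ with respect to $X_0(A)$, and the second is the definitional identity $w_{\alpha_0}(1) = x_{\alpha_0}(1) x_{-\alpha_0}(-1) x_{\alpha_0}(1)$ after rewriting $x_{-\alpha_0}(b) = {}^{S_0} x_{\alpha_0}(-b)$. The relations~\eqref{eq:Allcock-4}--\eqref{eq:Allcock-6} are precisely the vanishing commutators between the $0$-node generators and those attached to any node $i \neq j$, which follows from~\eqref{R3} because no such node is adjacent to $0$ on the extended diagram. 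The relations~\eqref{eq:Allcock-7}--\eqref{eq:Allcock-12} encode the $\rA_2$-type interaction between the $0$-node and its unique neighbor $j$, inside the image in $\St(\widetilde{\Phi}, A)$ of the rank-two subgroup $\St(\rA_2, A)$ spanned by $\alpha_0$ and $\alpha_j$.

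The bulk of the work lies in this third block. I would fix an explicit pinning of the subsystem $\{\alpha_0, \alpha_j, \alpha_0 + \alpha_j\}$ and verify each identity by direct computation from~\eqref{x-additivity}--\eqref{R3}, the definitions~\eqref{eq:w-definition}--\eqref{eq:h-definition}, and the basic identities of~\cite[\S~5]{Ma69}; alternatively, one may invoke the classical Curtis--Tits presentation for the finite type $\rA_2$, which is a special case of Allcock's result. The main obstacle I anticipate is the sign bookkeeping: since the relations~\eqref{eq:Allcock-2}--\eqref{eq:Allcock-12} are stated without explicit structure constants, I must verify that Allcock's sign convention for his generators $X_0$, $S_0$, $w_{\alpha_j}(1)$ is consistent with the simply-laced Chevalley signs $N_{\alpha_0, \alpha_j} = \pm 1$ after transport through~\cref{lem:affine-vs-loop}. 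Once this single consistency check is made inside $\St(\rA_2, A)$, the remaining identities~\eqref{eq:Allcock-8-1}--\eqref{eq:Allcock-12} follow by routine substitutions using antisymmetry and bimultiplicativity of Steinberg symbols together with the formulae~\eqref{eq:chi-w}--\eqref{eq:chi-h}.
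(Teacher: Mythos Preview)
Your approach is correct and matches the paper's: both invoke~\cref{lem:affine-vs-loop} together with Allcock's presentation~\cite[Theorem~1]{A16}, then identify $X_i(a)$ with $x_{\alpha_i}(a)$ and $S_i$ with $w_{\alpha_i}(1)$ for $i \neq 0$ so that the subgroup generated by nodes $1,\ldots,\ell$ is exactly $\St(\Phi,A)$. The paper's proof is more direct in that it simply reads the relations off~\cite[Table~1]{A16} rather than re-deriving the $\rA_2$-edge relations by hand as you propose; the only additional observation it records is that the relation $[x_{\alpha_j}(b), X_0(a)] = {}^{w_{\alpha_j}(1)} X_0(ab)$, symmetric to~\eqref{eq:Allcock-12}, has been dropped from the list as it follows from~\eqref{eq:Allcock-9-2} and~\eqref{eq:Allcock-12}.
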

\begin{proof}
This is a direct consequence of our~\cref{lem:affine-vs-loop} and the presentation of~\cite[Theorem~1]{A16} (or, alternatively, \cite[Theorem~1.1]{A13} combined with~\cite[Theorem~1.3]{A13}).
The list of relations in the statement is obtained from \cite[Table~1]{A16} by substituting concrete values of $i, j$, identifying Allcock's $X_{i}(a)$ with
$x_{\alpha_i}(a)$ and $S_i$ with $w_{\alpha_i}(a)$ for all $i\neq 0$ and then omitting all the relations are already satisfied in $\St(\Phi, A)$ (i.\,e. the relations not involving $X_0(a)$ or $S_0$).
The only further simplification is that we have omitted the relation $[X_{\alpha_j}(b), X_0(a)] = {}^{w_{\alpha_j}(1)} X_0(ab)$ (the relation symmetric to~\eqref{eq:Allcock-12})
because it is a consequence of~\eqref{eq:Allcock-9-2} and~\eqref{eq:Allcock-12}.
\end{proof}

Our next goal is to slightly simplify the presentation of~\cref{prop:Allcock-affine} by using the larger group $\St(\Phi, A[X])$ instead of $\St(\Phi, A)$ in the statement.
\begin{cor} \label{cor:Allcock-simpler}
For a finite irreducible simply-laced root system $\Phi$ and an arbitrary commutative ring $A$ the Steinberg group $\St(\Phi, A[X, X\inv])$ is isomorphic
to the free product of $\St(\Phi, A[X])$ and the infinite cyclic group $\langle S \rangle$ amalgamated over the subgroup generated by the following list of relations:
{\allowdisplaybreaks\begin{align}
[S^2, x_{\alpha_0}(aX)] & = 1 & \text{ for $a \in A$; } \label{eq:simpler-2} \\
x_{\alpha_0}(X) \cdot {}^{S} x_{\alpha_0}(X) \cdot x_{\alpha_0}(X) & = S; \label{eq:simpler-3} \\
[S, w_{\alpha_i}(1)] & = 1 & \text{ for $i$ unjoined with $0$;} \label{eq:simpler-4} \\
[S, x_{\alpha_i}(a)] & = 1 & \text{ $i$ unj. with $0$, $a \in A$; } \label{eq:simpler-5-1}\\
S \cdot w_{\alpha_j}(1) \cdot S & = w_{\alpha_j}(1) \cdot S \cdot w_{\alpha_j}(1); \label{eq:simpler-7} \\
{}^{S^2} w_{\alpha_j}(1) & = w_{\alpha_j}(-1); \label{eq:simpler-8-1} \\
{}^{w_{\alpha_j}^2(1)} S & = S^{-1}; \label{eq:simpler-8-2} \\
x_{\alpha_j}(a) \cdot S \cdot w_{\alpha_j}(1) & = S \cdot w_{\alpha_j}(1) \cdot x_{\alpha_0}(aX), & \label{eq:simpler-9-1} \\
x_{\alpha_0}(aX) \cdot w_{\alpha_j}(1) \cdot S & = w_{\alpha_j}(1) \cdot S \cdot x_{\alpha_j}(a), & \label{eq:simpler-9-2} \\
{}^{S^2} x_{\alpha_j}(a) & = x_{\alpha_j}(-a) & \text{ for $a \in A$; } \label{eq:simpler-10-1} \\
[x_{\alpha_0}(aX), {}^{S} x_{\alpha_j}(b)] &= 1, & \label{eq:simpler-11-1} \\
[x_{\alpha_0}(aX), x_{\alpha_j}(b)] &= {}^{S} x_{\alpha_j}(ab) & \text{for $a, b \in A.$} \label{eq:simpler-12}
\end{align}}
\end{cor}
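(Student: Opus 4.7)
The plan is to construct mutually inverse homomorphisms between the amalgamated free product $G$ appearing in the statement and $\St(\Phi, A[X, X\inv])$, taking Allcock's affine presentation~\cref{prop:Allcock-affine} as the point of departure. Under the isomorphism of~\cref{lem:affine-vs-loop}, the Allcock generator $X_0(a)$ is identified with $x_{\alpha_0}(aX)$ and the element $S_0$ with $w_{\alpha_0}(X) = x_{\alpha_0}(X)\cdot x_{-\alpha_0}(-X\inv)\cdot x_{\alpha_0}(X)$. The latter lies in $\St(\Phi, A[X, X\inv])$ but \emph{not} in $\St(\Phi, A[X])$, which is exactly why $S$ must be adjoined as a separate generator in the statement.

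To produce the map $G \to \St(\Phi, A[X, X\inv])$, I would apply the morphism $\St(\Phi, A[X]) \to \St(\Phi, A[X, X\inv])$ induced by the ring inclusion $A[X] \hookrightarrow A[X, X\inv]$, together with $S \mapsto w_{\alpha_0}(X)$. Under these substitutions each of~\eqref{eq:simpler-2}--\eqref{eq:simpler-12} is literally one of~\eqref{eq:Allcock-2}--\eqref{eq:Allcock-12}, hence holds by~\cref{prop:Allcock-affine}.

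Conversely, for the map $\St(\Phi, A[X, X\inv]) \to G$ defined through~\cref{prop:Allcock-affine}, I would send $\St(\Phi, A)$ into $G$ via the composite $\St(\Phi, A) \hookrightarrow \St(\Phi, A[X]) \to G$, send $X_0(a) \mapsto x_{\alpha_0}(aX)$, and send $S_0 \mapsto S$. Relations~\eqref{eq:Allcock-2}--\eqref{eq:Allcock-4}, \eqref{eq:Allcock-5-1}, \eqref{eq:Allcock-7}--\eqref{eq:Allcock-10-1}, \eqref{eq:Allcock-11-1}, and~\eqref{eq:Allcock-12} are present verbatim in the shorter list. What remains is to check that the four Allcock relations absent from the new presentation, namely~\eqref{eq:Allcock-5-2}, \eqref{eq:Allcock-6}, \eqref{eq:Allcock-10-2}, and~\eqref{eq:Allcock-11-2}, are consequences of the Chevalley commutator relations~\eqref{x-additivity}--\eqref{R3} inside $\St(\Phi, A[X])$; these involve only polynomial-valued generators, so the verification can indeed be carried out there, and then transfers to~$G$.

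The main obstacle, which is really just careful bookkeeping, is this last verification. For $i$ unjoined with $0$ in the extended Dynkin diagram the simple roots $\alpha_i$ and $\alpha_0$ are orthogonal, forcing $\alpha_i + \alpha_0 \notin \Phi$ because $\Phi$ is simply-laced; this immediately yields~\eqref{eq:Allcock-6} via~\eqref{R3} and, combined with Matsumoto's conjugation formula~\cite[Lemme~5.1]{Ma69}, also yields~\eqref{eq:Allcock-5-2}. Relation~\eqref{eq:Allcock-10-2} amounts to conjugation by $w_{\alpha_j}^2(1) = h_{\alpha_j}(-1)$ acting on $x_{\alpha_0}(aX)$ through the character $(-1)^{\langle \alpha_0, \alpha_j^\vee\rangle} = -1$, with $j$ adjacent to $0$. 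Finally, for~\eqref{eq:Allcock-11-2} one uses $s_{\alpha_j}(\alpha_0) = \alpha_0 + \alpha_j$, so ${}^{w_{\alpha_j}(1)}x_{\alpha_0}(bX)$ is (up to sign) the element $x_{\alpha_0 + \alpha_j}(bX)$; the inner product computation $(\alpha_j, \alpha_0 + \alpha_j) = 1$ then shows $2\alpha_j + \alpha_0 \notin \Phi$, producing the commutation by~\eqref{R3}. Keeping Matsumoto's signs consistent across these four computations is the only delicate point.
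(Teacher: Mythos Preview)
Your proposal is correct and follows essentially the same approach as the paper. The paper's proof is extremely terse—it simply states that the relations are obtained from~\eqref{eq:Allcock-2}--\eqref{eq:Allcock-12} by identifying $X_0(a)$ with $x_{\alpha_0}(aX)$, $S_0$ with $S$, and omitting those relations which already hold in $\St(\Phi, A[X])$ by virtue of Lemmas~5.1--5.2 of~\cite{Ma69}—whereas you spell out explicitly which four relations are omitted and why each follows from the Steinberg relations together with Matsumoto's conjugation formulae; this is exactly the content the paper leaves to the reader. The only thing you leave implicit is the routine verification that your two homomorphisms are indeed mutually inverse (equivalently, that $\St(\Phi,A)$ together with the elements $x_{\alpha_0}(aX)$ already generate $\St(\Phi,A[X])$, which follows by conjugating with Weyl elements and iterating commutators), but the paper does not make this explicit either.
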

\begin{proof}
The list of relations is obtained from~\eqref{eq:Allcock-2}--\eqref{eq:Allcock-12} by identifying $X_0(a)$ with $x_{\alpha_0}(aX)$, $S_0$ with $S$ and omitting those relations
which are satisfied in the group $\St(\Phi, A[X])$ by virtue of Lemmas 5.1--5.2 of ~\cite{Ma69}.
\end{proof}
\begin{rem}
Notice that the generator $S$ in the above presentation corresponds to the element $w_{-\alpha_\mathrm{max}}(X) \in \St(\Phi, A[X, X\inv])$ under the above isomorphism.
\end{rem}

\subsection{Relationship between $\St(\Phi, A[X, X\inv])$ and $\St(\Phi, A[X])$.} \label{subsec:short-presentation}
Now let $\Phi$ be a root system of type $\rD_\ell$, $\rE_6$ or $\rE_7$.
Denote by $k$ the vertex of the extended Dynkin diagram marked green on Figure~1.
Recall that in each case $m_k(\alpha_\mathrm{\max}) = 1$, therefore $m_k(\alpha) = 1$ for all $\alpha \in \Sigma_k^+$ and the subgroup $\UU(\Sigma^+_k, R)$ is abelian.
Also the weight $\varpi_k$ is a \textit{microweight} of $\Phi$, cf. e.\,g.~\cite[\S~2]{Ge17}.

Denote by $G$ the amalgamated product of $\St(\Phi, A[X])$ and the cyclic group $\langle \sigma \rangle$ amalgamated over the subgroup generated by the following relations:
\begin{align}
{}^\sigma x_{\alpha}(f) = & x_{\alpha} (Xf), & \alpha \in \Sigma^+_k, f \in A[X], \label{eq:sigma-sigma-plus} \\
x_{\beta}(f)^ \sigma     =& x_{\beta} (Xf), & \beta \in \Sigma^-_k, f \in A[X], \label{eq:sigma-sigma-minus} \\
[\sigma,\, x_\gamma(f)]   =& 1, & \gamma \in \Delta_k, f \in A[X]. \label{eq:sigma-delta}
\end{align}
It is clear that the action of the generator $\sigma$ is chosen to mimick the action of the weight element $\chi_{\omega_k, X}$ from~\cref{subsec:weight-automorphisms}.

We denote by $i_+$ the canonical homomorphism $\St(\Phi, A[X]) \to G$ and by $h_+$ the canonical embedding $A[X] \to A[X, X\inv]$.

The main result of this subsection is the following
\begin{prop} \label{prop:rel-poly-Laurent}
For $\Phi$ and $G$ as above there exists a group homomorphism $\varphi$ making the diagram below commute:
\[\begin{tikzcd} & \St(\Phi, A[X, X\inv]) \arrow[rd, dashrightarrow, "\varphi"] & \\
\St(\Phi, A[X]) \arrow{ru}{h_+^*} \arrow{rr}{i_+} & & G.
\end{tikzcd}\]
\end{prop}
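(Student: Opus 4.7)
The strategy is to use the Curtis--Tits type presentation of $\St(\Phi, A[X, X\inv])$ given by~\cref{cor:Allcock-simpler}: it displays $\St(\Phi, A[X, X\inv])$ as the free product $\St(\Phi, A[X]) * \langle S \rangle$ amalgamated by the list of relations~\eqref{eq:simpler-2}--\eqref{eq:simpler-12}. By the universal property of such an amalgamated product, constructing the desired $\varphi$ amounts to (i)~using $i_+$ on the $\St(\Phi, A[X])$-factor (which is forced by $\varphi\circ h_+^* = i_+$), (ii)~choosing a single element $s\in G$ to serve as $\varphi(S)$, and (iii)~checking that each amalgamating relation becomes a true identity in $G$ after these substitutions.

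The natural candidate for $s$ is dictated by the factorization
\[ S \;=\; w_{\alpha_0}(X) \;=\; x_{\alpha_0}(X)\cdot x_{\alpha_\mathrm{max}}(-X\inv)\cdot x_{\alpha_0}(X) \]
inside $\St(\Phi, A[X, X\inv])$: while the middle factor $x_{\alpha_\mathrm{max}}(-X\inv)$ does not lie in $\St(\Phi, A[X])$, it is precisely $\chi_{\varpi_k, X\inv}$ applied to $x_{\alpha_\mathrm{max}}(-1)$ (recall $\alpha_\mathrm{max}\in\Sigma_k^+$), and the defining relations~\eqref{eq:sigma-sigma-plus}--\eqref{eq:sigma-delta} of $G$ force $\sigma$-conjugation to model the weight automorphism $\chi_{\varpi_k, X}$ on the relevant root subgroups. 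Accordingly, I set
\[ s \;:=\; i_+(x_{\alpha_0}(X)) \cdot \sigma\inv \cdot i_+(x_{\alpha_\mathrm{max}}(-1)) \cdot \sigma \cdot i_+(x_{\alpha_0}(X)) \;\in\; G. \]

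What remains is a direct—if tedious—verification of the amalgamating relations. Each relation is checked by expanding $s$ and successively pushing every $\sigma^{\pm 1}$ factor across the adjacent root unipotent using~\eqref{eq:sigma-sigma-plus}--\eqref{eq:sigma-delta} (which multiplies the argument by the appropriate power of $X$), after which the surviving identity becomes one valid inside $\St(\Phi, A[X])$ by virtue of the Steinberg relations~\eqref{x-additivity}--\eqref{R3} and the standard formulae of~\cite[Lemme~5.1]{Ma69}. For instance,~\eqref{eq:simpler-3} collapses to the braid identity $ABA = BAB$ for $A = i_+(x_{\alpha_0}(X))$ and $B = \sigma\inv i_+(x_{\alpha_\mathrm{max}}(-1))\sigma$; using $\sigma\cdot x_{\alpha_0}(X)\cdot\sigma\inv = x_{\alpha_0}(1)$ to move the outer $A$'s past their neighbouring $\sigma^{\pm 1}$ reduces both sides to $\sigma\inv\cdot i_+(x_{\alpha_0}(1)x_{\alpha_\mathrm{max}}(-1)x_{\alpha_0}(1))\cdot\sigma = \sigma\inv\cdot i_+(w_{\alpha_0}(1))\cdot\sigma$. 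The ``commutation'' relations~\eqref{eq:simpler-4} and~\eqref{eq:simpler-5-1} are analogous: since $\alpha_i$ unjoined with $\alpha_0$ is orthogonal to both $\alpha_0$ and $\alpha_\mathrm{max}$, the unipotents commute past $x_{\alpha_0}(X)$ and $x_{\alpha_\mathrm{max}}(-1)$ by~\eqref{R3}, while the two $\sigma^{\pm 1}$ shifts cancel even in the case $i = k$ (when $\alpha_i\in\Sigma_k^+$ rather than $\Delta_k$). The main technical obstacle is~\eqref{eq:simpler-12}, which encodes the Chevalley commutator $[x_{\alpha_0}(aX), x_{\alpha_j}(b)] = x_{\alpha_0+\alpha_j}(\pm abX)$ as the twisted conjugate ${}^s x_{\alpha_j}(ab)$: its check requires tracking the structure constants $N_{\alpha,\beta}$ and $X$-powers through the five successive conjugations of $x_{\alpha_j}(ab)$ by the factors defining $s$, with intermediate roots $\alpha_0+\alpha_j\in\Sigma_k^-$ and $\alpha_\mathrm{max}+(\alpha_0+\alpha_j)=\alpha_j\in\Delta_k$ both appearing, though the known Weyl action of $w_{\alpha_0}(X)$ at the Chevalley-group level constrains and guides the sign bookkeeping.
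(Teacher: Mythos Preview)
Your approach is essentially the same as the paper's: the same presentation from \cref{cor:Allcock-simpler}, the same element $s$ (the paper writes it as $x_{\alpha_0}(X)\cdot x_{-\alpha_0}(-1)^\sigma\cdot x_{\alpha_0}(X)$, which is literally your expression since $-\alpha_0=\alpha_{\max}$), and the same relation-by-relation verification. The one organizational point you are missing is that $s = w_{\alpha_0}(1)^\sigma$ (use \eqref{eq:sigma-sigma-minus} on the outer factors), which turns each check into a single $\sigma$-conjugate of a known identity in $\St(\Phi,A[X])$ rather than a five-fold push of $\sigma^{\pm1}$; in particular your ``technical obstacle'' \eqref{eq:simpler-12} becomes immediate once you observe this.
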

\begin{proof}
We will use the presentation of~\cref{cor:Allcock-simpler} for $\St(\Phi, A[X, X\inv])$.
We only need to specify the value of $\varphi$ on the generator $S$.
Observe that $-\alpha_0 = \alpha_{\max} \in \Sigma_k^+$, therefore the obvious candidate for the role of $\varphi(x_{-\alpha_0}(aX\inv))$ would be $x_{-\alpha_0}(a)^\sigma$, cf.~\eqref{eq:sigma-sigma-plus},
which motivates the following definition for $\varphi(S)$:
\[\varphi(S) := x_{\alpha_0}(X) \cdot x_{-\alpha_0}(-1)^\sigma \cdot x_{\alpha_0}(X).\]
From~\eqref{eq:w-definition},\eqref{eq:sigma-sigma-minus} it follows that $\varphi(S) = w_{\alpha_0}(1)^\sigma$.

We need to check that our definition is correct, namely that $\varphi$ respects the relations listed in~\cref{cor:Allcock-simpler}.
We claim that this follows from the fact that the substitution of $w_{\alpha_0}(1)^\sigma$ into $S$ turns every formula from \cref{cor:Allcock-simpler}
into a $\sigma$-conjugate of a valid relation in $\St(\Phi, A[X])$.

We will prove this claim for relations~~\eqref{eq:simpler-3}, \eqref{eq:simpler-5-1} and \eqref{eq:simpler-9-1}, for other relations it is similar but easier.
Let us show that $\varphi$ preserves~\eqref{eq:simpler-3}.
Using~\eqref{eq:sigma-sigma-minus} combined with~\cite[Lemma~5.1b]{Ma69} we obtain that
\begin{multline*}
\varphi(x_{\alpha_0}(X) \cdot {}^{S} x_{\alpha_0}(X) \cdot x_{\alpha_0}(X)) = x_{\alpha_0}(1)^\sigma \cdot {}^{w_{\alpha_0}(1)^\sigma} x_{\alpha_0}(1)^\sigma \cdot x_{\alpha_0}(1)^\sigma = \\
= \left( x_{\alpha_0}(1) \cdot {}^{w_{\alpha_0}(1)} x_{\alpha_0}(1) \cdot x_{\alpha_0}(1)\right)^\sigma = \left(x_{\alpha_0}(1) \cdot x_{-\alpha_0}(-1) \cdot x_{\alpha_0}(1)\right)^\sigma = w_{\alpha_0}(1)^\sigma = \varphi(S).
\end{multline*}

Let us show that $\varphi$ preserves~\eqref{eq:simpler-5-1}.
Set $g = x_{\alpha_i}(a)$ if $i \neq k$ and $g = x_{\alpha_k}(aX)$ if $i = k$.
From~\eqref{eq:sigma-sigma-plus},\eqref{eq:sigma-delta} and the commutator formulae we obtain that
\begin{equation*}
\varphi([S, x_{\alpha_i}(a)]) = [w_{\alpha_0}(1)^\sigma, x_{\alpha_i}(a)] = [w_{\alpha_0}(1)^\sigma, g^\sigma] = [w_{\alpha_0}(1), g]^\sigma = 1^\sigma = 1.
\end{equation*}

Let us show that $\varphi$ preserves~\eqref{eq:simpler-9-1}.
We claim that in $\St(\Phi, A)$ one has
\begin{equation} \label{eq:simpler-relation} {}^{w_{\alpha_0}(1) w_{\alpha_j}(1)} x_{\alpha_0}(a) = x_{\alpha_j}(a). \end{equation}
This identity can be verified by direct computation using~\cite[Lemma~5.1]{Ma69} and the standard identities for structure constants~\cite[\S~14]{VP} or,
alternatively, it can be obtained by applying the evaluation homomorphism
$\mathrm{ev}_{X=1}^*\colon \St(\Phi, A[X, X\inv]) \to \St(\Phi, A)$ to both sides of the relation~\eqref{eq:Allcock-9-1}.
Consequently, from~\eqref{eq:sigma-sigma-minus},\eqref{eq:sigma-delta} and~\eqref{eq:simpler-relation} we obtain that
\begin{multline*}
\varphi(x_{\alpha_j}(a) \cdot S \cdot w_{\alpha_j}(1)) = x_{\alpha_j}(a) \cdot w_{\alpha_0}(1)^\sigma \cdot w_{\alpha_j}(1) = \\
= \left(x_{\alpha_j}(a) \cdot w_{\alpha_0}(1) \cdot w_{\alpha_j}(1)\right)^\sigma = \left(w_{\alpha_0}(1) \cdot w_{\alpha_j}(1) \cdot x_{\alpha_0}(a)\right)^\sigma = \\
= w_{\alpha_0}(1)^\sigma \cdot w_{\alpha_j}(1) \cdot x_{\alpha_0}(aX) = \varphi(S \cdot w_{\alpha_j}(1) \cdot x_{\alpha_0}(aX)). \qedhere
\end{multline*}
\end{proof}

\subsection{Relative Curtis--Tits decompositions}
We aim of this subsection is to recall the amalgamation theorem for relative Steinberg groups from~\cite{S15}.

Notice that Allcock's presentation from~\cite{A16, A13} uses the identity element of $R$ in its statement,
so it is not possible to directly generalize it to the relative case by replacing $R$ with an ideal $I$.
A less naive idea to amalgamate rank $2$ relative Steinberg groups over the edges of the Dynkin diagram of $\Phi$ also would not work
because the resulting group would not even contain root subgroups $X_\alpha(I)$ for all $\alpha \in \Phi$.
Nevertheless, it turns out to be possible to obtain a weak analogue of Curtis--Tits decomposition for relative Steinberg groups of simply-laced type
by decomposing $\St(\Phi, R, I)$ into an amalgamated product of multiple copies of Steinberg groups $\St(\rA_3, R, I) \cong \St(4, R, I)$.

Let $\Phi$ be an arbitrary simply-laced irreducible root system of rank $\geq 3$.
We denote by $A_3(\Phi)$ the set consisting of all root subsystems of type $\rA_3$ of $\Phi$.
Under our assumption on $\Phi$ the relative Steinberg group $\St(\Phi, R, I)$ is generated by elements
$z_\alpha(m, a) = x_\alpha((m; 0))^{x_{-\alpha}((a; a))}C$, $\alpha \in \Phi$, $a \in R$, $m \in I$, see~\cite[\S~3.1]{S15}.

Denote by $\widetilde{G}$ the free product of relative Steinberg groups $\St(\Psi, R, I)$, where $\Psi \in A_3(\Phi)$.
For $\Psi \in A_3(\Phi)$ also denote by $i_\Psi$ the canonical embedding $\St(\Psi, R, I) \to \widetilde{G}$.
We denote by $G$ the quotient of $\widetilde{G}$ modulo all relations of the form $i_{\Psi_1}(z_\alpha(m, a)) = i_{\Psi_2}(z_\alpha(m, a))$,
where $\Psi_1, \Psi_2 \in A_3(\Phi)$ both contain a common root $\alpha \in \Phi$ and $a\in R$, $m \in I$.

\begin{thm}\label{thm:relPres} In the above notation the group $\St(\Phi, R, I)$ is isomorphic to $G$ as an abstract group. \end{thm}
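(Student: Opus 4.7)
The plan is to construct mutually inverse homomorphisms between $G$ and $\St(\Phi, R, I)$. Building the surjection $\rho \colon G \twoheadrightarrow \St(\Phi, R, I)$ is immediate: for each $\Psi \in A_3(\Phi)$, functoriality of the relative Steinberg group applied to the inclusion $\Psi \hookrightarrow \Phi$ yields a homomorphism $\St(\Psi, R, I) \to \St(\Phi, R, I)$ sending $z_\alpha(m, a)$ to $z_\alpha(m, a)$. These maps agree on the generators identified by the amalgamation relations, so the universal property of the amalgamated product yields $\rho$. By~\cref{lem:relative-generators}, $\St(\Phi, R, I)$ is generated by the $z_\alpha(m, a)$, and every root $\alpha$ of a simply-laced $\Phi$ of rank $\geq 3$ sits in some $\rA_3$-subsystem, so $\rho$ is surjective.

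For the inverse, I would rely on the explicit presentation of $\St(\Phi, R, I)$ by generators $z_\alpha(m, a)$ and relations provided by~\cite[Proposition~6]{S15}. I define $\phi \colon \St(\Phi, R, I) \to G$ on each generator by $\phi(z_\alpha(m, a)) := i_{\Psi_\alpha}(z_\alpha(m, a))$ for some fixed choice of $\Psi_\alpha \in A_3(\Phi)$ containing $\alpha$. The defining amalgamation relations of $G$ guarantee that this prescription is independent of the chosen $\Psi_\alpha$. The essential task is then to verify that every defining relation of $\St(\Phi, R, I)$ already holds in $G$.

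The pivotal combinatorial observation is that each defining relation in the presentation of~\cite{S15} involves only finitely many generators $z_\alpha(m, a)$ whose indexing roots (together with their opposites, which appear implicitly in the definition $z_\alpha(m, a) = x_\alpha(m)^{x_{-\alpha}(a)}$) span at most a rank-$2$ subsystem of $\Phi$. Any simply-laced subsystem of rank $\leq 2$ of $\Phi$ embeds into an $\rA_3$-subsystem $\Psi \in A_3(\Phi)$; within $i_\Psi(\St(\Psi, R, I)) \leq G$, the same relation holds tautologically since $\St(\Psi, R, I)$ satisfies the analogous relation by definition. Hence each defining relation is respected, $\phi$ is well-defined, and by construction $\phi$ and $\rho$ are mutually inverse on the generators $z_\alpha(m, a)$.

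The main obstacle I anticipate is the bookkeeping needed to align the choices $\Psi_\alpha$ across a single defining relation: one must show that the various $i_{\Psi_\alpha}(z_\alpha(m, a))$ appearing in a fixed relation can be simultaneously rewritten via the amalgamation relations as images from one common $\Psi \in A_3(\Phi)$ containing all the relevant roots. This reduces to verifying, for each type of relation in~\cite[Proposition~6]{S15} separately, a small combinatorial fact about root configurations in $\rD_\ell$, $\rE_6$, $\rE_7$, $\rE_8$, and $\rA_\ell$, $\ell \geq 3$. Once this verification is complete, the isomorphism $G \cong \St(\Phi, R, I)$ follows.
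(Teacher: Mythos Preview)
The paper does not actually give a proof of this statement: its entire argument is the citation ``See~\cite[\S~2]{LS20}.'' So there is no in-paper proof to compare against, and your proposal is in effect an attempt to reconstruct the argument that is outsourced to that reference.

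Your outline is the correct strategy and matches the standard approach to such amalgamation statements. The surjection $\rho$ is unproblematic; for the inverse you correctly identify that one should use the generators-and-relations presentation of $\St(\Phi, R, I)$ from~\cite[Proposition~6]{S15} and check that every defining relation already holds inside some $i_\Psi(\St(\Psi, R, I))$. The key combinatorial input you state---that the roots occurring in any single defining relation of~\cite{S15} lie in a subsystem of rank $\leq 2$, and that any such subsystem of a simply-laced irreducible $\Phi$ of rank $\geq 3$ is contained in some $\Psi \in A_3(\Phi)$---is exactly what drives the argument in~\cite[\S~2]{LS20}. One point worth tightening: you should verify the embedding claim separately for the two rank-$2$ shapes $\rA_2$ and $\rA_1 \times \rA_1$ (two orthogonal roots), since the latter is the less obvious case and is precisely where the hypothesis $\mathrm{rk}\,\Phi \geq 3$ is used. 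Once that is done, the ``bookkeeping'' you flag is automatic: the amalgamation relations defining $G$ let you move all the $i_{\Psi_\alpha}(z_\alpha(m,a))$ occurring in a given relation into the single copy $i_\Psi(\St(\Psi, R, I))$, where the relation holds by definition.
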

\begin{proof}
See~\cite[\S~2]{LS20}.
\end{proof}

\begin{rem}
Recently E.~Voronetsky has found an explicit presentation of the relative Steinberg group $\St(\Phi, R, I)$
by means of generators and relations, see~\cite{V22}.
This result can be thought of as a further strengthening of~\cref{thm:relPres}
\end{rem}

\section{Horrocks theorem for $\K_2$} \label{sec:horrocks}
\begin{dfn}
Let $K$ be a functor from commutative rings to groups.
Recall from~\cite{LSV2} that $K$ is called \textit{locally acyclic (resp., locally acyclic for domains)} if for every commutative local ring (resp., domain)
$A$ the following diagram whose arrows are induced by natural embeddings is a pullback square:
\begin{equation}\label{eq:P1-square} \begin{tikzcd} K(A) \ar[r] \ar[d] & K(A[X]) \arrow{d} \\ K(A[X\inv]) \ar{r} & K(A[X, X\inv]). \end{tikzcd} \end{equation}
\end{dfn}
Observe that the top and left arrows in the above diagram are injective (they admit a section induced by evaluation at $X=1$).
Consequently, if~\eqref{eq:P1-square} is a pullback then all arrows on the diagram are injective.

Now we are ready formulate the $\K_2$-analogue of Horrocks theorem for domains,
which is the main result of this section.
\begin{thm}\label{thm:horrocks-k2}
Let $\Phi$ be a root system of type $\rA_{\geq 4}$, $\rD_{\geq 5}$, $\rE_6$ or $\rE_7$.
Then the functor $\K_{2}(\Phi, -)$ is locally acyclic for domains.
\end{thm}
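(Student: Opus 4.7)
The plan is to verify the pullback property by a direct patching argument in the spirit of Tulenbaev's proof of \cite[Proposition~4.3]{Tu83}, replacing his ad hoc constructions by the amalgamation structure of \cref{cor:Allcock-simpler}/\cref{prop:rel-poly-Laurent} and the $\omega$-pair formalism of \cref{subsec:weight-automorphisms}. Fix a local domain $A$ with maximal ideal $M$. Since the top and left arrows of the square are split injective via $\mathrm{ev}_{X=1}^*$, the map from $\K_2(\Phi, A)$ into the fibre product is automatically injective, so only surjectivity requires work.

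Given a pair $(a,b)$ in the fibre product, I would first choose lifts $\tilde a \in \St(\Phi, A[X])$ and $\tilde b \in \St(\Phi, A[X^{-1}])$ of $a$ and $b$ satisfying $\mathrm{ev}_{X=1}^*(\tilde a) = \mathrm{ev}_{X=1}^*(\tilde b) = 1$. Because $\K_2(\Phi, -)$ is literally a (central) subgroup of $\St(\Phi, -)$ and $a, b$ have the same image in $\K_2(\Phi, A[X, X\inv])$, the two lifts coincide as a single element $g := j_+^*(\tilde a) = j_-^*(\tilde b)$ of $\St(\Phi, A[X, X\inv])$. The problem thus reduces to the following assertion: an element of $\St(\Phi, A[X])$ whose image in $\St(\Phi, A[X, X\inv])$ already lies in the subgroup $\St(\Phi, A[X\inv])$ is necessarily the image of an element of $\St(\Phi, A)$, up to a controlled central correction in $\K_2$.

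The main tool is the microweight $\varpi_k$ of $\Phi$ corresponding to the vertex marked green on Figure~1 (which exists for every type allowed in the statement, since $\Phi \supseteq \rA_4$ ensures a suitable microweight-bearing extended node). I would construct the $\omega$-pair $\sigma(\pm\varpi_k)$ of \cref{dfn:delta-pair} using the explicit elements $X^d(u,v)$ and $Y^d(v,u)$ from \cref{dfn:xy-def} applied componentwise via the relative Curtis--Tits decomposition of \cref{thm:relPres}, which presents $\St(\Phi, A[X], XA[X])$ as an amalgam of $\St(\rA_3, A[X], XA[X])$-pieces. With $\sigma(\pm\varpi_k)$ in hand, the commutative diagram~\eqref{eq:sigma-diagram} identifies the effect of $\chi_{\varpi_k, X}$ on $g$ with a degree-shift operation. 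Decomposing $g$ via the Bruhat form of \cref{lem:bruhat} and iteratively applying $\chi_{\pm\varpi_k, X}$ to reduce the $X$-degree of each Bruhat component, the element $g$ is brought into $\St(\Phi, A)$, which is then the desired common lift.

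The main obstacle will be controlling the central Steinberg symbols $\{X^{\langle \varpi_k, \alpha\rangle}, u\}$ produced along the way by \cref{lem:winv-chiw} and formula~\eqref{eq:chi-h}. Here the hypothesis that $A$ is a domain enters decisively via \cref{lem:symbols}: the relative symbol map $\{X, -\}_r$ is injective on $(1+M)^\times$ and its image meets the central kernel $C(\Phi, A[X,X\inv], M[X, X\inv])$ trivially, so symbol corrections accumulated during the iterative reduction cannot produce spurious central obstructions and any surviving correction genuinely comes from $\K_2(\Phi, A)$. The type restrictions $\rA_{\geq 4}, \rD_{\geq 5}, \rE_6, \rE_7$ on $\Phi$ are precisely what guarantees an extended Dynkin diagram with a microweight-labelled node $k$ whose Levi $\Delta_k$ contains a root system of rank $\geq 3$ of simply-laced type, which is the hypothesis needed to invoke \cref{thm:relPres} and to make the construction of $\sigma(\varpi_k)$ go through.
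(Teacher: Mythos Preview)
Your proposal correctly identifies all the ingredients the paper uses---the $\varpi_k$-pair built from the $X^d$, $Y^d$ elements via \cref{thm:relPres}, the automorphisms $\chi_{\varpi_k,X}$, the role of \cref{lem:symbols} in exploiting the domain hypothesis, and the reason for the type restrictions---but the central step, the ``iterative degree reduction'', is not a proof and does not match what the paper actually does.

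There are two concrete problems. First, \cref{lem:bruhat} applies only over a \emph{local} ring; neither $A[X]$ nor $A[X,X^{-1}]$ is local, so you cannot Bruhat-decompose $g$ as written. Second, even granting some decomposition, the maps $\sigma(\pm\varpi_k)$ are defined only on the subgroups $N_{\pm\varpi_k}\leq\St(\Phi,A[X])$, and applying $\chi_{\varpi_k,X}$ to an element of $\St(\Phi,A[X,X^{-1}])$ does not move it closer to $\St(\Phi,A)$ in any sense you have made precise: there is no degree function being reduced and no termination argument. A related issue is that your $\St$-level claim (``an element of $\St(\Phi,A[X])$ whose image lies in $\St(\Phi,A[X^{-1}])$ comes from $\St(\Phi,A)$'') is stronger than what you need and presupposes injectivity of $h_+^*$, which is itself part of what the theorem asserts.

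The paper circumvents all of this by an indirect route. Rather than manipulating a single element, it builds a \emph{model set} $\overline{V}$ out of triples in $\St(\Phi,A[X])\times B\times\St(\Phi,A[X^{\pm1}],M[X^{\pm1}])$ modulo an explicit equivalence (\cref{subsec:triples}), and then constructs an action of $\St(\Phi,A[X,X^{-1}])$ on $\overline{V}$. By \cref{prop:rel-poly-Laurent} this reduces to defining a single bijection $\overline{\delta}_{\varpi_k}$ of $\overline{V}$ compatible with the left-translation action of $\St(\Phi,A[X])$; the $\varpi_k$-pair enters exactly here, acting on the first coordinate of a triple while $\chi_{\varpi_k,X}$ and $\widetilde{\chi}_{\varpi_k,X}$ act on the second and third (\cref{sigma-def}). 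The Bruhat decomposition is used only over the genuinely local ring $A$ to show every triple is equivalent to one in the domain of $\delta_{\varpi_k}$ (\cref{lem:v-correctness1}). Once the action exists, \cref{lem:second-reduction} and \cref{lem:first-reduction} convert this into the pullback statement via the surjectivity of~\eqref{eq:c-surj}; the injectivity you would need for a direct argument is obtained only \emph{a posteriori}. The model-set formalism is precisely the bookkeeping device that makes the ``degree shift'' idea rigorous, and it is not a step you can skip.
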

Horrocks theorem for $\K_2$ has been previously known in the linear and even orthogonal case $\Phi=\rA_{\geq 4},\rD_{\geq 7}$, see~\cite[Proposition~4.3]{Tu83} and~\cite[Theorem~1]{LS20}, respectively.
Notice that the orthogonal Horrocks theorem for $\K_2$ was proved in~\cite{LS20} only under the additional assumption that $2$ is invertible.
Thus, the novelty of~\cref{thm:horrocks-k2} lies not only in its applicability to root systems of types $\rD_5$, $\rD_6$, $\rE_6$, and $\rE_7$, but also in the fact that
it is proved without assuming that 2 is invertible -- albeit under the requirement that the base ring is a domain.

Since our proof of \cref{thm:horrocks-k2} is rather long and technical, we will first outline its main structure.
Let \( A \) be a local ring with maximal ideal \( M \) and residue field \( \kappa \).
Recall that proofs of Horrocks' theorem for \( \K_1 \) are typically based on the decomposition of the elementary subgroup over the Laurent polynomial ring,
commonly referred to as \textit{Suslin's lemma} or \textit{Suslin's structure theorem}; see~\cite{Abe83, Su77}, \cite[\S~VI.6]{Lam10}.
In the linear case, this decomposition asserts that the elementary linear group
\( \E(n, A[X, X\inv]) = \Esc(\rA_{n-1}, A[X, X\inv]) \) admits the following group factorization for \( n \geq 3 \):
\begin{equation}\label{eq:triple-decomposition}
\E(n, A[X^{\pm 1}]) = \E(n, A[X]) \cdot B(A[X^{\pm 1}]) \cdot \E(n, A[X^{\pm 1}], M[X^{\pm 1}]).
\end{equation}
Here, \( \E(n, R, I) \) denotes the relative elementary subgroup, i.e., the kernel of the canonical reduction homomorphism \( \E(n, R) \to \E(n, R/I), \)
while \( B(R) \) denotes the Borel subgroup (i.e., the semidirect product of the unipotent radical \( \UU(\Phi^+, R) \) and the group of diagonal matrices).

To prove Suslin's structure theorem, one constructs a collection of pairwise commuting diagonal automorphisms \( \delta_i \) (\( 1 \leq i \leq n \))
and shows that the product \( V \) of the three subgroups on the right-hand side of~\eqref{eq:triple-decomposition} is stabilized by these automorphisms.
Since the product \( V \) is clearly stabilized by left translations by elements of \( \E(n, A[X]) \),
the decomposition~\eqref{eq:triple-decomposition} then follows from the fact that the minimal subgroup of \( \E(n, A[X, X\inv]) \)
that contains the image of \( \E(n, A[X]) \) and is simultaneously invariant under the action of all \( \delta_i \)
coincides with all of \( \E(n, A[X, X\inv]) \).

Tulenbaev's proof of Horrocks' theorem for \( \K_2 \) is analogous to Suslin's proof for \( \K_1 \).
The key difference, however, is that proving an analogue of the factorization~\eqref{eq:triple-decomposition} for the Steinberg group
\( \St(\Phi, A[X, X\inv]) \) alone is insufficient.
Instead, one must establish a much more refined result, namely to construct a ``model set'' \( \overline{V} \) for the group
\( \St(\Phi, A[X, X\inv]) \) using the same three ingredient groups: \( \St(\Phi, A[X]) \), the Borel subgroup, and the relative group
\( \St(\Phi, A[X^\pm], M[X^\pm]) \).
By a ``model set,'' we mean a set upon which \( \St(\Phi, A[X, X\inv]) \) acts simply transitively.
This technique likely originates from~\cite{ST76}, where it was employed to prove the injective stability theorem for \( \K_2 \).
It was subsequently adapted by M.~Tulenbaev in his proof of Horrocks' theorem for \( \K_2 \) in the linear case; see Propositions~4.1 and~4.3 in~\cite{Tu83}.
The same method is also used in~\cite[Theorem~3]{LS20}, a direct generalization of~\cite[Proposition~4.3]{Tu83}.
We adopt this technique to prove \cref{thm:horrocks-k2}, which should be regarded as a generalization of~\cite[Proposition~4.1]{Tu83}.

Our proof differs from Tulenbaev's in that it uses the economical presentation of the Steinberg group over Laurent polynomial rings, cf.~\cref{prop:rel-poly-Laurent}.
This allows us to construct a single ``weight automorphism'' $\delta_{\varpi_k}$ instead of multiple ones, as in Tulenbaev's approach.
Also, this streamlines the proof by avoiding the task of verifying that weight automorphisms commute with one another,
which is particularly hard to prove for root systems of types $\rD_\ell$ and $\rE_\ell$.

The key steps of the proof are as follows:
\begin{itemize}
\item \textbf{Step 1.} In~\cref{subsec:triples}, we introduce an abstract formalism of group decompositions, enabling us to construct the set \( \overline{V} \) modeling
the Steinberg group \( \St(\Phi, A[X, X\inv]) \). This serves as the groundwork for formulating the \( \K_2 \)-analogue of Suslin's structure theorem~\eqref{eq:triple-decomposition}.
\item \textbf{Step 2.} In~\cref{subsec:structure-theorem-overview}, we use this formalism to construct the "model set" \( \overline{V} \) and precisely formulate the \( \K_2 \)-analogue
of Suslin's structure theorem. We also demonstrate how \cref{thm:horrocks-k2} reduces to a concrete technical statement: the existence of an action of \( \St(\Phi, A[X, X\inv]) \) on \( \overline{V} \).
\end{itemize}

\cref{prop:rel-poly-Laurent} reduces the problem of constructing the action of \( \St(\Phi, A[X, X\inv]) \) on \( \overline{V} \) to the construction of
the automorphism \( \delta_{\varpi_k} \) modeling the action of the weight automorphism \( \chi_{\varpi_k, X} \) (cf.~\cref{subsec:weight-automorphisms}).
Here, the index \( k \) corresponds to the simple root marked in green in Figure~1 of \cref{subsec:curtis-tits}.
The construction of \( \delta_{\varpi_k} \) then proceeds as follows:
\begin{itemize}
\item \textbf{Step 3.} First, we establish the existence of a \( \varpi_k \)-pair in the sense of \cref{dfn:delta-pair}.
For the root system of type \( \rA_3 \), we construct the \( \varpi_1 \)-pair using the presentation from \cref{prop:rel-presentation}.
We then generalize this result using the amalgamation theorem for relative Steinberg groups (i.\,e. \cref{thm:relPres}), as shown in \cref{subsec:construction-sigma}.
\item \textbf{Step 4.} Finally, we complete the construction of the action of \( \St(\Phi, A[X, X\inv]) \) on \( \overline{V} \) and verify all the necessary relations.
This is carried out in \cref{sec:construction-delta}.
\end{itemize}

\subsection{Formalism of triples}\label{subsec:triples}
Let $\mu\colon M \to N$ and $\mu' \colon M' \to N'$ be a pair of precrossed modules (cf.~\cref{dfn:crossed-module}).
By definition, a \textit{homomorphism of precrossed modules} $(f, g)\colon \mu \to \mu'$ is a pair of group homomorphisms $f\colon M \to M'$, $g\colon N \to N'$ such that
\begin{enumerate}[ref=CH\arabic*, label=CH\arabic*)]
\item \label{ax:ch-cs} they complete $\mu$ and $\mu'$ to a commutative square, i.\,e. $\mu'f = g \mu$;
\item \label{ax:ch-ga} they preserve the group action, i.\,e. ${f(m)}^{g(n)} = f(m^n)$ for $n \in N$, $m \in M$.
\end{enumerate}

Now suppose that we are given the following cube-like commutative diagram of abstract groups:
\begin{equation} \label{eq:cube} \begin{split} \xymatrix{
G_{123} \ar[rr]_{f_{23}} \ar[dd]_{f_{12}} \ar[rd]_{f_{13}} &                        & G_{23} \ar@{-->}[dd]^(.3){g_2^3} \ar[rd]^{g^2_3} &           \\
& G_{13} \ar[rr]^(.3){g^1_3} \ar^(.3){g^3_1}[dd] &                   & G_3 \ar[dd]_{h_3} \\
G_{12} \ar@{-->}[rr]^(.3){g_2^1} \ar[rd]_{g_1^2}          &                        & G_2 \ar@{-->}[rd]_{h_2}         &           \\
& G_1 \ar[rr]_{h_1}              &                   & G.} \end{split} \end{equation}
Additionally, we make the following assumptions:
\begin{itemize}
\item $G_1$ acts on $G_{13}$ on the right; $G$ acts on $G_3$ on the right;
exponential notation is used for both actions, e.\,g. $h^g$ denotes the action of $g \in G$ on $h \in G_{3}$;
\item $g_1^3$ is a precrossed module with respect to the above action;
\item $h_3$ is a crossed module with respect to the above action;
\item $(g_3^1, h_1) \colon g_1^3 \to h_3$ is a homomorphism of precrossed modules.
\end{itemize}
Set $V = G_1 \times G_2 \times G_3$, $W = G_{12} \times G_{13} \times G_{23}$.
We define an operation $\star \colon W \times V \to V$ as follows.
For $v = (x, y, z) \in V$ and $w = (a, b, c) \in W$ we set
\[(a, b, c) \star (x, y, z) = (x \cdot g_1^3(b) \cdot g_1^2(a),\ g_2^1(a)^{-1} \cdot y \cdot g_2^3(c),\ g_3^2(c)^{-1} \cdot g_3^1(b)^{-h_2(y)} \cdot z).\]
Consider the set-theoretic map $h \colon V \to G$ given by $(x, y, z) \mapsto h_1(x) \cdot h_2(y) \cdot h_3(z)$.
It is clear from the definition of $\star$-operation that for $w \in W$ one has $h(w \star v) = h(v).$
Now let us define the relation associated with $\star$-operation.
We declare two elements $v, v' \in V$ congruent (denoted $v \sim_W v'$) if $v' = w \star v$ for some triple $w=(a, b, c) \in W$.
As the following lemma shows, this relation is an equivalence relation.
\begin{lemma} For every $v \in V$ one has
\begin{equation*}(a', b', c') \star \left( (a, b, c) \star v \right) = (a \cdot a', b \cdot {b'}^{g_1^2(a)^{-1}}, c \cdot c') \star v.\end{equation*}
\end{lemma}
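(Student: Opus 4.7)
The plan is to verify the identity componentwise by directly expanding both sides using the definition of the $\star$-operation. Let me denote the intermediate element by $(a,b,c) \star (x,y,z) = (x_1, y_1, z_1)$. For the left-hand side I will then apply $(a', b', c') \star -$ to this, and for the right-hand side I will apply the triple $(aa', b \cdot (b')^{g_1^2(a)^{-1}}, cc')$ directly to $(x,y,z)$. The first two coordinates will be straightforward, while the third will require both the precrossed-module homomorphism property of $(g_3^1, h_1)$ and the Peiffer identity for $h_3$.

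The first coordinate: the left-hand side collapses to $x \cdot g_1^3(b) \cdot g_1^2(a) \cdot g_1^3(b') \cdot g_1^2(a')$, whereas the right-hand side expands to
\[ x \cdot g_1^3\bigl(b \cdot (b')^{g_1^2(a)^{-1}}\bigr) \cdot g_1^2(aa') = x \cdot g_1^3(b) \cdot g_1^3(b')^{g_1^2(a)^{-1}} \cdot g_1^2(a) \cdot g_1^2(a'), \]
and the precrossed-module property of $g_1^3$ lets me rewrite $g_1^3(b')^{g_1^2(a)^{-1}} = g_1^2(a)\cdot g_1^3(b')\cdot g_1^2(a)^{-1}$, after which the two sides are equal. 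The second coordinate requires nothing beyond multiplicativity of $g_2^1$ and $g_2^3$: both sides equal $g_2^1(a')^{-1} g_2^1(a)^{-1} \cdot y \cdot g_2^3(c) g_2^3(c')$.

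The third coordinate is the substance of the argument. The left-hand side produces
\[ g_3^2(c')^{-1} \cdot g_3^1(b')^{-h_2(y_1)} \cdot g_3^2(c)^{-1} \cdot g_3^1(b)^{-h_2(y)} \cdot z, \]
while the right-hand side, after distributing the $-h_2(y)$-conjugation over the product and applying the homomorphism of precrossed modules to rewrite $g_3^1((b')^{g_1^2(a)^{-1}}) = g_3^1(b')^{h_1(g_1^2(a))^{-1}}$, becomes
\[ g_3^2(c')^{-1} g_3^2(c)^{-1} \cdot g_3^1(b')^{-h_1(g_1^2(a))^{-1} h_2(y)} \cdot g_3^1(b)^{-h_2(y)} \cdot z. \]
The two expressions agree precisely when $g_3^2(c)^{-1} \cdot g_3^1(b')^{-h_1(g_1^2(a))^{-1} h_2(y)} = g_3^1(b')^{-h_2(y_1)} \cdot g_3^2(c)^{-1}$. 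Here the Peiffer identity for the crossed module $h_3$ lets me absorb the left-multiplication by $g_3^2(c)^{-1}$ and the right-multiplication by $g_3^2(c)$ into an action by $h_3(g_3^2(c))^{-1}$ on the exponent. The cube identity $h_3 g_3^2 = h_2 g_2^3$ turns this into an action by $h_2(g_2^3(c))^{-1}$, and combining with $h_1 g_1^2 = h_2 g_2^1$ reassembles the exponent as $-h_2(g_2^1(a)^{-1}) h_2(y) h_2(g_2^3(c)) = -h_2(y_1)$, as required.

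The main obstacle is purely bookkeeping: disentangling the nested exponents $(\cdot)^{-h_2(y)}$, recalling the convention that the minus sign denotes inversion commuted past the action, and then identifying the single place where Peiffer is invoked. Once the notation is pinned down, no further tool beyond the four stated axioms (the two commutative faces of the cube, the precrossed-module property of $g_1^3$, the precrossed-module homomorphism property of $(g_3^1, h_1)$, and Peiffer for $h_3$) is needed.
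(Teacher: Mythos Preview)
Your proof is correct and follows essentially the same route as the paper's own argument: a direct componentwise verification using the precrossed-module property of $g_1^3$ for the first coordinate, multiplicativity for the second, and the combination of the precrossed-module homomorphism $(g_3^1,h_1)$, the cube identities $h_1 g_1^2 = h_2 g_2^1$, $h_3 g_3^2 = h_2 g_2^3$, and the Peiffer identity for $h_3$ for the third. The only cosmetic difference is that the paper computes the left-hand side and simplifies it toward the right, whereas you expand both sides and match them in the middle.
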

\begin{proof}
Set $v'=(x', y', z') = (a, b, c) \star v$ and $(x'', y'', z'') = (a', b', c') \star v'$.
Since $g_1^3$ is a precrossed module we immediately obtain that
\begin{align*}
x'' =& x' \cdot g_1^3(b') \cdot g_1^2(a') = x \cdot g_1^3(b) \cdot g_1^2(a) \cdot g_1^3(b') \cdot g_1^2(a') = x \cdot g_1^3(b \cdot b'^{g_1^2(a)^{-1}}) g_1^2(a \cdot a'),\\
y'' =& g_2^1(a')^{-1} \cdot g_2^1(a)^{-1} \cdot y \cdot g_2^3(c) \cdot g_2^3(c') = g_2^1(a\cdot a')^{-1} \cdot y \cdot g_2^{3}(c\cdot c'). \end{align*}
Since $(g_3^1, h_1)$ is a map of precrossed modules we get from~\eqref{ax:ch-ga} that for $a \in G_{12}$, $b \in G_{13}$ one has $g_3^1(b^{g_1^2(a)}) = g_3^1(b)^{h_1 g_1^2(a)} = g_3^1(b)^{h_2g_2^1(a)}$.
Since $h_3$ satisfies Peiffer's identity~\eqref{eq:peiffer}, for every $c \in G_{23}$ and $z \in G_3$ one has $z ^{h_2 g_2^3(c)} = z^{ h_3 g_3^2(c)} = g_3^2(c)^{-1} \cdot z \cdot g_3^2(c)$.
Using these identities we obtain that
\begin{multline*}
z'' = g_3^2(c')^{-1} \cdot g_3^1(b')^{-h_2(y')} \cdot z' = \\
= g_3^2(c')^{-1} \cdot g_3^1(b')^{- h_2 \left( g_2^1(a)^{-1} \cdot y g_2^3(c) \right)} g_3^2(c)^{-1} \cdot g_3^1(b)^{-h_2(y)} \cdot z = \\
= g_3^2(c \cdot c')^{-1} \cdot g_3^1(b')^{- h_2 \left( g_2^1(a)^{-1} \cdot y \right)} \cdot g_3^1(b)^{-h_2(y)} \cdot z = \\
= g_3^2(c \cdot c')^{-1} \cdot g_3^1(b \cdot {b'} ^ {g_1^2(a)^{-1}})^{- h_2 \left( y \right)} \cdot z. \qedhere
\end{multline*}
\end{proof}
Since $h$ is constant on the orbits of $\star$-action, $h$ gives rise to a well-defined map $\overline{h} \colon \overline{V} \to G$,
where $\overline{V}$ denotes the set of equivalence classes $V/\sim_W.$
We use the notation $[a, b, c]$ to denote the equivalence class of the triple $(a, b, c) \in V$.

\begin{lemma}\label{lem:one-one-z} Assume that the back face $(f_{12}, f_{23}, g_2^1, g_2^3)$ of~\eqref{eq:cube} is a pullback square.
Let $z \in G_3$ be such that $[1, 1, 1] = [1, 1, z]$.
Then $z \in g_3^1(\Ker(g_1^3)).$ \end{lemma}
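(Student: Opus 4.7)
My plan is a direct diagram chase. The hypothesis $[1,1,1]=[1,1,z]$ supplies a triple $(a,b,c)\in W = G_{12}\times G_{13}\times G_{23}$ with $(a,b,c)\star(1,1,1)=(1,1,z)$. The first move is to unwind the definition of the $\star$-operation on $(1,1,1)$, using the fact that $h_2(1)=1$ trivializes the conjugation in the third coordinate. This converts the single vector equality into three scalar ones: $g_1^3(b)\cdot g_1^2(a)=1$, $g_2^1(a) = g_2^3(c)$, and $z = g_3^2(c)^{-1}\cdot g_3^1(b)^{-1}$.

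The central step is to exploit the pullback hypothesis on the back face $(f_{12}, f_{23}, g_2^1, g_2^3)$. Applied to the middle equation $g_2^1(a)=g_2^3(c)$, it manufactures a unique element $t\in G_{123}$ with $f_{12}(t)=a$ and $f_{23}(t)=c$. Setting $b':=f_{13}(t)$ and reading commutativity of the left face $g_1^2\circ f_{12} = g_1^3\circ f_{13}$ and of the top face $g_3^2\circ f_{23} = g_3^1\circ f_{13}$ yields the two auxiliary identities $g_1^3(b')=g_1^2(a)$ and $g_3^1(b')=g_3^2(c)$.

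Combining these with the first of the three equations above gives $g_1^3(b\cdot b') = g_1^3(b)\cdot g_1^2(a) = 1$, so $b\cdot b'\in\Ker(g_1^3)$. Combining with the third equation gives $g_3^1(b\cdot b') = g_3^1(b)\cdot g_3^2(c) = z^{-1}$. Taking inverses produces $k:=(b\cdot b')^{-1}\in\Ker(g_1^3)$ with $g_3^1(k)=z$, which is exactly the required conclusion. There is no real obstacle here: once the pullback property is used to produce the lift $t$, the argument is forced, amounting only to bookkeeping of inverses and two applications of commutativity of the cube.
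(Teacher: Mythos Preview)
Your proof is correct and follows essentially the same route as the paper's own argument: unwind $\star$ at $(1,1,1)$, invoke the pullback on the back face to lift $(a,c)$ to an element of $G_{123}$, and then use commutativity of the left and top faces to conclude that $(b\cdot f_{13}(t))^{-1}$ lies in $\Ker(g_1^3)$ and maps to $z$ under $g_3^1$. The paper's proof is the same computation, just written more tersely and without naming $b'=f_{13}(t)$ separately.
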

\begin{proof} By the definition of congruence relation there exists $(a, b, c)\in W$ such that
\[ (a, b, c) \star (1, 1, 1) = ( g_1^3(b) \cdot g_1^2(a),\ g_2^1(a)^{-1} \cdot g_2^3(c),\ g_3^2(c)^{-1} \cdot g_3^1(b)^{-1}) = (1,1,z). \]
By lemma's assumption there exists $e \in G_{123}$ such that $f_{12}(e) = a$, $f_{23}(e) = c$, hence
\[ 1 = g_1^3(b) \cdot g_1^2(f_{12}(e)) = g_1^3(b \cdot f_{13}(e)),\ z = g_3^2(f_{23}(e))^{-1} \cdot g_3^1(b)^{-1} = g_3^1(b \cdot f_{13}(e))^{-1}. \qedhere\] \end{proof}

\subsection{First reductions} \label{subsec:structure-theorem-overview}
%sSignificant progress towards proving~\cref{thm:horrocks-k2} has already been achieved in~\cite{LS20}.
%In this subsection we reduce~\cref{thm:horrocks-k2} to a specific technical statement which will be addressed in the following sections.

The following lemma, which is based on the argument of~\cite[Theorem~2]{LS20},
provides the first key reduction in the proof of~\cref{thm:horrocks-k2}.
\begin{lemma} \label{lem:first-reduction}
Let $A$ be a local ring with maximal ideal $M$ and residue field $\kappa$.
Let $\Phi$ be as in the statement of~\cref{thm:horrocks-k2}.
Suppose that the canonical homomorphism
\begin{equation} \label{eq:c-surj} C(\Phi, A[X], M[X]) \to C(\Phi, A[X, X\inv], M[X, X\inv]) \end{equation}
is surjective.
Then the square~\eqref{eq:P1-square} is pullback and Horrocks theorem for $\K_2$ holds.
\end{lemma}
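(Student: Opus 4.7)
Given a compatible pair $(\alpha_+, \alpha_-) \in \K_2(\Phi, A[X]) \times_{\K_2(\Phi, A[X, X\inv])} \K_2(\Phi, A[X\inv])$, the natural candidate common preimage in $\K_2(\Phi, A)$ is $\delta := \mathrm{ev}_{X=1}(\alpha_+) = \mathrm{ev}_{X=1}(\alpha_-)$ — the two sides agree because each factors through $\mathrm{ev}_{X=1}$ of the common Laurent image. The sections $\mathrm{ev}_{X=1}$ make the top and left maps of~\eqref{eq:P1-square} injective, so the pullback property reduces to the following claim: whenever $\beta_+ \in \K_2(\Phi, A[X])$ and $\beta_- \in \K_2(\Phi, A[X\inv])$ have equal image in $\K_2(\Phi, A[X, X\inv])$ and satisfy $\mathrm{ev}_{X=1}(\beta_\pm) = 1$, one has $\beta_+ = \beta_- = 1$. (The Horrocks statement follows formally once the pullback property is established.)

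\textbf{Lifting to the relative Steinberg level.} The classical homotopy invariance of $\K_2$ over fields cited in the introduction yields $\K_2(\Phi, \kappa[X]) \cong \K_2(\Phi, \kappa)$ via evaluation at $X = 1$. Hence the mod-$M$ reduction of $\beta_+$ is trivial in $\K_2(\Phi, \kappa[X])$, and the exact sequence~\eqref{eq:relative-Steinberg} (restricted to $\K_2$) allows me to lift $\beta_+$ to some $\widetilde{\beta}_+ \in \K_2(\Phi, A[X], M[X])$ with $\mu_+(\widetilde{\beta}_+) = \beta_+$. The same construction over $A[X\inv]$ produces $\widetilde{\beta}_- \in \K_2(\Phi, A[X\inv], M[X\inv])$.

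\textbf{Matching via the $C$-surjectivity hypothesis.} The two images of $\widetilde{\beta}_\pm$ in $\K_2(\Phi, A[X, X\inv], M[X, X\inv])$ both lift the common element $\beta_+ = \beta_-$ under $\mu_{\mathrm{lau}}$, so their ratio lies in $\ker \mu_{\mathrm{lau}} = C(\Phi, A[X, X\inv], M[X, X\inv])$. The hypothesis of the lemma provides a preimage $c \in C(\Phi, A[X], M[X])$; since $C \subseteq \ker \mu_+$, the replacement $\widetilde{\beta}_+ \mapsto \widetilde{\beta}_+ \cdot c^{-1}$ leaves $\mu_+(\widetilde{\beta}_+) = \beta_+$ intact and equates the Laurent images of $\widetilde{\beta}_+$ and $\widetilde{\beta}_-$ inside $\K_2(\Phi, A[X, X\inv], M[X, X\inv])$.

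\textbf{Descent to $\K_2(\Phi, A, M)$ — the main obstacle.} The remaining step is to show that a pair of relative classes $(\widetilde{\beta}_+, \widetilde{\beta}_-)$ agreeing in the Laurent relative $\K_2$ descends to a single element $\widetilde{\beta} \in \K_2(\Phi, A, M)$; then $\beta_+ = \mu(\widetilde{\beta})$ lies in the image of the injection $\K_2(\Phi, A) \hookrightarrow \K_2(\Phi, A[X])$, and the vanishing $\mathrm{ev}_{X=1}(\beta_+) = 1$ forces $\beta_+ = 1$. This descent is an amalgamation/intersection statement for relative Steinberg groups — essentially that $\St(\Phi, A, M)$ coincides with the pullback of $\St(\Phi, A[X], M[X])$ and $\St(\Phi, A[X\inv], M[X\inv])$ over $\St(\Phi, A[X, X\inv], M[X, X\inv])$. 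I would establish it at the group-theoretic level by decomposing any such lift using the generating set of~\cref{lem:relative-generators}, carefully tracking the polynomial and inverse-polynomial coefficients across the localization maps, and invoking the elementary identity $A[X] \cap A[X\inv] = A$ inside $A[X, X\inv]$. This amalgamation step is the principal technical hurdle of the argument.
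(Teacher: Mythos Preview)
Your overall strategy diverges sharply from the paper's, and the final ``descent'' step is a genuine gap rather than a routine technicality. The amalgamation statement you need --- that a pair of relative $\K_2$-classes over $A[X]$ and $A[X^{-1}]$ agreeing over $A[X,X^{-1}]$ descends to $\K_2(\Phi, A, M)$ --- is essentially a \emph{relative} version of the very pullback square you are trying to establish. Attacking it by ``tracking polynomial and inverse-polynomial coefficients'' through the generators of \cref{lem:relative-generators} cannot work directly: the relative Steinberg group has no normal form in those generators, so the ring-theoretic identity $A[X]\cap A[X^{-1}]=A$ does not transfer to the group. This step hides exactly the difficulty that the whole of \cref{sec:horrocks} is devoted to, so what you have labelled ``the principal technical hurdle'' is in fact the entire content of the lemma.

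The paper sidesteps this completely. Rather than comparing $A[X]$ and $A[X^{-1}]$ directly, it introduces the intermediate subring $B = A[X^{-1}] + M[X] \subseteq A[X,X^{-1}]$, sets $I = M[X,X^{-1}]$, and runs a diagram chase on the sequences~\eqref{eq:relative-Steinberg} for $(B,I)$ and $(R,I)$. The $C$-surjectivity hypothesis is used only to make the left vertical arrow of that diagram surjective (via the factorization $C(\Phi, A[X], M[X]) \to C(\Phi, B, I) \to C(\Phi, R, I)$); together with the injectivity of $\St(\Phi,\kappa[X^{-1}])\to\St(\Phi,\kappa[X,X^{-1}])$ and the lifting $t$ of \cite{LS17}, the chase of \cite[Theorem~1]{LS20} yields that $\St(\Phi, B) \hookrightarrow \St(\Phi, A[X,X^{-1}])$ is injective. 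The pullback property of~\eqref{eq:P1-square} then follows by invoking \cite[Theorem~3]{LS20}. In short, the paper never attempts your amalgamation; it replaces it with an injectivity statement for the auxiliary ring $B$ that is already available in the literature.
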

\begin{proof}
Denote by $R$ the Laurent polynomial ring $A[X, X\inv]$ and by $B$ the subring $A[X\inv] + M[X] \subseteq R$.
We also set $I \subseteq M[X, X\inv]$, which is clearly an ideal of both $B$ and $R$.
Consider the following diagram with rows obtained from~\eqref{eq:relative-Steinberg}:
\[\begin{tikzcd}
C(\Phi, B, I) \arrow{r} \arrow[d, swap, twoheadrightarrow] & \St(\Phi, B, I) \arrow{r}{\mu_B} \arrow{d} & \St(\Phi, B) \arrow{r} \arrow{d} & \St(\Phi, \kappa[X^{-1}]) \arrow[hookrightarrow]{d} \\
C(\Phi, R, I) \arrow{r} & \St(\Phi, R, I) \arrow{r}{\mu_R} \arrow[ur, "t", dashrightarrow] & \St(\Phi, R) \arrow{r} & \St(\Phi, \kappa[X, X^{-1}]).
\end{tikzcd}\]
The construction of the lifting $t$ is discussed in~\cite[\S~2]{LS17}, cf. also~\cite[Lemma~3.3]{LS20}.
The right-hand side vertical arrow is injective by~\cite[Lemma~2.2]{LS20}.
The left-hand side vertical arrow is surjective since the composite arrow
\[\xymatrix{ C(\Phi, A[X], M[X]) \ar[r] & C(\Phi, B, I) \ar[r] & C(\Phi, R, I) }\]
is surjective by lemma's assumption.
It remains to repeat the diagram chasing argument of~\cite[Theorem~1]{LS20} to conclude that the homomorphism $\St(\Phi, B) \to \St(\Phi, A[X, X\inv])$ is injective.
The assertion of the lemma then follows from~\cite[Theorem~3]{LS20}.
\end{proof}
The proof of Horrocks theorem is, thus, reduced to showing that~\eqref{eq:c-surj} is surjective for every local domain $(A, M)$.

Our next objective is to present a second key reduction in the proof of Horrocks' theorem.
Before proceeding further, we introduce concise notation for certain Steinberg groups and their subgroups.
Specifically, let
{\allowdisplaybreaks\begin{align*}
G     =& \St(\Phi, A[X, X^{-1}]),\\
G^+   =& \St(\Phi, A[X]),\\
B     =& \UU(\Phi^+, A[X, X\inv]) \rtimes \StH(\Phi, A[X, X\inv]) \leq G,\\
G_M   =& \St(\Phi, A[X, X^{-1}], M[X, X^{-1}]),\\
U^+   =& \UU(\Phi^+, A[X]),\\
G^+_M =& \St(\Phi, A[X], M[X]),\\
B_M   =& \UU(\Phi^+, M[X, X^{-1}]) \times \{X, 1+M\} \leq G_M,\\
U^+_M =& \UU(\Phi^+, M[X]),
\end{align*}}
where $\{X, 1+M\}$ denotes the image of the homomorphism $\{X, -\}_{r}$ from~\eqref{eq:relative-symbol}.

The above groups can be organized into the following commutative diagram:
\begin{equation} \label{eq:cube-Steinberg} \xymatrix{
U^+_M \ar@{^{(}->}[rr] \ar@{^{(}->}[dd] \ar@{^{(}->}[rd] &                        & B_M \ar[dd]^(.3){g_B^M} \ar@{^{(}->}[rd]^{g^B_M} &           \\
& G^+_M \ar[rr]^(.3){g^+_M} \ar^(.3){g^M_+}[dd] &                   & G_M \ar[dd]_{h_M} \\
U^+ \ar@{^{(}->}[rr]^(.3){g_B^+} \ar@{^{(}->}[rd]_{g_+^B}          &                        & B \ar@{^{(}->}[rd]_{h_B}       &           \\
& G^+ \ar[rr]_{h_+}              &                   & G.}\end{equation}
Here $g^M_+$ and $h_M$ are simply renamed homomorphisms $\mu$ from~\eqref{eq:relative-Steinberg}.
The homomorphism $g^M_B$ is also induced by $\mu$.
Homomorphisms $g_M^+$ and $h_+$ are induced by the ring embedding $A[X] \to A[X, X\inv]$.
The other homomorphisms on the diagram are all obvious subgroup embeddings.

Set $V = G^+\times B \times G_M,$ $W = U^+\times G^+_M \times B_M.$
Both homomorphisms $g_+^M$ and $h_M$ are crossed modules by~\cref{lem:rel-Steinberg-crossed-module}.
Notice that the actions of $G$ on $G_M$ and the action of $G^+$ on $G_M^+$ are both right conjugation actions.
From this and the functoriality of the relative Steinberg group construction (cf. e.\,g.~\cite[\S~3]{S15}) it follows that $(g^+_M, h_+)$ is a map of precrossed modules.
Thus, we find ourselves in the situation of~\cref{subsec:triples}.

\begin{lemma} \label{lem:second-reduction}
Suppose that $A$ is a domain.
Suppose that $\overline{V} = V/\sim_W$ admits an action of $G$ such that
for $g \in G_M$ one has \[ h_M(g) \cdot [1, 1, 1] = [1, 1, g]. \]
Then the canonical homomorphism~\eqref{eq:c-surj} is surjective.
\end{lemma}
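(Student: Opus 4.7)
The plan is to apply~\cref{lem:one-one-z} to the cube~\eqref{eq:cube-Steinberg}: the assumed $G$-action on $\overline{V}$ will translate the vanishing $h_M(c)=1$ for $c\in C(\Phi, R, I)$ into the congruence $[1,1,c]=[1,1,1]$, and then \cref{lem:one-one-z} will produce the desired preimage in $C(\Phi, A[X], M[X])$.

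First I fix $c\in C(\Phi, R, I)$ where $R=A[X, X\inv]$ and $I=M[X, X\inv]$. Since $c\in\Ker(\mu)$ we have $h_M(c)=1$ in $G$, so the hypothesized action gives
\[[1,1,c] = h_M(c)\cdot[1,1,1] = [1,1,1].\]
In the notation of~\cref{subsec:triples} the cube~\eqref{eq:cube-Steinberg} has $G_1=G^+$, $G_3=G_M$, $G_{13}=G^+_M$ with $g_1^3 = g^M_+$ (a crossed module by~\cref{lem:rel-Steinberg-crossed-module}) and $g_3^1 = g^+_M$; the remaining hypotheses of~\cref{subsec:triples} are immediate from the functoriality of the relative Steinberg group construction.

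The key technical step is verifying that the back face $(U^+_M, U^+, B_M, B)$ of~\eqref{eq:cube-Steinberg} is a pullback of groups, and this is where the assumption that $A$ is a domain enters. Given a pair $(u, b_m)\in U^+\times_B B_M$, I would decompose $b_m$ into its unipotent part lying in $\UU(\Phi^+, M[X, X\inv])$ and a central symbol part $\prod\{X, 1+m_i\}_r$, and then project the identification $u=g_B^M(b_m)$ along the semidirect decomposition $B=\UU(\Phi^+, R)\rtimes\StH(\Phi, R)$. The unipotent components then force the coefficients of $b_m$ to lie in $A[X]\cap M[X, X\inv]=M[X]$, while the $\StH$-component forces $\prod\{X, 1+m_i\}=1$ in $G$. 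By \cref{lem:symbols} this latter vanishing propagates back to the relative group, yielding $\prod\{X, 1+m_i\}_r=1$ in $B_M$, so $b_m$ belongs to (the image of) $U^+_M$. This is the main obstacle, because it is the only point where the domain hypothesis is used in an essential way; everything else is formal.

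With the back face established as a pullback, \cref{lem:one-one-z} applied to $[1,1,1]=[1,1,c]$ gives $c\in g^+_M(\Ker(g^M_+))$. That is, there exists $\tilde c\in G^+_M=\St(\Phi, A[X], M[X])$ with $g^M_+(\tilde c)=1$ and $g^+_M(\tilde c)=c$; the first condition places $\tilde c$ in $C(\Phi, A[X], M[X])$, and the second says precisely that $\tilde c$ is sent to $c$ by the canonical homomorphism~\eqref{eq:c-surj}. Hence~\eqref{eq:c-surj} is surjective.
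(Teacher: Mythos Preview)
Your proposal is correct and follows essentially the same approach as the paper's own proof: both use the hypothesis to obtain $[1,1,1]=[1,1,c]$ for $c\in C(\Phi,R,I)$, verify that the back face of~\eqref{eq:cube-Steinberg} is a pullback via~\cref{lem:symbols} (which is precisely where the domain assumption enters), and then invoke~\cref{lem:one-one-z}. The paper is terser, simply noting that~\cref{lem:symbols} makes $g_B^M$ injective and hence the back face a pullback, whereas you spell out the semidirect-product projection argument explicitly; but the content is the same.
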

\begin{proof}
Since $A$ is a domain, $g^M_B$ is injective by~\cref{lem:symbols}.
Thus, the back face of~\eqref{eq:cube-Steinberg} is pull-back.
Notice that for any $g \in C(\Phi, A[X, X\inv], M[X, X\inv]) \subseteq G_M$ one has
\[ [1, 1, 1] = h_M(g) \cdot [1, 1, 1] = [1, 1, g].\]
It is clear now that $g$ lies in the image of $C(\Phi, A[X], M[X])$ under $g^+_M$ by~\cref{lem:one-one-z}.
\end{proof}

\begin{rem}
Notice that in the linear case the symbol homomorphism \[A^\times \to \K_2(A[X, X\inv]),\ a \mapsto \{a, X\}\] admits section
for a general commutative ring $A$ by~\cite{Wa71}.
Thus, in the linear case the assertion of the lemma can be proven by the same argument without the assumption that $A$ is a domain, cf.~\cite[Lemma~3.1g]{Tu83}.
\end{rem}

\subsection{Construction of an $\varpi_k$-pair} \label{subsec:construction-sigma}
Throughout this section $\Phi$ denotes a simply laced root system of rank $\ell \geq 3$ and not of type $\rE_8$ and $k$ is the index of
a simple root $\alpha_k$ such that the corresponding weight $\varpi_k$ is a microweight.
The aim of this section is to prove the existence of a $\varpi_k$-pair for $\Phi$ in the sense of~\cref{dfn:delta-pair}
under the additional assumption that $A$ is a local ring.

First of all, recall that the ideal $XA[X]$ is a splitting ideal of $A[X]$, therefore
$\St(\Phi, A[X])$ decomposes as $\St(\Phi, A[X], XA[X]) \rtimes \St(\Phi, A)$.
Since $\varpi_k$ is a microweight subgroups $N_{\pm \varpi_k}$ both contain $\St(\Phi, A[X], XA[X])$.
In fact, $N_{\varpi_k}$ (resp. $N_{-\varpi_k}$) decomposes into the semidirect product of $N_0 := \St(\Phi, A[X], XA[X])$ and the parabolic subgroup $P_k^+ \leq \St(\Phi, A)$ (resp. $P_k^-\leq \St(\Phi, A)$).
%Denote by $\iota$ the natural embedding $N_0 \hookrightarrow N_\omega$.
Recall that $P_k^\pm$ is generated by $x_\alpha(a)$, $a \in A$ for all $\alpha \in \Delta_k \sqcup \Sigma^\pm_k$.

\begin{lemma} \label{lem:relative-reduction}
For $\omega = \pm \varpi_k$ the homomorphism $\sigma(\omega)\colon N_\omega \to N_{-\omega}$ exists if and only if its restriction to $N_0$ exists.
\end{lemma}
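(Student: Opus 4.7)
The ``only if'' direction is immediate: given $\sigma(\omega)$ on all of $N_\omega$, one restricts to the subgroup $N_0 \leq N_\omega$ and observes that the restriction automatically satisfies~\eqref{eq:sigmadef} on the generators $x_\gamma(f) \in \mathcal{X}_\omega \cap N_0$, so it qualifies as $\sigma(\omega)|_{N_0}$.

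For the ``if'' direction, the plan is to exploit the semidirect product decomposition $N_\omega = N_0 \rtimes P_k^{\pm}$ (with the sign matching $\omega = \pm\varpi_k$) and to construct $\sigma(\omega)$ separately on the two factors. On $N_0$ one takes the restriction $\sigma_0$ given by hypothesis. On $P_k^{\pm}$ one defines a candidate map $\sigma_P$ on the generators $x_\alpha(a)$ (for $\alpha \in \Delta_k \sqcup \Sigma_k^{\pm}$, $a \in A$) using the formula~\eqref{eq:sigmadef}. Checking that $\sigma_P$ respects the relations~\eqref{x-additivity}--\eqref{R3} of $P_k^{\pm}$ is routine: additivity is clear, the commutator relation~\eqref{R2} goes through because $\langle \omega, -\rangle$ is additive on root sums, and the microweight assumption on $\varpi_k$ (so that $|\langle \varpi_k, \alpha\rangle| \leq 1$) guarantees that for $\alpha, \beta \in \Sigma_k^{\pm}$ the sum $\alpha+\beta$ never lies in $\Phi$, so only the trivial case~\eqref{R3} applies to such pairs.

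It then remains to verify that $\sigma_0$ and $\sigma_P$ splice together into a homomorphism on the semidirect product, which amounts to the compatibility condition
\[\sigma_P(p) \cdot \sigma_0(n) \cdot \sigma_P(p)^{-1} = \sigma_0(p \cdot n \cdot p^{-1})\]
for every $p \in P_k^{\pm}$ and every $n \in N_0$. By~\cref{lem:relative-generators} it suffices to verify this for $p = x_\alpha(a)$ a generator of $P_k^{\pm}$ and $n$ ranging over the two families of generators of $N_0$, namely $x_\gamma(m)$ with $\gamma \in \Phi$, $m \in XA[X]$, and $z_\beta(m, b)$ with $\beta \in \Sigma_k^{\pm}$, $m \in XA[X]$, $b \in A[X]$. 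In each case both sides expand via the Chevalley commutator formulas~\eqref{R2}--\eqref{R3}, and the additivity of $\langle \omega, -\rangle$ on root sums, together with the matching of structure constants $N_{\alpha,\beta}$, forces the two expressions to coincide.

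The main technical obstacle will lie in the verification of the compatibility identity for the generators $z_\beta(m, b)$: this case involves an auxiliary conjugation by $x_{-\beta}(b)$, for which $-\beta \in \Sigma_k^{\mp}$ and so the formula~\eqref{eq:sigmadef} would formally produce a coefficient $X^{-1}b$ that only lies in $A[X]$ when $b$ is divisible by $X$. One therefore cannot blindly apply the formula to the individual constituents of $z_\beta(m, b)$, and must instead treat $z_\beta(m, b)$ as an indivisible generator of $N_0$, using the known value of $\sigma_0$ on it. The microweight assumption is precisely what confines the relevant $X$-exponents to $\{-1, 0, 1\}$ and makes the bookkeeping close up inside $N_{-\omega} \subseteq \St(\Phi, A[X])$.
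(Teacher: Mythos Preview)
Your proposal is correct and follows essentially the same route as the paper: decompose $N_\omega = N_0 \rtimes P_k^\pm$, define the map separately on the two factors, and verify the coherence condition on the generating set of $N_0$ supplied by \cref{lem:relative-generators}. The only notable differences are that the paper constructs $\sigma_P$ via the Levi decomposition $P_k^\pm = L_k \ltimes U_k^\pm$ (rather than by checking Steinberg relations directly, which requires the implicit observation that the map factors through the image $L_k$), and that the paper chooses the $z$-generators with $\beta$ in $\Sigma_k^-$ (for $\omega = \varpi_k$), yielding the clean formula $\sigma_0(z_\beta(Xg,f)) = z_\beta(g,Xf)$ and making the coherence check a direct application of the commutator relations together with the first two identities of \cite[Lemma~9]{S15}.
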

\begin{proof}
The necessity of this condition is obvious.
To demonstrate sufficiency, we focus on the case $\omega = \varpi_k$, as the argument for $\omega = -\varpi_k$ is analogous.

By the universal property of semidirect products, for any group $H$ acting on a group $N$, and for any homomorphisms $f_N\colon N \to G$ and $f_H\colon H \to G$ satisfying the coherence condition
\begin{equation}
\label{eq:coherence-condition}
f_N({}^hn) = {}^{f_H(h)} f_N(n), \quad n \in N, \; h \in H,
\end{equation}
there exists a unique homomorphism $f\colon N \rtimes H \to G$ extending both $f_N$ and $f_H$.

Using this universal property, the construction of \( \sigma(\varpi_k) \colon N_{\varpi_k} \to N_{-\varpi_k} \) reduces to the construction of
restrictions \( \sigma(\varpi_k)_{P_k^+} \), \( \sigma(\varpi_k)_{N_0} \), and the verification of~\eqref{eq:coherence-condition}.

We only need to construct \( \sigma(\varpi_k)_{P_k^+} \).
From the Levi decomposition~\eqref{eq:levi-decomp}, we know that \( P_k^+ = L_k \ltimes U_k^+ \), where \( L_k = \mathrm{Im}(\St(\Delta_k, A) \to \St(\Phi, A)) \) and
\( U_k^+ = \UU(\Sigma_k^+, A) \).
Let
\begin{equation} \label{eq:sigma-Pk}
\sigma(\varpi_k)_{L_k} \coloneqq \mathrm{id}_{L_k}, \quad
\sigma(\varpi_k)_{U_k} \left(\prod_{\alpha \in \Sigma_k^+} x_\alpha(a_\alpha)\right) \coloneqq \prod_{\alpha \in \Sigma_k^+} x_\alpha(Xa_\alpha).
\end{equation}
These homomorphisms obviously satisfy~\eqref{eq:coherence-condition}, so the existence of \( \sigma(\varpi_k)_{P_k^+} \) follows from the above universal property.

It remains to verify~\eqref{eq:coherence-condition} for \( \sigma(\varpi_k)_{P_k^+} \) and \( \sigma(\varpi_k)_{N_0} \).
From~\eqref{eq:sigmadef}, we deduce that for any \( \alpha \in \Phi \), \( \beta \in \Sigma_k^- \), and \( f, g \in A[X] \), one has
\begin{equation} \label{eq:charact}
\sigma(\varpi_k)_{N_0}(x_\alpha(Xg)) = x_\alpha(X^{1 + \langle \omega, \alpha \rangle}f), \qquad
\sigma(\varpi_k)_{N_0}(z_\beta(Xg, f)) = z_\beta(g, Xf).
\end{equation}

By~\cref{lem:relative-generators}, the elements $x_\alpha(Xg)$ and $z_\beta(Xg, f)$ form a generating set for \( N_0 \).
It suffices to verify~\eqref{eq:coherence-condition} when \( n \) and \( h \) are generators of their respective groups,
i.e., when \( n \) is one of the above generators of \( N_0 \) and \( h = x_\gamma(a) \) for \( \gamma \in \Delta_k \sqcup \Sigma^+_k \) is a generator of \( P_k^+ \).
This verification is a straightforward calculation using~\eqref{eq:charact}, the commutator relations~\eqref{R2}--\eqref{R3}, and the first two relations from~\cite[Lemma~9]{S15}.
\end{proof}

Our next goal is to prove the existence of an $\varpi_1$-pair for the root system $\Phi$ of type $\rA_\ell$.
The following proposition is based on the construction described in the beginning of~\cite[\S~3]{Tu83}.
Its key novelty is that it also addresses the case \( \ell = 3 \) omitted in~\cite{Tu83}, which will be crucial for the subsequent construction of \( \varpi_k \)-pairs for other types of \( \Phi \) later in this subsection.
For the remainder of this subsection we assume that the base ring $A$ is local.

\begin{prop} \label{prop:sigma-construction}
For a local ring $A$ and $\Phi = \rA_\ell$, $\ell + 1 = n \geq 4$ there exists an $\varpi_1$-pair.
\end{prop}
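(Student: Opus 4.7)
The plan is to construct $\sigma(\pm\varpi_1)$ by reducing to the relative subgroup, then defining the maps explicitly on the generators of an economical presentation and checking the defining relations term by term.

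By \cref{lem:relative-reduction}, it suffices to build the restriction of each member of the $\varpi_1$-pair to $N_0 = \St(\rA_\ell, A[X], XA[X])$. Since $XA[X]$ is a splitting ideal of $A[X]$ and we are assuming $n - 1 = \ell \geq 3$, \cref{prop:rel-presentation} applies with $R = A[X]$ and $I = XA[X]$, presenting $N_0$ by the generators $F(u,v)$ and $S(v,u)$ subject to the four relations \eqref{add4}--\eqref{coef-move}. The strategy is then to define, for $d^+ = \mathrm{diag}(X^{-1},1,\ldots,1)$ and $d^- = \mathrm{diag}(X,1,\ldots,1)$ viewed inside $T(n, A[X, X^{-1}])$,
\[
\sigma(\varpi_1)(F(u,v)) := X^{d^+}(u,v), \qquad \sigma(\varpi_1)(S(v,u)) := Y^{d^-}(v,u),
\]
with the roles of $d^+$ and $d^-$ interchanged for $\sigma(-\varpi_1)$. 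One first verifies that the integrality constraints of \cref{dfn:xy-def} are indeed satisfied: $d^{+\,{-1}}u = (Xu_1, u_2, \ldots, u_n) \in R^n$ and $d^+ \cdot I^n \subseteq R^n$ since $I = XA[X]$; symmetrically for $Y^{d^-}(v,u)$. The projections of the right-hand sides to the Chevalley group agree with the conjugation of $t(u,v) = e + uv^t$, respectively $t(v,u) = e + vu^t$, by the weight elements $h_{\varpi_1}(X^{\pm 1})$, so at the level of $\GL(n, A[X, X^{-1}])$ the prescription agrees with the restriction of $\chi_{\varpi_1,X}$, as required by \cref{dfn:delta-pair}.

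The bulk of the argument is the verification that $\sigma(\varpi_1)$ respects each of the defining relations. The additivity identities \eqref{add4} and \eqref{add5} follow from \cref{lem:xy-wd} combined with the concatenation of decompositions and the additivity clause \cref{itm:xsmall-additivity} of \cref{lem:xsmall-properties}, after noting that the $(n-2)$ zero entries of any $v^r \in D(u)$ persist after multiplying by the diagonal matrices $d^{\pm}$. The conjugation relation \eqref{conj3} reduces, by writing $F(u,v)$ as a product over the canonical decomposition \eqref{eq:canonical}, to the single-generator conjugation computation carried out in \cref{lem:xy-conj}; because $d^{\pm}$ is diagonal with entries in $A[X^{\pm 1}]$, the matrix $m = d^+ \pi(g) d^{+\,{-1}}$ remains in $\E(n, A[X])$ for any generator $g = x_{hk}(a)$ of $\St(n, A[X])$, so the hypothesis of \cref{lem:xy-conj} is met.

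The expected main obstacle is the coefficient-move relation \eqref{coef-move}, which is the only relation that mixes the two families of generators and whose verification involves shuffling a scalar $a \in I$ between the ``row'' and ``column'' sides. I would handle this by applying the scalar-shift identity \cref{itm:xsmall-scalar} of \cref{lem:xsmall-properties} inside each factor of $X^{d^+}(me_1, m^* e_2 a)$ to pull the factor of $a$ from the right argument into the left argument, then recognising the result as $Y^{d^-}(me_1 a, m^* e_2)$ after renaming the decomposition; the two-zero-entry hypothesis in \cref{itm:xsmall-scalar} is available precisely because $m^* e_2 \in D(me_1)$ has $n - 2 \geq 2$ zero entries thanks to $n \geq 4$, which is exactly the point where the adaptation to $n = 4$ (and the use of the stronger presentation from \cref{prop:rel-presentation} rather than Tulenbaev's original) is essential.

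Finally, the symmetric definition of $\sigma(-\varpi_1)$ together with the identity $d^+ \cdot d^- = e$ and the defining formulae \eqref{eq:X-def}, \eqref{eq:Y-def} shows that $\sigma(\varpi_1)$ and $\sigma(-\varpi_1)$ are mutually inverse on the generating set, hence on all of $N_0$. Combined with the extension to $P_k^{\pm}$ furnished by \cref{lem:relative-reduction}, this completes the construction of the $\varpi_1$-pair.
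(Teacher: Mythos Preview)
Your overall strategy---reduce to $N_0$ via \cref{lem:relative-reduction}, use the presentation of \cref{prop:rel-presentation}, and define $\sigma(\pm\varpi_1)$ on the generators $F$ and $S$ via the elements $X^d$, $Y^{d'}$---is exactly the one the paper follows. However, two steps in your execution do not go through.

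\textbf{The choice of $d^-$ is inconsistent.} With $d^+ = d_1^{-1} = \mathrm{diag}(X^{-1},1,\ldots,1)$, the image of $X^{d^+}(u,v)$ in $\GL_n$ is $e + d_1 u v^t d_1^{-1}$, which is the correct conjugate for $\chi_{\varpi_1,X}$. But $Y^{d^-}(v,u)$ with your $d^- = d_1$ has image $e + d_1^{-1} v u^t d_1$, which is conjugation in the \emph{opposite} direction. Thus your $\sigma(\varpi_1)$ sends $F$ and $S$ to elements lying over different automorphisms of $\GL_n$, and relation~\eqref{coef-move} fails already after applying $\pi$. The paper instead takes $d' = d_1^{-1}\cdot X = \mathrm{diag}(1,X,\ldots,X)$ for the $Y$-part, which gives the same conjugation direction as the $X$-part; the point is that $d$ and $d'$ must differ by a scalar, not be inverses of one another.

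\textbf{The verification of \eqref{coef-move} is too optimistic.} Your claim that ``$m^* e_2$ has $n-2$ zero entries'' is false for a general $m \in \E(n, A[X])$: the vector $m^* e_2$ is just the second column of $m^{-t}$ and typically has no zero entries at all. The scalar-shift \cref{itm:xsmall-scalar} therefore does not apply directly, and in any case it cannot by itself convert $X^{d}(u, v a)$ into $Y^{d'}(va, u)$, because the two expressions decompose \emph{different} arguments (the second in $X^d$, the first in $Y^{d'}$). The paper's proof handles this by first checking \eqref{coef-move} in the special case $m \in H_1 \cdot U_1^- \cdot U_1^+$, where $m^* e_2$ genuinely has only two nonzero entries, and then invoking the Gauss decomposition of \cref{lem:cor-Gauss} (this is where locality of $A$ enters) together with the conjugation formula \cref{lem:xy-conj} to reduce the general case to the special one. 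Your sketch omits this reduction entirely.
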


By~\eqref{lem:relative-reduction}, it remains to construct $\sigma(\pm\varpi_1)_{N_0}$ where $N_0 = \St(n, A[X], XA[X])$.
Let \( d_1 \) denote the matrix $d_1(X) = \mathrm{diag}(X, 1, \ldots, 1) \in \GL(n, A[X, X\inv])$.

Define $R = A[X],$ $S = A[X, X\inv]$, $I = XA[X].$
For $u \in \E(n, R) \cdot e_1$ and $v \in I^n$ we can define homomorphisms $\sigma(\pm\varpi_1)_{N_0}$ on the generators of $N_{0}$ using the elements defined in~\cref{dfn:xy-def}:
\begin{align}
\sigma(\varpi_1)_{N_0} \left(F(u, v)\right) \coloneqq X^{d_1^{-1}}(u, v), & \quad \sigma(\varpi_1)_{N_0} \left(S(v, u)\right) \coloneqq Y^{d_1^{-1} \cdot X}(v, u), \label{eq:def-sigma-1} \\
\sigma(-\varpi_1)_{N_0} \left(F(u, v)\right) \coloneqq X^{d_1 \cdot X^{-1}}(u, v),& \quad \sigma(-\varpi_1)_{N_0} \left(S(v, u)\right) \coloneqq Y^{d_1}(v, u). \label{eq:def-sigma-2}
\end{align}
It is easy to see that the above definitions are correct, i.\,e. all conditions on $u$, $v$ and $d_1$ formulated in~\cref{dfn:xy-def} are satisfied.

Before we can proceed with the proof of~\cref{prop:sigma-construction} we need several lemmas.
\begin{lemma}\label{lem:sigma-N0-image}
The image of $\sigma(\varpi_1)_{N_0}$ is contained in $U_1^- \ltimes N_0 \leq N_{-\varpi_1}$.
\end{lemma}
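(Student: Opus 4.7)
The plan is to verify the inclusion on the generating set given by \cref{prop:rel-presentation}. Since $n = \ell + 1 \geq 4$, the relative Steinberg group $N_0 = \St(n, A[X], XA[X])$ is generated by elements of the form $F(u, v)$ and $S(v, u)$; as $U_1^- \ltimes N_0$ is a subgroup of $N_{-\varpi_1}$, it suffices to check the image of each such generator. The key structural observation is that, under the semidirect product decomposition $N_{-\varpi_1} = N_0 \rtimes P_1^-$ recalled at the beginning of this section, the canonical retraction onto $P_1^-$ is realized by the evaluation homomorphism $\mathrm{ev}_{X=0}^* \colon \St(\Phi, A[X]) \to \St(\Phi, A)$, since $N_0$ is precisely its kernel. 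The desired inclusion is therefore equivalent to showing $\mathrm{ev}_{X=0}^*(\sigma(\varpi_1)_{N_0}(g)) \in U_1^-$ (as a subgroup of $P_1^-$) for each generator $g$.

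The core step is the explicit computation of $\mathrm{ev}_{X=0}^*(X^{d_1^{-1}}(u, v))$. Since $x(a, b)$ is constructed in~\cite{Tu83} as a specific polynomial expression in the entries of $a$ and $b$, it is natural in the base ring; applied to $X \mapsto 0$ this yields
\[ \mathrm{ev}_{X=0}^*\bigl(x(d_1 u, d_1^{-1} v^r)\bigr) = x\bigl(d_1 u|_{X=0},\ d_1^{-1} v^r|_{X=0}\bigr). \]
Writing $v_j = X v_j'$ and noting that $c_{ij} = v_i w_j - v_j w_i \in XA[X]$, so that $c_{ij} = X \tilde c_{ij}$, a short direct calculation shows that the canonical-decomposition summand $d_1^{-1} v^r$ with $v^r = u_{ij}c_{ij}$ evaluates to zero at $X = 0$ when $1 < i < j$, and to $u_j(0) \tilde c_{1j}(0) e_1$ when $i = 1 < j$. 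The scalar-factoring identity \cref{itm:xsmall-scalar} then rewrites each nontrivial factor as an explicit product of column-$1$ root unipotents $x_{\varepsilon_k - \varepsilon_1}$.

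Summing the contributions over $j > 1$ and invoking the identity $u^t v = 0$ (which after division by $X$ yields $\sum_{j > 1} u_j(0) v_j'(0) = -u_1(0) v_1'(0)$) together with $w^t u = 1$, the terms involving the auxiliary vector $w$ cancel completely, giving
\[ \mathrm{ev}_{X=0}^*\bigl(X^{d_1^{-1}}(u, v)\bigr) = \prod_{k > 1} x_{\varepsilon_k - \varepsilon_1}\bigl(u_k(0) v_1'(0)\bigr) \in U_1^-. \]
An entirely analogous computation for the other family of generators, $S(v, u) \mapsto Y^{d_1^{-1} X}(v, u)$, yields the formula $\prod_{k > 1} x_{\varepsilon_k - \varepsilon_1}(u_1(0) v_k'(0)) \in U_1^-$. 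The main obstacle I anticipate is the bookkeeping in the final simplification: in the $Y$-case, both strata $i = 1 < j$ and $1 < i < j$ of the canonical decomposition contribute nontrivially after evaluating at $X = 0$, and the $w$-independence of their sum, which is guaranteed a priori by \cref{lem:xy-wd} but not manifest in the termwise expression, must be recovered explicitly through cancellations involving $w^t u = 1$.
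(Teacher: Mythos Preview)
Your reduction via the retraction $\mathrm{ev}_{X=0}^*$ is exactly the paper's approach, and your argument is correct. However, you are doing more work than necessary by passing through the canonical decomposition and tracking the auxiliary vector $w$.

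The paper dispenses with the canonical decomposition entirely and observes a uniform divisibility pattern that holds for \emph{any} decomposition $v = \sum_r v^r$ with $v^r \in D(u) \cap I^n$. Concretely, in each factor $x(d_1 u,\, d_1^{-1} v^r)$ of $X^{d_1^{-1}}(u,v)$ the first coordinate of the first argument is $Xu_1$, while coordinates $2,\ldots,n$ of the second argument are $v^r_2,\ldots,v^r_n \in XA[X]$; in each factor $x({d'}^{-1} v^r,\, d' u)$ of $Y^{d_1^{-1}X}(v,u)$ (with $d' = \mathrm{diag}(1,X,\ldots,X)$) the first coordinate of the first argument is $v^r_1 \in XA[X]$, while coordinates $2,\ldots,n$ of the second argument are $Xu_2,\ldots,Xu_n$. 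Hence at $X=0$ every factor has the form $x(a,b)$ with $a_1 = 0$ and $b = b_1 e_1$, which is visibly a product of generators $x_{k1}(a_k b_1) \in U_1^-$.

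In particular, the obstacle you anticipate in the $Y$-case does not arise: even when both strata $i=1<j$ and $1<i<j$ of the canonical decomposition contribute nontrivially, each individual factor already lies in $U_1^-$ before any cancellation, so there is no need to recover $w$-independence explicitly. Your explicit formulas for $\mathrm{ev}_{X=0}^*$ of the generators are a bonus, but they are not needed for the statement of the lemma.
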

\begin{proof}
Indeed, it suffices to show that $\mathrm{ev}_{X=0}^*(X^{d_1^{-1}}(u, v))$ and $\mathrm{ev}_{X=0}^*(Y^{d^{-1}_1 \cdot X}(v, u))$ belong to $U_1^-$.
After expanding definitions, every factor $x(v, w)$ appearing in the right-hand sides of either~\eqref{eq:X-def} or~\eqref{eq:Y-def}
has the property that $X$ simultaneously divides both the coordinate $v_1$ and the coordinates $w_2, \ldots, w_n$.
Consequently $\mathrm{ev}_{X=0}^* \left(x(v, w)\right) \in U_1^-$, proving the assertion.
\end{proof}

Denote by $H_{1}$ the subgroup of the group of diagonal matrices $H(\Phi, A)$ generated by all $\pi(h_{\alpha_1}(u))$, $u \in A^\times$
(recall that $\pi$ denotes the homomorphism from~\eqref{eq:K1-K2-sequence}).
Also, we denote by $\mathrm{EP}_1^+$ the image of $P_1^+$ under $\pi$.
\begin{lemma} \label{lem:cor-Gauss}
For local $A$ the group $\E(n, A)$ admits the following decomposition:
\[\E(n, A) = \mathrm{EP}_1^+ \cdot H_{1} \cdot U^-_1 \cdot U^+_1. \]
\end{lemma}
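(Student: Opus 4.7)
The plan is to prove this decomposition by an explicit three-step Gauss-type reduction, exploiting two features of local rings: every unimodular column contains at least one unit entry, and $\mathrm{SL}_k(A) = \E(k, A)$ for $k \geq 2$.

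Given $g \in \E(n, A)$, I would first bring the $(1,1)$-entry into $A^\times$. The first column $g e_1$ is unimodular, and locality ensures that some $g_{i1}$ is a unit; if $g_{11}$ itself is already a unit there is nothing to do, otherwise one picks $i \geq 2$ with $g_{i1} \in A^\times$ and left-multiplies by $t_{1i}(1) \in U_1^+ \subseteq \mathrm{EP}_1^+$, which replaces $g_{11}$ by $g_{11} + g_{i1} \in A^\times$. Writing the result as $g_1 = p_1^{-1} g$ with $u \coloneqq (g_1)_{11}$, I would then right-multiply by the unique $(u^+)^{-1} \in U_1^+$ that zeros out the rest of the first row (it subtracts $u^{-1}(g_1)_{1j}$ times column $1$ from column $j$ for $j \geq 2$), obtaining a matrix $g_2$ of block form $\bigl(\begin{smallmatrix} u & 0 \\ c & B' \end{smallmatrix}\bigr)$ with $c \in A^{n-1}$ and $\det B' = u^{-1}$.

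The final step normalizes the Levi block $B'$. Set $s \coloneqq \mathrm{diag}(u^{-1}, 1, \ldots, 1) \cdot (B')^{-1}$; this has determinant $1$ and, since $A$ is local with $n - 1 \geq 3$, it lies in $\mathrm{SL}_{n-1}(A) = \E(n-1, A)$. Embedding $s$ as $\mathrm{diag}(1, s)$ produces an element $p_2 \in \mathrm{EP}_1^+$ (the Levi of the parabolic being generated by the root unipotents indexed by $\Delta_1$), and $p_2^{-1} g_2$ has the shape $\bigl(\begin{smallmatrix} u & 0 \\ c' & \mathrm{diag}(u^{-1}, 1, \ldots, 1) \end{smallmatrix}\bigr)$, which is precisely of the form $h \cdot u^-$ with $h = \mathrm{diag}(u, u^{-1}, 1, \ldots, 1) \in H_1$ and $u^- \in U_1^-$ determined by $c'$. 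Reassembling the three steps yields $g = (p_1 p_2) \cdot h \cdot u^- \cdot u^+ \in \mathrm{EP}_1^+ \cdot H_1 \cdot U_1^- \cdot U_1^+$.

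I do not foresee a serious obstacle; the whole argument is a direct Gauss-type reduction adapted to the maximal parabolic $P_1$, with locality used at each step to guarantee the availability of a unit entry. The only nontrivial input is the classical identification $\mathrm{SL}_k(A) = \E(k, A)$ for local $A$ and $k \geq 2$, which is what makes the Levi normalization in the third step possible.
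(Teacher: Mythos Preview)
Your argument is correct in outline and takes a genuinely different, more hands-on route than the paper. The paper simply quotes the general Gauss decomposition $\E(\Phi, A) = H(\Phi, A)\cdot \UU(\Phi^+, A)\cdot \UU(\Phi^-, A)\cdot \UU(\Phi^+, A)$ for local $A$ (Smolensky, \cite[Theorem~1.1]{Sm12}), specializes to $\Phi = \rA_{n-1}$, and then uses the Levi decomposition to push all root subgroups $X_\alpha$ with $\alpha \in \Delta_1$ from the three unipotent factors into the first factor $\mathrm{EP}_1^+$. Your approach instead re-derives the parabolic version directly via explicit row/column elimination together with $\mathrm{SL}_{n-1}(A) = \E(n-1, A)$ for local $A$. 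The trade-off is the usual one: the paper's argument is a two-line deduction from an existing black box and works uniformly for all $\Phi$, while yours is self-contained and elementary but specific to type $\rA$.

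One small bookkeeping slip: with your definition $s = \mathrm{diag}(u^{-1},1,\ldots,1)\cdot (B')^{-1}$ and $p_2 = \mathrm{diag}(1,s)$, the lower-right block of $p_2^{-1} g_2$ is $s^{-1} B' = B'\,\mathrm{diag}(u,1,\ldots,1)\,B'$, not $\mathrm{diag}(u^{-1},1,\ldots,1)$. You want $s = B'\cdot \mathrm{diag}(u,1,\ldots,1)$ (still of determinant $1$), or equivalently swap $p_2$ and $p_2^{-1}$. This is purely cosmetic and does not affect the validity of the argument.
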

\begin{proof}
Recall from~\cite[Theorem~1.1]{Sm12} that for any root system $\Phi$ the elementary subgroup $\E(\Phi, A)$ admits Gauss decomposition:
\[ \E(\Phi, A) = H(\Phi, A) \cdot \UU(\Phi^+, A) \cdot \UU(\Phi^-, A) \cdot \UU(\Phi^+, A). \]
To prove the assertion, it remains to specialize $\Phi = \rA_{n-1}$ and move all root subgroups
$X_\alpha$ with $\alpha \in \Delta_1$ to the first factor using the Levi decomposition.
\end{proof}

\begin{proof}[Proof of~\cref{prop:sigma-construction}]
We need to show that $\sigma(\varpi_1)_{N_0}$ respects the relations~\eqref{add4}--\eqref{coef-move}.
For relations~\eqref{add4}--\eqref{add5} this is an immediate corollary of~\cref{itm:xsmall-additivity} and \cref{lem:xy-wd}.
By~\cref{lem:xy-conj} for $g \in N_{-\varpi_1}$ one has
\begin{equation}
\label{eq:xy-conj-n1}
g \cdot X^{d_1^{-1}}(u', v') \cdot g^{-1} = X^{d_1^{-1}}(mu', m^*v'), \text{ where } m = d_1^{-1} \cdot \pi(g) \cdot d_1.
\end{equation}
To obtain that $\sigma(\varpi_1)_{N_0}$ respects~\eqref{conj3} it remains to specialize the above equality setting $g = X^{d_1^{-1}}(u, v)$.

Let us verify that $\sigma(\varpi_1)_{N_0}$ respects~\eqref{coef-move}, i.\,e. that for $a\in XA[X]$ and $m \in \E(n, A[X])$ one has
$X^{d_1^{-1}}(me_1, m^*e_2 a) = Y^{d_1^{-1} \cdot X}(me_1 a, m^* e_2)$.
First, we verify this in the special case when $m \in \E(n, A[X])$ belongs to the subset
\[G_0 = H_{1} \cdot U^-_1 \cdot U^+_1.\]
It is easy to check that in this case the only nonzero components of $u = m^* e_2$ are $u_1$ and $u_2$.
Decompose $v = m e_1$ into a sum $v' + v''$, where $v' = (v_1, v_2, 0, \ldots 0)^t,$ $v'' = (0, 0, v_3, \ldots v_n)^t$.
Since $v^t u = 0$ we obtain that $v', v'' \in D(u)$.
Now from Lemmas~\ref{lem:xy-wd}--\ref{lem:xy-conj} we obtain for any $a \in XA[X]$ that
\begin{multline}
\label{eq:special-case}
X^{d_1^{-1}}(me_1, m^*e_2 a) =
x(d_1 \cdot v, d_{1}^{-1} \cdot u a) =
x(d_1 \cdot X^{-1} va, d_1^{-1} \cdot X u) = \\
= x(d_1 X^{-1} \cdot v'a, d_1^{-1}\cdot X u) \cdot x(d_1 X^{-1}\cdot v''a, d_1^{-1} \cdot X u) =
Y^{d_1^{-1} \cdot X}(me_{1}a, m^* e_2)
\end{multline}

To obtain the assertion in the general case notice that by~\cref{lem:cor-Gauss} one has $\E(n, A[X]) = \pi(N_{\varpi_1}) \cdot G_0$.
Now factor $m \in \E(n, A[X])$ as $\pi(n) \cdot h$ for some $n\in N_{\varpi_1}$ and $h \in G_0$.
Since $\pi(n) = d_1^{-1} \cdot \pi(g) \cdot d_1$ for some $g \in N_{-\varpi_1}$ it remains to apply~\eqref{eq:xy-conj-n1}, \eqref{eq:special-case} and~\cref{lem:xy-conj}:
\begin{multline}
\nonumber X^{d_1^{-1}}(me_1, m^*e_{2}a) = {}^{g}(X^{d_1^{-1}}(he_1, h^*e_{2}a)) = \\
= {}^{g}(Y^{d_1^{-1} \cdot X}(he_{1}a, h^*e_2)) = Y^{d_1^{-1} \cdot X}(me_{1}a, m^{*} e_{2}).
\end{multline}
The construction of $\sigma(\varpi_1)$ is now complete.
The argument for $\sigma(-\varpi_1)$ is analogous.
\end{proof}

\begin{cor} \label{cor:a3-microweight}
For a local ring $A$ and $\Phi = \rA_3$ an $\omega$-pair exists for any weight $\omega$ lying the $W(\Phi)$-orbit of a microweight.
\end{cor}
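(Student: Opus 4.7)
The plan is to deduce this corollary in two steps from \cref{prop:sigma-construction}: first use \cref{lem:delta-weyl} to spread a $\varpi_1$-pair across the full Weyl orbit $W(\rA_3) \cdot \varpi_1$, and then use the $\omega \leftrightarrow -\omega$ symmetry built into \cref{dfn:delta-pair} to cover the opposite orbit $W \cdot \varpi_3$.

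Concretely, the first step would be to apply \cref{prop:sigma-construction} to $\Phi = \rA_3$ (the case $n = \ell + 1 = 4 \geq 4$), yielding a $\varpi_1$-pair. For each $\overline{w} \in W(\rA_3)$, lifting $\overline{w}$ to $w \in \StW(\rA_3, A)$ and applying \cref{lem:delta-weyl} via the conjugation formula~\eqref{eq:sigma-gen} would produce an $(\overline{w} \cdot \varpi_1)$-pair, handling every $\omega \in W \cdot \varpi_1$. The second step is the observation that the two mutually inverse maps $\sigma(\omega)$ and $\sigma(-\omega)$ forming an $\omega$-pair automatically constitute a $(-\omega)$-pair after relabeling, so existence of pairs is stable under negation. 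The longest Weyl element $w_0 \in W(\rA_3)$ sends $\varpi_1$ to $-\varpi_3$, so applying negation to $W \cdot \varpi_1$ covers $W \cdot \varpi_3 = W \cdot (-\varpi_1)$ as well.

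The main obstacle I expect is the remaining microweight orbit $W(\rA_3) \cdot \varpi_2$, which is disjoint from $W \cdot \varpi_1 \cup W \cdot \varpi_3$ because $W$ preserves squared norms and $(\varpi_2, \varpi_2) \neq (\varpi_1, \varpi_1)$; neither Weyl transport nor negation reaches this orbit. To cover it, one would have to adapt the argument of \cref{prop:sigma-construction} using $d_2 = \mathrm{diag}(X, X, 1, 1)$ in place of $d_1$, together with versions of \cref{dfn:xy-def}, \cref{lem:xy-conj}, and the Gauss decomposition of \cref{lem:cor-Gauss} tailored to the parabolic $P_2^\pm$ rather than $P_1^\pm$. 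The most delicate point is the analog of relation~\eqref{coef-move}, whose original proof relied crucially on $m^* e_2$ having only two nonzero components—a feature unavailable in the $\varpi_2$ setting, so a different canonical decomposition of $v \in D(u)$ would have to be arranged.
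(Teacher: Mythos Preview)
Your handling of the orbits $W\cdot\varpi_1$ and $W\cdot\varpi_3$ matches the paper: \cref{prop:sigma-construction} plus Weyl transport via \cref{lem:delta-weyl}, with the $\varpi_3$ case following either by negation or, as the paper phrases it, ``by symmetry'' (cf.~\cref{exm:chi-linear}). You are also right that $W\cdot\varpi_2$ is the genuine obstacle and cannot be reached this way.

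Where your proposal diverges is in the treatment of $\varpi_2$. You suggest rerunning the entire construction of \cref{prop:sigma-construction} with $d_2=\mathrm{diag}(X,X,1,1)$ and a $P_2^\pm$-adapted Gauss decomposition, and you correctly flag that the verification of relation~\eqref{coef-move} is the delicate step whose original proof does not carry over. The paper avoids this altogether by exploiting an additive identity among weights: in $\rA_3$ one has $\varpi_2 = s_{\alpha_1}\cdot\varpi_1 + \varpi_1$. Using \cref{lem:relative-reduction} to reduce to $N_0=\St(4,A[X],XA[X])$, the paper simply \emph{composes} two maps already in hand, setting
\[
\sigma(\varpi_2)_{N_0}\ \coloneqq\ \sigma(\varpi_1)\circ\sigma(s_{\alpha_1}\cdot\varpi_1)\big|_{N_0}.
\]
The only point to check is that this composition makes sense, i.e.\ that the image of $\sigma(s_{\alpha_1}\cdot\varpi_1)|_{N_0}$ lands inside $N_{\varpi_1}$; this follows from \cref{lem:sigma-N0-image} together with~\eqref{eq:sigma-gen}, which give that the image lies in ${}^{w_{\alpha_1}(1)}U_1^-\ltimes N_0 \leq N_{\varpi_1}$. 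The additive identity then forces the composite to satisfy~\eqref{eq:sigmadef} for $\varpi_2$, so its image lies in $N_{-\varpi_2}$.

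In short: your plan for $\varpi_2$ is plausible in outline but incomplete, and the obstacle you name is real. The paper's composition trick sidesteps it entirely --- no new $X^{d_2}$, $Y^{d_2}$ elements, no new Gauss decomposition, no second verification of~\eqref{coef-move}.
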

\begin{proof}
By~\cref{lem:delta-weyl} that it suffices to prove the existence of an $\omega$-pair in the case when $\omega$ is a fundamental weight, i.\,e. $\omega = \varpi_{i}$ for some $1 \leq i \leq 3$.

In \cref{prop:sigma-construction}, we have established the existence of an $\omega$-pair for $\omega = \varpi_1$
(and by symmetry, also for $\omega = \varpi_3$, cf.~\cref{exm:chi-linear}).
Thus, it remains to do the same for $\omega = \varpi_{2}$.

By~\cref{lem:relative-reduction} we only need to construct the restriction $\sigma(\varpi_2)_{N_0}$, where $N_0 = \St(4, A[X], XA[X])$.
Specifically, we define $\sigma(\varpi_2)_{N_0}$ as the composition
$\sigma(\varpi_1) \cdot \sigma((12) \cdot \varpi_1)_{N_0}$.
By \cref{lem:sigma-N0-image} and~\eqref{eq:sigma-gen}, the image of $\sigma((12) \cdot \varpi_1)_{N_0}$ is contained in
${}^{w_{12}(1)}U_1^- \ltimes N_0 \leq N_{\varpi_1},$ so the above homomorphisms can indeed be composed correctly.
Using~\eqref{eq:sigmadef} and the identity
$\varpi_2 = (12) \cdot \varpi_1 + \varpi_1$ it is straightforward to show that the image of $\sigma(\varpi_2)_{N_0}$ is contained in $N_{-\varpi_2}$.
\end{proof}

Now we can prove the main result of this subsection.
\begin{thm} \label{thm:pairconstr}
Let $\Phi$ be a root system of type $\rD_{\geq 4}$, $\rE_6$ or $\rE_7$ and $A$ be a local ring.
Denote by $k$ the index of the simple root $\alpha_k$, whose weight $\varpi_k$ is a microweight
($k$ is the index of the vertex marked green on the Dynkin diagram from Figure 1 in~\cref{subsec:curtis-tits}).
Then there exists a $\varpi_k$-pair.
\end{thm}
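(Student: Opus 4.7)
By~\cref{lem:relative-reduction}, constructing a $\varpi_k$-pair reduces to producing the restriction $\sigma(\varpi_k)_{N_0}\colon N_0 \to N_{-\varpi_k}$, where $N_0 := \St(\Phi, A[X], XA[X])$. My plan is to obtain this map by gluing together the rank-$3$ constructions of~\cref{cor:a3-microweight} across all $\rA_3$-subsystems of $\Phi$, using the amalgamation theorem~\cref{thm:relPres}.

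For each $\Psi \in A_3(\Phi)$, set $\omega_\Psi := \varpi_k|_\Psi \in P(\Psi)$. The microweight property of $\varpi_k$ gives $\langle \omega_\Psi, \alpha\rangle = \langle \varpi_k, \alpha\rangle \in \{-1, 0, 1\}$ for every $\alpha \in \Psi$. A straightforward classification of $W(\rA_3)$-orbits of weights having all pairings in $\{-1, 0, 1\}$ (using the standard identification of the fundamental weights of $\rA_3$ with vectors of the form $(1, \ldots, 1, 0, \ldots, 0) \in \mathbb{R}^4$ modulo the main diagonal) shows that $\omega_\Psi$ is either zero (in which case the associated $0$-pair is tautologically the identity on $\St(\Psi, A[X])$) or $W(\Psi)$-conjugate to a fundamental weight of $\Psi$. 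In both cases,~\cref{cor:a3-microweight} supplies an $\omega_\Psi$-pair; restricting via the relative analogue of~\cref{lem:relative-reduction} yields a homomorphism
\[
\sigma_\Psi \colon N_0(\Psi) \longrightarrow N_{-\omega_\Psi}(\Psi) \hookrightarrow \St(\Phi, A[X]), \qquad N_0(\Psi) := \St(\Psi, A[X], XA[X]).
\]

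By~\cref{thm:relPres} applied with $R = A[X]$ and $I = XA[X]$, the group $N_0$ is the colimit of $\{N_0(\Psi)\}_{\Psi \in A_3(\Phi)}$ amalgamated along the shared relative root generators $z_\alpha(m, a)$ corresponding to common roots $\alpha \in \Psi_1 \cap \Psi_2$. Consequently, producing $\sigma(\varpi_k)_{N_0}$ as the induced morphism on the amalgam reduces to verifying the compatibility
\[
\sigma_{\Psi_1}(z_\alpha(m, a)) = \sigma_{\Psi_2}(z_\alpha(m, a)) \quad \text{in } \St(\Phi, A[X])
\]
for every common root $\alpha$ and all $m \in XA[X]$, $a \in A[X]$. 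Once this holds, the universal property of the colimit produces $\sigma(\varpi_k)_{N_0}$, and the full $\varpi_k$-pair reassembles via~\cref{lem:relative-reduction} using the Levi-type data~\eqref{eq:sigma-Pk}.

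The principal obstacle will be the compatibility check, since each $\sigma_\Psi$ comes from the coordinate-dependent formulas of~\cref{prop:sigma-construction} (built around a specific diagonal matrix $d_1 = \mathrm{diag}(X, 1, 1, 1)$), so agreement on overlaps is not tautological. My approach is to invoke the uniqueness observation following~\cref{dfn:delta-pair}: each $\sigma_\Psi$ is the unique homomorphism on $N_{\omega_\Psi}(\Psi)$ extending the prescription
\[
x_\beta(f) \longmapsto x_\beta(X^{\langle \varpi_k, \beta\rangle} f), \qquad x_\beta(f) \in \mathcal{X}_{\omega_\Psi} \cap \St(\Psi, A[X]),
\]
coming from~\eqref{eq:sigmadef}. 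Because the scaling exponent $\langle \varpi_k, \beta\rangle$ is intrinsic to $\beta$ and $\varpi_k$, not to the ambient $\rA_3$-system, I would then split into cases on $\langle \varpi_k, \alpha\rangle \in \{-1, 0, 1\}$: for $\alpha \in \Delta_k$ both sides reduce to $z_\alpha(m, a)$ itself by direct application of~\eqref{eq:sigmadef}; for $\alpha \in \Sigma_k^-$ formulas~\eqref{eq:charact} pin down both sides uniformly; and for $\alpha \in \Sigma_k^+$ the symmetric argument using $\sigma(-\varpi_k)$ applies. With compatibility established, the gluing yields $\sigma(\varpi_k)_{N_0}$ and completes the proof.
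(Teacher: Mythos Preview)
Your proposal is correct and follows essentially the same approach as the paper: reduce to $N_0$ via \cref{lem:relative-reduction}, restrict $\varpi_k$ to each $\rA_3$-subsystem to obtain a microweight (or zero), invoke \cref{cor:a3-microweight} to get the local $\omega_\Psi$-pairs, and glue via the amalgamation theorem \cref{thm:relPres}. The paper disposes of the compatibility check on the overlap generators $z_\alpha(Xf,g)$ in one line (``obvious from the construction''), whereas you spell out the case split on $\langle\varpi_k,\alpha\rangle$; your treatment of $\alpha\in\Delta_k$ and $\alpha\in\Sigma_k^-$ via~\eqref{eq:sigmadef} and~\eqref{eq:charact} is exactly right, and the appeal to the $\sigma(-\varpi_k)$ side for $\alpha\in\Sigma_k^+$ is the natural symmetric move.
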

\begin{proof}
By~\cref{lem:relative-reduction} it suffices to constuct $\sigma({\varpi_k})_{N_0}$.
In our situation \cref{thm:relPres} asserts that $N_0 = \St(\Phi, A[X], XA[X])$ can be decomposed into the free product of subgroups $\St(\Psi, A[X], XA[X])$,
where $\Psi$ ranges over the set $A_3(\Phi)$ of root subsystems of type $\rA_3$ in $\Phi$, amalgamated over embeddings of generators $z_\alpha(Xf, g)$ into different groups $\St(\Psi, A[X], XA[X])$, whose root system $\Psi \in A_3(\Phi)$ contains $\alpha$.
Thus, by the universal property of coproducts it remains to construct $\sigma(\varpi_k)_\Psi \colon \St(\Psi, A[X], XA[X]) \to N_{-\varpi_k}$ and then
verify that these homomorphisms agree on the generators $z_\alpha(Xf, g)$ of different $\St(\Psi, A[X], XA[X])$.

In the case $\Psi \subseteq \Delta_k$ the homomorphism $\sigma(\varpi_k)_\Psi$ should act as identity, so we define it as the canonical map $\St(\Psi, A[X], XA[X]) \to N_{-\varpi_k}$
induced by the embedding $\Psi \subseteq \Phi$.

Now consider the nontrivial case $\Psi \not\subseteq \Delta_k$.
It is clear that there is a unique nonzero weight $\omega \in P(\Psi)$ determined by
\begin{equation} \label{eq:Psi-choice} (\omega, \alpha) = (\varpi_k, \alpha),\ \alpha \in \Psi. \end{equation}
By definition, the weight $\omega$ is a microweight for $\Psi$.
By~\cref{cor:a3-microweight} there exists an $\omega$-pair $\xymatrix{ \sigma(\omega)\colon N_{\omega, \Psi} \ar[r] & \ar@<-1.0ex>[l] N_{-\omega, \Psi}\colon \sigma(-\omega) }$.

Denote by $\iota$ the homomorphism $N_{-\omega, \Psi} \to N_{-\varpi_k, \Phi}$ induced by the embedding of Steinberg groups $\St(\Psi, A[X]) \to \St(\Phi, A[X])$ (it is well-defined by the definition of $N_\omega$ and~\eqref{eq:Psi-choice}).
The required homomorphisms $\sigma(\varpi_k)_\Psi$ now can be defined as the composition $\iota \sigma(\omega)$.
The fact that $\sigma(\varpi_k)_\Psi(z_\alpha(Xf, g))$ does not depend on $\Psi$ is obvious from the construction of $\sigma(\varpi_k)_\Psi$.
\end{proof}

\subsection{Construction of the action of $\St(\Phi, A[X, X\inv])$ on $V$} \label{sec:construction-delta}
From now on we assume that $A$ is a local domain.
Recall from \cref{subsec:structure-theorem-overview} that \[V = G^+ \times B \times G_M,\ W = U^+ \times G_M^+ \times B_M\]
and that $W$ acts upon $V$ via $\star$-action defined in~\cref{subsec:triples}.

For a weight $\omega \in P(\Phi)$ denote by $V_\omega$ the subset of $V$ consisting of those triples $v = (x, y, z)$ for which $x \in N_\omega\cdot \StW(\Phi, A)$.
Throughout this section we assume that $\Phi$ is a root system of type $\rD_{\geq 4}$, $\rE_6$ or $\rE_7$.

\begin{dfn} \label{sigma-def}
Let $k$ be the index of the vertex marked green on the Dynkin diagram from Figure 1 in~\cref{subsec:curtis-tits} and let $\omega = \pm \varpi_k$,
We define the function $\delta_\omega \colon V_\omega \to V_{-\omega}$ via the following formula:
\begin{equation} \label{eq:sigma-def} (x_1 \cdot x_2, y, z) \mapsto (\sigma(\omega)(x_1)\cdot x_2, x_2^{-1} \cdot \chi_{\omega, X}(x_2 \cdot y), \widetilde{\chi}_{\omega, X}(z)), \end{equation}
where $x_1 \in N_\omega$, $x_2 \in \StW(\Phi, A)$, $y \in B$, $z \in G_M$, and
$\chi_{\omega, X}$, $\widetilde{\chi}_{\omega, X}$ denote automorphisms defined in~\eqref{eq:chi-def} and~\cref{lem:relative-chi}, respectively,
while $\sigma(\omega)$ denotes the homomorphism defined in~\cref{thm:pairconstr}.
\end{dfn}
By~\cref{lem:winv-chiw} the factor $x_2^{-1} \cdot \chi_{\omega, X}(x_2)$ lies in $\StH(\Phi, A[X^{\pm 1}])$,
so the second component of the above triple belongs to $B$.
At this point, it is not immediately clear why the above definition does not depend on the choice of decomposition $x = x_1 \cdot x_2$.

Denote by $\Lambda$ the orbit of $\varpi_k$ under $W(\Phi)$.
Since $W(\Delta_k)$ stabilizes $\varpi_k$ it follows that the coset set $W(\Phi)/W(\Delta_k)$ is in one-to-one correspondence with $\Lambda$.

We denote by $V$ the microweight representation of $\Gsc(\Phi, A)$ with the highest weight $\varpi_k$ (see e.\,g. \cite[\S~2]{Ge17} or \cite[\S~1.1]{V00}).
This representation has dimension $|\Lambda|$ and we denote by $v^\lambda$ its basis vector corresponding to $\lambda \in \Lambda$.
We also denote by $v^+$ the highest weight vector of this representation, i.\,e. the basis vector $v^{\varpi_k}$.
A choice of the system of positive roots specifies a strict partial order on $\Lambda$.
By definition, $\lambda > \mu$ if $\lambda - \mu$ is a sum of positive roots.

Recall that for every $w \in W(\Phi, A)$ there exists a unique $\lambda \in \Lambda$ and $u \in A^\times $ such that
\begin{equation}\label{eq:w-highestweight} w \cdot v^\lambda = u^{-1} v^+. \end{equation}
Conversely, for every pair $(u, \lambda) \in A^\times \times \Lambda$ there exists some element $w = w_{\lambda, u} \in W(\Phi, A)$ such that~\eqref{eq:w-highestweight} holds.
If $\lambda = \varpi_k$ the element $w_{\lambda, u}$ can be chosen as $h_\alpha(u)$ for some $\alpha \in \Sigma_k^-$, $u \in A^\times$, cf.~\cite[Lemma~7]{V00}.
If $\lambda \neq \varpi_k$ by~\cite[Lemma~6]{V00} the element $w_{\lambda, u}$ can be chosen as a product of \textit{at most} $n$ elements
$w_{\beta_i}(v_i)$, $1 \leq i \leq n$, for $\beta_i\in\Sigma_k^-$ such that $\beta_i$ are mutually orthogonal and $\prod_i v_i = \pm u$ and $\varpi_k + \sum_i \beta_i = \lambda$ where
\begin{itemize}
\item $n=2$ if $\Phi = \rD_\ell, \rE_6$,
\item $n=3$ if $\Phi = \rE_7$.
\end{itemize}
Formula~\eqref{eq:w-highestweight} follows from the description of the action of $w_\beta(v)$ on $V$ given by~\cite[Lemma~6]{V00}.
It is also clear that $\overline{w_{\lambda, u}} \in W(\Phi)$ swaps weights $\varpi_k$ and $\lambda$.

Before we proceed further recall that the canonical homomorphism
$\K_2(\Delta_k, A) \to \K_2(\Phi, A)$ induced by the embedding $\Delta_k \subseteq \Phi$ is surjective by~\cite[Theorem~2.13]{Ste73}.

Notice also that
\begin{equation} \label{eq:WP} W(\Phi, A) \cap \pi(P_k^+) \subseteq W(\Delta_k, A). \end{equation}

Denote by $\overline{W}(\Delta_k, A)$ the image of $\StW(\Delta_k, A)$ in $\St(\Phi, A)$ (or, equivalently, the subgroup generated by $w_\alpha(u)$, $u\in A^\times,$ $\alpha \in \Delta_k$).
\begin{lemma} \label{lem:can-repr1}
We have $\StW(\Phi, A) \cap N_{\varpi_k} = \overline{W}(\Delta_k, A).$
\end{lemma}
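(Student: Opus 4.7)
The plan is to establish the two inclusions separately. The easy direction $\overline{W}(\Delta_k, A) \subseteq \StW(\Phi, A) \cap N_{\varpi_k}$ follows immediately once one observes that for every $\alpha \in \Delta_k$ we have $\langle \varpi_k, \pm\alpha \rangle = m_k(\pm\alpha) = 0$, so each of the three factors of $w_\alpha(u) = x_\alpha(u) \cdot x_{-\alpha}(-u^{-1}) \cdot x_\alpha(u)$ belongs to $\mathcal{X}_{\varpi_k}$; by construction $w_\alpha(u)$ also lies in $\StW(\Phi, A)$.

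For the reverse inclusion I would take $w \in \StW(\Phi, A) \cap N_{\varpi_k}$ and proceed in four short steps. \textbf{Step 1.} Apply $\mathrm{ev}_{X=0}^* \colon \St(\Phi, A[X]) \to \St(\Phi, A)$. On the one hand, $w$ lies in the image of $\St(\Phi, A) \hookrightarrow \St(\Phi, A[X])$ (a section for $\mathrm{ev}_{X=0}^*$), so $\mathrm{ev}_{X=0}^*(w) = w$. On the other hand, any generator $x_\alpha(a) \in \mathcal{X}_{\varpi_k}$ with $\alpha \in \Sigma_k^-$ has $a$ divisible by $X$ and is thus killed at $X=0$, while the generators with $\alpha \in \Delta_k \sqcup \Sigma_k^+$ are sent into $P_k^+$. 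Hence $w \in P_k^+$. \textbf{Step 2.} Apply $\pi$: from $w \in \StW(\Phi, A) \cap P_k^+$ we obtain $\pi(w) \in W(\Phi, A) \cap \pi(P_k^+)$, and the inclusion~\eqref{eq:WP} then gives $\pi(w) \in W(\Delta_k, A)$.

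\textbf{Step 3.} Lift $\pi(w)$ back to Steinberg level. By the very definition of $W(\Delta_k, A)$ as $\pi(\overline{W}(\Delta_k, A))$, choose $w' \in \overline{W}(\Delta_k, A)$ with $\pi(w') = \pi(w)$. Then $w \cdot (w')^{-1} \in \ker(\pi) \cap \StW(\Phi, A)$. Since $A$ is local, $\K_2(\Phi, A)$ is generated by Steinberg symbols, which lie in $\StH(\Phi, A) \leq \StW(\Phi, A)$, so $\K_2(\Phi, A) \subseteq \StW(\Phi, A)$ and the intersection is exactly $\K_2(\Phi, A)$. \textbf{Step 4.} Absorb the $\K_2$-factor into $\overline{W}(\Delta_k, A)$. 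By the surjectivity $\K_2(\Delta_k, A) \twoheadrightarrow \K_2(\Phi, A)$ recalled immediately before the lemma (via~\cite[Theorem~2.13]{Ste73}), every element of $\K_2(\Phi, A)$ is the image under the canonical map $\St(\Delta_k, A) \to \St(\Phi, A)$ of an element of $\K_2(\Delta_k, A) \leq \StH(\Delta_k, A)$. The image of $\StH(\Delta_k, A)$ in $\St(\Phi, A)$ sits inside $\overline{W}(\Delta_k, A)$, so $w(w')^{-1} \in \overline{W}(\Delta_k, A)$ and consequently $w \in \overline{W}(\Delta_k, A)$.

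The step that requires the most care is Step 4, where one must use that the surjection $\K_2(\Delta_k, A) \twoheadrightarrow \K_2(\Phi, A)$ is induced by the honest embedding of Steinberg groups $\St(\Delta_k, A) \hookrightarrow \St(\Phi, A)$, so that every central element of $\K_2(\Phi, A)$ is literally equal in $\St(\Phi, A)$ to a product of $h_\alpha(u)$'s with $\alpha \in \Delta_k$, rather than merely congruent to one modulo something. The other essential input is the inclusion~\eqref{eq:WP}, which intuitively says that a Weyl element whose image already lies in the positive parabolic must in fact live in the Levi's Weyl subgroup — this is precisely what collapses the argument back into $\Delta_k$.
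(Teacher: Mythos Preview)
Your proof is correct and follows essentially the same route as the paper's. The only difference is cosmetic: where the paper silently uses the semidirect product decomposition $N_{\varpi_k} = N_0 \rtimes P_k^+$ (recalled at the start of~\S\ref{subsec:construction-sigma}) to conclude that an element of $N_{\varpi_k} \cap \St(\Phi, A)$ lies in $P_k^+$, you make this explicit via $\mathrm{ev}_{X=0}^*$; Steps~2--4 then match the paper's argument line by line.
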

\begin{proof}
Clearly, $\overline{W}(\Delta_k, A) \subseteq \StW(\Phi, A) \cap N_{\varpi_k}$.
Let us now establish the reverse inclusion.

Consider an element $g \in \StW(\Phi, A) \cap N_{\varpi_k}$.
By~\eqref{eq:WP} the element $\pi(g)$ belongs to the subgroup $W(\Delta_k, A)$.
Consequently, $g \in \K_2(\Phi, A) \cdot \overline{W}(\Delta_k, A)$.
It remains to note that $\K_2(\Phi, A) \subseteq \overline{W}(\Delta_k, A)$ since the map $\K_2(\Delta_k, A) \to \K_2(\Phi, A)$ is surjective.
\end{proof}

\begin{lemma} \label{lem:can-repr2}
The map $(u, \lambda) \mapsto w_{\lambda, u}$ establishes an isomorphism of the set $A^\times \times \Lambda$ and the coset set $\StW(\Phi, A) / \overline{W}(\Delta_k, A)$.
\end{lemma}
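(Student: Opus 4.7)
The plan is to identify the claimed bijection as the orbit--stabilizer correspondence for the action of $\StW(\Phi, A)$ on the microweight module $V$ through $\pi$, and then translate back to the parameterization $(u, \lambda) \mapsto w_{\lambda, u}$.

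First I would describe the $\StW(\Phi, A)$-orbit of $v^+$. On the one hand, rearranging the defining identity~\eqref{eq:w-highestweight} gives $w_{\lambda, u}^{-1} v^+ = u v^\lambda$, so every $u v^\lambda$ lies in the orbit. Conversely, $\pi(\StW(\Phi, A)) = W(\Phi, A)$ normalises the split maximal torus and therefore permutes the weight spaces of $V$; since $V$ is a microweight module with one-dimensional weight spaces indexed by $\Lambda$, each element $w v^+$ has the form $c v^\mu$ with $\mu \in \Lambda$ and $c \in A^\times$ (invertibility of $c$ follows from that of $\pi(w)$). So the orbit is exactly $\{u v^\lambda : (u, \lambda) \in A^\times \times \Lambda\}$, which bijects canonically with $A^\times \times \Lambda$.

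The core technical step is the stabilizer computation $\Stab_{\StW(\Phi, A)}(v^+) = \overline{W}(\Delta_k, A)$. For $\supseteq$, each generator $w_\alpha(u)$ with $\alpha \in \Delta_k$ must fix $v^+$ on the nose. The key combinatorial fact is that, by the microweight property, a weight of the form $\varpi_k + \beta$ lies in $\Lambda$ iff $\langle \varpi_k, \beta\rangle = -1$; since $\alpha \in \Delta_k$ satisfies $\langle \varpi_k, \alpha \rangle = m_k(\alpha) = 0$, neither $\varpi_k + \alpha$ nor $\varpi_k - \alpha$ is a weight of $V$. Hence $x_{\pm\alpha}(a) v^+ = v^+$ exactly, and consequently $w_\alpha(u) v^+ = v^+$ with no sign factor. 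For $\subseteq$, take $g \in \StW(\Phi, A)$ with $g v^+ = v^+$; then $\pi(g)$ stabilises $v^+$ in $\Gsc(\Phi, A)$, so in particular lies in $\pi(P_k^+)$, and by~\eqref{eq:WP} this forces $\pi(g) \in W(\Delta_k, A)$. Choosing any lift $g' \in \overline{W}(\Delta_k, A)$ of $\pi(g)$, the difference $g {g'}^{-1}$ lies in $\K_2(\Phi, A)$, which is already contained in $\overline{W}(\Delta_k, A)$ by the surjectivity of $\K_2(\Delta_k, A) \to \K_2(\Phi, A)$ recalled before the lemma (exactly as in the proof of~\cref{lem:can-repr1}).

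Combining the two previous paragraphs via orbit--stabilizer yields the desired bijection between the coset set and $A^\times \times \Lambda$; tracking $w_{\lambda, u}$ through this identification using $w_{\lambda, u}^{-1} v^+ = u v^\lambda$ recovers the parameterization in the statement. I expect the main technical obstacle to be the first half of the stabilizer computation, namely ruling out a priori sign factors in the action of $w_\alpha(u)$ on $v^+$ for $\alpha \in \Delta_k$; once the weight-theoretic argument above is in hand, the rest of the proof is essentially formal.
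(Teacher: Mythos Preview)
Your proposal is correct and follows essentially the same route as the paper: both identify the coset set with the orbit of $v^+$ in the microweight module and compute the stabilizer using~\eqref{eq:WP} together with the surjectivity of $\K_2(\Delta_k, A) \to \K_2(\Phi, A)$. The one step you pass over without comment --- that an element of $W(\Phi, A)$ fixing $v^+$ automatically lies in $\pi(P_k^+)$ --- is precisely where the paper invokes the Chevalley--Matsumoto decomposition~\cite[Theorem~1.3]{St78}, whereas the sign issue you flag as the main obstacle is not singled out in the paper.
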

\begin{proof}
Since $\K_2(\Delta_k, A) \to \K_2(\Phi, A)$ is surjective, the coset set $\StW(\Phi, A) / \overline{W}(\Delta_k, A)$ is in bijective correspondence with the coset set
$W(\Phi, A)/W(\Delta_k, A)$.
We claim that $W(\Phi, A)/W(\Delta_k, A)$ is isomorphic to $A^\times \times \Lambda$
via the mapping $w W(\Delta_k, A) \mapsto (u, \lambda)$, where $u$ is determined from~\eqref{eq:w-highestweight}.
This map is obviously surjective, its injectivity follows from Chevalley--Matsumoto decomposition~\cite[Theorem~1.3]{St78} and~\eqref{eq:WP}.
\end{proof}

\begin{cor} \label{cor:can-repr}
For $\omega = \pm \varpi_k$ one has \[N_{\omega} \cdot \StW(\Phi, A) = \bigsqcup\limits_{(u, \lambda) \in A^\times \times \Lambda} N_{\omega} \cdot w_{\lambda, u}. \]
\end{cor}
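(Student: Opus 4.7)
The plan is to derive the corollary directly from the two preceding lemmas, the only extra ingredient being the observation that the "Levi" Weyl subgroup $\overline{W}(\Delta_k, A)$ is already absorbed by $N_\omega$ for $\omega = \pm\varpi_k$. More precisely, first I would unpack \cref{lem:can-repr2} to write every element of $\StW(\Phi,A)$ in the normal form $h \cdot w_{\lambda, u}$, where $h$ ranges over $\overline{W}(\Delta_k,A)$ and $(u,\lambda)$ ranges over $A^\times \times \Lambda$, with distinct pairs $(u,\lambda)$ producing different cosets. Then the statement to prove will split naturally into a covering assertion and a disjointness assertion.

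For the covering, the key observation is that $\overline{W}(\Delta_k,A) \subseteq N_{\pm\varpi_k}$. Indeed, since $\Phi$ is simply-laced we have $\alpha^\vee = \alpha$, so $\langle \varpi_k, \alpha\rangle = (\varpi_k,\alpha) = m_k(\alpha)$; for $\alpha \in \Delta_k$ this vanishes, and hence $x_{\pm\alpha}(f) \in \mathcal{X}_{\pm\varpi_k}$ for every $f \in A[X]$. Since $\overline{W}(\Delta_k,A)$ is generated by the elements $w_\alpha(u) = x_\alpha(u)\cdot x_{-\alpha}(-u^{-1})\cdot x_\alpha(u)$ with $\alpha \in \Delta_k$ and $u \in A^\times$, it is contained in $N_{\pm\varpi_k}$. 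Consequently $N_\omega \cdot \overline{W}(\Delta_k,A) = N_\omega$, and combining this with the coset decomposition from \cref{lem:can-repr2} yields
\[ N_\omega \cdot \StW(\Phi, A) = \bigcup_{(u,\lambda) \in A^\times \times \Lambda} N_\omega \cdot w_{\lambda, u}. \]

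For disjointness, suppose $N_\omega \cdot w_{\lambda_1, u_1}$ meets $N_\omega \cdot w_{\lambda_2, u_2}$. Then $w_{\lambda_1, u_1}\cdot w_{\lambda_2, u_2}^{-1}$ lies in $\StW(\Phi, A) \cap N_\omega$, which equals $\overline{W}(\Delta_k, A)$ by \cref{lem:can-repr1}. Hence $w_{\lambda_1, u_1}$ and $w_{\lambda_2, u_2}$ represent the same coset in $\StW(\Phi, A)/\overline{W}(\Delta_k, A)$, and the bijection from \cref{lem:can-repr2} forces $(u_1, \lambda_1) = (u_2, \lambda_2)$. Since neither the covering nor the disjointness step involves any calculation beyond quoting the two previous lemmas and computing the pairing $\langle \varpi_k, \alpha\rangle$, there is no real obstacle here; the only subtlety worth double-checking is that \cref{lem:can-repr1} was stated for $\omega = \varpi_k$, so for $\omega = -\varpi_k$ one notes that $N_{-\varpi_k}$ contains the same $\overline{W}(\Delta_k,A)$ (by the same $\langle \varpi_k, \alpha\rangle = 0$ argument) and that $\StW(\Phi, A) \cap N_{-\varpi_k} = \overline{W}(\Delta_k, A)$ by a verbatim repetition of the proof of \cref{lem:can-repr1}.
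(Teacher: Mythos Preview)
Your proposal is correct and follows exactly the same approach as the paper, which simply says the corollary follows from the coset decomposition of $\StW(\Phi, A)$ together with \cref{lem:can-repr1,lem:can-repr2}. You have faithfully unpacked that one-line argument, including the observation (implicit in the paper) that $\overline{W}(\Delta_k, A) \subseteq N_{\pm\varpi_k}$ and the remark that \cref{lem:can-repr1} applies to $N_{-\varpi_k}$ as well by symmetry.
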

\begin{proof}
Follows from the coset decomposition for $\StW(\Phi, A)$, and Lemmas~\ref{lem:can-repr1}--\ref{lem:can-repr2}.
\end{proof}

\begin{cor} \label{cor:can-repr2}
For $\omega = \pm\varpi_k$ the right-hand side of~\eqref{eq:sigma-def} does not depend on the choice of decomposition $x = x_1 \cdot x_2$, so $\delta_\omega$ is defined unambiguously.
Moreover, $\delta_{\varpi_k}$ and $\delta_{-\varpi_k}$ are mutually inverse.
\end{cor}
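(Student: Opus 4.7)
The plan is to handle well-definedness and the inverse property separately, each time reducing to invariance properties of the ingredients $\sigma(\pm\omega)$ and $\chi_{\pm\omega, X}$ already prepared in the previous subsections.

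For well-definedness, suppose $x_1 x_2 = x_1' x_2'$ with $x_1, x_1' \in N_\omega$ and $x_2, x_2' \in \StW(\Phi, A)$. Then
\[ h := x_1^{-1} x_1' = x_2 (x_2')^{-1} \in N_\omega \cap \StW(\Phi, A) = \overline{W}(\Delta_k, A) \]
by~\cref{lem:can-repr1} (the analogue for $\omega=-\varpi_k$ follows by the same argument, with $P_k^-$ in place of $P_k^+$), so $x_1' = x_1 h$ and $x_2' = h^{-1} x_2$. Invariance of the three components of~\eqref{eq:sigma-def} under this change reduces to two identities: $\sigma(\omega)(h) = h$ and $\chi_{\omega, X}(h) = h$ for every $h \in \overline{W}(\Delta_k, A)$. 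Both are immediate. Since $\langle \pm\varpi_k, \alpha\rangle = 0$ for $\alpha \in \Delta_k$, the automorphism $\chi_{\omega, X}$ fixes every generator $x_\alpha(a)$ of $L_k \supseteq \overline{W}(\Delta_k, A)$; and in the construction of $\sigma(\pm\varpi_k)$ carried out in the proof of~\cref{lem:relative-reduction}, the restriction to the Levi factor was by definition the identity, i.\,e.\ $\sigma(\pm\varpi_k)_{L_k} = \mathrm{id}$. A direct substitution now shows that every component is unchanged; the only nontrivial verification is the second, which reads
\[ (x_2')^{-1}\chi_{\omega, X}(x_2' y) = x_2^{-1} h \cdot \chi_{\omega, X}(h^{-1})\cdot \chi_{\omega, X}(x_2 y) = x_2^{-1} \chi_{\omega, X}(x_2 y). \]

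For the mutual inverse property, I would note that the first component of $\delta_\omega(x_1 x_2, y, z)$ is $\sigma(\omega)(x_1)\cdot x_2$ with $\sigma(\omega)(x_1) \in N_{-\omega}$ by the defining property of an $\omega$-pair, so this factorisation is already of the form required to apply $\delta_{-\omega}$. The resulting triple equals
\[ \bigl(\sigma(-\omega)\sigma(\omega)(x_1)\cdot x_2,\ x_2^{-1} \chi_{-\omega, X}\chi_{\omega, X}(x_2 y),\ \widetilde\chi_{-\omega, X}\widetilde\chi_{\omega, X}(z)\bigr), \]
which collapses to $(x_1 x_2, y, z)$ using that $\sigma(\omega), \sigma(-\omega)$ are mutually inverse (the defining property of an $\omega$-pair in~\cref{dfn:delta-pair}) and that $\chi_{-\omega, X}, \widetilde\chi_{-\omega, X}$ invert $\chi_{\omega, X}, \widetilde\chi_{\omega, X}$ (immediate from~\eqref{eq:chi-def} and the construction in~\cref{lem:relative-chi}). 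The symmetric computation yields $\delta_\omega \circ \delta_{-\omega} = \mathrm{id}$ on $V_{-\omega}$. There is no serious obstacle here — the statement is essentially bookkeeping, and the only mildly delicate ingredient is the identification $N_\omega \cap \StW(\Phi, A) = \overline{W}(\Delta_k, A)$ already provided by~\cref{lem:can-repr1}.
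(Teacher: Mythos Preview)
Your proposal is correct and follows essentially the same approach as the paper: the paper's proof also invokes \cref{lem:can-repr1} together with the observation that $\sigma(\omega)$ and $\chi_{\omega, X}$ act identically on $\overline{W}(\Delta_k, A)$, and says the second assertion is a straightforward calculation from~\eqref{eq:sigma-def} and the definition of an $\omega$-pair. You simply spell out the details more fully, including the explicit check of the second component and the remark that \cref{lem:can-repr1} for $\omega = -\varpi_k$ follows by the same argument with $P_k^-$ in place of $P_k^+$.
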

\begin{proof}
The first assertion follows from~\cref{lem:can-repr1} and the observation that, by their definitions, $\sigma(\omega)$ and $\chi_{\omega, X}$ act identically on $\overline{W}(\Delta_k, A)$.
The second assertion can be proved by a straightforward calculation using~\eqref{eq:sigma-def} and the definition of an $\omega$-pair.
\end{proof}

\begin{lemma} \label{lem:v-correctness1}
For every $v \in V$ and $\omega = \pm\varpi_k$ there exists $(a, b, 1) \in W$ such that $(a, b, 1) \star v \in V_\omega$.
\end{lemma}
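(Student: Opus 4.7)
The plan is to reduce the claim to a concrete decomposition statement for $G^+ = \St(\Phi, A[X])$ alone, and then establish that decomposition by working modulo $M[X]$ and applying the Bruhat decomposition over the residue field. I will treat only $\omega = \varpi_k$; the case $\omega = -\varpi_k$ will follow symmetrically, with $\Pi$ replaced by $-\Pi$ and $\Sigma_k^-$ by $\Sigma_k^+$.

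First I would unpack the $\star$-action with $c = 1$: the first coordinate of $(a, b, 1) \star (x, y, z)$ equals $x \cdot \mu(b) \cdot a$. Thus the statement reduces to showing that for every $x \in G^+$ there exist $a \in \UU(\Phi^+, A[X])$ and $b \in G_M^+$ with $x \cdot \mu(b) \cdot a \in N_{\varpi_k} \cdot \StW(\Phi, A)$. Since $\mu$ is a crossed module by~\cref{lem:rel-Steinberg-crossed-module}, its image $\overline{\St}(\Phi, A[X], M[X])$ is normal in $G^+$, so the existence of $b$ for a given $a$ is equivalent to asking that the residue $\overline{x \cdot a}$ in $\St(\Phi, \kappa[X])$ lie in the image of $N_{\varpi_k} \cdot \StW(\Phi, A)$ under the reduction map.

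Next I would exploit the splitting of $X\kappa[X]$ in $\kappa[X]$ to decompose $\St(\Phi, \kappa[X])$ as the semidirect product $\overline{\St}(\Phi, \kappa[X], X\kappa[X]) \rtimes \St(\Phi, \kappa)$. Applying~\cref{lem:relative-generators} with $\Sigma = \Sigma_k^-$, the normal factor is generated by $x_\alpha(Xg)$ and $x_\beta(Xg)^{x_{-\beta}(h)}$ with $\beta \in \Sigma_k^-$. Since $\varpi_k$ is a microweight, the divisibility required for membership in $\mathcal{X}_{\varpi_k}$ amounts to $X \mid Xg$ (which is automatic), while for $\beta \in \Sigma_k^-$ we have $\langle \varpi_k, -\beta\rangle > 0$, so that $x_{-\beta}(h) \in \mathcal{X}_{\varpi_k}$ as well. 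Thus $\overline{\St}(\Phi, \kappa[X], X\kappa[X]) \subseteq N_{\varpi_k}$, and any such $\overline{x}$ factors as $k \cdot s$ with $k \in N_{\varpi_k}$ and $s \in \St(\Phi, \kappa)$.

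Finally I would invoke~\cref{lem:bruhat} for $\St(\Phi, \kappa)$ with $P = \Pi$: since $\kappa$ is a field the relative factor is trivial, giving $s = u \cdot w \cdot v$ with $u, v \in \UU(\Phi^+, \kappa)$ and $w \in \StW(\Phi, \kappa)$. Because every $\alpha \in \Phi^+$ satisfies $\langle \varpi_k, \alpha\rangle \geq 0$, one has $\UU(\Phi^+, \kappa) \subseteq N_{\varpi_k}$, whence $ku \in N_{\varpi_k}$. Setting $\bar a \coloneqq v^{-1}$, lifting $\bar a$ to $a \in \UU(\Phi^+, A)$, lifting $w$ to $\tilde w \in \StW(\Phi, A)$ (possible because $\kappa^\times$ lifts to $A^\times$ by locality), and lifting $ku$ generator-by-generator to $n \in N_{\varpi_k}$ with the same divisibility pattern, the normality of the image of $\mu$ furnishes $b \in G_M^+$ with $x \cdot \mu(b) \cdot a = n \cdot \tilde w$, as required. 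The main obstacle is the combinatorial bookkeeping of the two parity choices $\Sigma$ in~\cref{lem:relative-generators} and $P$ in~\cref{lem:bruhat}: one must take $(P, \Sigma) = (\Pi, \Sigma_k^-)$ for $\omega = \varpi_k$ and $(P, \Sigma) = (-\Pi, \Sigma_k^+)$ for $\omega = -\varpi_k$, and the microweight hypothesis is essential in matching these signs, since it keeps $|\langle \varpi_k, \alpha\rangle| \leq 1$ and hence aligns with the single power of $X$ in the generators of the relative subgroup.
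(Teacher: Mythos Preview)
Your argument is correct and follows the same underlying strategy as the paper: exhibit a decomposition of $x \in G^+$ with a left factor in $N_\omega$, a $\StW(\Phi, A)$ factor, and right factors absorbable by $\mu(b)$ and $a$. The only difference is one of packaging: the paper applies \cref{lem:bruhat} directly over the local ring $A$ (which already contains the $\overline{\St}(\Phi, A, M)$ tail) and invokes the containment $\St(\Phi, A[X], XA[X]) \subseteq N_{\pm\varpi_k}$ stated just before \cref{lem:relative-reduction}, obtaining
\[
G^+ = \UU(\Phi^+_P, A)\cdot \St(\Phi, A[X], XA[X]) \cdot \StW(\Phi,A)\cdot \overline{\St}(\Phi, A, M) \cdot\UU(\Phi^+, A)
\]
in one stroke, whereas you reduce to $\kappa[X]$, apply the field Bruhat decomposition, and lift back---effectively re-proving \cref{lem:bruhat} inline and re-deriving the containment via \cref{lem:relative-generators}. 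Both routes yield the same factorization; the paper's is shorter only because it cites the two ingredients rather than unfolding them.
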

\begin{proof}
Let $P$ be $\Pi$ or $-\Pi$ depending on whether $\omega$ is $\varpi_k$ or $-\varpi_k$.
Thus, $\Phi^+_P$ is either $\Phi^+$ or $\Phi^-$.
From~\cref{lem:bruhat} and the normality of subgroups $\St(\Phi, A[X], XA[X]) \leq G^+$, $\overline{\St}(\Phi, A, M) \leq \St(\Phi, A)$ we obtain that
\[G^+ = \UU(\Phi^+_P, A)\cdot \St(\Phi, A[X], XA[X]) \cdot \StW(\Phi,A)\cdot \overline{\St}(\Phi, A, M) \cdot\UU(\Phi^+, A).\]
The first two factors in the above decomposition are contained in $N_\omega$, so the assertion follows from the definition of $\star$.
\end{proof}

For $\lambda \in \Lambda$ and a commutative ring $R$ we denote by $\Stab_{\lambda}(R)$ the subgroup of $\Gsc(\Phi, R)$
consisting of elements stabilizing the basis vector $v^\lambda$ of $V$.
%Recall that the projection $\pi \colon \St(\Phi, R) \to \G_{sc}(\Phi, R)$ is always injective on unipotent subgroups.
Moreover, it is clear that $\mathrm{ev}_{X=0}^*(N_\lambda) = \pi^{-1}(\Stab_\lambda(A))$ and that
$(\Stab_{\varpi_k}(A))^{w_{\lambda, u}} = \Stab_\lambda(A)$.
\begin{lemma} \label{lem:q}
Suppose that $a \in \UU(\Phi^+, A)$ is such that its image $\pi(\overline{a})$ belongs to $\Stab_{\lambda}(A/M)$ for some $\lambda \in \Lambda$.
Then there exists $q \in \UU(\Phi^+, M)$ such that $\pi(aq)$ belongs to $\Stab_{\lambda}(A)$.
\end{lemma}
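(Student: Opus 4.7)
The plan is to decompose the positive roots relative to the weight $\lambda$ and isolate the ``obstruction'' as an abelian quotient. Concretely, set
\[\Phi^+_1 = \{ \alpha \in \Phi^+ : \langle \lambda, \alpha \rangle = -1 \}, \qquad \Phi^+_0 = \Phi^+ \setminus \Phi^+_1.\]
Since $\lambda$ lies in the Weyl orbit of the microweight $\varpi_k$, every weight of the microweight representation $V$ pairs with any root to an element of $\{-1,0,1\}$, and $\lambda+\alpha$ is a weight in $\Lambda$ precisely when $\langle \lambda, \alpha \rangle = -1$. In particular, $x_\alpha(t)\cdot v^\lambda = v^\lambda$ for all $\alpha \in \Phi_0^+$ and $t\in R$, while $x_\alpha(t)\cdot v^\lambda = v^\lambda \pm t\cdot v^{\lambda+\alpha}$ for $\alpha \in \Phi^+_1$.

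First I would verify that $\Phi^+_1$ is closed and abelian as a set of roots: if $\alpha,\beta \in \Phi^+_1$ were distinct with $\alpha+\beta\in\Phi$, then $\langle \lambda, \alpha+\beta\rangle=-2$, which is impossible since all pairings of $\lambda$ with roots in the microweight setting lie in $\{-1,0,1\}$. Hence $[\UU(\Phi^+_1, R),\UU(\Phi^+_1, R)]=1$ by relation~\eqref{R3}. Similarly, for $\alpha\in \Phi_0^+$ and $\beta\in \Phi_1^+$ with $\alpha+\beta\in\Phi^+$ one has $\langle \lambda, \alpha+\beta\rangle \in \{-1,0\}$, so $\alpha+\beta\in\Phi^+$; thus $\Phi^+_1$ is an ideal of $\Phi^+$ and we get a semidirect product decomposition
\[ \UU(\Phi^+, R) = \UU(\Phi^+_0, R) \ltimes \UU(\Phi^+_1, R) \]
(valid both in $\St(\Phi, R)$ and in $\Gsc(\Phi, R)$).

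Next I would use this decomposition to write $a = a_0\cdot a_1$ uniquely with $a_0\in \UU(\Phi^+_0, A)$ and $a_1\in \UU(\Phi^+_1, A)$. By the first paragraph, $\pi(a_0)$ already lies in $\Stab_\lambda(A)$. Writing $a_1 = \prod_{\alpha\in \Phi^+_1} x_\alpha(t_\alpha)$ (well-defined and commutative), we compute
\[ \pi(a_1)\cdot v^\lambda = v^\lambda + \sum_{\alpha\in \Phi^+_1} \pm\, t_\alpha \cdot v^{\lambda+\alpha}. \]
The basis vectors $v^{\lambda+\alpha}$, $\alpha\in\Phi^+_1$, are pairwise distinct and distinct from $v^\lambda$. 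The hypothesis that $\pi(\overline{a})\in \Stab_\lambda(A/M)$ combined with $\pi(\overline{a_0})\in \Stab_\lambda(A/M)$ forces $\pi(\overline{a_1})\cdot v^\lambda = v^\lambda$ in $V\otimes_A A/M$, hence $\overline{t_\alpha}=0$, i.e.\ $t_\alpha\in M$ for every $\alpha\in \Phi^+_1$.

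Finally, I would put $q = a_1^{-1}\in \UU(\Phi^+_1, M)\subseteq \UU(\Phi^+, M)$. Then $aq = a_0\in \UU(\Phi^+_0, A)$, and $\pi(aq)=\pi(a_0)\in \Stab_\lambda(A)$ as required. The only nontrivial point in the whole argument is the combinatorial verification that $\Phi^+_1$ is simultaneously abelian and an ideal in $\Phi^+$, but this is immediate from the microweight pairing constraint $\langle \lambda,\alpha\rangle\in\{-1,0,1\}$; after that, the lemma reduces to a direct linear-independence argument in the microweight representation.
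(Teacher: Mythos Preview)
Your decomposition $\Phi^+ = \Phi^+_0 \sqcup \Phi^+_1$ and the observation that both pieces are closed are correct, but the claim that $\Phi^+_1$ is an \emph{ideal} of $\Phi^+$ is false in general. Whenever $\alpha \in \Phi^+_0$ has $\langle \lambda, \alpha\rangle = +1$ and $\beta \in \Phi^+_1$ satisfies $\alpha+\beta \in \Phi$, one gets $\langle \lambda, \alpha+\beta\rangle = 0$, so $\alpha+\beta \in \Phi^+_0$, not $\Phi^+_1$. For a concrete instance take $\Phi=\rA_3$ and $\lambda = s_{\alpha_1}(\varpi_1)$: then $\alpha_1 \in \Phi^+_1$, $\alpha_2 \in \Phi^+_0$, but $\alpha_1 + \alpha_2 \in \Phi^+_0$. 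Hence $\UU(\Phi^+_1, R)$ is not normal in $\UU(\Phi^+, R)$ and there is no semidirect product as you claim; your sentence ``so $\alpha+\beta\in\Phi^+$; thus $\Phi^+_1$ is an ideal'' is a non sequitur.

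That said, the bare \emph{product} decomposition $\UU(\Phi^+, R) = \UU(\Phi^+_0, R)\cdot \UU(\Phi^+_1, R)$ with unique factorization does hold, because both factors are closed and partition $\Phi^+$ (write $\UU(\Phi^+, R)$ as an ordered product of root subgroups with the $\Phi^+_0$-roots listed first). With this weaker statement the rest of your argument goes through: for $a_1 = \prod_{\alpha \in \Phi^+_1} x_\alpha(t_\alpha)$ the coefficient of each $v^{\lambda+\alpha}$ in $a_1\cdot v^\lambda$ is exactly $\pm t_\alpha$, so $t_\alpha \in M$ and $q = a_1^{-1}$ works. (Your displayed formula for $a_1 \cdot v^\lambda$ is also incomplete---orthogonal pairs $\alpha, \beta \in \Phi^+_1$ produce higher terms $v^{\lambda+\alpha+\beta}$---but these live at weights with $\langle\lambda,\cdot\rangle\leq -2$ and cannot interfere with the $v^{\lambda+\alpha}$ coefficients.)

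The paper's proof is essentially the conjugated version of yours: it transports the problem to the highest weight via $b = {}^{w_{\lambda,u}} a$ and invokes the Chevalley--Matsumoto decomposition to split $b = b_1 b_2$ with $b_2 \in \UU(\Sigma_k^-, A)$; the hypothesis forces $b_2 \in \UU(\Sigma_k^-, M)$, and conjugating back yields $q$. This sidesteps the combinatorics of $\Phi^+_0$ and $\Phi^+_1$ entirely.
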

\begin{proof}
Set $b = {}^{w_{\lambda, u}} a$.
Notice that $b \cdot v_+ = v^+ + \sum_{\mu < \varpi_k} c_\mu v^\mu = 1$, therefore by the Chevalley--Matsumoto decomposition~\cite[Theorem~1.3]{St78} we can write
$b = b_1 \cdot b_2$ for some $b_1$ such that $\pi(b_1) \in \Stab_{\varpi_k}(A)$ and $b_2 \in \UU(\Sigma_k^-, A)$.
It is also clear from our assumptions that $c_\mu \in M$ hence $b_2 = \prod_{\alpha \in \Sigma_k^-}x_{\alpha}(c_{\varpi_k + \alpha})$, in fact, belongs to $\UU(\Sigma_k^-, M)$.
Set $a_i = b_i^{w_{\lambda, u}}$.
It is clear that $a = a_1 \cdot a_2$  for some $a_1, a_2$ satisfying $\pi(a_1) \in \Stab_\lambda(A)$, $\pi(a_2) \in \UU(\Phi^+ \cap \overline{w_{\lambda, u}^{-1}}(\Sigma_k^-), M)$,
from which the assertion of the lemma follows.
\end{proof}

\begin{prop} \label{prop:v-correctness2}
For $\omega = \pm \varpi_k$ and every $w \in W$, $v \in V_\omega$ such that $v' = w \star v \in V_\omega$ there exists $w' \in W$ such that
$\delta_\omega(w \star v) = w' \star \delta_\omega(v)$.
\end{prop}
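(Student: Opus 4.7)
My plan is to establish the compatibility by case analysis, exploiting the composition rule for iterated $\star$-actions from \cref{subsec:triples} to reduce to the three elementary cases in which at most one of the components $a$, $b$, $c$ of $w$ is nontrivial. In each such case I construct the corresponding $w'$ explicitly; in general the $w'$ obtained may itself have several nontrivial components.

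The case $w = (1, 1, c)$ with $c \in B_M$ is the cleanest: the first component of $v$ is unchanged, so the same decomposition $x = x_1 x_2$ works on both sides, and a direct computation yields $w' = (1, 1, \widetilde{\chi}_{\omega, X}(c))$. One must then verify that $\widetilde{\chi}_{\omega, X}$ preserves $B_M$: for the root-unipotent part $\UU(\Phi^+, M[X, X^{-1}])$ this is routine (the argument of each $x_\alpha$ is simply multiplied by the unit $X^{\langle\omega, \alpha\rangle}$), and for the relative symbol $\{X, 1+m\}_r$ one uses the centrality, bimultiplicativity, and antisymmetry identities~\eqref{eq:symbol-properties} to obtain $\widetilde{\chi}_{\omega, X}(\{X, 1+m\}_r) = \{X, 1+m\}_r$. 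For $w = (a, 1, 1)$ with $a \in U^+$ I rewrite $x_1 x_2 a = x_1 \cdot {}^{x_2}a \cdot x_2$; the hypothesis $v' \in V_\omega$ together with the disjointness of the cosets $N_\omega \cdot w_{\lambda, u}$ from \cref{cor:can-repr} determines a valid decomposition $x_1 x_2 a = x'_1 x'_2$ (possibly requiring a Weyl adjustment so that $x'_2 \neq x_2$). The appropriate $w'$ is then read off by applying $\sigma(\omega)$ to the new pieces of $x'_1$ and observing that the shift $y \mapsto a^{-1} y$ is absorbed by the $\chi_{\omega, X}$-twist already built into the definition of $\delta_\omega$.

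The main obstacle is the case $w = (1, b, 1)$ with $b \in G^+_M$, where $w'$ will generally require a nontrivial $c'$-component as well. The underlying difficulty is that $\widetilde{\chi}_{\omega, X}$ does \emph{not} restrict to an automorphism of $G^+_M$ (indeed $X \notin A[X]^\times$), so naively $\chi$-twisting $g^+_M(b) \in G_M$ produces an element outside the image of $g^+_M$. My plan is to use the amalgamation theorem \cref{thm:relPres} to split $b$ into pieces supported on rank-$3$ subsystems, and for each piece to use the generating set from \cref{lem:relative-generators} with $\Sigma = \Sigma_k^\pm$ aligned to the weight $\pm\varpi_k$. For each generator one writes $\widetilde{\chi}_{\omega, X}(g^+_M(b)) = g^+_M(b') \cdot (\text{correction in } B_M)$, where $b'$ comes from the $\omega$-pair $\sigma(\omega)$ (which, unlike $\widetilde{\chi}_{\omega, X}$, is defined on the relevant subgroup of $G^+_M$) and the correction becomes the $c'$-component of $w'$. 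The Peiffer identity~\eqref{eq:peiffer} for the crossed module $h_M$ then guarantees that this correction interacts correctly with the second coordinate of $\delta_\omega(v)$ when $w' \star \delta_\omega(v)$ is expanded.
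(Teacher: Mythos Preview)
Your $(1,1,c)$-case is correct, but the reduction to elementary $w$'s is not: although $(a,b,c)$ factors through $(a,1,1)$, $(1,b,1)$, $(1,1,c)$ via the $\star$-composition rule, the intermediate triples need not lie in $V_\omega$. In particular $(1,b,1) \star v$ has first coordinate $x_1 x_2 \cdot \mu(b)$, and for general $b \in G_M^+$ there is no reason this lies in $N_\omega \cdot \StW(\Phi, A)$; the hypothesis only controls the full product $x_1 x_2 \cdot \mu(b) \cdot a$. So the cases cannot be decoupled. Your plan for the $(1,b,1)$-case also fails on generators: for $\alpha \in \Sigma_k^-$ and a constant $m \in M$ one has $\widetilde{\chi}_{\varpi_k, X}(x_\alpha(m)) = x_\alpha(X^{-1} m)$, a \emph{negative}-root Laurent element. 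This cannot be written as $g^+_M(b') \cdot c'$ with $b' \in G_M^+$ (polynomial entries) and $c' \in B_M = \UU(\Phi^+, M[X^{\pm 1}]) \times \{X, 1+M\}$ (positive roots only); already a $2\times 2$ block shows the obstruction. Working generator-by-generator via \cref{thm:relPres} therefore does not suffice.

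The paper's argument is structurally different: it treats the action of $(a, b, 1)$ as a whole, and its heart is the comparison of the Weyl components $x_2 = w_{\lambda, u}$, $x_2' = w_{\lambda', u'}$ appearing in the two decompositions. One evaluates the identity $x_1 x_2 = x_1' x_2' \cdot \mu(b) \cdot a$ at $X=0$, reduces modulo $M$, and lets both sides act on the basis vector $v^{\lambda'}$ of the microweight representation with highest weight $\varpi_k$. This forces $\lambda = \lambda'$ and $u/u' \in 1+M$, whence $x_2 = c \cdot h_{\alpha_k}(1+m) \cdot x_2'$ with $c \in \overline{W}(\Delta_k, A)$ and $m \in M$. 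The second essential ingredient, missing from your sketch, is \cref{lem:q}: it supplies $q \in \UU(\Phi^+, M)$ such that $aq$, conjugated by ${x_2'}^{-1}$, lands in $N_\lambda$, so that $\sigma(\lambda)$ can actually be applied. The resulting $w'$ has all three components nontrivial, with the $c'$-component carrying the relative symbol $\{1+m, X\}_r$ produced by $h_{\alpha_k}(1+m)$ via~\eqref{eq:chi-h}.
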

\begin{proof}
It suffices to consider the case $\omega = \varpi_k$.
Suppose that $v = (x_1 x_2, y, z) = (a, b, 1) \star v'$, where $v' = (x_1' x_2', y', z')$.
We have that
\begin{equation}\label{eq:x-xprime} x_1 x_2 = x_1' x_2' \cdot \mu(b) \cdot a  \text{ for some }x_1, x_1' \in N_\omega,\ x_2, x_2' \in \StW(\Phi, A) \end{equation}
and some $a \in \UU(\Phi^+, A[X]),\ b \in \St(\Phi, A[X], M[X])$.
Thanks to~\cref{cor:can-repr} we may assume, without loss of generality, that $x_2 = w_{\lambda, u}$, $x_2' = w_{\lambda', u'}$
for some $u, u' \in A^\times$, $\lambda, \lambda' \in \Lambda$.

Evaluating both sides of~\eqref{eq:x-xprime} at $X=0$ and projecting the equality into $\St(\Phi, \kappa)$ we obtain that
\begin{equation} \label{eq:sides}
\overline{{x_2'}^{-1}} \cdot \overline{\mathrm{ev}_{X=0}^*({x_1'}^{-1}x_1)} \cdot \overline{x_2} = \overline{\mathrm{ev}_{X=0}^*(a)},
\end{equation}
where by $\overline{x}$ we denote the image of $x$ in $\St(\Phi, \kappa)$.

Notice that the middle factor in the left-hand side of~\eqref{eq:sides} stabilizes the highest-weight vector $v^+$,
therefore we can compute the images of the vector $v^{\lambda'}$ under both sides of~\eqref{eq:sides} using~\eqref{eq:w-highestweight} as follows:
\begin{align} \overline{{x_2'}^{-1}} \cdot \overline{\mathrm{ev}_{X=0}^*({x_1'}^{-1}x_1)} \cdot \overline{x_2} \cdot v^{\lambda'} = \overline{{x_2'}^{-1}} \cdot \overline{{u}^{-1}} \cdot v^+ &= \overline{u^{-1}}\overline{u'} \cdot v^\lambda, \\
\overline{\mathrm{ev}_{X=0}^*(a)} \cdot v^{\lambda'} &= v^{\lambda'} + \sum_{\mu > \lambda'} \overline{a_\mu} v^\mu \label{eq:right-side} \end{align}
for some $a_\mu \in A$.
Thus, from~\eqref{eq:sides}--\eqref{eq:right-side} we conclude that $\lambda = \lambda'$ and that $1 - u/u' \in M$ and also that $a_\mu \in M$.
Using~\cite[Lemma~7]{V00} and~\eqref{eq:w-highestweight} we can find $m \in M$ such that the element
$h_{\alpha_k}(1+m) \cdot x_2' \cdot x_2^{-1}$ stabilizes the highest weight vector $v^+$.
By the same argument as in the proof of~\cref{lem:can-repr2} we conclude that it belongs to $\overline{W}(\Delta_k, A)$.
Consequently, we can write $x_2 = c \cdot h_{\alpha_k}(1+m) \cdot x_2'$ for some $c \in \overline{W}(\Delta_k, A)$.

Also we conclude from all the equalities obtained up to this point that
\begin{equation} {}^{x_2'}(\mu(b) \cdot a) \cdot h_{\alpha_k}^{-1}(1+m) = {x_1'}^{-1} \cdot x_1 \cdot c \in N_\omega. \end{equation}

Recalling that $\overline{w_{\lambda, u}^{-1}} \in W(\Phi)$ swaps $\omega$ and $\lambda$
we obtain from~\eqref{eq:sigma-gen} and \cref{lem:delta-weyl} that
\begin{equation} \label{eq:delta-weyl-x2prime} {}^{x_2'}N_\lambda = N_\omega\text{ and } {}^{x_2'}\sigma(\lambda)(g) = \sigma(\omega)({}^{x_2'}g)\text{ for } g\in N_\lambda.\end{equation}

Consequently, we obtain that
$a \cdot \mu(b^a) \cdot h_{\alpha_k}^{-{x_2'}}(1+m) \in {N_{\lambda}}$.

By~\eqref{eq:right-side} and~\cref{lem:q} we can find $q \in \UU(\Phi^+, M)$ such that $\pi(\mathrm{ev}_{X=0}^*(a) \cdot q) \in \Stab_\lambda(A)$ and hence
that $a \cdot q \in N_\lambda$.
Now we can compute the first coordinate of $\delta_\omega \cdot v$ as follows:
\begin{multline} \label{eq:intermediate}
\sigma(\omega)(x_1) \cdot x_2 = \sigma(\omega)(x_1'x_2' \cdot \mu(b) \cdot a \cdot x_2^{-1}) \cdot x_2 = \\
= \sigma(\omega)(x_1'x_2' \cdot \mu(b) \cdot a \cdot h^{-{x_2'}}_{\alpha_k}(1+m) \cdot {x_2'}^{-1} \cdot c^{-1}) \cdot c \cdot x_2' \cdot h_{\alpha_k}^{x_2'}(1+m) = \\
= \sigma(\omega)\left(x_1'\cdot {}^{x_2'} (aq \cdot q^{-1} \mu(b^a) \cdot h^{-x_2'}_\alpha(1+m))\right) \cdot x_2' \cdot h_{\alpha}^{x_2'}(1+m) = \\
= \sigma(\omega)(x_1') \cdot {}^{x_2'}\sigma(\lambda)(aq) \cdot {}^{x_2'}\sigma(\lambda)(q^{-1} \mu(b^a) \cdot h^{-x_2'}_{\alpha_k}(1+m)) \cdot x_2' \cdot h^{x_2'}_{\alpha_k}(1+m) = \\
= \sigma(\omega)(x_1') \cdot x_2' \cdot \sigma(\lambda)(aq) \cdot \sigma(\lambda)\left(q^{-1} \mu(b^a) \cdot h^{-x_2'}_{\alpha_k}(1+m)\right) \cdot h^{x_2'}_{\alpha_k}(1+m).
\end{multline}

Now we let us compute the second coordinate of $\delta_\omega \cdot v$ using~\eqref{eq:chi-h} and the fact that
$\chi_{\omega, X}$ acts identically on $\StW(\Delta_k, A)$:
\begin{multline}\label{eq:intermediate2}
x_2^{-1}\cdot \chi_{\omega, X}(x_2 \cdot y) = x_2^{-1} \cdot \chi_{\omega, X}(x_2 \cdot a^{-1} \cdot y') = \\
= {x_2'}^{-1} \cdot h_{\alpha_k}^{-1}(1 + m) \cdot c^{-1} \cdot \chi_{\omega, X}(c \cdot h_{\alpha_k}(1 + m) \cdot x_2' \cdot a^{-1} \cdot y') = \\
= {x_2'}^{-1} \cdot h_{\alpha_k}^{-1}(1 + m) \cdot \chi_{\omega, X}(h_{\alpha_k}(1 + m) \cdot x_2' \cdot a^{-1} \cdot y') = \\
= \{X, 1+m\} \cdot {x_2'}^{-1} \chi_{\omega, X}(x_2' \cdot a^{-1} \cdot y').
\end{multline}

It is clear that the product of last two factors in the right-hand side of~\eqref{eq:intermediate} is
the image in $G^+$ of some element $b' \in G_M^+$.
Now we can compute the third coordinate of $(1, {b'}^{-1}, 1) \star (\delta_\omega \cdot v)$ using~\eqref{eq:intermediate2}, \eqref{eq:delta-weyl-x2prime} and~\eqref{eq:chi-h} as follows:
\begin{multline*}
{b'}^{x_2^{-1} \cdot \chi_{\omega, X}(x_2 \cdot y) } \cdot \widetilde{\chi}_{\omega, X}(z) =
{b'}^{{x_2'}^{-1} \cdot \chi_{\omega, X}(x_2' \cdot a^{-1} \cdot y') } \cdot \widetilde{\chi}_{\omega, X}(b^{-y'} \cdot z') = \\
= \left(\sigma(\omega)\left( {}^{x_2'}(q^{-1} \mu(b^a)) \cdot h^{-1}_{\alpha_k}(1+m)\right) \cdot h_{\alpha_k}(1+m)\right)^{\chi_{\omega, X}(x_2' \cdot a^{-1} \cdot y')} \cdot \widetilde{\chi}_{\omega, X}(b^{-y'} \cdot z') = \\
= \{1+m, X\} \cdot \widetilde{\chi}_{\omega, X}\left( \left( q^{-1} \cdot b^a \right)^{a^{-1}y'} \cdot b^{-y'} \cdot z' \right) = \\
= \{1+m, X\} \cdot \widetilde{\chi}_{\omega, X}(q^{-a^{-1}y'} \cdot z').
\end{multline*}
Thus, we obtain that
\begin{multline*}
v'' = (1, {b'}^{-1}, \{1 + m, X\}) \star (\delta_\omega \cdot v) = \\ =
(\sigma(\omega)(x_1') \cdot x_2' \cdot \sigma(\lambda)(aq), x_2'^{-1} \cdot \chi_{\omega, X}(x_2' \cdot a^{-1} y'), \widetilde{\chi}_{\omega, X}(q^{-a^{-1}y'} \cdot z')).
\end{multline*}
Notice that $\sigma(\lambda)(aq) \in \UU(\Phi^+, A[X])$, while $\chi_{\omega, X}(q^{-a^{-1}y'})\in \UU(\Phi^+, M[X, X\inv])$.
Thus, from the definition of $\star$-action we obtain that
\begin{multline*}
(\sigma(\lambda)(aq)^{-1}, 1, \chi_{\omega, X}(q^{-a^{-1}y'})) \star v'' = \\
= (\sigma(\omega)(x_1') \cdot x_2', \sigma(\lambda)(aq) x_2'^{-1} \cdot \chi_{\omega, X}(x_2' \cdot a^{-1} y') \cdot \chi_{\omega, X}(q^{-a^{-1}y'}), \widetilde{\chi}_{\omega, X}(z')) = \\
= (\sigma(\omega)(x_1') \cdot x_2', x_2'^{-1} \chi_{\omega, X}(x_2' \cdot q^{a^{-1}} \cdot y' \cdot q^{-a^{-1}y'}), \widetilde{\chi}_{\omega, X}(z')) = \\
= \delta_\omega (x_1' \cdot x_2', y', z'). \qedhere
\end{multline*}
\end{proof}

For $\omega = \pm \varpi_k$ the function $\delta_\omega \colon V_{\omega} \to V_{-\omega}$ gives rise to a well-defined function $\overline{\delta}_\omega \colon \overline{V} \to \overline{V}.$
Indeed, by~\cref{lem:v-correctness1} for every $v \in V$ there exists $w \in W$ such that $w \star v \in V_\omega$, therefore
we can define $\overline{\delta_\omega}([v])$ as the equivalence class of $\delta_{\omega}(w \star v)$.
By~\cref{prop:v-correctness2} this definition is correct, i.\,e. does not depend on the choice of $w$.

\begin{prop}
There is a well-defined action of $\St(\Phi, A[X, X\inv])$ on $\overline{V}$ compatible with the action of $\St(\Phi, A[X])$.
\end{prop}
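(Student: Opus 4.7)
The plan is to invoke~\cref{prop:rel-poly-Laurent} and transport an action from the amalgamated product $G$ back along $\varphi \colon \St(\Phi, A[X, X\inv]) \to G$. By the universal property of $G$, constructing a $G$-action on $\overline{V}$ reduces to specifying compatible actions of $\St(\Phi, A[X])$ and of $\langle \sigma \rangle$ that satisfy the amalgamation relations~\eqref{eq:sigma-sigma-plus}--\eqref{eq:sigma-delta}.

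First, I let $G^+ = \St(\Phi, A[X])$ act on $V$ by left multiplication on the first coordinate, $g \cdot (x, y, z) = (gx, y, z)$. Since the $\star$-action of $W$ modifies the first coordinate only by right multiplication, this left action commutes with $\star$ and descends to an action on $\overline{V}$. Next, I let $\sigma$ act as the bijection $\overline{\delta}_{\varpi_k}$ introduced after~\cref{prop:v-correctness2}, whose two-sided inverse is $\overline{\delta}_{-\varpi_k}$ by~\cref{cor:can-repr2}; this gives a well-defined action of $\langle \sigma \rangle$.

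The central step is verifying the three amalgamation relations at the level of $\overline{V}$. For an arbitrary class $[v] \in \overline{V}$, \cref{lem:v-correctness1} allows me to pick a representative $v = (x_1 x_2, y, z) \in V_{\varpi_k}$ with $x_1 \in N_{\varpi_k}$ and $x_2 \in \StW(\Phi, A)$. Since $\varpi_k$ is a microweight, $\langle \varpi_k, \alpha \rangle$ equals $1$, $0$, $-1$ for $\alpha$ lying in $\Sigma_k^+$, $\Delta_k$, $\Sigma_k^-$ respectively, so each of $x_\alpha(f)$ (for $\alpha \in \Sigma_k^+$), $x_\gamma(f)$ (for $\gamma \in \Delta_k$), and $x_\beta(Xf)$ (for $\beta \in \Sigma_k^-$) belongs to $N_{\varpi_k}$. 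Left multiplication by any of these generators therefore preserves $V_{\varpi_k}$, and combining~\eqref{eq:sigma-def} with~\eqref{eq:sigmadef} (which forces $\sigma(\varpi_k)$ to send $x_\alpha(f) \mapsto x_\alpha(Xf)$, to fix $x_\gamma(f)$, and to send $x_\beta(Xf) \mapsto x_\beta(f)$) yields
\begin{align*}
\overline{\delta}_{\varpi_k}(x_\alpha(f) \cdot [v]) &= x_\alpha(Xf) \cdot \overline{\delta}_{\varpi_k}([v]), \\
\overline{\delta}_{\varpi_k}(x_\gamma(f) \cdot [v]) &= x_\gamma(f) \cdot \overline{\delta}_{\varpi_k}([v]), \\
\overline{\delta}_{\varpi_k}(x_\beta(Xf) \cdot [v]) &= x_\beta(f) \cdot \overline{\delta}_{\varpi_k}([v]),
\end{align*}
which are precisely the operator-form versions of~\eqref{eq:sigma-sigma-plus},~\eqref{eq:sigma-delta}, and~\eqref{eq:sigma-sigma-minus}.

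With the $G$-action in hand, I pull it back along $\varphi$ to obtain the sought action of $\St(\Phi, A[X, X\inv])$ on $\overline{V}$; the compatibility with the $\St(\Phi, A[X])$-action is automatic from the identity $i_+ = \varphi \circ h_+^*$ recorded in~\cref{prop:rel-poly-Laurent}. I do not anticipate a substantive obstacle at this stage: the genuinely delicate input—the construction of the $\varpi_k$-pair (\cref{thm:pairconstr}) and the invariance of $\overline{\delta}_{\varpi_k}$ under the $\star$-equivalence (\cref{prop:v-correctness2})—has already been assembled in preceding subsections, leaving the present argument formal.
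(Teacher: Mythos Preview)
Your proposal is correct and follows essentially the same approach as the paper: both construct the action by invoking \cref{prop:rel-poly-Laurent}, letting $\St(\Phi, A[X])$ act by left translation on the first coordinate and $\sigma$ act via $\overline{\delta}_{\varpi_k}$, then verifying the amalgamation relations~\eqref{eq:sigma-sigma-plus}--\eqref{eq:sigma-delta}. The only cosmetic difference is that the paper checks~\eqref{eq:sigma-sigma-plus} by computing the conjugate $\overline{\delta}_{\varpi_k}\, x_\alpha(f)\, \overline{\delta}_{-\varpi_k}$ on a representative in $V_{-\varpi_k}$, whereas you verify the equivalent operator identity directly on a representative in $V_{\varpi_k}$; both amount to the same use of~\eqref{eq:sigmadef} and~\eqref{eq:sigma-def}.
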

\begin{proof}
Thanks to~\cref{prop:rel-poly-Laurent} it suffices to construct the action of the group $G$ on $\overline{V} = V/\sim_W$.
Here $G$ denotes the free product of $\langle \sigma \rangle$ and $\St(\Phi, A[X])$ amalgamated over the relations~\eqref{eq:sigma-sigma-plus}--\eqref{eq:sigma-delta}.

We let the generator $\sigma$ of $G$ act on $V/\sim_W$ via $\delta_{\varpi_k}$.
By~\cref{cor:can-repr2} $\sigma^{-1}$ acts on $V/\sim_W$ via $\delta_{-\varpi_k}$.
For $\alpha \in \Phi$ we define the translation action of $g \in \St(\Phi, A[X])$ on $V$ via $g \cdot (x, y, z) \coloneqq (gx, y, z)$.
It is easy to see that this definition is correct and gives rise to a well-defined action of $\St(\Phi, A[X])$ on $\overline{V}$.

We need to check that these two actions satisfy relations~\eqref{eq:sigma-sigma-plus}--\eqref{eq:sigma-delta}.
Let us verify~\eqref{eq:sigma-sigma-plus}.
Indeed, for $v = (x_1 \cdot x_2, y, z) \in V_{-\omega_k}$, where $x_1 \in V_{-\varpi_k}$, $x_2 \in \StW(\Phi, A)$, $y \in B$, $z \in G_M$ and $\alpha \in \Sigma_k^+$, $f \in A[X]$
we obtain from~\eqref{eq:sigmadef} and~\eqref{eq:sigma-def} that
\begin{multline*}
\overline{\delta}_{\varpi_k} \cdot x_\alpha(f) \cdot \overline{\delta}_{-\varpi_k} \cdot [x_1 \cdot x_2, y, z] = \\
= \overline{\delta}_{\varpi_k} \cdot x_\alpha(f) \cdot [\sigma(-\varpi_k)(x_1) \cdot x_2, x_2^{-1} \cdot \chi_{-\varpi_k, X}(x_2 \cdot y), \widetilde{\chi}_{-\varpi_k, X}(z)] = \\
= \left[\sigma(\varpi_k)\left(x_{\alpha}(f) \cdot \sigma(-\varpi_k)(x_1)\right) \cdot x_2, x_2^{-1} \cdot \chi_{\varpi_k, X}\left(x_2 \cdot x_2^{-1} \cdot \chi_{-\varpi_k, X}(x_2 \cdot y)\right), z\right] = \\
= [x_\alpha(Xf) \cdot x_1 \cdot x_2, y, z] = x_\alpha(Xf) \cdot [x_1 \cdot x_2, y, z].
\end{multline*}

Relation~\eqref{eq:sigma-sigma-minus} can be verified by the same calculation with $\varpi_k$ and $\Sigma_k^+$ replaced with $-\varpi_k$ and $\Sigma_k^-$, respectively.
Relation~\eqref{eq:sigma-delta} follows from the fact that $\sigma(\pm \varpi_k)$ fixes elements of the image of the canonical homomorphism $\St(\Delta_k, A[X]) \to \St(\Phi, A[X])$.
\end{proof}

Our next goal is to prove~\cref{thm:horrocks-k2}.
In order to achieve this we need several lemmas.

\begin{lemma} \label{lem:family1}
For $z \in \St(\Phi, M[X, X\inv], A[X, X\inv])$, $n, m \geq 0$, $f \in M[X]$, $g \in A[X]$, $\beta \in \Sigma_k^\pm$ one has
\[ z_\beta(X^{-m}f, X^{-n}g) \cdot [1, 1, z] =  [1, 1, z_\beta(X^{-m}f, X^{-n}g) \cdot z].\]
\end{lemma}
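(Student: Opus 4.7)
The plan is a primary induction on $n$ coupled with a secondary induction on $m$, driven by $\sigma$-conjugation in $G$ and closed off by a Steinberg-identity trick in the hardest boundary case; I treat $\beta \in \Sigma_k^+$ explicitly, the case $\beta \in \Sigma_k^-$ being symmetric with $\sigma^{-1}$ in place of $\sigma$.

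For the primary inductive step ($n \geq 1$), the $\sigma$-relations of \cref{cor:Allcock-simpler} (recall that $\sigma$ implements the weight automorphism $\chi_{\varpi_k, X}$) yield in $G$ the identity
\[ z_\beta(X^{-m}f, X^{-n}g) = \sigma \cdot z_\beta(X^{-m-1}f, X^{-n+1}g) \cdot \sigma^{-1}. \]
Applying this to $[1, 1, z]$, using $\sigma^{\pm 1} \cdot [1, 1, z'] = [1, 1, \widetilde{\chi}_{\pm\varpi_k, X}(z')]$ (which follows immediately from \cref{sigma-def}) and the primary IH at $(m+1, n-1)$, one concludes after noting $\widetilde{\chi}_{\varpi_k, X}(z_\beta(X^{-m-1}f, X^{-n+1}g)) = z_\beta(X^{-m}f, X^{-n}g)$ in $G_M$.

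For the primary base case $n = 0$, I would induct on $m$. The sub-base $m = 0$ uses that $z_\beta(f, g) \in G_M^+$ together with the $\star$-identity $[g_1^3(b), 1, z] = [1, 1, g_3^1(b) \cdot z]$ for $b \in G_M^+$ (from the proof of \cref{lem:second-reduction}). For $m \geq 1$, additivity $z_\beta(h_1 + h_2, a) = z_\beta(h_1, a) z_\beta(h_2, a)$ applied to $f = f_0 + Xf'$ (with $f_0 \in M$, $f' \in M[X]$) splits the problem into the inner IH case $z_\beta(X^{-m+1}f', g)$ and the atomic element $z_\beta(X^{-m}f_0, g)$; a parallel split $g = g_0 + Xg'$ via the conjugation identity $z_\beta(h, a+b) = z_\beta(h, a)^{x_{-\beta}(b)}$ reduces further to the truly atomic $z_\beta(X^{-m}f_0, g_0)$ with both arguments scalar. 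When $g_0 \in M$, both $x_{-\beta}(g_0)$ and $x_\beta(f_0)$ lie in $G_M^+$, so chaining translation, the $\star$-identity, and $\sigma^{-m}$-conjugation of $x_\beta(X^{-m}f_0) = \sigma^{-m} x_\beta(f_0) \sigma^m$ yields the claim directly.

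The remaining sub-case $g_0 \in A^\times$ is the hard part. Steinberg's identity $x_{-\beta}(g_0) = x_\beta(g_0^{-1}) w_\beta(-g_0^{-1}) x_\beta(g_0^{-1})$ together with the Chevalley formula $w_\beta(u)^{-1} x_\beta(v) w_\beta(u) = x_{-\beta}(-u^{-2}v)$ yield
\[ z_\beta(X^{-m}f_0, g_0) = x_\beta(g_0^{-1})^{-1} \cdot x_{-\beta}(-g_0^2 X^{-m} f_0) \cdot x_\beta(g_0^{-1}), \]
and the action on $[1, 1, z]$ unwinds in the following chain: move $x_\beta(g_0^{-1}) \in U^+$ from the first to the second coordinate via $\star$ with $(a, b, c) = (x_\beta(g_0^{-1}), 1, 1)$; apply $\sigma^{-m}$ (admissible since the new first coordinate is $1 \in N_{-\varpi_k}$); absorb $x_{-\beta}(-g_0^2 f_0) \in G_M^+$ into the third coordinate via the generalized $\star$-identity $[g_1^3(b), y, z] = [1, y, g_3^1(b)^{h_2(y)} z]$; apply $\sigma^m$ to go back; reverse the first $\star$-move. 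The resulting third coordinate simplifies via $\widetilde{\chi}_{\varpi_k, X}^m$-equivariance to $z_{-\beta}(-g_0^2 X^{-m}f_0, g_0^{-1}) \cdot z$, which equals $z_\beta(X^{-m}f_0, g_0) \cdot z$ by the same Steinberg computation that produced the rewriting above. The main obstacle is precisely this sub-case: $x_{-\beta}(g_0) \notin G_M^+$, so the $\star$-identity cannot be invoked directly, and the Steinberg-identity trick combined with the careful three-coordinate $\star$-juggling is essential.
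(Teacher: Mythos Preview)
Your inductive scheme is more elaborate than necessary and contains a genuine gap at the $g$-split. After the $f$-split you write
\[
z_\beta(X^{-m}f_0,\,g_0+Xg')=z_\beta(X^{-m}f_0,\,g_0)^{x_{-\beta}(Xg')}
\]
and claim this ``reduces further'' to the constant case. But to exploit this you must control the action of $x_{-\beta}(Xg')$ on $[1,1,z]$. You chose $\beta\in\Sigma_k^+$, so $-\beta\in\Sigma_k^-\subseteq\Phi^-$; hence $x_{-\beta}(Xg')$ lies in neither $U^+=\UU(\Phi^+,A[X])$ nor (for general $g'\in A[X]$) in $G_M^+$, and none of the $\star$-moves in \cref{subsec:triples} apply. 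It is also not covered by any inductive hypothesis. Your Steinberg-identity trick only applies when the second argument is a unit of $A[X]$, i.e.\ a constant unit, so it cannot absorb the polynomial tail $Xg'$ either.

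The source of the trouble is the choice of sign. The paper works out $\beta\in\Sigma_k^-$, for which no induction and no case distinction are needed: one simply writes in $G$
\[
z_\beta(X^{-m}f,\,X^{-n}g)=\sigma^{\pm n}\,x_{-\beta}(-g)\,\sigma^{\mp(n+m)}\,x_\beta(f)\,\sigma^{\pm(n+m)}\,x_{-\beta}(g)\,\sigma^{\mp n}
\]
and traces the action on $[1,1,z]$ in one pass. The reason this works is that for $\beta\in\Sigma_k^-$ one has $-\beta\in\Phi^+$, so $x_{-\beta}(g)\in U^+$ can be shifted into the $B$-coordinate via the $a$-slot of $\star$, while $x_\beta(f)\in G_M^+$ (because $f\in M[X]$) can be shifted into the $G_M$-coordinate via the $b$-slot. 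Your Steinberg trick is, in effect, an ad hoc device that converts the awkward case $\beta\in\Sigma_k^+$, $g_0\in A^\times$ into exactly this good $\Sigma_k^-$ situation --- but only for constant invertible second argument, and the preceding paragraph shows you cannot reduce to that. Doing the $\Sigma_k^-$ case by the paper's direct computation removes the entire difficulty and renders the double induction, the $f$- and $g$-splits, and the Steinberg trick unnecessary.
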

\begin{proof}
We will only verify the assertion in the case $\beta \in \Sigma_k^-$
(the case $\beta \in \Sigma_k^+$ can be obtained by replacing everywhere in the proof $\sigma$ with $\sigma^{-1}$).

Notice that in the group $G$ one has
\begin{equation} z_\beta(X^{-m}f, X^{-n}g) =
\sigma^n \cdot x_{-\beta}(-g) \cdot \sigma^{-n-m} \cdot x_{\beta}(f) \cdot \sigma^{n + m} \cdot x_{-\beta}(g) \cdot \sigma^{-n} \end{equation}
For shortness we write $[x, y, z] \rightsquigarrow_{g} [x', y', z']$ to illustrate the fact that
$[x', y', z'] = g \cdot [x, y, z]$.
Direct calculation using~\eqref{eq:sigmadef} and~\eqref{eq:sigma-def} shows that
\begin{multline*}
[1, 1, z] \rightsquigarrow_{\overline{\delta}_{-\varpi_k}^n}
[1, 1, \widetilde{\chi}^n_{-\varpi_k, X}(z)] \rightsquigarrow_{x_{-\beta}(g)}
[x_{-\beta}(g), 1, \widetilde{\chi}_{-\varpi_k, X}^n(z)] = \\
= [1, x_{-\beta}(g), \widetilde{\chi}^n_{-\varpi_k, X}(z)] \rightsquigarrow_{\overline{\delta}_{\varpi_k}^{n + m}}
[1, x_{-\beta}(X^{-n-m}g), \widetilde{\chi}^m_{\varpi_k, X}(z)] \rightsquigarrow_{ x_{\beta}(f) } \\
\rightsquigarrow_{ x_{\beta}(f) } [x_{\beta}(f), x_{-\beta}(X^{-n-m}g), \widetilde{\chi}^m_{\varpi_k, X}(z)] = \\
= [1, x_{-\beta}(X^{-n-m}g), x_{\beta}(f)^{x_{-\beta}(X^{-n-m}g)} \widetilde{\chi}^m_{\varpi_k, X}(z)] \rightsquigarrow_{\overline{\delta}^{n + m}_{-\varpi_k}} \\
\rightsquigarrow_{\delta_{-\varpi_k}^{n + m}} [1, x_{-\beta}(g), x_{\beta}(X^{-n-m}f)^{x_{-\beta}(g)} \widetilde{\chi}^n_{-\varpi_k, X}(z)] \rightsquigarrow_{ x_{-\beta}(-g) } \\
\rightsquigarrow_{ x_{-\beta}(-g) } [1, 1, x_{\beta}(X^{-n-m}f)^{x_{-\beta}(g)} \widetilde{\chi}^n_{-\varpi_k, X}(z)] \rightsquigarrow_{ \overline{\delta}_{\varpi_k}^n }
[1, 1, x_{\beta}(X^{-m}f)^{x_{-\beta}(X^{-n}g)} \cdot z],
\end{multline*}
from which the assertion of the lemma follows.
\end{proof}

\begin{lemma} \label{lem:family2}
For $z \in \St(\Phi, M[X, X\inv], A[X, X\inv])$, $\alpha \in \Delta_k$, $n \geq 0$ and $f \in M[X]$ one has
\begin{equation*} x_\alpha(X^{-n} f) \cdot [1, 1, z] = [1, 1, x_\alpha(X^{-n} f) \cdot z]. \end{equation*}
\end{lemma}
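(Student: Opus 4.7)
The plan is to mirror the proof of \cref{lem:family1} via a Chevalley commutator decomposition. The main algebraic input is that for $\Phi \in \{\rD_{\geq 4}, \rE_6, \rE_7\}$ with the microweight $\varpi_k$ marked on the extended Dynkin diagram in Figure 1, the semisimple Levi corresponding to $\Delta_k$ acts faithfully on the abelian unipotent radical $\mathfrak{u}_k^+$; equivalently, for every $\alpha \in \Delta_k$ the reflection $s_\alpha$ moves some $\beta \in \Sigma_k^+$, which (in simply-laced types) yields $\beta \in \Sigma_k^+$ with $\beta-\alpha \in \Sigma_k^+$ as well. Setting $\gamma := \alpha - \beta \in \Sigma_k^-$ and $N := N_{\beta\gamma} \in \{\pm 1\}$, the Chevalley commutator formula~\eqref{R2} gives
\[
x_\alpha(X^{-n}f) \;=\; x_\beta(X^{-n}f/N)\cdot x_\gamma(1)\cdot x_\beta(-X^{-n}f/N)\cdot x_\gamma(-1).
\]

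I would then apply these four factors in succession to $[1,1,z]$ in the same style as the $\rightsquigarrow$-trace in \cref{lem:family1}. The outer factors $x_\beta(\pm X^{-n}f/N)$ lie in $G_M$ (since $X^{-n}f/N \in M[X,X\inv]$), so by \cref{lem:family1} specialized to $g=0$ they shift the third coordinate of any canonical class $[1,1,w]$ by $x_\beta(\pm X^{-n}f/N)$. The inner factors $x_\gamma(\pm 1) \in \St(\Phi,A)$ left-translate the first coordinate; the resulting triples $[x_\gamma(\pm 1), 1, w]$ are not in $V_{\varpi_k}$, so before applying $\overline{\delta}_{\pm\varpi_k}$ I would first bring them into canonical form via the $\star$-action with $(x_{-\gamma}(\mp 1), 1, 1) \in W$. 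This is permissible because $-\gamma \in \Sigma_k^+ \subseteq \Phi^+$ implies $x_{-\gamma}(\mp 1) \in U^+$; combined with the Matsumoto identity $x_\gamma(\pm 1)\cdot x_{-\gamma}(\mp 1) = x_{-\gamma}(\pm 1)\cdot w_\gamma(\pm 1)$, this rewrites the first coordinate as $x_{-\gamma}(\pm 1)\cdot w_\gamma(\pm 1) \in N_{\varpi_k}\cdot \StW(\Phi, A)$, making $\delta_{\pm\varpi_k}$ directly applicable.

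The main technical obstacle will be the bookkeeping of $\StH$-corrections that arise in the second coordinate when $\chi_{\varpi_k, X}$ acts on the intermediate Weyl elements $w_\gamma(\pm 1)$. By~\eqref{eq:chi-h} this action produces diagonal factors $h_\gamma(\pm X)$ together with attached Steinberg symbols of the form $\{X, -1\}$; the contributions from the two successive $x_\gamma$-translations combine, via the antisymmetry and bimultiplicativity~\eqref{eq:symbol-properties} of the symbols, into a product of relative symbols $\{X, 1+m\}_r$ lying in $C(\Phi, R, M[X,X\inv])$. Because $A$ is assumed to be a local domain, \cref{lem:symbols} forces this product to be trivial, so the accumulated diagonal corrections contribute nothing to the final third coordinate. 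After collecting all the cancellations the net action on $[1,1,z]$ reduces to $[1,1, [x_\beta(X^{-n}f/N), x_\gamma(1)]\cdot z] = [1,1, x_\alpha(X^{-n}f)\cdot z]$, completing the proof.
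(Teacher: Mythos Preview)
Your commutator setup is sound: the root combinatorics (finding $\beta\in\Sigma_k^+$ with $\gamma=\alpha-\beta\in\Sigma_k^-$) and the Matsumoto identity rewriting $x_\gamma(\pm 1)x_{-\gamma}(\mp 1)=x_{-\gamma}(\pm 1)w_\gamma(\pm 1)$ to land in $V_{\varpi_k}$ are both correct. But the mechanism you invoke for cancelling the $\StH$-corrections is wrong. By~\eqref{eq:w-computation}, the correction produced by $\chi_{\varpi_k,X}$ acting on $w_\gamma(\pm 1)$ is $\{X^{-1},\mp 1\}\cdot h_\gamma^{-1}(X^{-1})$; the Steinberg symbols appearing here have second argument $\mp 1$, not $1+m$ with $m\in M$, so they are \emph{not} in the image of the relative symbol map $\{X,-\}_r$ and \cref{lem:symbols} says nothing about them. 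Moreover these corrections live in the second coordinate $B$, not in $G_M$, so appealing to $C(\Phi,R,M[X,X^{-1}])$ is a type error. The cancellation you need is structural (the corrections from the outward $\sigma$-trace are undone by the return $\sigma$-trace), not a consequence of the domain hypothesis; your proposal does not supply that argument, and the intermediate second-coordinate terms (mixing $h_\gamma(X^{\pm 1})$, $x_{-\gamma}(\cdot)$, and symbols through several $\delta$-iterations) are considerably messier than you indicate.

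The paper takes a different and cleaner route: instead of a commutator it uses a Weyl conjugation. One chooses $\beta\in\Sigma_k^+$ with $\langle\alpha,\beta\rangle=-1$ and writes $x_\alpha(\eta X^{-n}f)={}^{\sigma^n w_\beta(1)\sigma^{-n}}x_{s_\beta(\alpha)}(f)$, where $s_\beta(\alpha)\in\Sigma_k^+$ and $f\in M[X]$. The point is that $w_\beta^{-1}(1)$ goes directly into the $x_2$-slot of~\eqref{eq:sigma-def} with $x_1=1$, so iterating $\delta_{\varpi_k}$ only produces a central factor $c\in\StH$ in the second coordinate. The middle term $x_{s_\beta(\alpha)}(f)$ lies in $G_M^+$ and is moved to the third coordinate via the $\star$-action (the $b$-component), after which conjugation by $h_\beta^n(X^{-1})$ in the second slot manufactures the required power $X^{-n}$. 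Reversing the conjugation then undoes $c$ and $h_\beta^n(X^{-1})$ automatically. Your approach could likely be pushed through, but it would require redoing the trace with a nontrivial $x_1$-factor and proving the cancellation directly rather than via \cref{lem:symbols}.
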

\begin{proof}
We can find $\beta \in \Sigma_k^+$ such that $\alpha + \beta \in \Phi$ or $\langle \alpha, \beta \rangle = -1$.
Notice that by~\eqref{eq:chi-w} one has $\chi_{\omega, X}^n(w_\beta(1)) = w_\beta(X^{n\langle \omega, \beta \rangle})$, therefore
${}^{w_\beta(X^n)} x_{s_\beta(\alpha)}(f) = x_\alpha(\eta X^{-n} f)$
for $\eta = \eta(\beta, s_\beta(\alpha)) = \pm 1$.
Consequently, ${}^{\sigma^n \cdot w_\beta(1) \sigma^{-n}} x_{s_\beta(\alpha)}(f) = x_\alpha(\eta X^{-n} f)$.

Notice also that from~\eqref{eq:w-computation} it follows that
\[ w_\beta(u) \cdot \chi_{\omega, X}(w^{-1}_\beta(u)) = \{ X^{-\langle \omega, \beta \rangle }, u \} \cdot h_\beta(X^{-\langle \omega, \beta \rangle}). \]

Consequently, a direct computation involving~\eqref{eq:sigmadef}, \eqref{eq:sigma-def} and~\cite[Lemme~5.1--5.2]{Ma69}
shows that there exists a central element $c \in \StH(\Phi, A[X, X\inv])$ (in fact, $c$ is a product of Steinberg symbols) such that
\begin{multline*}
[1, 1, z] \rightsquigarrow_{\delta_{-\varpi_k}^n}
[1, 1, \widetilde{\chi}_{-\varpi_k}^n(z)] \rightsquigarrow_{w^{-1}_\beta(1)}
[w_\beta^{-1}(1), 1, \widetilde{\chi}_{-\varpi_k}^n(z)] \rightsquigarrow_{\delta_{\varpi_k}^n} \\
\rightsquigarrow_{\delta_{\varpi_k}^n} [w_\beta^{-1}(1), c \cdot h_\beta^n(X^{-1}), z] \rightsquigarrow_{x_{s_\beta(\alpha)}(f)}
[x_{s_\beta(\alpha)}(f) \cdot w_\beta^{-1}(1), c \cdot h_\beta^n(X^{-1}), z] = \\
= [w_\beta^{-1}(1) \cdot x_{\alpha}(\eta f), c \cdot h_\beta^n(X^{-1}), z] =
[w_\beta^{-1}(1), c \cdot h_\beta^n(X^{-1}), x_{\alpha}(\eta f)^{h_\beta^n(X^{-1})} \cdot z] = \\
= [w_\beta^{-1}(1), c \cdot h_\beta^n(X^{-1}), x_{\alpha}(\eta fX^{-n}) \cdot z] \rightsquigarrow_{\delta_{-\varpi_k}^n} \\
\rightsquigarrow_{\delta_{-\varpi_k}^n} [w_\beta^{-1}(1), 1, \widetilde{\chi}_{-\varpi_k, X}^n (x_{\alpha}(\eta fX^{-n}) \cdot z)] \rightsquigarrow_{ w_\beta(1) } \\
\rightsquigarrow_{ w_\beta(1) } [1, 1, \widetilde{\chi}_{-\varpi_k, X}^n (x_{\alpha}(\eta fX^{-n}) \cdot z)] \rightsquigarrow_{\delta_{\varpi_k}^n}
[1, 1, x_{\alpha}(\eta fX^{-n}) \cdot z] \qedhere
\end{multline*}
\end{proof}

Now we are ready to prove the main result of this section.
\begin{proof}[Proof of~\cref{thm:horrocks-k2}]
Thanks to~\cref{lem:first-reduction} and~\cref{lem:second-reduction} it suffices to show that the action of
$\St(\Phi, A[X, X\inv])$ on $\overline{V} = V/\sim_W$ constructed above satisfies the property
$\mu(h) \cdot [1, 1, z] = [1, 1, hz]$ for $h, z \in \St(\Phi, A[X, X\inv], M[X, X\inv])$.
Clearly, it suffices to verify this property for $h$ belonging to a generating set of $\St(\Phi, A[X, X\inv], M[X, X\inv])$.
According to~\cref{lem:relative-generators} it suffices to verify this assertion for two types of generators:
\begin{itemize}
\item $z_{\beta}(f, g)$, $f \in M[X, X\inv]$, $g \in A[X, X\inv]$, $\beta \in \Sigma_k^-$;
\item $x_\alpha(f)$, $f \in M[X, X\inv]$, $\alpha \in \Phi$.
\end{itemize}
The assertion for the generators of the first type is contained in~\cref{lem:family1}.
The assertion for the generators of second type follows from~\cref{lem:family1} (in the special case $\alpha \in \Sigma_k^+$ and $g = 0$) and~\cref{lem:family2} (for $\alpha \in \Delta_k$).
\end{proof}

\section{Proof of main results} \label{sec:main}
First of all, notice that we already have all technical ingredients to prove
some of our main results.

\begin{proof}[Proof of~\cref{thm:LP-for-K2}]
This is what the proof of~\cite[Theorem~1.1]{LSV2} actually shows if we use the more general~\cref{thm:horrocks-k2}.
\end{proof}

\begin{proof}[Proof of~\cref{cor:motivic-pi1}]
Repeat the proof of~\cite[Corollary~1.2]{LSV2} verbatim.
\end{proof}

The remainder of this section is devoted to the proof of~\cref{cor:dedekind}.
For the rest of this subsection $\Psi$ is an arbitrary irreducible simply-laced root system of rank $\geq 3$ embedded into another irreducible simply-laced root system $\Phi$.
For a commutative ring $R$ we denote by $j_R$ the corresponding homomorphism of Steinberg groups $\St(\Psi, R) \to \St(\Phi, R)$ induced by this embedding.

The following result is generalizes the so-called ``dilation principle'' for subsystem embeddings, cf.~\cite[Corollary~2.4]{Tu83}.
\begin{lemma}\label{lem:dp-2}
Let $h\in\St(\Phi, A[X], X A[X])$ be such that $\lambda_a^*(h) \in \Img(j_{A_a[X]})$.
Then for sufficiently large $n$ one has
\[\ev{A}{A[X]}{a^n\cdot X}^*(h) \in \mathrm{Im}(j_{A[X]}).\]
\end{lemma}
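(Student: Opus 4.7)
The plan is to reduce to a relative preimage, decompose it via the amalgamation theorem (\cref{thm:relPres}) into pieces supported on rank-$3$ subsystems, apply Tulenbaev's linear dilation principle \cite[Corollary~2.4]{Tu83} to each such piece inside a suitable ambient $\rA$-type subsystem, and conclude via $\K_2$-localization injectivity.

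First I would choose a preimage $h' \in \St(\Psi, A_a[X])$ with $j_{A_a[X]}(h') = \lambda_a^*(h)$. Since $XA_a[X]$ is a splitting ideal, there is a semidirect decomposition $\St(\Psi, A_a[X]) = \St(\Psi, A_a[X], XA_a[X]) \rtimes \St(\Psi, A_a)$; writing $h' = h'_+ \cdot h'_0$ accordingly and applying $\mathrm{ev}_{X=0}^*$ to $j_{A_a[X]}(h') = \lambda_a^*(h)$ combined with the relativity of $h$ forces $j_{A_a}(h'_0) = 1$. Hence we may replace $h'$ by $h'_+$ and assume $h' \in \St(\Psi, A_a[X], XA_a[X])$.

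Next, applying \cref{thm:relPres} to $\St(\Psi, A_a[X], XA_a[X])$, I would write $h' = \prod_j h'_j$ with each factor $h'_j$ living in some $\St(\Psi_j, A_a[X], XA_a[X])$ for $\Psi_j \in A_3(\Psi) \subseteq A_3(\Phi)$. Each such $\Psi_j$ of type $\rA_3$ embeds into an ambient $\rA$-type subsystem of $\Phi$ of rank $\geq 4$ (available by the hypotheses on $\Phi$, e.g.\ the $\rA_4$ around the relevant vertex of the Dynkin diagram). Within this ambient linear subsystem, Tulenbaev's linear dilation principle applies: for each $j$ there exists $n_j$ such that $\ev{A}{A[X]}{a^{n_j} X}^*(j_{A_a[X]}(h'_j))$ lies in the image of $\St(\Psi_j, A[X], XA[X])$ inside $\St(\Phi, A[X], XA[X])$. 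Taking $n = \max_j n_j$ and assembling the dilated pieces using the amalgamation relations of \cref{thm:relPres}, one obtains an element $k \in \St(\Psi, A[X], XA[X])$ such that $j_{A[X]}(k) = \prod_j \ev{A}{A[X]}{a^n X}^*(j_{A_a[X]}(h'_j))$.

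To conclude, $j_{A[X]}(k)$ and $\ev{A}{A[X]}{a^n X}^*(h)$ coincide after applying $\lambda_a^*$, since both equal $\ev{A}{A[X]}{a^n X}^*(\lambda_a^*(h))$. Their quotient therefore lies in $\Ker(\lambda_a^* \colon \St(\Phi, A[X], XA[X]) \to \St(\Phi, A_a[X], XA_a[X]))$. Projecting by $\pi$ gives an element killed by $\lambda_a^*$ in $\G_\mathrm{sc}(\Phi, A[X])$, which is trivial because $A[X] \hookrightarrow A_a[X]$ (as $A$ is a domain); hence the quotient lies in $\K_2(\Phi, A[X])$. By \cite[Lemma~2.2]{LS20} this $\K_2$ also injects into $\K_2(\Phi, A_a[X])$ for $A$ a domain, forcing the quotient to be trivial and yielding $\ev{A}{A[X]}{a^n X}^*(h) = j_{A[X]}(k) \in \mathrm{Im}(j_{A[X]})$. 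The main obstacle is the assembly step: the decompositions of $h'$ and of $h$ into $\rA_3$-pieces need not be compatible, so the individually-dilated pieces must be glued together via the amalgamation relations of \cref{thm:relPres} in a way that ensures the resulting lift factors through $\St(\Psi, A[X])$ rather than merely through some larger $\rA$-type subgroup of $\St(\Phi, A[X])$ that happens to contain each ambient piece.
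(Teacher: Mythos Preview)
Your approach has a genuine gap at the step where you invoke Tulenbaev's dilation principle. That result takes as input an element of $\St(n, A[X], XA[X])$ over the \emph{non-localized} ring whose image under $\lambda_a^*$ falls into a smaller linear piece; dilation then pulls it back into the smaller piece over $A[X]$. Your pieces $h'_j$, however, already live in $\St(\Psi_j, A_a[X], XA_a[X])$ over the \emph{localized} ring, and there is no element over $A[X]$ to feed into Tulenbaev's statement. The expression $\ev{A}{A[X]}{a^{n_j}X}^*(j_{A_a[X]}(h'_j))$ does not type-check: the evaluation map has source $A[X]$, not $A_a[X]$. If instead you try to lift each $h'_j$ to $\St(\Psi_j, A[X], XA[X])$ by ``clearing denominators'' after substituting $X \mapsto a^n X$, you hit the obstruction that in a generator $z_\alpha(m, r)$ the second argument $r \in A_a[X]$ has constant term $r(0) \in A_a$, which is untouched by the substitution. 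Beyond this, your argument assumes that $A$ is a domain (for the $\K_2$-injectivity step) and that every $\rA_3$ subsystem of $\Psi$ extends to an $\rA_{\geq 4}$ inside $\Phi$; neither is part of the hypothesis (the preamble only requires $\Psi \subseteq \Phi$ irreducible simply-laced of rank $\geq 3$).

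The paper's proof sidesteps all of this with a direct colimit argument. It uses the lifting homomorphism $T_\Psi \colon \St(\Psi, A_a[X], XA_a[X]) \to \St(\Psi, A \ltimes XA_a[X])$ from \cite[\S~2]{LS17}, which exists precisely because the relative Steinberg group is insensitive to the constant-term ambiguity noted above. One then identifies $A \ltimes XA_a[X]$ with the filtered colimit of the directed system $A_i = A[X]$ with transition maps $X \mapsto a^{j-i}X$, and concludes from the fact that the Steinberg functor commutes with filtered colimits. No amalgamation, no reduction to the linear case, and no domain or rank hypotheses are needed.
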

\begin{proof}
We denote by $A\ltimes XA_a[X]$ the semidirect product of $A$ and the ideal $XA_a[X]$, cf. e.\,g.~\cite[Definition~3.2]{S15}.
Denote by $\theta$ the obvious map $A[X]\rightarrow A\ltimes XA_a[X]$ localizing at $a$ all coefficients of terms of degree $\geq 1$.

Recall from~\cite[\S~2]{LS17} that there exists a homomorphism
\[T_\Psi \colon \St(\Psi, A_a[X], XA_a[X]) \to \St(\Psi, A \ltimes XA_a[X])\]
such that $\theta^* = T_\Psi \circ \lambda_a^*$.

Let $(A_i, f_{ij})$ be the directed system of rings given by
\[A_i\coloneqq A[X],\ f_{ij} \coloneqq \ev{A}{A[X]}{a^{j-i} \cdot X},\ 0 \leq i\leq j.\]
It is easy to check that $\varinjlim A_i$ coincides with $A \ltimes XA_a[X]$.
The canonical morphisms $A_j\rightarrow \varinjlim_i A_i \cong A \ltimes XA_a[X]$ can be easily computed as $\ev{A}{A\ltimes XA_a[X]}{a^{-j} \cdot X}$.

By hypothesis $\lambda_a^*(h) = j_{A_a[X]}(h')$ for some $h' \in \St(\Psi, A_a[X], XA_a[X])$.
Consequently, $\theta^*(h) = j_{A \ltimes XA_a[X]}(T_\Psi(h'))$
and the assertion of the lemma follows from the fact that the Steinberg group functor commutes with
colimits over directed systems (cf.~\cite[Lemma~2.2]{Tu83}):
\[\St(\Psi, A\ltimes XA_a[X]) = \St(\Psi, \varinjlim_i A_i) \cong \varinjlim_i \St(\Psi,A_i). \qedhere\]
\end{proof}

Recall that a pair of elements $a, b \in A$ are called \textit{coprime} if together they generate the unit ideal, i.\,e. $Aa + Ab = A$.
The following lemma generalizes~\cite[Lemma~2.5]{Tu83}.
\begin{lemma}\label{lem:L25-2}
Let $a$ and $b$ be a pair of coprime elements of $A$.
Let $g$ be an element of $\St(\Phi, A[X], XA[X])$ such that
$\lambda_a^*(g) \in \Img(j_{A_a[X]})$ and $\lambda_b^*(g) \in \Img(j_{A_b[X]})$.
Then $g$ lies in the image of $j_{A[X]} \colon \St(\Psi, A[X]) \to \St(\Phi, A[X])$.
\end{lemma}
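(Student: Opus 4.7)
The plan is to adapt Tulenbaev's two-variable patching technique (cf.~\cite[Lemma~2.5]{Tu83}) and reduce the statement to~\cref{lem:dp-2}. Concretely, I would introduce an auxiliary variable $Y$ and construct the element
\[ \tilde g(X, Y) := g(X+Y) \cdot g(Y)^{-1} \in \St(\Phi, A[Y][X], X \cdot A[Y][X]). \]
Here $g(X+Y)$ and $g(Y)$ denote the functorial images of $g$ under the two ring homomorphisms $A[X] \to A[X, Y]$ sending $X$ to $X+Y$ and to $Y$, respectively. Since the ideals $(X+Y)A[X,Y]$ and $Y A[X,Y]$ are both splitting (with sections $X \mapsto -Y$ and $X \mapsto 0$), each factor can be viewed inside $\St(\Phi, A[X, Y])$ via the corresponding canonical injection $\mu$; the substitution $X \mapsto 0$ sends their product to $1$, so $\tilde g(X, Y)$ in fact lies in the relative subgroup $\St(\Phi, A[Y][X], X \cdot A[Y][X])$.

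Next, by the hypothesis and by naturality of $j$, the localization $\lambda_a^*(\tilde g(X, Y))$ lies in $\Img(j_{A_a[Y][X]})$, and symmetrically for $b$. I would then apply~\cref{lem:dp-2} twice, once for $a$ and once for $b$, taking $A[Y]$ rather than $A$ as the base ring (the statement of that lemma imposes no restrictions on the base ring, so this substitution is legitimate). Taking the maximum of the two exponents produced yields a single integer $n$ for which both
\[ g(a^n X + Y) \cdot g(Y)^{-1} \quad \text{and} \quad g(b^n X + Y) \cdot g(Y)^{-1} \]
lie in $\Img(j_{A[X, Y]})$.

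Finally, writing $1 = c a^n + d b^n$ for some $c, d \in A$ (possible since $a^n, b^n$ are coprime), applying the ring homomorphism $X \mapsto cX$, $Y \mapsto d b^n X$ to the first element above yields
\[ g(X) \cdot g(d b^n X)^{-1} \in \Img(j_{A[X]}), \]
while applying $X \mapsto dX$, $Y \mapsto 0$ to the second (and using $g(0) = 1$ since $g \in \St(\Phi, A[X], XA[X])$) yields $g(d b^n X) \in \Img(j_{A[X]})$. Multiplying these two elements recovers $g(X)$, establishing the claim.

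The main subtlety I expect to encounter is the construction of $\tilde g(X, Y)$ as an element of the correct relative group rather than merely of the absolute Steinberg group $\St(\Phi, A[X, Y])$, together with the need to verify that~\cref{lem:dp-2} may be applied with $A[Y]$ in place of $A$. Once these foundational points are settled, everything else reduces to routine functoriality.
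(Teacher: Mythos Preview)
Your proposal is correct and follows essentially the same approach as the paper's proof: both introduce an auxiliary variable, form a ``difference'' element lying in the relative Steinberg group with respect to that variable, apply \cref{lem:dp-2} over the enlarged base ring to replace the variable by $a^n$- and $b^n$-multiples, and then specialize using a relation $ra^n + sb^n = 1$ to recover $g(X)$. The only cosmetic difference is that the paper works with three variables and the auxiliary element $h(X,Y,Z) = g(YX)\cdot g^{-1}((Y+Z)X)$ over $S = A[X,Y]$, whereas your two-variable element $\tilde g(X,Y) = g(X+Y)\cdot g(Y)^{-1}$ over $A[Y]$ achieves the same end slightly more economically; neither choice affects the substance of the argument.
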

\begin{proof}
We reproduce the argument of~\cite[Lemma~2.5]{Tu83}, cf. also with~\cite[Lemma~16]{S15}.
Set $S \coloneqq A[X, Y]$.
Consider the following element of $\St(\Phi, S[Z])$:
\[h(X, Y, Z) \coloneqq g(YX) \cdot  g^{-1}((Y+Z) X) = \ev{A}{S[Z]}{YX}^* \left(g\right) \cdot \ev{A}{S[Z]}{(Y + Z)X}^*\left(g^{-1}\right).\]
It is easy to see that $h(X, Y, Z)$ belongs to
\[\Ker\left(\eval{Z}{S}{S}{0}^*\colon\St(\Phi, S[Z]) \rightarrow \St(\Phi, S)\right)\]
and hence by~\cite[Lemma~8]{S15} lies in $\St(\Phi, S[Z], Z S[Z])$.
On the other hand, \begin{multline*}
\lambda_{a}^*(h(X, Y, Z)) = \left(\lambda_a\circ \ev{A}{S[Z]}{Y X}\right)^*(g) \cdot \left(\lambda_a\circ \ev{A}{S[Z]}{(Y + Z)X}\right)^*(g^{-1}) = \\
= \ev{A_a}{S_a[Z]}{YX}^*(\lambda_{a}^*(g)) \cdot \ev{A_a}{S_a[Z]}{(Y + Z)X}^*(\lambda_{a}^*(g^{-1})) \end{multline*}
lies in the image of $j_{S_a[Z]}$.
Similarly, $\lambda_{b}^*(h(X, Y, Z))$ lies in the image of $j_{S_b[Z]}$.

We claim that there exists $n$ such that both $h(X, Y, a^n Z)$ and $h(X, Y, b^n Z)$
lie in the image of $j(S[Z])$.

By assumption, there exist $r, s\ \in A$ such that $r a^n + s b^n = 1$, consequently
\begin{multline*}
g(X) = g(X)\cdot g^{-1}(ra^n\cdot X) \cdot g(ra^n\cdot X) \cdot g^{-1}(0) = \\
= h(X, 1, -sb^n) \cdot h(X, ra^n, -ra^n) \in \mathrm{Im}(j_{A[X]}). \qedhere
\end{multline*} \end{proof}

The following result is the subsystem analogue of~\cite[Theorem~2]{LS17} which generalizes~\cite[Theorem~2.1]{Tu83}.
\begin{cor}[Local-global principle for subsystem embeddings] \label{cor:QS-subsystem}
An element $g \in \St(\Phi, A[X], XA[X])$ belongs to $\Img(j_{A[X]})$ if and only if
$\lambda_M^*(g) \in \St(\Phi, A_M[X])$ belongs to $\Img(j_{A_M[X]})$ for all maximal ideals $M \trianglelefteq A$.
\end{cor}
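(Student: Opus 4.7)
The ``only if'' direction is immediate from the naturality of $j$ in the ring variable; I only need to prove the ``if'' direction. My plan is to break this into two stages: a compactness step that reduces the pointwise hypothesis at every maximal ideal to a finite cover of $\mathrm{Spec}(A)$, and a patching step that bootstraps the already-established two-element case from~\cref{lem:L25-2} up to an arbitrary finite cover.

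For the compactness stage, I would exploit the fact that the Steinberg group functor commutes with filtered colimits (Lemma~2.2 of~\cite{Tu83}). Writing $A_M[X] = \varinjlim_{s \notin M} A_s[X]$, the hypothesis $\lambda_M^*(g) \in \Img(j_{A_M[X]})$ implies that a preimage already exists at a finite stage: there is some $s_M \notin M$ with $\lambda_{s_M}^*(g) \in \Img(j_{A_{s_M}[X]})$. The collection of such $s_M$, as $M$ ranges over maximal ideals, lies in no single maximal ideal and hence generates the unit ideal, so I can extract a finite subfamily $s_1,\ldots,s_n$ with $\sum_i A s_i = A$ and $\lambda_{s_i}^*(g) \in \Img(j_{A_{s_i}[X]})$ for each $i$.

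For the patching stage, I would prove by induction on $n$ the auxiliary assertion $P(n)$: for any ring $A$, any $h \in \St(\Phi, A[X], XA[X])$ and any unit-generating family $s_1,\ldots,s_n \in A$ with $\lambda_{s_i}^*(h) \in \Img(j_{A_{s_i}[X]})$ for all $i$, one has $h \in \Img(j_{A[X]})$. The case $n = 1$ is trivial since then $s_1$ is a unit and $\lambda_{s_1}$ is an isomorphism; the case $n = 2$ is precisely~\cref{lem:L25-2}. For the inductive step from $n-1$ to $n$, since $(s_1) + (s_2,\ldots,s_n) = A$, I pick $b \in (s_2,\ldots,s_n)$ with $b \equiv 1 \pmod{s_1}$, so that $s_1$ and $b$ are coprime in $A$. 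Over the localisation $A_b$, the elements $s_2,\ldots,s_n$ now generate the unit ideal (since $b$ becomes a unit), and the images $\lambda_{s_i}^*(\lambda_b^*(h))$ lie in $\Img(j_{A_{bs_i}[X]}) \supseteq \Img(j_{A_{s_i}[X]})$ for $i \geq 2$. Applying $P(n-1)$ over the ring $A_b$ to $\lambda_b^*(h)$ gives $\lambda_b^*(h) \in \Img(j_{A_b[X]})$. Combined with the original $\lambda_{s_1}^*(h) \in \Img(j_{A_{s_1}[X]})$ and the coprimality of $s_1$ and $b$, a single application of~\cref{lem:L25-2} now delivers $h \in \Img(j_{A[X]})$.

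The substantial technical work is already absorbed into~\cref{lem:L25-2}, which itself rests on the dilation principle~\cref{lem:dp-2}. Accordingly, I do not anticipate any serious obstacle; the residual care required is merely to verify that the formulation of $P(n)$ is stable under passage to $A_b$, which is straightforward because both the unit-generation property and the membership $\lambda_{s_i}^*(\lambda_b^*(h)) \in \Img(j_{(A_b)_{s_i/1}[X]})$ survive the localisation.
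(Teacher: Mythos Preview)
Your proposal is correct and follows essentially the same strategy as the paper. The paper packages the patching step slightly differently: it defines the Quillen set $Q(g) = \{a \in A \mid \lambda_a^*(g) \in \Img(j_{A_a[X]})\}$ and shows (via the argument of~\cite[Theorem~2]{S15}, with \cref{lem:L25-2} supplying closure under addition) that $Q(g)$ is an ideal, whence the hypothesis forces $Q(g)=A$; your explicit compactness-plus-induction unpacking is the same argument in different clothing.
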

\begin{proof}
It suffices to show ``if'' part of the statement.
One defines the \textit{Quillen set} $Q(g)$ as the set consisting of those elements $a \in A$
such that $\lambda_a^*(g) \in \Img(j_{A_a[X]})$.

Repeating the same argument as in the proof of~\cite[Theorem~2]{S15} one shows using~\cref{lem:L25-2}
that $Q(g)$ is an ideal of $A$, which can not be proper and therefore must coincide with $A$.
\end{proof}

Before we proceed further we would like to briefly recall the main construction from~\cite{LS20} upon which
the proof of Theorem~1 ibid.\ is based.
Recall that in~\cite{LS20} one constructs an action of the group $\St(\Phi, A[X\inv] + M[X])$ on a certain set $\overline{V}$.
The set $\overline{V}$ referenced here is distinct from the set $\overline{V}$ introduced in~\cref{sec:horrocks}.
Since the latter is no longer relevant to our discussion, we will henceforth use the notation $\overline{V}$ exclusively to denote the set defined in~\cite{LS20},
thereby avoiding any potential ambiguity in notation.

The construction of $\overline{V}$ proceeds as follows.
Set $G_{M, \Phi}^{\geq 0} \coloneqq \Img(\St(\Phi, A[X], M[X]) \to \St(\Phi, A[X, X\inv])), G_M^0 \coloneqq \overline{\St}(\Phi, A, M)$.
$G_M^0$ is easily seen to be a subgroup of both $\St(\Phi, A[X\inv])$ and $G_M^{\geq 0}$.
Denote by $\overline{V}$ the factor set of the product $V \coloneqq G_M^{\geq 0} \times \St(\Phi, A[X\inv]) \times (1 + M)^\times$
modulo the equivalence relation given by $(gh_0, h, u) \cong (g, h_0h, u)$ where $h_0 \in G_M^0, (g, h, u) \in V.$
Denote by $[g, h, u] \in \overline{V}$ the equivalence class corresponding to $(g, h, u)\in V$, cf.~\cite[\S~5.4]{LS20}.

Note also that, although the results in~\cite[\S~5.5]{LS20} are conditional,
i.e., they are formulated under the additional assumption that the canonical homomorphism $\St(\Phi, A[X^{-1}] + M[X]) \to \St(\Phi, A[X, X^{-1}])$ is injective,
this does not pose a problem for us, as this condition was already verified in the proof of~\cref{lem:first-reduction}.

\begin{prop} \label{prop:horrocks-main} The group $\St(\Phi, A[X\inv] + M[X])$ acts simply transitively on $\overline{V}$.
This action satisfies the following additional property.
Suppose that for some
\[g \in \Img(j\colon \St(\Psi, A[X\inv] + M[X]) \to \St(\Phi, A[X\inv] + M[X])), \]
$h, h' \in \St(\Phi, A[X\inv])$ and $u, h' \in 1 + M$ one has
\[ g \cdot [1, h, u] = [g', h', u'].\]
Then $g'$ belongs to $\Img(j\colon G_{M, \Psi}^{\geq 0} \to G_{M, \Phi}^{\geq 0})$.
\end{prop}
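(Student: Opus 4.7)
My plan is to treat the two assertions separately. The simply transitive action of $\St(\Phi, A[X\inv]+M[X])$ on $\overline{V}$ was constructed in~\cite[\S~5.5]{LS20}, albeit conditional on the injectivity of the canonical map $\St(\Phi, A[X\inv]+M[X]) \to \St(\Phi, A[X, X\inv])$. This injectivity was verified unconditionally above in the course of the proof of~\cref{lem:first-reduction}, so the first assertion follows by direct citation.

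For the subsystem property, I would exploit the functoriality of the LS20 construction in $\Phi$. Forming the analogous set $V_\Psi = G_{M, \Psi}^{\geq 0} \times \St(\Psi, A[X\inv]) \times (1+M)^\times$ and its quotient $\overline{V}_\Psi$ modulo the $G_{M, \Psi}^0$-relation, the same construction produces a simply transitive action of $\St(\Psi, A[X\inv] + M[X])$ on $\overline{V}_\Psi$, while the entrywise application of $j$ defines a set map $J \colon \overline{V}_\Psi \to \overline{V}$, $[g_\Psi, h_\Psi, u] \mapsto [j(g_\Psi), j(h_\Psi), u]$ (well-defined because $j$ carries $G_{M,\Psi}^0$ into $G_M^0$). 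The key step is to verify that $J$ is equivariant along $j \colon \St(\Psi, A[X\inv] + M[X]) \to \St(\Phi, A[X\inv] + M[X])$. This is a relation-by-relation check against the explicit formulas in~\cite[\S~5.5]{LS20}: each of them is phrased purely in terms of the defining Steinberg relations~\eqref{x-additivity}--\eqref{R3} and is therefore preserved by $j$. This verification is the main obstacle, since the LS20 action formulas are intricate and involve weight automorphisms and decompositions of the Steinberg group that must be tracked carefully along the subsystem embedding.

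Granting $J$-equivariance, I would reduce the general statement to the special case $h = 1$, $u = 1$ using the structural fact, immediate from the LS20 formulas, that the subgroup $\St(\Phi, A[X\inv])$ acts on $\overline{V}$ by left multiplication on the second coordinate only, while the symbol subgroup $\{X, 1+M\}$ acts on the third coordinate only; neither alters the first coordinate modulo $G_M^0$. Writing $[1, h, u] = h \cdot \{X, u\}_r \cdot [1, 1, 1]$, one obtains $g \cdot [1, h, u] = g \cdot h \cdot \{X, u\}_r \cdot [1, 1, 1]$, and its first coordinate agrees modulo $G_M^0$ with that of $g \cdot [1, 1, 1]$. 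Picking a preimage $g_\Psi \in \St(\Psi, A[X\inv]+M[X])$ of $g$ under $j$ and writing $g_\Psi \cdot [1, 1, 1] = [g'_\Psi, h'_\Psi, u'_\Psi]$ in $\overline{V}_\Psi$, the equivariance of $J$ gives $g \cdot [1, 1, 1] = [j(g'_\Psi), j(h'_\Psi), u'_\Psi]$ in $\overline{V}$. Consequently $g'$ is represented by $j(g'_\Psi) \in \Img(j\colon G_{M, \Psi}^{\geq 0} \to G_{M, \Phi}^{\geq 0})$, as required.
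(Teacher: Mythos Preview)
Your proposal is correct and aligned with the paper's own argument, which is even terser: the paper simply cites \cite[Proposition~5.39, Remark~5.42]{LS20} for the simply transitive action and says the subsystem property ``follows from the construction of the action in~\cite[\S~5.4]{LS20}.'' Your functoriality argument via the $j$-equivariant map $J\colon \overline{V}_\Psi \to \overline{V}$ is exactly the right way to make that one-line citation precise, and your reduction to $[1,1,1]$ is justified because the Steinberg symbol $\{X,u_1\}$ is central, so a decomposition $g = g_1\cdot h_1\cdot\{X,u_1\}$ gives $g\cdot h\cdot\{X,u\} = g_1\cdot(h_1 h)\cdot\{X,u_1 u\}$, leaving the first coordinate unchanged.
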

\begin{proof}
The existence of the action of $\St(\Phi, A[X\inv] + M[X])$ and its faithfullness are contained in
Proposition~5.39 and Remark 5.42 of~\cite{LS20}.
The stated property follows from the construction of the action in~\cite[\S~5.4]{LS20}.
\end{proof}

The following lemma generalizes~\cite[Proposition~4.3(b)]{Tu83}.
\begin{lemma} \label{lem:horrocks-subsystem-local}
Let $A$ be a local domain with maximal ideal $M$ and residue field $\kappa$.
Suppose that the image in $\St(\Phi, A[X, X\inv])$ of the element $x \in \K_2(\Phi, A[X], XA[X])$
can be decomposed as $j_{A[X, X\inv]}(y) \cdot \lambda_{X^{-1}}^*(z)$ for
some $y \in \St(\Psi, A[X, X\inv])$ and $z \in \St(\Phi, A[X\inv])$
Then $x$ belongs to $\Img(j_{A[X]})$.
\end{lemma}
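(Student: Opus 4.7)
The plan is to exploit the action of $\St(\Phi, A[X, X^{-1}]) \cong \St(\Phi, A[X^{-1}] + M[X])$ on $\overline{V}$ provided by \cref{prop:horrocks-main}, in combination with a key preliminary observation: the image of $\mu(x)$ in $\St(\Phi, A[X, X^{-1}])$ in fact lies in the subgroup $G^{\geq 0}_{M, \Phi}$. To see this, note that the classical $\mathbb{A}^1$-invariance of $\K_2$ over fields gives $\K_2(\Phi, \kappa[X]) = \K_2(\Phi, \kappa)$; since $x$ is relative to the ideal $XA[X]$, one has $\mu(x)|_{X=0} = 1 \in \St(\Phi, A)$, so the image of $\mu(x)$ in $\K_2(\Phi, \kappa[X])$ is trivial. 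Hence $\mu(x)$ lies in $\overline{\St}(\Phi, A[X], M[X])$, placing its image in $\St(\Phi, A[X, X^{-1}])$ inside $G^{\geq 0}_{M, \Phi}$.

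Next, I would lift $y$ to $\widetilde{y} \in \St(\Psi, A[X^{-1}] + M[X])$ using \cref{thm:horrocks-k2} applied to $\Psi$ (which is in the admissible range in the only use-case, $\Psi = \rA_4$), and compute the orbit of the base point $[1, 1, 1] \in \overline{V}$ in two ways. Elements of $G^{\geq 0}_{M, \Phi}$ act by left translation on the first coordinate, so $\mu(x) \cdot [1, 1, 1] = [\mu(x), 1, 1]$. On the other hand, using the hypothesis decomposition and the fact that $\lambda_{X^{-1}}^*(z)$ acts by translation on the middle coordinate, one obtains
\[
\mu(x) \cdot [1, 1, 1] = j(\widetilde{y}) \cdot \lambda_{X^{-1}}^*(z) \cdot [1, 1, 1] = j(\widetilde{y}) \cdot [1, z, 1] = [g', h', u'],
\]
where $g' \in \Img(j\colon G^{\geq 0}_{M, \Psi} \to G^{\geq 0}_{M, \Phi})$ by the second assertion of \cref{prop:horrocks-main}.

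Comparing both calculations through the defining equivalence on $\overline{V}$ forces $u' = 1$ and yields $\mu(x) = g' \cdot h_0$ in $\St(\Phi, A[X^{-1}] + M[X])$, where $h_0 \in \overline{\St}(\Phi, A, M) = G_M^0$ is the equivalence-adjustment element. Lifting $g'$ via Horrocks to an element $\widetilde{g} \in \St(\Psi, A[X], M[X])$, and using that $\pi(\mu(x)) = 1$, one deduces that $\pi(h_0) = \pi(g')^{-1}$ lies in the image of $\Gsc(\Psi, A) \to \Gsc(\Phi, A)$; combining this with the surjectivity $\K_2(\Psi, A) \twoheadrightarrow \K_2(\Phi, A)$ (valid under our rank conditions via \cite[Theorem~2.13]{Ste73}), one lifts $h_0$ itself to some $\widetilde{h} \in \St(\Psi, A)$. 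The product $\widetilde{g} \cdot \widetilde{h} \in \St(\Psi, A[X])$ then has image under $j_{A[X]}$ whose image in $\St(\Phi, A[X, X^{-1}])$ agrees with that of $\mu(x)$, and injectivity of $\St(\Phi, A[X]) \to \St(\Phi, A[X, X^{-1}])$ (which follows from the pullback property in \cref{thm:horrocks-k2} combined with the section of evaluation at $X=1$) produces the desired preimage.

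The main obstacle I anticipate is in the third paragraph: rigorously lifting the "constant part'' $h_0$ from $\overline{\St}(\Phi, A, M)$ to $\St(\Psi, A)$, because \cref{prop:horrocks-main} guarantees only that the first coordinate $g'$ lies in the $\Psi$-image, while $h_0$ appears as a free choice in the equivalence relation on $\overline{V}$. Controlling $h_0$ requires projecting the equation to the Chevalley group, exploiting the centrality of $\mu(x)$ (it lies in $\K_2$), and carefully using that the embedding $\Gsc(\Psi, A) \hookrightarrow \Gsc(\Phi, A)$ is a closed immersion so that the scheme-theoretic intersection recovers elements of $\St(\Psi, A)$ up to the identified $\K_2(\Psi, A) \twoheadrightarrow \K_2(\Phi, A)$.
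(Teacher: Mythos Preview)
Your overall strategy—act on $\overline{V}$ via \cref{prop:horrocks-main} and read off the $\Psi$-factor from the first coordinate—is the paper's, but there is a real gap in how you get the acting element into the group that \cref{prop:horrocks-main} is about.

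You write $\St(\Phi, A[X, X^{-1}]) \cong \St(\Phi, A[X^{-1}] + M[X])$ and then propose to lift $y \in \St(\Psi, A[X, X^{-1}])$ to $\widetilde{y} \in \St(\Psi, A[X^{-1}] + M[X])$ via \cref{thm:horrocks-k2}. Neither step is valid. The map $\St(\Phi, A[X^{-1}] + M[X]) \to \St(\Phi, A[X, X^{-1}])$ is injective (this is precisely what the proof of \cref{lem:first-reduction} establishes) but far from surjective: for instance $x_\alpha(aX)$ with $a \notin M$ is not in the image. And \cref{thm:horrocks-k2} is a pullback statement about the functor $\K_2$, not a surjectivity statement for $\St$; it provides no lift of an arbitrary $y$. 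Since \cref{prop:horrocks-main} supplies an action only of the smaller group $\St(\Phi, A[X^{-1}] + M[X])$, and its second assertion requires the acting element to come from $\St(\Psi, A[X^{-1}]+M[X])$ via $j$, you cannot proceed as written.

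The paper circumvents this by first reducing modulo $M$. From $\rho_{A[X,X^{-1}]}(j(y)\cdot z) = 1$ one splits $z = j_{A[X^{-1}]}(z_0)\cdot z_1$ with $z_0 \in \St(\Psi, A[X^{-1}])$ and $z_1 \in \overline{\St}(\Phi, A[X^{-1}], M[X^{-1}])$; then $y\cdot \lambda_{X^{-1}}^*(z_0)$ has trivial reduction, so it equals $\mu(y')$ for some $y' \in \St(\Psi, A[X,X^{-1}], M[X,X^{-1}])$. The relative lifting $t$ of~\cite{LS17} (recalled in the proof of \cref{lem:first-reduction}) now carries $y'$ into $\St(\Psi, A[X^{-1}]+M[X])$, and it is $j(t(y'))$ acting on $[1,z_1,1]$ that feeds into \cref{prop:horrocks-main}. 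Finally, your treatment of $h_0$ is heavier than needed: since $x(0)=1$, evaluating $x = g'\cdot h_0$ at $X=0$ gives $h_0 = g'(0)^{-1}$, which is already in the $\Psi$-image, and the conclusion follows immediately without passing through $\Gsc$ or invoking $\K_2$-surjectivity.
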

\begin{proof}
We denote by $\rho_{A}$ (resp. $\rho_{A[X]}$, $\rho_{A[X, X\inv]}$) the canonical homomorphism
$A \to \kappa$ (resp. $A[X] \to \kappa[X]$, $A[X, X\inv] \to \kappa[X, X\inv]$).

Recall from~\cite{Hur77} that $\K_2(\Phi, \kappa[X]) = \K_2(\Phi, \kappa)$ hence one has $\rho_{A[X]}(x) = 1$
and by our assumptions $\rho_{A[X, X\inv]}(j_{A[X, X\inv]}(y) \cdot z) = 1$.
Consequently, we can find $z_0 \in \St(\Psi, A[X\inv])$ and $z_1 \in \overline{\St}(\Phi, A[X^{-1}], M[X^{-1}])$
such that $z = j_{A[X\inv]}(z_0) \cdot z_1$.
It is clear that $\rho_{A[X, X\inv]}(y \cdot \lambda_{X^{-1}}^*(z_0)) = 1$ hence we can find
$y' \in \St(\Psi, A[X, X\inv], M[X, X\inv])$ such that $\mu(y') = \cdot \lambda_{X^{-1}}(z_0)$.
Notice also that $t(y') \in \St(\Phi, A[X\inv] + M[X])$, where $t$ denotes the homomorphism discussed in~\cref{lem:first-reduction}.
From~\cref{prop:horrocks-main} we obtain that
\[ [x, 1, 1 ] = t(y') [1, z_1, 1] = [g', h', u']. \]
for some $g' \in \Img(\G_{M, \Psi}^{\geq 0} \to G_{M, \Phi}^{\geq 0})$.
From the definition of $\overline{V}$ we obtain that $x = g' \cdot h_0$ for some $h_0 \in \overline{\St}(\Phi, A, M)$.
But since $x(0) = 1$ we conclude that $x = g' \cdot (g'(0))^{-1}$ from which the assertion of the lemma follows.
\end{proof}

\begin{cor} \label{cor:horrocks--ingredient}
Let $A$ be an arbitrary commutative domain.
Suppose that the image in $\St(\Phi, A[X, X\inv])$ of the element $x \in \K_2(\Phi, A[X], XA[X])$
can be decomposed as $j_{A[X, X\inv]}(y) \cdot \lambda_{X^{-1}}^*(z)$ for
some $y \in \St(\Psi, A[X, X\inv])$ and $z \in \St(\Phi, A[X\inv])$
Then $x$ belongs to $\Img(j_{A[X]})$.
\end{cor}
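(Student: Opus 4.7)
The plan is to reduce the statement for arbitrary commutative domains to the local case established in~\cref{lem:horrocks-subsystem-local} by a standard localization argument, using the local-global principle for subsystem embeddings (\cref{cor:QS-subsystem}).

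First I would fix a maximal ideal $M \trianglelefteq A$ and consider the localization map $\lambda_M \colon A \to A_M$. Since $A$ is a domain, $A_M$ is a local domain with maximal ideal $MA_M$. The ring homomorphism $\lambda_M$ induces corresponding homomorphisms of Steinberg groups, and because all constructions in sight are functorial, applying $\lambda_M^*$ to the hypothesis $x = j_{A[X, X\inv]}(y) \cdot \lambda_{X\inv}^*(z)$ yields
\[ \lambda_M^*(x) = j_{A_M[X, X\inv]}(\lambda_M^*(y)) \cdot \lambda_{X\inv}^*(\lambda_M^*(z)), \]
where now $\lambda_M^*(y) \in \St(\Psi, A_M[X, X\inv])$ and $\lambda_M^*(z) \in \St(\Phi, A_M[X\inv])$. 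Similarly $\lambda_M^*(x)$ lies in $\K_2(\Phi, A_M[X], XA_M[X])$ because $\lambda_M$ maps the ideal $XA[X]$ into $XA_M[X]$. Thus the hypotheses of~\cref{lem:horrocks-subsystem-local} are satisfied for $\lambda_M^*(x)$ over the local domain $A_M$.

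Applying~\cref{lem:horrocks-subsystem-local} we conclude that $\lambda_M^*(x) \in \Img(j_{A_M[X]})$ for every maximal ideal $M$ of $A$. Finally, since $x \in \St(\Phi, A[X], XA[X])$ (this group embeds into $\St(\Phi, A[X])$ because $XA[X]$ is a splitting ideal), \cref{cor:QS-subsystem} applied to the subsystem embedding $\Psi \subseteq \Phi$ immediately yields $x \in \Img(j_{A[X]})$, which is the desired conclusion.

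I do not anticipate a serious obstacle here: the two main ingredients (the local version of the lemma and the local-global principle for subsystem embeddings) are both already in place, and the only thing to verify is that the hypothesis on $x$ survives the localization $\lambda_M$, which is a straightforward functoriality check. The only minor subtlety worth double-checking is that $\lambda_M^*(x)$ really does land in the relative group $\K_2(\Phi, A_M[X], XA_M[X])$ rather than in a merely larger group, but this is immediate from the construction of the relative Steinberg group as a functor in the pair $(R, I)$ together with the fact that $\lambda_M$ sends $XA[X]$ into $XA_M[X]$.
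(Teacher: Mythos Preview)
Your proposal is correct and matches the paper's own proof, which simply states that the corollary is a consequence of \cref{lem:horrocks-subsystem-local} and \cref{cor:QS-subsystem}. Your write-up spells out exactly the localization-and-glue argument that those two references encode.
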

\begin{proof}
This is a consequence of~\cref{lem:horrocks-subsystem-local} and~\cref{cor:QS-subsystem}.
\end{proof}

We start by recalling the so-called Zariski excision property of Steinberg groups.
\begin{lemma} \label{lem:zariski-glueing}
Let $\Phi$ be any simply-laced root system of rank $\geq 3$.
Let $A$ be a commutative domain and $a, b \in A$ be a pair of coprime elements.
\begin{enumerate}
\item Let $\delta$ be an element of $\St(\Phi, A_{ab})$.
Then $\delta$ can be presented as $\lambda_b^*(x) \cdot \lambda_a^*(y)$ for some
$x  \in \St(\Phi, A_a)$ and $y \in \St(\Phi, A_b)$.
\item  Let $x \in \St(\Phi, A_a)$ and $y \in \St(\Phi, A_b)$ be such that the equality $\lambda_b^*(x) = \lambda_a^*(y)$ holds in $\St(\Phi, A_{ab})$.
Then there exists $z \in \St(\Phi, A)$ such that $x = \lambda_a^*(z)$, $y = \lambda_b^*(z)$.
\end{enumerate}
\end{lemma}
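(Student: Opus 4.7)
The plan is to prove both parts by establishing that the Steinberg-group functor $\St(\Phi, -)$ satisfies Zariski descent along the principal-open cover $\mathrm{Spec}(A) = D(a) \cup D(b)$. Since $A$ is a domain, the ring square with vertices $A, A_a, A_b, A_{ab}$ is cartesian with all four arrows injective, and coprimality of $a^N, b^M$ for all $N, M \geq 0$ yields a partial-fraction decomposition $f = \iota_b(g) + \iota_a(h)$ (with $g \in A_a$, $h \in A_b$) for every $f \in A_{ab}$, where $\iota_a, \iota_b$ are the ring-level avatars of $\lambda_a^*, \lambda_b^*$. This is the sole place where the domain hypothesis is used.

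For part~(1), I would argue by induction on the length of a word in Steinberg generators representing $\delta \in \St(\Phi, A_{ab})$. The base case $\delta = x_\alpha(f)$ is handled by invoking~\eqref{x-additivity} together with the partial-fraction decomposition above to obtain $x_\alpha(f) = \lambda_b^*(x_\alpha(g)) \cdot \lambda_a^*(x_\alpha(h))$. In the inductive step, given $\delta = \lambda_b^*(x_0) \cdot \lambda_a^*(y_0) \cdot x_\alpha(f)$ with a new rightmost generator, one applies the base case to $x_\alpha(f)$ and then commutes the new $\lambda_b^*$-factor past $\lambda_a^*(y_0)$. By~\eqref{R2}--\eqref{R3}, each resulting commutator is either trivial or a single root unipotent $x_{\beta + \gamma}(\cdot) \in \St(\Phi, A_{ab})$, admitting its own decomposition by the base case. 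Termination of this procedure is the delicate point, discussed below.

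For part~(2), the identity $\lambda_b^*(x) = \lambda_a^*(y)$ in $\St(\Phi, A_{ab})$ is certified by a finite sequence of applications of the Steinberg relations, each involving only finitely many elements of $A_{ab}$. Using that $\St(\Phi, -)$ commutes with filtered colimits of rings (as recalled in the proof of~\cref{lem:dp-2}), this witness already takes place over $A_{a^N b^M}$ for some $N, M \geq 0$. Combined with the Mayer--Vietoris exact sequence $0 \to A \to A_a \oplus A_b \to A_{ab} \to 0$ (which holds thanks to the domain hypothesis and coprimality), a denominator-clearing argument in the spirit of~\cref{lem:dp-2,lem:L25-2} then produces the desired lift $z \in \St(\Phi, A)$.

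The main obstacle I anticipate is the termination of the reshuffling procedure in part~(1): each commutation can enlarge the denominators of the new parameters in $A_{ab}$ via the Chevalley commutator formula, so there is no a priori reason the process stabilises. The cleanest resolution is to first establish part~(2) as above -- which is logically independent of part~(1) -- and then to deduce part~(1) by localising at each maximal ideal $M \trianglelefteq A$, reducing the task to the same statement over the local ring $A_M$ where the partial-fraction step terminates after finitely many iterations, and finally assembling the local solutions into a global one by means of a local-global principle in the spirit of~\cref{cor:QS-subsystem}.
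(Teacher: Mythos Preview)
The paper itself does not prove this lemma; it simply refers to \cite[Corollary~4.5]{LSV2} and \cite[Lemma~2.6]{LSV2}. Your attempt at a self-contained argument has genuine gaps in both parts.

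For part~(1), you correctly flag that the reshuffling procedure need not terminate, but the proposed repair does not work. A local--global principle in the style of \cref{cor:QS-subsystem} decides whether a \emph{fixed} element lies in the image of a \emph{fixed} homomorphism; it does not globalise an existential statement of the form ``there exist $x,y$ with $\delta=\lambda_b^*(x)\cdot\lambda_a^*(y)$''. Over a localisation $A_M$ one of $a,b$ becomes a unit and the factorisation is trivial (one factor may be taken equal to $1$), but \emph{which} factor is trivial depends on whether $a\in M$ or $b\in M$. These local factorisations are therefore mutually incompatible, and there is no mechanism to patch them into a single global pair $(x,y)$.

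For part~(2), the filtered-colimit reduction is vacuous: $A_{a^Nb^M}=A_{ab}$ for all $N,M\geq 1$, so passing to a ``finite stage'' does not land you in a smaller ring. The denominator-clearing technique of \cref{lem:dp-2} and \cref{lem:L25-2} relies essentially on a polynomial variable $X$ and the substitution $X\mapsto a^nX$, neither of which is available in the present statement. In particular, you have not explained why an element of $\St(\Phi,A_a)$ whose image in $\St(\Phi,A_{ab})$ happens to lie in $\mathrm{Im}(\lambda_a^*)$ must already come from $\St(\Phi,A)$; that is exactly the nontrivial content of part~(2), and it does not follow from the ring-level Mayer--Vietoris sequence alone.

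The argument in~\cite{LSV2} proceeds instead via the relative Steinberg groups and their excision-type behaviour (the same circle of ideas behind the lifting~$t$ appearing in the proof of \cref{lem:first-reduction}), which is what permits transport of elements between $\St(\Phi,A)$, $\St(\Phi,A_a)$ and $\St(\Phi,A_b)$ without the termination problems inherent in a generator-by-generator approach.
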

\begin{proof}
See~\cite[Corollary~4.5]{LSV2}, cf.\ also the proof of~\cite[Lemma~2.6]{LSV2}.
\end{proof}

\begin{lemma} \label{lem:horrocks-b}
Let $A$ be a commutative domain and $f \in A[X]$ be a unitary polynomial.
Let $h$ be an element of $\K_2(\Phi, A[X], XA[X])$ such that $\lambda_f^*(h)$ belongs to the image of the stabilization map
$\St(\Psi, A[X]_f) \to \St(\Phi, A[X]_f)$.
Then $h$ lies in the image of $j_{A[X]}\colon\St(\Psi, A[X]) \to \St(\Phi, A[X])$.
\end{lemma}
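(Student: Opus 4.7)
My plan is to apply \cref{cor:horrocks--ingredient}: it suffices to factor $\lambda_{X^{-1}}^*(h) \in \St(\Phi, A[X, X^{-1}])$ as $j_{A[X,X^{-1}]}(y) \cdot \lambda_{X^{-1}}^*(z)$ with $y \in \St(\Psi, A[X, X^{-1}])$ and $z \in \St(\Phi, A[X^{-1}])$. I aim to do this with $y = 1$, by finding $z \in \St(\Phi, A[X^{-1}])$ whose image in $\St(\Phi, A[X, X^{-1}])$ equals $\lambda_{X^{-1}}^*(h)$.

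The monic assumption on $f$ is leveraged through the factorization $f = X^d g$ in $A[X, X^{-1}]$, where $d = \deg f$ and $g := X^{-d} f = 1 + a_{d-1} X^{-1} + \cdots + a_0 X^{-d} \in A[X^{-1}]$. Since $g \equiv 1 \pmod{X^{-1} A[X^{-1}]}$, the elements $X^{-1}$ and $g$ are coprime in the domain $A[X^{-1}]$, and moreover $A[X, X^{-1}]_g = A[X, X^{-1}]_f$. I will apply Zariski gluing (\cref{lem:zariski-glueing}) to the coprime pair $(X^{-1}, g)$ in $A[X^{-1}]$: the two patches $A[X, X^{-1}]$ and $A[X^{-1}][g^{-1}]$ have common overlap $A[X, X^{-1}]_f$. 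As gluing data I take $\lambda_{X^{-1}}^*(h) \in \St(\Phi, A[X, X^{-1}])$ on the first patch and $j(\tilde y) \in \St(\Phi, A[X^{-1}][g^{-1}])$ on the second, where $\tilde y \in \St(\Psi, A[X^{-1}][g^{-1}])$ is a lift of $\lambda_{X^{-1}}^*(y')$, with $y' \in \St(\Psi, A[X]_f)$ supplied by $\lambda_f^*(h) = j(y')$. Compatibility on the overlap is then precisely the hypothesis, and gluing yields the required $z$.

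The key step, and the main obstacle, is producing the lift $\tilde y$. Starting from a presentation $y' = \prod_k x_{\alpha_k}(p_k/f^{n_k})$ with $p_k \in A[X]$ and applying Euclidean division $p_k = q_k f^{n_k} + r_k$ with $\deg r_k < d n_k$, each generator splits as $x_{\alpha_k}(q_k) \cdot x_{\alpha_k}(r_k/f^{n_k})$; the proper-fraction factor has coefficient $r_k X^{-d n_k}/g^{n_k} \in A[X^{-1}][g^{-1}]$, but the polynomial factor $x_{\alpha_k}(q_k)$ with $q_k \in A[X]$ does not. I intend to eliminate the polynomial factors by iteratively pushing them past the proper-fraction factors via \eqref{R2}--\eqref{R3}; each such commutator produces a new generator with coefficient $q_k r_{k'}/f^{n_{k'}}$ that, after another round of Euclidean division, has polynomial part of strictly smaller degree than $\deg q_k$, so the process terminates with polynomial coefficients in $A \subseteq A[X^{-1}][g^{-1}]$. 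The case $\alpha_k + \alpha_{k'} = 0$ is not covered by \eqref{R2}--\eqref{R3} and requires a separate treatment, presumably by first reducing to a local domain via \cref{cor:QS-subsystem} and exploiting that $y' \in \K_2(\Psi, A[X]_f)$ (which follows from $h \in \K_2$) together with Hurrelbrink's triviality $\K_2(\Psi, \kappa[X]) = \K_2(\Psi, \kappa)$ to control the opposite-root contributions through Steinberg symbols.
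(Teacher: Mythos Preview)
Your overall architecture is right: pass to the coprime pair $(X^{-1},g)$ in $A[X^{-1}]$ with $g = X^{-d}f$, use Zariski excision (\cref{lem:zariski-glueing}) there, and feed the outcome into \cref{cor:horrocks--ingredient}. This is exactly the paper's route. Where you diverge is in insisting on $y=1$: that forces you to produce an \emph{exact lift} $\tilde y\in\St(\Psi,A[X^{-1}]_g)$ of the image of $y'\in\St(\Psi,A[X]_f)$, which you then attempt by a hand-rolled Euclidean/commutator argument. That argument is where the gap lies --- you yourself flag the opposite-root case $\alpha_k+\alpha_{k'}=0$, and the fix you sketch (localize, invoke Hurrelbrink) is vague and does not obviously close it.

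The point you are missing is that \cref{lem:zariski-glueing} has \emph{two} parts, and you should use both, on \emph{different} root systems. Apply part~(1) to $\Psi$ over $A[X^{-1}]$ with the coprime pair $(X^{-1},g)$: the image of your $y'$ in $\St(\Psi,A[X,X^{-1}]_f)$ factors as
\[
\lambda_f^*(z)\cdot \lambda_{X^{-1}}^*(w),\qquad z\in\St(\Psi,A[X,X^{-1}]),\ w\in\St(\Psi,A[X^{-1}]_g).
\]
No Euclidean division, no commutator chasing --- the lemma hands you this for free, and it is only a \emph{factorization}, not a lift. Now push through $j$: in $\St(\Phi,A[X,X^{-1}]_f)$ one has $\lambda_f^*\bigl(j(z)^{-1}\cdot\lambda_X^*(h)\bigr)=\lambda_{X^{-1}}^*(j(w))$, so part~(2) of \cref{lem:zariski-glueing} applied to $\Phi$ yields $z'\in\St(\Phi,A[X^{-1}])$ with $\lambda_X^*(h)=j(z)\cdot\lambda_{X^{-1}}^*(z')$. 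That is precisely the input to \cref{cor:horrocks--ingredient}, with the $\Psi$-factor $z$ nontrivial in general. Dropping the requirement $y=1$ is what makes the whole proof a two-line diagram chase instead of an open-ended rewriting argument.
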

\begin{proof}
Suppose that $f(X) = X^n + a_1 X^{n-1} \ldots + a_n$.
Set $g(X\inv) = 1 + a_1 X\inv + \ldots + a_{n} X^{-n}$.
It is clear that $A[X, X\inv]_f = A[X\inv]_{X\inv g}$ and, moreover, that $X\inv$ and $g$ are not zero divisors and together generate the unit ideal of $A$.

Consider the following diagram:
%! suppress = EscapeAmpersand
\[ \xymatrix{\St(\Phi, A[X]) \ar[r]^{\lambda_X} \ar[d]_{\lambda_f^*} & \St(\Phi, A[X, X\inv]) \ar[d]_{\lambda_f^*}  & \St(\Phi, A[X\inv]) \ar[l]_{\lambda_{X\inv}^*} \ar[d]_{\lambda_g^*}  \\
\St(\Phi, A[X]_f) \ar[r] & \St(\Phi, A[X, X\inv]_f) & \St(\Phi, A[X\inv]_g) \ar[l] \\
\St(\Psi, A[X]_f) \ar[r] \ar[u]_j & \St(\Psi, A[X, X\inv]_f) \ar[u]_j & \St(\Psi, A[X\inv]_g) \ar[l] \ar[u]_j \\
& \St(\Psi, A[X, X\inv]) \ar[u]_{{\lambda_f^\Psi}^*}   & \St(\Psi, A[X\inv]). \ar[l] \ar[u]_{{\lambda_g^\Psi}^*}}\]

By assumption there exists $\widetilde{h} \in \St(\Psi, A[X]_f)$ such that $j(\widetilde{h}) = \lambda_f(h)$.
By the first part of~\cref{lem:zariski-glueing} one can write
$\lambda_X(\widetilde{h}) = {\lambda_f^\Psi}^*(z) \cdot \lambda_{X\inv}^*(y)$
for some $y \in \St(\Psi, A[X\inv]_g)$, $z \in \St(\Psi, A[X, X\inv]).$
Consequently, one has $\lambda_f^*(j(z)\inv \cdot \lambda_X^*(\alpha)) = \lambda_{X\inv}^*(j(y))$.
By the second part of~\cref{lem:zariski-glueing} there exists $y' \in \St(\Phi, A[X\inv])$ such that $\lambda_X^*(\alpha) = j(z) \cdot \lambda_{X\inv}^*(y').$
The assertion now follows from~\cref{cor:horrocks--ingredient}.
\end{proof}

\begin{thm}\label{thm:early-stability}
Let $\Phi$ be a root system of type $\rA_{\geq 4}$, $\rD_{\geq 5}$ or $\rE_{6,7,8}$ and let $A$ be an arbitrary noetherian commutative
domain of Krull dimension $\leq 1$.
Then for any $n \geq 0$ the obvious inclusion $\rA_4 \subseteq \Phi$ induces a surjection
\[\K_2(\rA_4, A[X_1,\ldots, X_n]) \to \K_2(\Phi, A[X_1, \ldots X_n]).\]
\end{thm}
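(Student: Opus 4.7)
The plan is to proceed by induction on $n$, with the base case $n = 0$ following from classical surjective stability: since a Noetherian domain $A$ of Krull dimension $\leq 1$ has stable rank at most $2$, the map $\K_2(\rA_4, A) \twoheadrightarrow \K_2(\Phi, A)$ is the standard surjective stability statement for the unstable $\K_2$-functor.

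For the inductive step I would set $R = A[X_1, \ldots, X_{n-1}]$ and $X = X_n$. Given $h \in \K_2(\Phi, R[X])$, the first move is to use $\mathrm{ev}_{X=0}^*$: the inductive hypothesis applied to $h(0) \in \K_2(\Phi, R)$ produces a lift $\tilde h_0 \in \K_2(\rA_4, R)$, and replacing $h$ by $h \cdot j_{R[X]}(\tilde h_0)^{-1}$ reduces the problem to the case $h \in \K_2(\Phi, R[X], XR[X])$. For $\Phi$ not of type $\rE_8$, the main step is now to invoke~\cref{lem:horrocks-b} with the unitary polynomial $f = X$: it suffices to show that the image of $h$ in $\St(\Phi, R[X, X^{-1}])$ lies in the image of $\St(\rA_4, R[X, X^{-1}])$. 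Equivalently, by~\cref{cor:horrocks--ingredient}, it suffices to exhibit a Steinberg-level factorization of this image as $j(y) \cdot \lambda_{X^{-1}}^*(z)$ with $y \in \St(\rA_4, R[X, X^{-1}])$ and $z \in \St(\Phi, R[X^{-1}])$. I would produce this factorization by applying the local-global principle~\cref{cor:QS-subsystem} to pass to the localizations $R_M$ at maximal ideals $M \trianglelefteq R$; over the regular local ring $R_M$, Horrocks' theorem~\cref{thm:horrocks-k2} combined with local surjective stability of $\K_2$ (reducing generators to Steinberg symbols, which already live in $\rA_4$) yields the required decomposition.

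The case $\Phi = \rE_8$, not covered by Horrocks, would be handled via the chain $\rA_4 \subset \rE_7 \subset \rE_8$: once the result is established for $\Phi = \rE_7$, it suffices to prove $\K_2(\rE_7, A[X_1, \ldots, X_n]) \twoheadrightarrow \K_2(\rE_8, A[X_1, \ldots, X_n])$, which follows by applying~\cref{cor:QS-subsystem} to the embedding $\rE_7 \subset \rE_8$ and reducing to the generation of local $\K_2$ by Steinberg symbols already contained in $\rE_7$. The principal obstacle throughout is the explicit construction of the Steinberg-level factorization over $R_M$: while Horrocks' pullback property gives the right structural control at the $\K_2$ level, lifting it to the Steinberg group itself requires careful bookkeeping with the $\star$-action formalism of~\cref{subsec:triples} and the simply transitive action on $\overline{V}$ from~\cref{prop:horrocks-main}.
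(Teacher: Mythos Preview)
Your base case and the reduction to $h \in \K_2(\Phi, R[X], XR[X])$ with $R = A[X_1,\ldots,X_{n-1}]$ match the paper. The gap is in the next step. After invoking \cref{cor:QS-subsystem} you must show, for each maximal ideal $M \trianglelefteq R$, that $\lambda_M^*(h) \in \K_2(\Phi, R_M[X], XR_M[X])$ lies in the image of $j_{R_M[X]}$. But $R_M[X]$ is \emph{not} a local ring, so $\K_2(\Phi, R_M[X])$ is not known to be generated by Steinberg symbols; the ``local surjective stability'' you appeal to is simply unavailable here. Horrocks' theorem \cref{thm:horrocks-k2} gives only the $\mathbb{P}^1$-pullback square, an injectivity statement that says nothing about surjectivity of the stabilization map over $R_M[X]$. (Moreover $A$ is only assumed to be a one-dimensional Noetherian domain, not Dedekind, so $R_M$ need not even be regular, and $\dim R_M$ can be as large as $n$.) Your separate treatment of $\rE_8$ has the same defect: after applying \cref{cor:QS-subsystem} you are again left with a polynomial ring over a local ring, not a local ring.

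The paper supplies the missing idea: instead of localizing at primes of $R$, it inverts Suslin's multiplicative system $S \subset B = A[X_1,\ldots,X_n]$ consisting of polynomials that become unitary in $X_1$ after a Nagata-type substitution $X_i \mapsto Y_i + Y_1^{m^{i-1}}$. The point (\cite[\S~6]{Su77}) is that $\dim(S^{-1}B) \leq \dim A \leq 1$, so $S^{-1}B$ again satisfies $\mathrm{SR}_3$ and the \emph{base case itself} yields $\K_2(\rA_4, S^{-1}B) \twoheadrightarrow \K_2(\Phi, S^{-1}B)$. Since $\K_2$ commutes with filtered colimits, one extracts a single unitary $f \in S$ with $\lambda_f^*(h) \in \Img(j_{B_f})$, and then \cref{lem:horrocks-b} (applied with this $f$, not with $f = X$) finishes the argument uniformly for all the listed $\Phi$.
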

\begin{proof}
The proof is modeled after the proof of~\cite[Theorem~5.3]{Tu83}.
We proceed by induction on $n$.
Our assumption on the dimension of $A$ guarantees that it satisfies the condition $\mathrm{SR}_3$ in the sense of~\cite{St78}.
Thus, from Corollary~3.2 and Theorem~4.1 of~\cite{St78} we conclude that the composite arrow in the following diagram is a surjection:
\[\K_2(\rA_2, A) \to \K_2(\rA_4, A) \to \K_2(\Phi, A).\]
Consequently, we obtain that the right arrow is a surjection, which yields the induction base.

Now let us verify the induction step.
Set $C = A[X_2, \ldots , X_n]$ and $B = C[X_1]$.
We need to show that $\K_2(\rA_4, B) \to \K_2(\Phi, B)$ is surjective.
Every element $\alpha \in \K_2(\Phi, B)$ can be decomposed as a product $\alpha = \alpha_0 \cdot \alpha_1$,
where $\alpha_0 \in \K_2(\Phi, C)$ and $\alpha_1 \in \K_2(\Phi, B, X_1 B)$.
By inductive assumption $\K_2(\rA_4, C)$ surjects onto $\K_2(\Phi, C)$, so it remains to show that $\alpha_1$ lies in the image of $\K_2(\rA_4, B)$.

Denote by $S$ the multiplicative system $S \subseteq B$ consisting of polynomials $f$ such that for sufficiently large $m$
the polynomial $f$ becomes unitary in $Y_1$ after substitutions $X_1 \coloneqq Y_1,$ $X_2 \coloneqq Y_2 + Y_1^m, \ldots X_n \coloneqq Y_n + Y_1^{m^n}$.
Recall from~\cite[\S~6]{Su77} that $\dim(S^{-1}B) \leq \dim(A) = 1$.
By induction base the map $\K_2(\rA_4, S^{-1}B) \to \K_2(\Phi, S^{-1}B)$ is surjective.
Since functor $\K_2$ commutes with filtered colimits (cf. \cite[Lemma~3.3]{LSV2}) there exists $f \in S$ such that $\lambda^*_f(\alpha_1)$ lies in the image of $\K_2(\rA_4, B_f) \to \K_2(\Phi, B_f)$.
By the construction of $S$ we may assume that $f$ is unital in $X_1$.
The required assertion now follows from~\cref{lem:horrocks-b}.
\end{proof}

\begin{proof}[Proof of~\cref{cor:dedekind}]
Consider the following diagram:
\[ \K_2(\rA_3, A) \to \K_2(\rA_4, A) \to \K_2(\rA_4, A[X_1, \ldots, X_n]) \rightarrow \K_2(A[X_1, \ldots, X_n]) \to \K_2(A).\]
The two right arrows on the above diagram are isomorphisms by the $\mathbb{A}^1$-invariance of the stable $\K_2$
(see e.\,g. \cite[Theorem~V.6.3]{Kbook}) combined with the main result of~\cite{Tu83}.
On the other hand, by~\cite[Corollary~3.2]{ST76} the left arrow and the composite arrow are isomorphisms.
Our assertion now follows from~\cref{thm:early-stability}.
\end{proof}

\printbibliography
\end{document}